\theoremstyle{plain}
\newtheorem{prop}{Proposition}
\newtheorem{coro}[prop]{Corollary}
\newtheorem{lemm}[prop]{Lemma}
\newtheorem{theo}[prop]{Theorem}
\newenvironment{manualtheorem}[1]{%
  \manualtheoreminner
}{\endmanualtheoreminner}
\newenvironment{manualprop}[1]{%
  \manualpropinner
}{\endmanualpropinner}
\theoremstyle{remark}
\newtheorem{rema}[prop]{Remark}
\newtheorem{assu}[prop]{Assumption}
\newtheorem{exam}[prop]{Example}
 \patchcmd{\epigraph}{\@epitext{#1}}{\itshape\@epitext{#1}}{}{}
\date{September, 2024}
\title{Empirical partially Bayes multiple testing \\ and compound $\chi^2$ decisions}
\author{
Nikolaos Ignatiadis\\
 \texttt{ignat@uchicago.edu}
 \and
 Bodhisattva Sen \\
 \texttt{bodhi@stat.columbia.edu}
}
\begin{document}

\maketitle

\begin{abstract}
A common task in high-throughput biology is to screen for associations across thousands of units of interest, e.g., genes or proteins. Often, the data for each unit are modeled as Gaussian measurements with unknown mean and variance and are summarized as per-unit sample averages and sample variances. The downstream goal is multiple testing for the means. In this domain, it is routine to ``moderate'' (that is, to shrink) the sample variances through parametric empirical Bayes methods before computing p-values for the means. Such an approach is asymmetric in that a prior is posited and estimated for the nuisance parameters (variances) but not the primary parameters (means). Our work initiates the formal study of this paradigm, which we term ``empirical partially Bayes multiple testing.'' In this framework, if the prior for the variances were known, one could proceed by computing p-values conditional on the sample variances---a strategy called partially Bayes inference by Sir David Cox. We show that these conditional p-values satisfy an Eddington/Tweedie-type formula and are approximated at nearly-parametric rates when the prior is estimated by nonparametric maximum likelihood. The estimated p-values can be used with the Benjamini-Hochberg procedure to guarantee asymptotic control of the false discovery rate. Even in the compound setting, wherein the variances are fixed, the approach retains asymptotic type-I error guarantees.
\\ 

\noindent \textbf{Keywords:} partially Bayes inference, conditionality principle, nonparametric maximum likelihood, Benjamini-Hochberg, average significance control

\end{abstract}

\section{Introduction} 
\label{sec:introduction}
We study an empirical Bayes method 
for testing $n$ null hypotheses,
$H_i: \mu_i=0$, in the normal means problem with estimated variances.
To be concrete, we assume that for the $i$-th unit we observe a Gaussian measurement
$Z_i \sim \nn(\mu_i,\; \sigma_i^2)$ centered around $\mu_i$ with variance $\sigma_i^2$, as well as
an independent unbiased measurement $S_i^2$ of $\sigma_i^2$ whose law is the scaled $\chi^2$ distribution
with $\nu \geq 2$ degrees of freedom:
\begin{equation}
    \label{eq:full_sampling}
     (Z_i, S_i^2) \, \simindep \, \mathcal{N}(\mu_i, \sigma_i^2) \otimes \frac{\sigma_i^2}{\nu} \chi^2_{\nu}\;\;\,\text{ for } i=1,\dotsc,n.
\end{equation}
Here $\sigma_i^2 > 0$ and $\mu_i \in \RR$ are unknown. $Z_i$ and $S_i^2$ arise from data summarization
in Gaussian linear models.
As one concrete example,
suppose that for the $i$-th unit
we observe \smash{$Y_{ij} \mid \mu_i, \tilde{\sigma}_i^2 \simiid \nn(\mu_i, \tilde{\sigma}_i^2)$} for $j=1,\dotsc,K$.
Then $Z_i \coloneqq \sum_{j=1}^K Y_{ij}/K$ and $S_i^2 \coloneqq \sum_{j=1}^K (Y_{ij}- Z_i)^2/(K^2-K)$
satisfy~\eqref{eq:full_sampling} with $\mu_i=\mu_i, \sigma_i^2 =  \tilde{\sigma}_i^2/K$ and $\nu=K-1$.
Data of the form~\eqref{eq:full_sampling} are ubiquitous
and arise naturally in high-throughput biological studies, 
for example in transcriptomics~\citep{Lonnstedt166664, smyth2004linear,ritchie2015limma}, 
proteomics~\citep{kammers2015detecting, terkelsen2021high,zheng2021mixtwice}, and methylomics~\citep{zhang2013genomewide,Maksimovic2017f1000};
cf. Sections~\ref{subsec:limma},~\ref{subsec:applicability}, ~\ref{subsec:methylation}, and~\ref{sec:applications}.

The crux of a standard empirical Bayesian~\citep{robbins1956empirical, efron2010largescale} analysis with 
data from~\eqref{eq:full_sampling} is to 
further posit that $(\mu_i, \sigma_i^2)$ are $\text{iid}$ draws from an unknown distribution $\Pi$ 
and to then make statistical decisions by imitating
the actions of an oracle Bayesian who knows $\Pi$.
The imitation step can be accomplished by estimating $\Pi$ based on $(Z_1, S_1^2), \dotsc, (Z_n, S_n^2)$
as \smash{$\widehat{\Pi}$} and then pretending that \smash{$(\mu_i, \sigma_i^2) \simiid \widehat{\Pi}$}. 

This is \emph{not} the empirical Bayes approach we pursue in this paper. 
Instead, we imitate an oracle Bayesian that is a \emph{``partial Bayesian''}~\citep{cox1975note, mccullagh1990note} 
who posits a distribution $G$ for the nuisance parameters $\sigma_i^2$,
\begin{equation}
    \label{eq:random_sigma}
\sigma_i^2 \simiid G,\,\,\, \text{for } i=1,\dotsc,n,
\end{equation}
but treats the parameters of primary interest, $\mu_i$, as fixed and unknown.\footnote{
\label{footnote:independence}
If \smash{$\mu_i$} is treated as random, then the statement in~\eqref{eq:random_sigma} should be interpreted as \smash{$\sigma_i^2 \mid \mu_i \sim G$}, which implies that \smash{$\sigma_i^2$} and \smash{$\mu_i$}  are independent. 
This assumption will play an important role in what follows. We will show that our proposed procedure is robust to violations of the independence assumption, especially with respect to type-I error guarantees 
(see, e.g.,~Section~\ref{sec:joint_hierarchical}).}
Moreover, for the partial Bayesian that we seek to imitate the \emph{conditionality
principle} is in full force  (see explanation in Section~\ref{sec:conditional_pvalues}): 
their inferences for $\mu_i$ are based on the conditional distribution of $Z_i \mid S_i^2$,
for example, a two-sided p-value for testing $H_i: \mu_i=0$ is given by $P_i = P_G(Z_i, S_i^2)$,
\begin{equation}
    \label{eq:conditional_pvalue}
P_G(z, s^2) \coloneqq \PP[G]{ \abs{Z^{H_0}} \geq \abs{z}\; \mid \;  S^2=s} = \EE[G]{2(1-\Phi(|z|/\sigma)) \mid S^2=s^2}.
\end{equation}
Above $\Phi$ is the standard normal distribution function and we write \smash{$(\sigma^2, Z^{H_0}, S^2)$} for 
the triple generated as follows: $\sigma^2$ is generated according to $G$, as assumed in~\eqref{eq:random_sigma}, and \smash{$(Z^{H_0}, S^2)$} is generated as assumed in~\eqref{eq:full_sampling} 
conditional on $\sigma^2$ and $\mu=0$. In a purely frequentist analysis
with $\sigma^2$ treated as fixed, the distribution of $Z^{H_0}$ conditional on $S^2$
is \emph{not pivotal}, since $Z^{H_0} \mid S^2, \sigma^2 \sim \nn(0, \sigma^2)$. 
However, the partial Bayesian can marginalize over $\sigma^2 \mid S^2$, and so 
the distribution of $Z^{H_0} \mid S^2$ becomes pivotal. 
The conditional p-value $P_G(Z, S^2)$ has the following elegant interpretation in view of the 
right hand side of~\eqref{eq:conditional_pvalue}:  
it is a weighted average of standard z-test based p-values in the case of known variance $\sigma^2$ 
with weighting determined by the posterior of $\sigma^2$ conditional on $S^2$.

Our proposal is as follows: in accordance to the empirical Bayes
principle, we estimate $G$ by \smash{$\hG$} based on \smash{$S_1^2,\dotsc,S_n^2$} using the nonparametric maximum likelihood estimator~\citep[NPMLE]{kiefer1956consistency}. Then we
conduct partially Bayes inference \emph{conditional} on \smash{$S_1^2,\dotsc,S_n^2$}
with the prior \smash{$\hG$}. That is, we form the conditional p-values \smash{$P_i = P_{\hG}(Z_i, S_i^2)$} in~\eqref{eq:conditional_pvalue}
for $i=1,\dotsc,n$. Finally, we apply the method of~\citet[BH]{benjamini1995controlling} 
on these p-values to control the false discovery rate.
We call the resulting approach
to testing \emph{``empirical partially Bayes multiple hypothesis testing''}---such empirical Bayes developments had been explicitly 
anticipated by Sir David Cox~\citeyearpar{cox1975note} 
(more on this historical connection in Section~\ref{sec:conditional_pvalues}).
Algorithm~\ref{algo:np_limma} provides a high-level description of our approach.

In this paper, we demonstrate, that the empirical partially Bayes multiple testing approach has 
substantial advantages compared to alternative approaches for the same problem. 

\textbf{Comparison with standard t-tests.} A standard solution to the multiple testing task with data from~\eqref{eq:full_sampling}
is to compute p-values $P_i^{\text{ttest}}$ based on the t-statistics $T_i = Z_i/S_i$. The latter are pivotal 
under the null, that is, $T_i \sim t_{\nu}$ for $\mu=0$, where $t_{\nu}$ is the t-distribution
with $\nu$ degrees of freedom.  When the degrees of freedom $\nu$ are small, 
as is common in applications (we give concrete examples later), the t-test may have almost no power 
(due to the heavy tails of the t-distribution with few degrees of freedom). In such cases,
the empirical partially Bayes p-values $P_i$ (see~\eqref{eq:conditional_pvalue})  often lead to substantially more powerful tests.
For example, if $G$ in~\eqref{eq:random_sigma} is very concentrated around a fixed $\bar{\sigma}^2$,
the $P_i$ will be approximately equal to the p-value of the standard z-test with \emph{known} variance 
(as formalized in Theorem~\ref{theo:pvalue_quality} and Example~\ref{exam:same_variance} below). Furthermore, in Proposition~\ref{prop:ttest_failure} we show that
the conditional type-I error of the standard t-test for a small observed value of $S_i^2$ can be arbitrarily inflated. 
Such a property is undesirable in view of the conditionality principle; we \emph{ do not } want to prioritize
a hypothesis that had a small sample variance by chance.
The empirical partially Bayes approach avoids the aforementioned caveat since the p-values are computed \emph{conditional}
on $S_i^2$ (see~\eqref{eq:conditional_pvalue}). 
The upshots of the empirical partially Bayes approach compared
to standard t-tests, however, come at a cost: if we apply the Benjamini-Hochberg (BH) procedure with the t-test p-values $P_i^{\text{ttest}}$,
then the false discovery rate is controlled at level $\alpha$ for any finite $n$. In contrast, if we apply the BH procedure with the empirical partially Bayes p-values $P_i$, then we are only guaranteed control of the false discovery rate under the conditions of Theorems~\ref{theo:limma_bh_controls_the_FDR} and~\ref{manualtheorem:limma_bh_controls_the_FDR} as $n \to \infty$. 

\textbf{Comparison with a full empirical Bayes treatment.}
As mentioned in the beginning of this paper, an alternative approach for multiple testing with data from~\eqref{eq:full_sampling} 
treats the problem in a fully empirical Bayes fashion by
positing \smash{$(\mu_i,\;\sigma_i^2) \simiid \Pi$} and 
estimating \smash{$\widehat{\Pi}$}~\citep{lonnstedt2002replicated, ploner2006multidimensional, hwang2010optimal, lu2019empirical, zheng2021mixtwice}.
Asymptotically optimal testing procedures~\citep{sun2007cai} can then be derived by thresholding the (estimated)
local false discovery rate $\PP[\widehat{\Pi}]{\mu_i=0 \mid Z_i, S_i^2}$. 
By contrast, the approach we propose has three main advantages:\footnote{A fourth advantage was pointed to us by Roger Koenker, who suggested calling our approach ``impartially Bayes.'' By test inversion, our proposed p-values can be used to form intervals for the $\mu_i$s. These intervals may be shorter than standard t-test based intervals (due to the benefits of shrinkage), however, they are still centered at $Z_i$ unlike fully empirical Bayes intervals that would also shrink the $Z_i$. Such centering may be desirable in applications in which one is worried about introducing bias to the primary parameters $\mu_i$. 
} first, it is computationally 
streamlined---the most intensive step is the solution of a nonparametric maximum likelihood task for a one-dimensional
distribution; there are established software solutions for this task~\citep{koenker2017rebayes, kim2020fast}.
Given such software solutions, Algorithm~\ref{algo:np_limma} can be implemented in a few lines of code. 
On the other hand, solving a nonparametric maximum likelihood problem over two dimensions is more difficult~\citep{gu2017unobserved}---albeit still tractable---and
may require the practitioner
to take a stance e.g., on possible independence of $\sigma_i^2$ and $\mu_i$ as assumed e.g., by~\citet{zheng2021mixtwice}.
Second, in Theorem~\ref{manualtheorem:limma_bh_controls_the_FDR} we prove that Algorithm~\ref{algo:np_limma} asymptotically
controls the false discovery rate even if all the parameters $\mu_1,\dotsc,\mu_n$ and $\sigma_1^2,\dotsc,\sigma_n^2$ are in fact fixed 
and not random. We are not aware of an analogous result for empirical Bayes approaches that 
threshold $\PP[\widehat{\Pi}]{\mu_i=0 \mid Z_i, S_i^2}$.\footnote{In recent breakthrough work,~\citet{castillo2020spike} derive such guarantees 
when $\sigma_i^2$ is known to the analyst and equal to $1$ for all $i$. It is plausible---but far from obvious---that such 
results could be extended to the setting we consider wherein the variances are heteroscedastic and unknown.} Third, the empirical partially Bayes approach 
leads to an intermediate computation
of p-values. This can be an advantage for practitioners who may be more comfortable reporting results
that are accompanied by p-values~\citep{jeffery2006comparison}.

\begin{algorithm}[t]
    \caption{\textbf{Empirical partially Bayes multiple hypothesis testing.} 
    }
    \label{algo:np_limma}
  \Input{Pairs $(Z_i, S_i^2), \; i=1,\dotsc, n$.\\
         Nominal false discovery rate control level $\alpha \in (0,1)$.} 
    Let $\hG$ be an estimate of the distribution of $\sigma_i^2$ based on $(S_1^2,\dotsc,S_n^2)$ only, e.g.,~\eqref{eq:npmle_opt}.\; 
    Let $P_i = P_{\hG}(Z_i, S_i^2)$, where $P_G(z,s^2) = \EE[G]{2(1-\Phi(|z|/\sigma)) \mid S^2=s^2}$ is defined in~\eqref{eq:conditional_pvalue} and $\Phi$ is the standard normal distribution function.\; 
    Apply the Benjamini-Hochberg (BH) procedure with p-values $P_1,\dotsc,P_n$ at level $\alpha$ as in~\eqref{eq:BH} to declare discoveries.
\end{algorithm}

\subsection{Limma: Parametric empirical partially Bayes multiple testing}
\label{subsec:limma}
The empirical partially Bayes multiple testing approach is not 
new---a parametric instantiation of the framework has found broad usage and success in genomics applications.
\citet{Lonnstedt166664} and \citet{lonnstedt2002replicated} proposed a
model for detecting differentially expressed genes with replicated microarray data. In turn, \citet{smyth2004linear} generalized this model and proposed (in our terminology) a parametric empirical partially Bayes multiple testing approach implemented as the R package \texttt{limma}. This package has become 
the de-facto testing approach for the analysis of microarray data 
(\citet{smyth2004linear} has over 13,000 citations to date), and is also commonly used 
for the analysis of RNA-Seq data with
over 27,000 citations in~\citet{ritchie2015limma}.
Limma has also been successfully applied in many other areas, e.g., for the analysis of methylation data~\citep{Maksimovic2017f1000}, 
in proteomics~\citep{kammers2015detecting, terkelsen2021high}, and
genetic interaction screening~\citep{laufer2013mapping}.

The model underlying limma specifies $G$ in~\eqref{eq:random_sigma} parametrically as
\begin{equation}
\label{eq:limma_shrinkage_assumption}
\frac{1}{\sigma_i^2} \simiid \frac{1}{\nu_0 s_0^2} \chi^2_{\nu_0},\,\,\, \text{for } i=1,\dotsc,n,
\end{equation}
where $s_0^2, \nu_0 >0$ are estimated in an empirical Bayes fashion as $\hat{s}_0^2, \hat{\nu}_0$,
e.g., by the method of moments~\citep{smyth2004linear} or maximum marginal likelihood~\citep{wright2003random, menezes2004microarray}. 
The p-value for the $i$-th unit is defined as $P_i = P^{\text{limma}}_{\hat{\nu}_0, \hat{s}_0^2}(Z_i, S_i)$, where
\begin{equation}
\label{eq:moderated_t}
P^{\text{limma}}_{\nu_0, s_0^2}(z, s^2) \coloneqq 2\bar{F}_{t,\nu_0 + \nu}(|\tilde{t}|),\,\;\,\; \tilde{t} = \frac{z}{\tilde{s}}, \;\,\,\; \tilde{s}^2 = \frac{\nu_0 s_0^2 + \nu s^2}{\nu_0 + \nu},
\end{equation}
and $\bar{F}_{t,\nu_0 + \nu}$ is the survival function of the t-distribution with $\nu_0 + \nu$ degrees of freedom.
The formal argument justifying the limma p-values is the following: when $(Z_i, S_i^2)$ are sampled 
according to~\eqref{eq:full_sampling} with $\mu_i=0$ and we marginalize over $\sigma_i^2 \sim G$ in~\eqref{eq:limma_shrinkage_assumption}, 
then \smash{$\widetilde{T}_i$}, i.e., the random variable with realization 
$\tilde{t}$ in~\eqref{eq:moderated_t}, follows 
the t-distribution with $\nu_0 + \nu$ degrees of freedom 
(see Proposition~\ref{prop:limma_as_conditional} below).

In Section~\ref{sec:conditional_pvalues} we will see that limma is a special case
of the general approach set forth in Algorithm~\ref{algo:np_limma}.
The test statistic underlying limma, \smash{$\widetilde{T}_i$}, is called a ``moderated'' (that is, shrunken) t-statistic.
While the standard t-statistic $T_i = Z_i/S_i$  is 
studentized by $S_i^2$, the moderated t-statistic is studentized by $\tilde{S_i}^2$, a convex
combination of $S_i^2$ and $s_0^2$. At a heuristic level, the moderated t-statistic leads to an 
increase in power compared to the standard t-statistic because p-values are computed with 
respect to the t-distribution with $\nu + \nu_0$ degrees of freedom (rather than $\nu$, as in the standard
t-test). The increase in degrees of freedom may be attributed to the additional information we retrieve for each hypothesis
by ``borrowing'' information across units.
Furthermore, excessively small values of $S_i^2$ are pulled upward toward $s_0^2$, and so a hypothesis
with very small $S_i^2$ is less likely to be rejected (compared to the standard t-test).

In this paper we go beyond the parametric assumption for the distribution of $\sigma_i^2$ in~\eqref{eq:limma_shrinkage_assumption}
and instead propose to estimate $G$ (defined in~\eqref{eq:random_sigma}) by the nonparametric maximum likelihood estimator (NPMLE, see Section~\ref{sec:empirical-bayes}). Methodologically we thus provide an alternative
in cases wherein the parametric assumption is unlikely to hold. From a theoretical perspective, in Theorem~\ref{theo:hellinger_dist_convergence} 
we study the statistical properties of the NPMLE with scaled $\chi^2$-data and derive parametric rates (up to logarithmic factors) of Hellinger consistency for the marginal density
that are analogous to related results for the NPMLE with Gaussian data~\citep{ghosal2001entropies,zhang2009generalized, soloff2021multivariate}, 
or Poisson data~\citep{jana2022optimal}.
Furthermore, our results put forth a theoretical
foundation for understanding empirical partially Bayes multiple testing methods and their success in applications.

In Proposition~\ref{prop:pvalue} we demonstrate that the oracle partially Bayes p-values satisfy an Eddington/Tweedie-type formula~\citep{dyson1926method, efron2011tweedie}
which along with Theorem~\ref{theo:hellinger_dist_convergence} enables us to prove (Theorem~\ref{theo:pvalue_quality}) that the oracle partially Bayes p-values
can be approximated via empirical Bayes at parametric rates up to logarithmic factors, even when
$G$ in~\eqref{eq:random_sigma} is specified fully nonparametrically. In Theorem~\ref{manualtheorem:limma_bh_controls_the_FDR}, 
we prove the remarkable property
that our proposed method can asymptotically control the false discovery rate even when the $\sigma_i^2$ are in fact fixed (and not random). 
Our formal results are corroborated by the data analyses in Sections~\ref{subsec:methylation} and~\ref{sec:applications}, as well as the 
simulation study in Section~\ref{sec:simulations}.

\subsection{Applicability and relevance of the proposed framework to high-throughput biology}
\label{subsec:applicability}
The data requirements for applying the proposed empirical partially Bayes methods are very particular: the data for each unit is summarized as $(Z_i, S_i^2)$ that follows the sampling distribution in~\eqref{eq:full_sampling}, the number of units $n$ needs to be large, and the approach is most beneficial when the degrees of freedom $\nu$ are small. We briefly explain why these data requirements are approximately met in many applications in high-throughput biology. We first elaborate on the large-$n$, small-$\nu$ provenance. 

A principal characteristic of high-throughput biology is the large-scale generation of data. In fields such as genomics, proteomics, epigenomics, and metabolomics, thousands of genes, proteins, methylation probes, or metabolites can be analyzed simultaneously (thus $n$ is large). On the other hand, high-throughput measurements are often only made for a small number of biological replicates (e.g., distinct mice, distinct human cell lines); thus $\nu$ is small. There are many reasons for the small number of biological replicates. For one, it is common for a high-throughput experiment to serve as an initial, potentially cost-effective, screen to be followed by more targetted and comprehensive experimental validation~\citep{yu2013shrinkage}. The availability of biological specimen may be limited. Finally, the cost of even a single run of a high-throughput technology can be high, and so the number of replicates is limited by economic constraints~\citep{kooperberg2005significance}.\footnote{Technical advances can reduce cost over time, however, scientists are also inclined to ask more fine-grained questions that in turn require newer and more expensive genomic technology~\citep{stark2019rna}.}

We next elaborate on the normality assumption underlying~\eqref{eq:full_sampling}. The following observation, explicated by~\citet{smyth2004linear}, lies at the core of the wide applicability of the empirical partially Bayes approach:  $(Z_i, S_i^2)$ distributed as in~\eqref{eq:full_sampling} arise from data summarization in Gaussian linear models with repeated design where the goal is to test for a certain contrast. (We omit the proof of the following proposition.)

\begin{prop}[Testing for contrasts in linear regression with repeated design]
\label{rema:contrasts}
    Suppose our data for the $i$-th unit is modelled by a linear regression
    with $K$ samples and $p$ covariates ($K > p$), i.e.,
    \begin{equation}
    \label{eq:linear_model}
    \mathbf{Y}_i \cond \mathbf{X} \sim \nn\p{\mathbf{X} \boldsymbol{\beta}_i,  \, \tilde{\sigma}_i^2  \mathbf{I}_K},
    \end{equation}
    where $\mathbf{Y}_i  \in \RR^K$ and $\mathbf{X} \in \RR^{K \times p}$ is full rank. Here $\mathbf{I}_K$ is the $K \times K$ identity
    matrix and $\boldsymbol{\beta}_i \in \RR^p$, $\tilde{\sigma}_i^2 > 0$ are the unknown parameters.
    Furthermore, suppose the parameter of interest is $\mu_i = \mathbf{c}^\top \boldsymbol{\beta}_i$,
    where $\mathbf{c} \in \RR^p$ is a known contrast. Let $\widehat{\boldsymbol{\beta}}_i \coloneqq \p{\mathbf{X}^\top \mathbf{X}}^{-1} \mathbf{X}^\top \mathbf{Y}_i$
    be the ordinary least squares coefficient estimate, and let
    $$Z_i \coloneqq \mathbf{c}^\top \widehat{\mathbf{\beta}}_i, \;\;\;\;\, S_i^2 \coloneqq  \mathbf{c}^{\top} \p{\mathbf{X}^\top \mathbf{X}}^{-1} \mathbf{c} \cdot \Norm{\mathbf{X}\widehat{\boldsymbol{\beta}}_i - \mathbf{Y}_i}_2^2\bigg/(K-p).$$
    Then $(Z_i, S_i^2)$ satisfy~\eqref{eq:full_sampling} with $\mu_i =  \mathbf{c}^\top \boldsymbol{\beta}_i$, $\sigma_i^2 = \tilde{\sigma}_i^2 \mathbf{c}^{\top} \p{\mathbf{X}^\top \mathbf{X}}^{-1} \mathbf{c}$,
    and $\nu = K-p$. 
\end{prop}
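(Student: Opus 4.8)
The plan is to reduce the claim to the classical sampling theory of the Gaussian linear model, organized around the orthogonal projection onto the column space of $\mathbf{X}$. Write $\mathbf{Y}_i = \mathbf{X}\boldsymbol{\beta}_i + \boldsymbol{\varepsilon}_i$ with $\boldsymbol{\varepsilon}_i \sim \nn\p{\mathbf{0},\, \tilde{\sigma}_i^2 \mathbf{I}_K}$, and let $\mathbf{H} \coloneqq \mathbf{X}\p{\mathbf{X}^\top \mathbf{X}}^{-1}\mathbf{X}^\top$ denote the hat matrix, i.e., the orthogonal projection onto the column space of $\mathbf{X}$. The fitted coefficients $\widehat{\boldsymbol{\beta}}_i$ and the residual vector $\p{\mathbf{I}_K - \mathbf{H}}\mathbf{Y}_i$ are the two components of the induced decomposition, and the statistics $Z_i$ and $S_i^2$ are built from these two components respectively.

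First I would pin down the law of $Z_i$. Since $\widehat{\boldsymbol{\beta}}_i = \p{\mathbf{X}^\top \mathbf{X}}^{-1}\mathbf{X}^\top \mathbf{Y}_i$ is a linear image of a Gaussian vector, it is Gaussian with mean $\boldsymbol{\beta}_i$ and covariance $\tilde{\sigma}_i^2 \p{\mathbf{X}^\top \mathbf{X}}^{-1}$; contracting against $\mathbf{c}$ gives $Z_i = \mathbf{c}^\top \widehat{\boldsymbol{\beta}}_i \sim \nn\p{\mathbf{c}^\top \boldsymbol{\beta}_i,\, \tilde{\sigma}_i^2\, \mathbf{c}^\top \p{\mathbf{X}^\top \mathbf{X}}^{-1}\mathbf{c}}$. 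This identifies both $\mu_i = \mathbf{c}^\top \boldsymbol{\beta}_i$ and $\sigma_i^2 = \tilde{\sigma}_i^2\, \mathbf{c}^\top \p{\mathbf{X}^\top \mathbf{X}}^{-1}\mathbf{c}$, matching the claimed values.

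Next I would treat the residual sum of squares. Because $\p{\mathbf{I}_K - \mathbf{H}}\mathbf{X} = \mathbf{0}$, the mean term drops out and $\mathbf{X}\widehat{\boldsymbol{\beta}}_i - \mathbf{Y}_i = -\p{\mathbf{I}_K - \mathbf{H}}\boldsymbol{\varepsilon}_i$. Since $\mathbf{I}_K - \mathbf{H}$ is idempotent of rank $K - p$, Cochran's theorem (equivalently, diagonalizing $\mathbf{I}_K - \mathbf{H}$ into $K-p$ unit eigenvalues and zeros) yields $\Norm{\mathbf{X}\widehat{\boldsymbol{\beta}}_i - \mathbf{Y}_i}_2^2 / \tilde{\sigma}_i^2 \sim \chi^2_{K-p}$. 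Multiplying by $\mathbf{c}^\top \p{\mathbf{X}^\top \mathbf{X}}^{-1}\mathbf{c}/(K-p)$ and absorbing the scale into $\sigma_i^2$ gives $S_i^2 \sim \p{\sigma_i^2/\nu}\chi^2_\nu$ with $\nu = K - p$.

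The only step requiring genuine care is independence of $Z_i$ and $S_i^2$. Both are deterministic functions of the jointly Gaussian pair $\p{\widehat{\boldsymbol{\beta}}_i,\, \p{\mathbf{I}_K - \mathbf{H}}\mathbf{Y}_i}$, so it suffices to show their cross-covariance vanishes. A short computation gives $\mathrm{Cov}\p{\widehat{\boldsymbol{\beta}}_i,\, \p{\mathbf{I}_K - \mathbf{H}}\mathbf{Y}_i} = \tilde{\sigma}_i^2 \p{\mathbf{X}^\top \mathbf{X}}^{-1}\mathbf{X}^\top \p{\mathbf{I}_K - \mathbf{H}}$, which is zero because $\mathbf{X}^\top \p{\mathbf{I}_K - \mathbf{H}} = \mathbf{0}$. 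Zero covariance together with joint normality forces $\widehat{\boldsymbol{\beta}}_i$ and the residual vector to be independent, hence so are the functions $Z_i$ and $S_i^2$ of them. This establishes~\eqref{eq:full_sampling} and completes the verification; everything outside this orthogonality-implies-independence step is routine bookkeeping with the projection $\mathbf{H}$.
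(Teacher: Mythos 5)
Your proof is correct and complete: it establishes the Gaussian law of $Z_i$, the scaled $\chi^2_{K-p}$ law of $S_i^2$, and their independence via the vanishing cross-covariance $\mathbf{X}^\top(\mathbf{I}_K - \mathbf{H}) = \mathbf{0}$ together with joint normality, which is exactly what~\eqref{eq:full_sampling} requires. The paper deliberately omits a proof of this proposition, treating it as the classical sampling theory of the Gaussian linear model (as explicated by Smyth, 2004); your argument is precisely that standard projection-based derivation, so there is nothing to reconcile.
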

By Proposition~\ref{rema:contrasts}, the proposed methods can be applied as soon as the normal linear model in~\eqref{eq:linear_model} is appropriate. This is a strong modeling assumption, which however, is widely considered to be a reasonable approximation for several genomic technologies with quantitative output.\footnote{In all three applications that we consider in this paper (Sections~\ref{subsec:methylation}, \ref{subsec:ibrutinib}, and~\ref{subsec:proteomics}), the authors of the original scientific publications posited the sampling model in~\eqref{eq:full_sampling}, in addition to the parametric assumption in equation~\eqref{eq:limma_shrinkage_assumption} on the distribution of $\sigma_i^2$ (which we relax). } 
A necessary prerequisite is the availability of a
strong scientific and statistical understanding of the given genomic technology.
We quote \citet{Smyth2022}: ``The challenge with each new technology is to preprocess and normalize the data to make the linear model assumptions as good as possible. [...] Figuring out the best preprocessing for each new technology is one of the key concerns of statistical bioinformatics research and is the topic of many published journal articles.''

\subsection{A demonstration with high-throughput methylation data}
\label{subsec:methylation}

\begin{figure}
    \centering
    \includegraphics[width=1.1\linewidth]{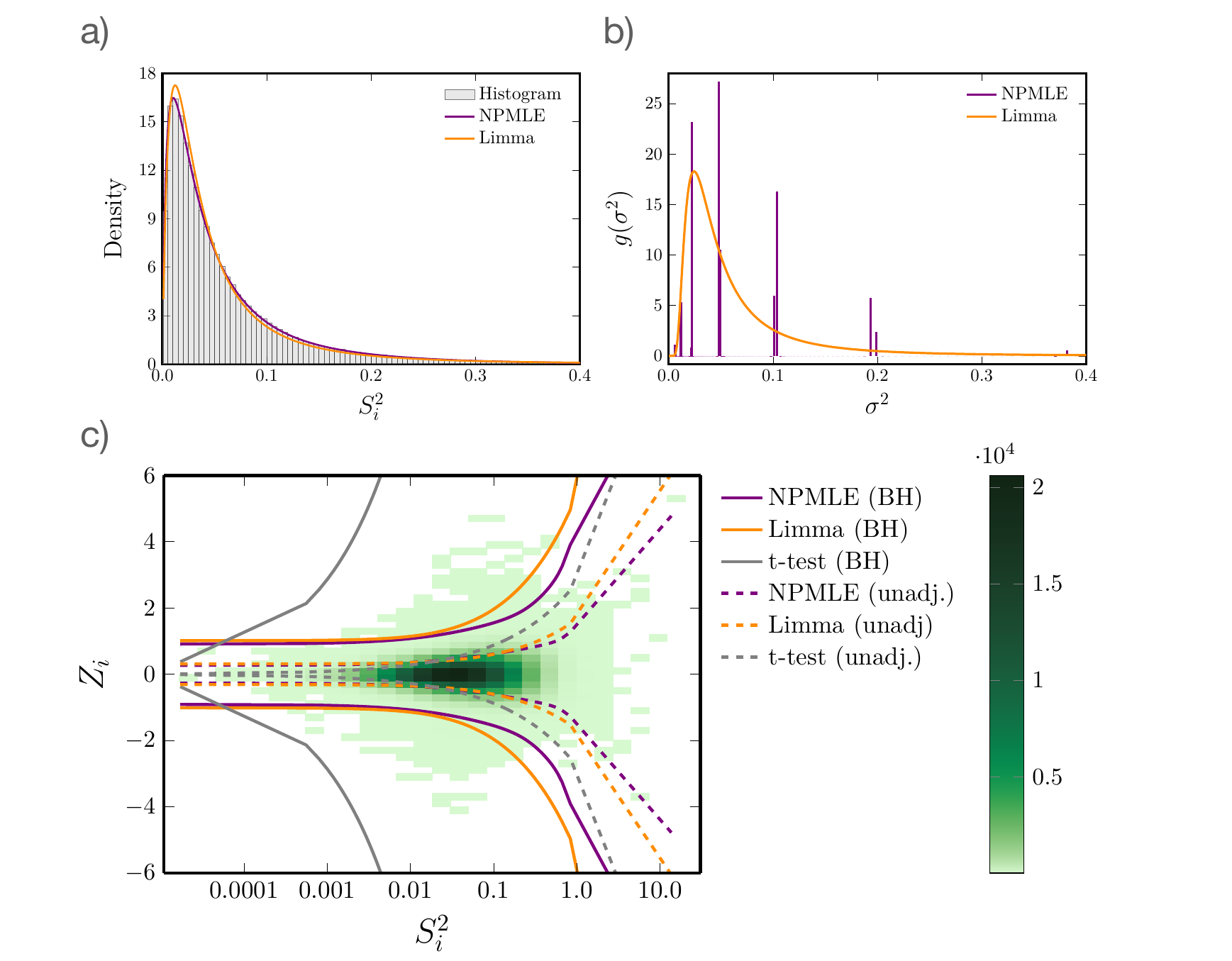}
    \caption{\textbf{Empirical partially  Bayes analysis of methylation data of \citet{zhang2013genomewide}}: {\small 
    Each unit $i$ corresponds to a distinct methylation probe ($n=439,918$) and its data is summarized as in~\eqref{eq:full_sampling}
    with $\nu = 4$ degrees of freedom.
    \textbf{a)} Histogram of the sample variances $S_i^2$, as well as the
    implied marginal densities of $S_i^2$ (marginalizing over~\eqref{eq:random_sigma}) for two choices
    of \smash{$G=\hG$}: the nonparametric maximum likelihood estimate 
    (NPMLE), and limma's
    parametric prior. \textbf{b)} The two estimated priors \smash{$\hG$}. For limma we plot
    the Lebesgue density of the prior. The NPMLE is a discrete measure;
    we have rescaled the mass of 
    the support points by a multiplicative constant for visualization purposes. 
    \textbf{c)} Two dimensional histogram of $(S_i^2, Z_i)$ pairs with the $x$-axis logarithmically scaled. 
    The number of hypotheses in each histogram bin is indicated by 
    color (color legend on the right) wherein the denser regions have darker green color. 
    The range of histogram counts is wide and any non-empty bin 
    (even with a single hypothesis) is colored in green. 
    The plot also shows the rejection thresholds of $3 \times 2$ different methods; in each case any hypothesis with 
    $Z_i$ above the upper line or below the lower line will be rejected. The different methods correspond to three constructions
    of p-values (empirical partially Bayes p-values with NPMLE, with limma, and standard t-test p-values) combined with two decision rules: 
    reject any hypothesis with p-value $\leq 0.05$ (dashed lines, unadjusted for multiple testing) and 
    reject any hypothesis with Benjamini-Hochberg (BH) adjusted p-value $\leq 0.05$ (solid lines).}}
    \label{fig:methylation}
\end{figure}

Before delving into a more formal treatment, we provide a high-level overview
of the methodological developments in this paper through 
a concrete data analysis. \citet{zhang2013genomewide} compared two types of immune cells,
naïve T-cells and antigen-activated naïve T-cells, in terms of DNA methylation, a heritable epigenetic mark.  For each of $n=439,918$ methylation probes, the outcome variable is the $\log_2$ ratio between the methylated intensity to the unmethylated intensity measured on the Illumina HumanMethylation450 microarray. The dataset, encompassing $10$ samples from $3$ human subjects and $4$ immune cell types, is analyzed using a linear model (as described in Proposition~\ref{rema:contrasts}) that adjusts for subject and cell type variations, yielding $\nu = 10 - 1 - (3-1) - (4-1) = 4$ degrees of freedom. The contrast between naïve T-cells and antigen-activated naïve T-cells is of primary interest.
We refer to \citet{zhang2013genomewide} and~\citet{Maksimovic2017f1000}
for further details on the dataset; what is important for our analysis is the following:
after data preprocessing and summarization,
the statistical task amounts to a large scale multiple testing problem based on data that may be assumed to be drawn from~\eqref{eq:full_sampling} with $\nu=4$ degrees 
of freedom.\footnote{Indeed, the analysis in \citet{zhang2013genomewide} and~\citet{Maksimovic2017f1000} uses limma,
and posits the sampling model in~\eqref{eq:full_sampling}, as well as the parametric assumption in~\eqref{eq:limma_shrinkage_assumption}
on the distribution of $\sigma_i^2$.}

Fig.~\ref{fig:methylation}a) shows the histogram of the sample variances $S_i^2$, as well as the
implied marginal densities of $S_i^2$ (marginalizing over~\eqref{eq:random_sigma}) for two choices
of \smash{$G=\hG$}: the nonparametric maximum likelihood estimate 
(NPMLE; more of which in Section~\ref{subsec:npmle}), and limma's
parametric prior ($\hat{\nu}_0 \approx 3.96$ and $\hat{s}_0^2 \approx 0.037$ in~\eqref{eq:limma_shrinkage_assumption}).
The NPMLE leads to a slightly better fit at the mode of the histogram. 
Fig.~\ref{fig:methylation}b) shows the two estimated priors \smash{$\hG$}; the NPMLE is discrete. 
While the two priors are qualitatively quite different,  the implied marginal densities
for $S_i^2$ in Fig.~\ref{fig:methylation}a) are quite similar.

Finally, Fig.~\ref{fig:methylation}c) demonstrates the heart of the matter:
it shows a 2d histogram of all the $(Z_i, S_i^2)$ pairs as well as the rejection thresholds 
of different methods.
There are three dashed lines; one corresponding to standard t-test p-values, and the other
two to the NPMLE and parametric (limma) partially Bayes p-values. For each value of $S_i^2$, 
the dashed rejection threshold demarcates the value of $z$ which would be such that the corresponding
p-value would be equal to $0.05$; in each case, hypotheses with 
$Z_i > 0$  (resp. $Z_i <0$) above (resp. below) the corresponding rejection threshold are rejected. The solid lines demarcate the rejection thresholds 
of the different methods after adjusting for multiple testing with the Benjamini-Hochberg procedure
with a nominal false discovery rate control of $\alpha = 0.05$. 

With the Benjamini-Hochberg correction, the NPMLE-based method leads to $969$ discoveries, compared to 627 with the
parametric limma prior, and to only $1$ discovery with the standard t-test. 
In this case the empirical partially Bayes methods lead to a substantial increase
in discoveries.
Fig.~\ref{fig:methylation}c) provides some insight into the mechanism. The (solid) rejection threshold
corresponding to the t-test is a lot more liberal for small values of $S_i^2$ (that is: closer to $0$) compared
to the empirical partially Bayes methods, but quickly becomes a lot 
more conservative as $S_i^2$ increases.\footnote{The reader may at this point wonder why the difference is less striking
for the dashed thresholds that correspond to p-values $\leq 0.05$ compared to the solid thresholds 
(Benjamini-Hochberg rejection thresholds). 
One reason is that the Benjamini-Hochberg procedure adapts to the signal in the p-values, and so,
the implied rejection threshold in terms of p-values varies for the different methods.
To be concrete, the data-driven p-value thresholds (for the different methods)
determined by the Benjamini-Hochberg procedure are equal to: \smash{$P_i^{\text{NPMLE}} \lessapprox 10^{-4}$}, \smash{$P_i^{\text{limma}} \lessapprox 0.7\cdot 10^{-4}$}, and \smash{$P_i^{\text{ttest}} \lessapprox 10^{-7}$}.}
The single discovery made with the t-test has sample variance $S_i^2=4\cdot10^{-5}$, which is
the second smallest sample variance in the whole dataset, and this hypothesis is \emph{not} rejected
by the empirical partially Bayes methods. The empirical partially Bayes approach shrinks 
this outlying sample variance upwards, so that this hypothesis is not rejected. On the other hand,
for the bulk of the units, the empirical Bayes approach leads to more accurate estimate of the sample variance,
which in turn leads to a more liberal rejection threshold and thus to more power overall. In this problem the NPMLE makes more discoveries than limma. In general this will not be true,
and the main benefit of the NPMLE is that it has theoretical type-I error guarantees also when
the parametric model for $G$ in~\eqref{eq:limma_shrinkage_assumption} does not hold.

We now discuss the conditional properties of procedures that we alluded to earlier. 
Fig.~\ref{fig:methylation_histogram} shows the histogram of the t-test p-values, as well as the NPMLE-based empirical partially Bayes p-values.
Three histograms are shown for each method: a histogram of all units, a histogram of units with sample variance $S_i^2$
in the bottom 10\% percentile, as well as a histogram of units with sample variance $S_i^2$ in the top 10\% percentile.
We observe that for large sample variances, the t-test leads to a histogram that is superuniform with almost no p-values $\leq 0.05$.
The t-test p-values are enriched for smaller p-values when only retaining units with small sample variance. In contrast,
the empirical partially Bayes p-values appear to have better conditional properties. We explain these empirical findings theoretically
in Proposition~\ref{prop:ttest_failure} which shows that the type-I error of the t-test may be arbitrarily inflated when the sample variance
is small, while Proposition~\ref{prop:asymptotically_uniform} demonstrates that the NPMLE p-values are asymptotically uniform 
conditional on $S_i^2$.

\begin{figure}
    \centering 
    \includegraphics[width=0.99\linewidth]{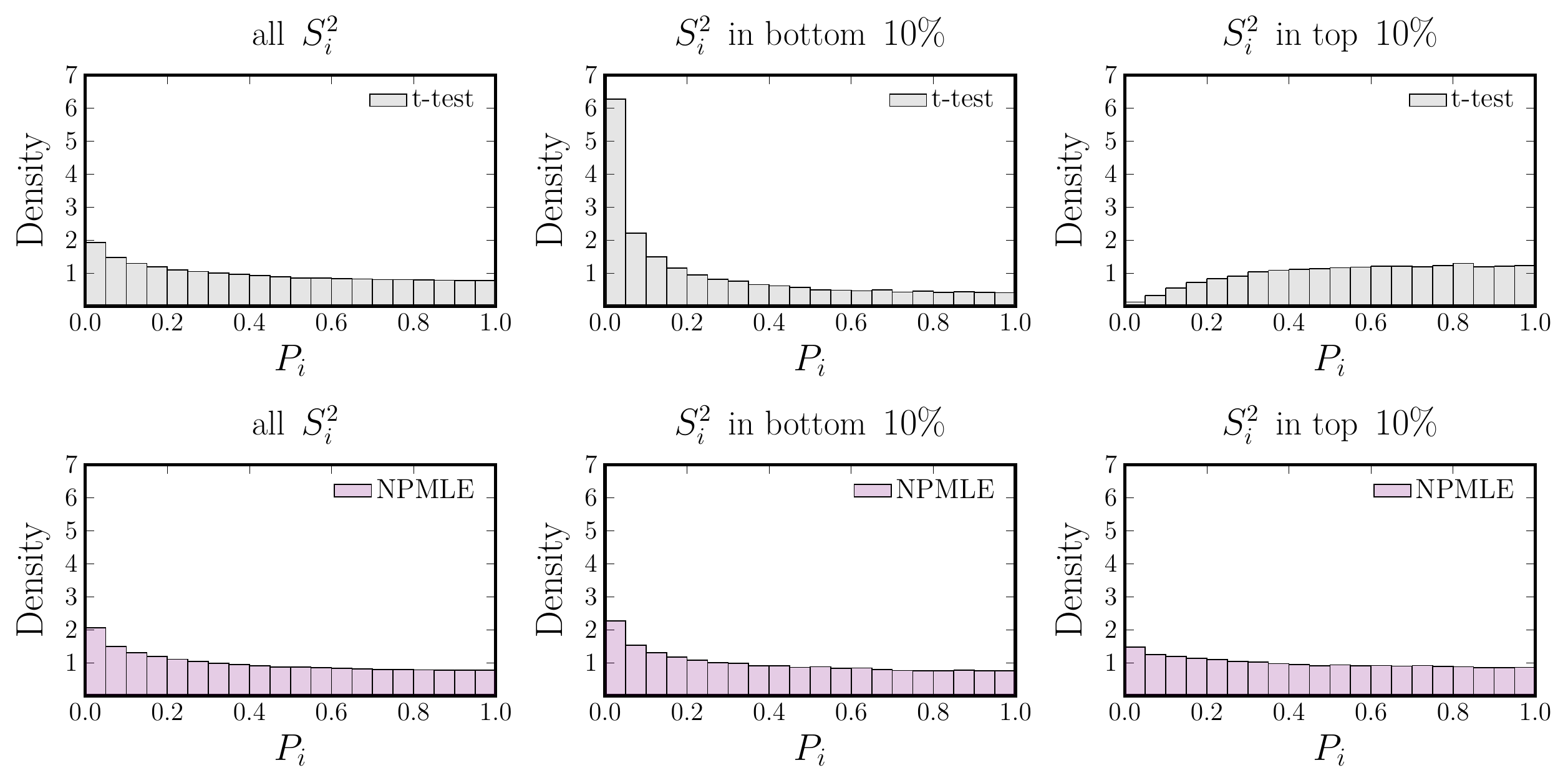}
    \caption{\textbf{P-value histograms in the analysis of methylation data of \citet{zhang2013genomewide}}: {\small The 
    first row shows t-test p-values, while the second rows shows the empirical partially Bayes p-values based on the NPMLE. The columns correspond to three different subsets of units: the first column shows all units, the second column only
    retains units with sample variance $S_i^2$ in the bottom 10\% percentile, and the third column retains units with sample variance $S_i^2$ in the top 10\% percentile.} } 
    \label{fig:methylation_histogram}
\end{figure}

\subsection{Related work}
\label{subsec:related_work}
Our work builds on a rich literature on multiple testing that was motivated by the microarray 
technology~\citep{tusher2001significance, efron2001empiricala, baldi2001bayesian, newton2001differential, cui2003statistical, wright2003random, menezes2004microarray, newton2004detecting, delmar2005mixture, cui2005improved, ploner2006multidimensional, witten2007comparison}.
As mentioned earlier, the specific model we elaborate upon was originally developed by~\citet{Lonnstedt166664, lonnstedt2002replicated} and 
turned into a widely used ``empirical partially Bayes'' testing methodology by~\citet{smyth2004linear}.
\citet{ignatiadis2022evalues} consider testing in the limma model of~\citep{smyth2004linear} with e-values instead of p-values. 

Closest to our paper, \citet{lu2016variance} consider a nonparametric generalization of the limma method 
based on a class of unimodal priors. We consider an even broader nonparametric assumption and lay 
out the theoretical foundations and framework that justify our proposal. 

When testing for $\mu_i=0$, uncertainty in $\sigma_i$ is often ignored by assuming that $\sigma_i$ is
known exactly~\citep{sun2012multiple, stephens2017false, gu2018oracle, fu2022heteroscedasticityadjusted}. This can be a good approximation in some cases,
 e.g., when $\sigma_i^2$ is estimated by $S_i^2$ and the degrees of freedom $\nu$ are large. 
In this paper we are mostly interested in the situation wherein the degrees of freedom $\nu$ are small;
such settings benefit the most from the empirical partially Bayes methodology that we outline
in this paper.

Going beyond the goal of multiple testing, several authors have studied 
the empirical Bayes problem with sample variances $S_i^2$ 
or joint draws of $(Z_i,\, S_i^2)$ as in~\eqref{eq:full_sampling}. 
\citet{yu2018adaptive} develop confidence intervals for $\mu_i$ with finite-sample frequentist coverage that
borrow strength 
from $(Z_j, S_j^2)_{j \neq i}$. A further strand of the literature considers shrinkage estimation of e.g., $\mu_i$, 
$\sigma_i^2$, or $1/\sigma_i^2$~\citep*{robbins1982estimating, muralidharan2012high, gu2017empirical, gu2017unobserved, banerjee2021nonparametric, kwon2022fmodellingbased}.
When $\nu=2$, the mixture model we consider for $S_i^2$ is a mixture of exponential distributions;
 see e.g.,~\citet{jewell1982mixtures} and \citet{polyanskiy2020selfregularizing} for some results for exponential mixtures.

Finally, the empirical partially Bayes approach was introduced in the setting
wherein there is a single primary parameter of interest and many nuisance parameters~\citep{lindsay1985using, zhao2019powerful};
a prior is posited (and estimated by empirical Bayes) only for the nuisance parameters but not the primary parameter.
Here we use the same terminology and study the empirical partially Bayes approach when we are facing a simultaneous inference
task with many parameters of primary interest, as well as many nuisance parameters.

\section{Oracle partially Bayes hypothesis testing}
\label{sec:conditional_pvalues}

\epigraph{``It would be possible to develop this investigation in much more detail, but in the absence of a specific application this will not be done''}{--- \textup{Sir David Roxbee Cox}, \citeyear{cox1975note}}

\noindent We first consider the setting in which the random variance assumption~\eqref{eq:random_sigma} holds
for a known prior $G$. To be concrete, we assume that we observe $(Z_i, S_i^2)$ drawn from the following model,
\begin{equation}
\label{eq:full_hierarchical}
\sigma_i^2 \simiid G,\;\;\;\; (Z_i, S_i^2) \mid \sigma_i^2 \, \simindep \, \mathcal{N}(\mu_i, \sigma_i^2) \otimes \frac{\sigma_i^2}{\nu} \chi^2_{\nu} \,\,\, \text{ for } i=1,\dotsc,n,
\end{equation}
where $G$ is known and $\mu_i$ is unknown (and with $\nu \geq 2$). Later,
we will drop the assumptions that $G$ is known (Section~\ref{sec:empirical-bayes}) and that
$\sigma_i^2$ are random (Section~\ref{sec:compound}). We study the model in~\eqref{eq:full_hierarchical} to answer
the question: \emph{how} would an oracle analyst with knowledge of the data generating mechanism in~\eqref{eq:full_hierarchical}
test the hypothesis $H_i: \mu_i=0$? Our nonparametric empirical Bayes approach will
then seek to mimic that oracle analyst. In the rest of this section
 we drop the subscript $i$ since we study only a single draw from~\eqref{eq:full_hierarchical}.

We start with the following characterization of $(Z, S^2)$ in~\eqref{eq:full_hierarchical}, 
see Supplement C.1 for a proof of the result.
\begin{prop}
\label{prop:minimal_sufficient}
Suppose that $G$ in~\eqref{eq:full_hierarchical} is not degenerate, that is, the support
of $G$ consists of 2 points or more and $G((0,\infty))=1$. Then $(Z, S^2)$ is the minimal sufficient statistic\footnote{However, $(Z,S^2)$ is \emph{not} complete for $\mu$.}
for $\mu$ in~\eqref{eq:full_hierarchical}. Furthermore, $S^2$ is ancillary for $\mu$, that is,
its distribution does not depend on $\mu$.
\end{prop}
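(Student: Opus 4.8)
The ancillarity claim is immediate, so I would dispose of it first: in the hierarchy~\eqref{eq:full_hierarchical} the conditional law of $S^2$ given $\sigma^2$ is $\tfrac{\sigma^2}{\nu}\chi^2_\nu$, which does not involve $\mu$, and the mixing distribution $G$ of $\sigma^2$ does not involve $\mu$ either; marginalizing over $\sigma^2\sim G$ therefore yields a law for $S^2$ that is free of $\mu$, so $S^2$ is ancillary. Since $(Z,S^2)$ is the entire observation, its sufficiency is automatic and the real substance of the proposition is \emph{minimality}.

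For minimality, I would first compute the marginal density of $(Z,S^2)$ under~\eqref{eq:full_hierarchical}. Multiplying the $\mathcal N(\mu,\sigma^2)$ density of $Z$ by the scaled-$\chi^2_\nu$ density of $S^2$ and integrating against $G$, then substituting $w=1/\sigma^2$ (with $H$ the law of $1/\sigma^2$), one obtains
$$ p_\mu(z,s^2) = C_0\,(s^2)^{\nu/2-1}\,\psi\!\left((z-\mu)^2+\nu s^2\right), \qquad \psi(c)\coloneqq \int_0^\infty w^{(\nu+1)/2}e^{-wc/2}\,dH(w), $$
with $C_0$ a $\mu$-free constant. The crucial structural point is that $\mu$ enters the likelihood only through the one-dimensional quantity $c(\mu)\coloneqq(z-\mu)^2+\nu s^2$ inside the Laplace transform $\psi$; everything else is a $\mu$-free prefactor. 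Writing $L\coloneqq\log\psi$, the likelihood-ratio criterion of Lehmann--Scheff\'e reduces minimality to the implication: if $L(c_1(\mu))-L(c_2(\mu))$ is constant in $\mu$ for two points $(z_1,s_1^2),(z_2,s_2^2)$, where $c_j(\mu)=(z_j-\mu)^2+\nu s_j^2$, then $(z_1,s_1^2)=(z_2,s_2^2)$.

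To run this implication I would establish two analytic properties of $L$ on $(0,\infty)$. Because $w>0$ and $s^2>0$ (so $c>0$), the integrals defining $\psi,\psi',\psi''$ all converge and differentiation under the integral is justified; introducing the tilted probability measure $dQ_c(w)\propto e^{-wc/2}\,d\tilde H(w)$ with $d\tilde H=w^{(\nu+1)/2}\,dH$ gives $L'(c)=-\tfrac12\,\mathbb E_{Q_c}[w]<0$ and $L''(c)=\tfrac14\,\mathrm{Var}_{Q_c}(w)$. The nondegeneracy hypothesis on $G$ (support of at least two points, $G((0,\infty))=1$) passes to $\tilde H$ and hence to $Q_c$, so $\mathrm{Var}_{Q_c}(w)>0$ and $L$ is strictly convex; in particular $L'$ is strictly negative and strictly increasing (hence injective). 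The rest is short: differentiating $L(c_1(\mu))-L(c_2(\mu))=\mathrm{const}$ yields $L'(c_1(\mu))(\mu-z_1)=L'(c_2(\mu))(\mu-z_2)$ for all $\mu$; evaluating at $\mu=z_1$ makes the left side vanish, and since $L'$ never vanishes this forces $z_1=z_2$. Setting $z_1=z_2=z$ and writing $v=(\mu-z)^2\in[0,\infty)$ reduces the constancy to $L(v+\nu s_1^2)-L(v+\nu s_2^2)=\mathrm{const}$; differentiating in $v$ and using injectivity of $L'$ gives $s_1^2=s_2^2$, completing the argument.

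I expect the only genuine obstacle to be the analytic bookkeeping around $\psi$: verifying finiteness of $\psi$ and its derivatives on $(0,\infty)$ \emph{without} assuming moment conditions on $1/\sigma^2$ (this works because $w^{k}e^{-wc/2}$ is bounded for each fixed $c>0$), justifying differentiation under the integral, and checking that the nondegeneracy of $G$ survives the reweighting by $w^{(\nu+1)/2}$ and the exponential tilt so that $\mathrm{Var}_{Q_c}(w)>0$. Once strict convexity and nonvanishing of $L'$ are secured, the likelihood-ratio computation itself---evaluate at $\mu=z_1$, then substitute $v=(\mu-z)^2$---is the clean part.
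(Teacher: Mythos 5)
Your proof is correct, and for the substantive part (minimality) it takes a genuinely different route from the paper. Both arguments start from the same structural reduction: the joint density factors as a $\mu$-free prefactor times $\psi\bigl((z-\mu)^2+\nu s^2\bigr)$, where $\psi$ is the Laplace transform of the reweighted precision measure $d\tilde H(w)=w^{(\nu+1)/2}\,dH(w)$, so constancy of the likelihood ratio is constancy of $L(c_1(\mu))-L(c_2(\mu))$ with $L=\log\psi$. From there the paper argues algebraically: to get $z_1=z_2$ it makes a clever choice of $\mu$ that equates the two exponents (pinning down the constant) and then derives a contradiction at $\mu=z_2$ from strict monotonicity; to get $s_1^2=s_2^2$ it views both sides as Laplace transforms of finite measures in the variable $u=\mu^2/2$ and invokes the uniqueness theorem for Laplace transforms, after which nondegeneracy of $H$ forces equality. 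You instead do calculus on $L$: the exponential-tilting identities $L'(c)=-\tfrac12\,\mathbb{E}_{Q_c}[w]<0$ and $L''(c)=\tfrac14\,\mathrm{Var}_{Q_c}(w)>0$ (the latter using nondegeneracy) make $L'$ nonvanishing and injective, and then differentiating the constancy relation and evaluating at $\mu=z_1$ gives $z_1=z_2$, while the substitution $v=(\mu-z)^2$ and injectivity of $L'$ give $s_1^2=s_2^2$. Your route is more self-contained---it replaces the Laplace-uniqueness theorem with a variance inequality---at the cost of justifying differentiation under the integral (which you correctly note needs no moment conditions on $1/\sigma^2$, since $w^k e^{-wc/2}$ is bounded for fixed $c>0$). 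It also isolates precisely where nondegeneracy enters: only through $\mathrm{Var}_{Q_c}(w)>0$, i.e., only in the $s^2$ step, which is consistent with the paper's footnote that for degenerate $G$ the statistic $Z$ alone is minimal sufficient (there $L$ is affine, $L'$ is constant, and indeed your second step would fail while the first still goes through).
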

The implication of the proposition is the following: when conducting inference for $\mu$, we need to use
both $Z$ and $S^2$ even though $S^2$ does not contain any information about $\mu$. The conditionality principle
is in full force~\citep[Chapter 2.3]{cox1974theoretical} and any inference for $\mu$ should be based on the conditional
distribution of $Z$ given 
$S^2$.\footnote{Suppose $G$ is degenerate, i.e., $G(\cb{\bar{\sigma}})=1$ for some $\bar{\sigma} >0$.
Then the conclusion of Proposition~\ref{prop:minimal_sufficient} does not hold
and $Z$ (rather than $(Z,S^2)$) is minimal sufficient for $\mu$. In that case $Z$ 
is independent of $S^2$, and so inference based on the conditional
distribution of $Z$ given $S^2$ is identical to inference based on $Z$.
}
Hence, an oracle analyst that knows~\eqref{eq:full_hierarchical} may use the conditional p-value $P = P_G(Z, S^2)$ 
defined in~\eqref{eq:conditional_pvalue} to test $H: \mu=0$.

Now, what if $G$ has the parametric form in~\eqref{eq:limma_shrinkage_assumption}? It turns out that then the limma p-value \smash{$P^{\text{limma}}_{\nu_0, s_0^2}(z, s^2)$} is identical
to the conditional p-value in~\eqref{eq:conditional_pvalue}. This is a consequence of the following result proved in Supplement C.2.
\begin{prop}
\label{prop:limma_as_conditional}
Let $(Z,S^2)$ be drawn from~\eqref{eq:full_hierarchical} with $G$ equal to
the distribution of $\sigma^2$ in~\eqref{eq:limma_shrinkage_assumption}, 
i.e., the inverse scaled chi-square distribution with parameters $\nu_0, s_0^2$. Then:
\begin{equation}
    \label{eq:limma_conditional}
Z \mid (S^2=s^2) \;\; \stackrel{\mathcal{D}}{=}\;\; \p{\mu \,+\, \tilde{s} T_{\nu_0+\nu}},
\end{equation}
where $T_{\nu_0+\nu}$ is a random variable following the t-distribution with $\nu_0+\nu$ 
degrees of freedom and $\tilde{s}$ is as in~\eqref{eq:moderated_t}.
It follows that for such $G$, $P_G(z, s^2) = P^{\text{limma}}_{\nu_0, s_0^2}(z, s^2)$.
\end{prop}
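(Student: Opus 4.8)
The plan is to exploit the conjugacy of the inverse scaled chi-square prior~\eqref{eq:limma_shrinkage_assumption} with the scaled chi-square likelihood of $S^2$, and then to recognize the resulting conditional law of $Z$ as a scale mixture of normals, which is exactly a $t$-law. The first step is to record the conditional independence structure of~\eqref{eq:full_hierarchical}: given $\sigma^2$, the product-measure notation $\otimes$ means $Z \perp S^2 \mid \sigma^2$. Consequently, the conditional law $Z \mid S^2=s^2$ is obtained by taking $Z \mid \sigma^2 \sim \mathcal{N}(\mu,\sigma^2)$ and averaging over the posterior of $\sigma^2 \mid S^2=s^2$. This reduces the claim to two sub-tasks: (i) identifying that posterior, and (ii) identifying a normal averaged over an inverse scaled chi-square scale.

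For (i), I would write the prior density for $\sigma^2$ implied by~\eqref{eq:limma_shrinkage_assumption}, namely (up to constants) $(\sigma^2)^{-\nu_0/2-1}\exp(-\nu_0 s_0^2/(2\sigma^2))$, together with the likelihood from $\nu S^2/\sigma^2 \sim \chi^2_\nu$, which as a function of $\sigma^2$ is proportional to $(\sigma^2)^{-\nu/2}\exp(-\nu s^2/(2\sigma^2))$. Their product is proportional to $(\sigma^2)^{-(\nu_0+\nu)/2-1}\exp(-(\nu_0 s_0^2+\nu s^2)/(2\sigma^2))$, which I read off as again an inverse scaled chi-square law, now with $\nu_0+\nu$ degrees of freedom and scale $\tilde{s}^2=(\nu_0 s_0^2+\nu s^2)/(\nu_0+\nu)$; equivalently $(\nu_0+\nu)\tilde{s}^2/\sigma^2 \mid S^2=s^2 \sim \chi^2_{\nu_0+\nu}$. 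This is precisely where $\tilde{s}$ from~\eqref{eq:moderated_t} appears organically, and where the degrees of freedom add.

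For (ii), I would establish the textbook scale-mixture identity: if $Z \mid \sigma^2 \sim \mathcal{N}(\mu,\sigma^2)$ and $\nu_* s_*^2/\sigma^2 \sim \chi^2_{\nu_*}$, then $(Z-\mu)/s_* \sim t_{\nu_*}$. Writing $Z-\mu=\sigma W$ with $W\sim\mathcal{N}(0,1)$ independent of $\sigma^2$ and substituting $\sigma=s_*\sqrt{\nu_*}/\sqrt{Q}$ with $Q\sim\chi^2_{\nu_*}$ gives $(Z-\mu)/s_* = W/\sqrt{Q/\nu_*}$, which is the standard representation of $t_{\nu_*}$. Applying this with $\nu_*=\nu_0+\nu$ and $s_*=\tilde{s}$ from the previous step yields~\eqref{eq:limma_conditional}. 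The p-value claim then follows by setting $\mu=0$ in~\eqref{eq:limma_conditional} and in the definition~\eqref{eq:conditional_pvalue}: $P_G(z,s^2)=\mathbb{P}[\abs{\tilde{s}\,T_{\nu_0+\nu}}\geq \abs{z}]=2\bar{F}_{t,\nu_0+\nu}(\abs{z}/\tilde{s})$, which equals $P^{\text{limma}}_{\nu_0,s_0^2}(z,s^2)$ in~\eqref{eq:moderated_t} since $\tilde{t}=z/\tilde{s}$.

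I do not expect a genuine obstacle here, as the argument is standard conjugate-Bayes bookkeeping. The two places that require care are: (a) justifying that one may marginalize $Z$ over the $\sigma^2$-posterior, which hinges on the conditional independence $Z\perp S^2 \mid \sigma^2$ baked into~\eqref{eq:full_hierarchical}; and (b) tracking the inverse scaled chi-square parametrization consistently so that the posterior degrees of freedom add to $\nu_0+\nu$ and the scale is the claimed convex combination $\tilde{s}^2$. Everything else is routine.
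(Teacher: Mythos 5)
Your proposal is correct, and it takes a genuinely different route from the paper's proof. The paper works in the precision parametrization $\tau^2=1/\sigma^2$, computes the full joint density $p(z,s^2,\tau^2)$, integrates out $\tau^2$ to obtain the joint marginal of $(Z,S^2)$ (the Student--Siegel distribution), and then changes variables from $(z,s^2)$ to $(\tilde{t},s^2)$ to show that the joint density \emph{factorizes}; from this factorization it reads off both that $\widetilde{T}\sim t_{\nu_0+\nu}$ and that $\widetilde{T}$ is independent of $S^2$ marginally, and the conditional law~\eqref{eq:limma_conditional} follows. You instead condition on $S^2$ first: conjugacy of the inverse scaled chi-square prior with the scaled $\chi^2$ likelihood gives the posterior $\sigma^2\mid S^2=s^2$ as inverse scaled chi-square with $\nu_0+\nu$ degrees of freedom and scale $\tilde{s}^2$, and then the scale-mixture-of-normals representation of the t-distribution finishes the argument. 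Your route is shorter and more conceptual---the addition of degrees of freedom and the convex combination $\tilde{s}^2$ emerge transparently from the conjugate update, and you never need the joint marginal of $(Z,S^2)$ at all. What the paper's longer computation buys is the stronger by-product that the moderated statistic $\widetilde{T}$ is \emph{marginally} independent of $S^2$ (not merely that its conditional law given $S^2=s^2$ is free of $s^2$), a fact of independent interest in the limma literature, plus the identification of the joint marginal as Student--Siegel. Your handling of the two delicate points---using $Z\perp S^2\mid\sigma^2$ to justify mixing over the posterior, and checking that the auxiliary $\chi^2_{\nu_0+\nu}$ variable in the mixture representation is independent of the standard normal $W$ given $S^2=s^2$---is sound, and the final p-value identity follows exactly as you state.
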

The above result gives an alternative interpretation of the limma p-value showing that it is exactly the same 
as the conditional p-value, which is the right thing to look at given the conditionality principle.

The claim of Proposition~\ref{prop:limma_as_conditional} is known, e.g., it is mentioned in~\citet{mccarthy2009testing} and~\citet{lu2016variance}, yet its 
implications are far-reaching and of historical importance: Sir David Cox~\citeyearpar{cox1975note} considered the model in~\eqref{eq:full_hierarchical} with $G$ 
as an inverse scaled chi-square distribution
and concluded that estimation/inference for $\mu$ 
should be based on~\eqref{eq:limma_conditional}, that is, the conditional distribution
of $Z$ given $S^2$. He called the resulting inference
``\emph{partially Bayes};'' the nuisance parameter is randomly distributed according
to a prior, while the primary parameter of interest is fixed. Moreover, 
Sir David Cox anticipated that the above model would be fruitful in an empirical Bayes 
analysis~\citep[``If the parameters of the prior distribution are estimated from a large amount of data'']{cox1975note}.
We do not believe that this contribution of Sir David Cox is well-known.\footnote{\citet{cox1975note} has less than 40 citations to date. 
But this is not for lack of importance: this work is included in the collection of selected statistical papers of Sir David Cox~\citep{cox2005selected} and it appears to have
been one source of inspiration in the development of predictive p-values, e.g., by~\citet{meng1994posterior}.}
We find it a source of great inspiration that yet another idea of Sir David Cox 
had ground-breaking impact 
in applied statistical work, in this case through the idea's wide adoption
in high-throughput biology via the limma software~\citep{smyth2004linear}.

The following proposition states three properties of the conditional p-values $P_G(Z, S^2)$; see Supplement C.3 for a proof.
\begin{prop}
    \label{prop:monotonicity}
    Let $(Z,S^2)$ be drawn from model~\eqref{eq:full_hierarchical} with $\mu=0$. Then, $P=P_G(Z, S^2)$ has the following properties.
    \begin{enumerate}[wide, noitemsep, label=\alph*.]
        \item Conditional uniformity: $\PP[G]{P \leq t  \mid S^2 } = t \text{ for all } t \in [0,\,1]$ almost surely.
        \item Unconditional uniformity:  $\PP[G]{P \leq t} = t \text{ for all } t \in [0,\,1]$.
        \item Monotonicity: $P_G(z, s^2)$ is non-increasing in $\abs{z}$ for any fixed value of $s^2>0$, 
        and non-decreasing in $s^2 >0$ for any fixed value of $\abs{z}$.
    \end{enumerate}
\end{prop}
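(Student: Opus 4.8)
The plan is to handle the three parts in the order (a), (b), (c), since part (b) will follow instantly from (a) and part (c) is self-contained.

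For part (a), the key observation is that for each fixed $s^2$ the map $z \mapsto P_G(z, s^2)$ is exactly the conditional survival function of $\abs{Z^{H_0}}$ given $S^2 = s^2$ under the null $\mu = 0$. Writing $W \coloneqq \abs{Z^{H_0}}$ and $\bar{H}_{s^2}(w) \coloneqq \PP[G]{W \geq w \mid S^2 = s^2}$, we have $P_G(z, s^2) = \bar{H}_{s^2}(\abs{z})$ and hence $P = \bar{H}_{S^2}(\abs{Z})$. I would first argue that, conditional on $S^2 = s^2$, the law of $W$ is absolutely continuous: it is a scale mixture of half-normals whose mixing distribution is the posterior of $\sigma^2$ given $S^2 = s^2$, hence it has no atoms. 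Applying the probability integral transform to the survival function of a continuous random variable then gives $\bar{H}_{s^2}(W) \sim \mathrm{Unif}[0,1]$ conditional on $S^2 = s^2$, which is precisely the claim.

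Part (b) is then immediate from (a) and the tower property: $\PP[G]{P \leq t} = \EE[G]{\PP[G]{P \leq t \mid S^2}} = \EE[G]{t} = t$. For part (c) I would use the representation $P_G(z, s^2) = \EE[G]{2(1 - \Phi(\abs{z}/\sigma)) \mid S^2 = s^2}$ from~\eqref{eq:conditional_pvalue}. Monotonicity in $\abs{z}$ is a one-line observation: the integrand $2(1 - \Phi(\abs{z}/\sigma))$ is non-increasing in $\abs{z}$ for every fixed $\sigma > 0$, and this is preserved under conditional expectation. The substantive part is monotonicity in $s^2$. Here the integrand $\sigma \mapsto 2(1 - \Phi(\abs{z}/\sigma))$ is non-decreasing in $\sigma$, so it suffices to show that the posterior law of $\sigma^2$ given $S^2 = s^2$ is stochastically increasing in $s^2$. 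I would obtain this from the monotone likelihood ratio (MLR) property of the scaled chi-square family: the density of $S^2$ given $\sigma^2$ is proportional to $(\sigma^2)^{-\nu/2}\exp(-\nu s^2/(2\sigma^2))$, and a direct computation of the likelihood ratio for $\sigma_1^2 < \sigma_2^2$ shows it is increasing in $s^2$. The standard fact that MLR implies stochastic monotonicity of the posterior in the observation then yields the desired stochastic ordering, and combining it with the monotonicity of the integrand in $\sigma$ completes the argument.

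The step I expect to be the main obstacle is the $s^2$-monotonicity in part (c): unlike the $\abs{z}$-monotonicity, it cannot be read off pointwise from the integrand (which depends on $s^2$ only through the posterior weights) and instead requires the MLR/stochastic-dominance reasoning. Parts (a) and (b) reduce to the probability integral transform together with the tower property, so the only point there requiring care is verifying the absence of atoms in the conditional law of $\abs{Z^{H_0}}$.
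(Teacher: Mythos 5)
Your proof is correct, and for most of the statement it follows the paper's own route: the paper also disposes of part (a) in one line via the probability integral transform conditional on $S^2$ (your extra verification that the conditional law of $\abs{Z^{H_0}}$ given $S^2=s^2$ is atomless, being a scale mixture of normals with mixing over $\sigma^2>0$, is exactly the care the paper leaves implicit), part (b) by the tower property, and the $\abs{z}$-monotonicity in (c) by the same pointwise observation about the integrand. Where you genuinely diverge is the $s^2$-monotonicity. The paper passes to the precision parametrization $\tau^2 = 1/\sigma^2 \sim H$, writes $P_G(z,s^2)$ as a ratio of integrals against $H$, differentiates in $s^2$, and identifies the derivative as $-\nu$ times the covariance, under the tilted measure $dA_s \propto (\tau^2)^{\nu/2}\exp(-\nu \tau^2 s^2/2)\,dH$, of the decreasing function $\tau^2 \mapsto 1-\Phi(\abs{z}\tau)$ and the increasing function $\tau^2 \mapsto \tau^2$; Chebyshev's ``other'' inequality (the covariance inequality for oppositely monotone functions, citing Fink) makes this covariance nonpositive, hence the derivative nonnegative. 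You instead note that the scaled-$\chi^2$ likelihood is MLR/TP2 in $(s^2,\sigma^2)$, so the posterior of $\sigma^2$ given $S^2=s^2$ increases in likelihood-ratio order---hence stochastically---as $s^2$ grows, and then integrate the increasing function $\sigma \mapsto 2(1-\Phi(\abs{z}/\sigma))$ against these stochastically ordered posteriors. The two arguments exploit the same underlying total-positivity structure, but yours is arguably cleaner on regularity: it avoids differentiating under the integral sign (an interchange the paper performs without comment), at the cost of invoking the standard MLR-implies-posterior-stochastic-monotonicity fact; conversely, the paper's computation yields an explicit expression for $\partial P_G(z,s^2)/\partial (s^2)$ rather than just the qualitative ordering. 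Either way, all three parts are established, so your proposal stands as a complete and valid alternative proof.
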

The first property follows from the definition in~\eqref{eq:conditional_pvalue}
and the probability integral transform applied conditional on $S^2$. 
Such conditional validity is desirable (cf. \citet[Chapter 2.3]{cox1974theoretical} and \citet[Chapter 10]{lehmann2005testing}), 
and may not be taken for granted. For example,
this property does not hold for the standard t-test as we will prove formally in Proposition~\ref{prop:ttest_failure} below;
we also refer the reader to the data analysis of Section~\ref{subsec:methylation} (in particular, Figure~\ref{fig:methylation_histogram})
that showcases empirical ramifications of the standard t-test's lack of conditional type-I error control.
The second property (unconditional uniformity) follows from the conditional uniformity
result (and iterated expectation). Finally, the monotonicity posits
a ``natural'' requirement for a testing procedure based on $(Z, S^2)$: 
the larger the value of $\abs{Z}$ is, the stronger the evidence, and for 
a fixed value of $\abs{Z}$ the evidence is stronger when $S^2$ is smaller.
Empirical Bayes procedure often satisfy such natural monotonicity requirements, 
e.g.,~\citet{vanhouwelingen1976monotone, koenker2014convex}, but not always
~\citep{gu2022invidious}.

We contrast the results from Proposition~\ref{prop:monotonicity}
with the following result for the standard t-test, see Supplement C.4 for a proof:

\begin{prop}
\label{prop:ttest_failure}
Consider the t-test p-value $P=P^{\text{ttest}}(Z, S^2)$ with $P^{\text{ttest}}(z, s^2) = 2\bar{F}_{t,\nu}(\abs{z}/s)$ and $\bar{F}_{t,\nu}$ is the
survival function of the t-distribution with $\nu$-degrees of freedom. Let $(Z,S^2)$ be drawn from model~\eqref{eq:full_hierarchical} with $\mu=0$. 
Then, $P$ is unconditionally uniform and satisfies the monotonicity property 
in Proposition~\ref{prop:monotonicity}. However, if $\EE[G]{\sigma^{-\nu}} \in (0,\infty)$, then:
$$ \lim_{\delta \searrow 0} \PP[G]{P^{\text{ttest}}(Z, S^2) \leq t \cond S^2 \leq \delta} = 1 \text{ for all }t \in (0,\,1].$$
\end{prop}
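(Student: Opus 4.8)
I would dispose of the two routine claims first. Under \eqref{eq:full_hierarchical} with $\mu=0$, conditional on $\sigma^2$ the statistic $T=Z/S$ is the ratio of the independent $\mathcal{N}(0,1)$ variable $Z/\sigma$ and $\sqrt{S^2/\sigma^2}$, where $\nu S^2/\sigma^2\sim\chi^2_\nu$; hence $T\mid\sigma^2\sim t_\nu$, and since this law is free of $\sigma^2$ it is also the marginal law of $T$. The two-sided p-value $P=2\bar{F}_{t,\nu}(|T|)$ of a continuous symmetric pivot is then $\mathrm{Unif}[0,1]$, giving unconditional uniformity. For monotonicity I would use that $\bar{F}_{t,\nu}$ is continuous and strictly decreasing on $[0,\infty)$ while $(z,s^2)\mapsto |z|/\sqrt{s^2}$ is nondecreasing in $|z|$ and nonincreasing in $s^2$; composing shows $P^{\text{ttest}}$ is nonincreasing in $|z|$ and nondecreasing in $s^2$.

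For the substantive claim, fix $t\in(0,1]$ and set $c_t:=\bar{F}_{t,\nu}^{-1}(t/2)\in[0,\infty)$, so that the event $\{P^{\text{ttest}}(Z,S^2)\le t\}$ equals $\{|Z|\ge c_t S\}$ (for $t=1$ one has $c_1=0$ and the probability is trivially $1$, so I would assume $t<1$, $c_t>0$). The plan is to reduce conditioning on $\{S^2\le\delta\}$ to a pointwise statement about conditioning on $\{S^2=s^2\}$. Writing $m_G$ for the marginal density of $S^2$ and $g(s^2):=\mathbb{P}_G\!\left(|Z|<c_t S\mid S^2=s^2\right)$,
\[
\mathbb{P}_G\!\left(P^{\text{ttest}}>t \mid S^2\le\delta\right)=\frac{\int_0^\delta g(s^2)\, m_G(s^2)\, ds^2}{\int_0^\delta m_G(s^2)\, ds^2}.
\]
If $g(s^2)\to 0$ as $s^2\searrow 0$, then for any $\varepsilon>0$ the numerator integrand is $\le\varepsilon\, m_G(s^2)$ once $\delta$ is small, so the ratio is $\le\varepsilon$; hence the displayed quantity tends to $0$ and $\mathbb{P}_G(P^{\text{ttest}}\le t\mid S^2\le\delta)\to 1$. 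It therefore suffices to show $g(s^2)\to 0$.

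To prove this I would write the posterior of $\sigma^2$ given $S^2=s^2$ explicitly. Since the density of $S^2\mid\sigma^2$ is proportional to $(\sigma^2)^{-\nu/2}(s^2)^{\nu/2-1}\exp(-\nu s^2/(2\sigma^2))$, the $G$-posterior of $\sigma^2$ is proportional to $(\sigma^2)^{-\nu/2}\exp(-\nu s^2/(2\sigma^2))$, and because $Z\mid\sigma^2\sim\mathcal{N}(0,\sigma^2)$,
\[
g(s^2)=\frac{\int \left(2\Phi(c_t\sqrt{s^2}/\sigma)-1\right)(\sigma^2)^{-\nu/2}\exp\!\left(-\nu s^2/(2\sigma^2)\right)dG(\sigma^2)}{\int (\sigma^2)^{-\nu/2}\exp\!\left(-\nu s^2/(2\sigma^2)\right)dG(\sigma^2)}.
\]
As $s^2\searrow 0$, for each fixed $\sigma>0$ the factor $2\Phi(c_t\sqrt{s^2}/\sigma)-1\to 0$ and $\exp(-\nu s^2/(2\sigma^2))\to 1$, and both integrands are dominated by $(\sigma^2)^{-\nu/2}$, which is $G$-integrable exactly because $\mathbb{E}_G[\sigma^{-\nu}]<\infty$. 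Dominated convergence then drives the numerator to $0$ and the denominator to $\mathbb{E}_G[\sigma^{-\nu}]$, so $g(s^2)\to 0$.

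The crux—and the sole place where the hypothesis $\mathbb{E}_G[\sigma^{-\nu}]\in(0,\infty)$ enters—is this last step: finiteness supplies the integrable envelope for dominated convergence (equivalently, it guarantees that $S^2$ has a bona fide density near the origin rather than degenerate mass), while positivity guarantees the normalizing denominator does not also vanish, so that $g$ is an honest conditional probability tending to $0$ rather than an indeterminate $0/0$. Everything else is bookkeeping: pivotality of $T$ for the uniformity claim, monotonicity of $\bar{F}_{t,\nu}$ for the monotonicity claim, and the weighted-average reduction that lets me pass from $\{S^2=s^2\}$ to $\{S^2\le\delta\}$.
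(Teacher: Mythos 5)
Your proof is correct, and it takes a genuinely different route from the paper's. The paper never conditions on $\{S^2=s^2\}$ exactly: it works with the positive-probability event $\{S^2\le\delta\}$ throughout, writes $\mathbb{P}_G[P^{\text{ttest}}>\alpha \mid S^2\le\delta]$ as the ratio $\mathbb{E}_G[(2\Phi(qS/\sigma)-1)\ind(S^2\le\delta)]/\mathbb{P}_G[S^2\le\delta]$, and then normalizes numerator and denominator by $\delta^{\nu/2}$. This requires an auxiliary lemma on the small-$u$ behavior of the $\chi^2_\nu$ distribution function ($F_\nu(u)\asymp u^{\nu/2}$ as $u\searrow 0$), Fatou's lemma to get $\liminf_{\delta\to 0}\mathbb{P}_G[S^2\le\delta]/\delta^{\nu/2}\gtrsim \mathbb{E}_G[\sigma^{-\nu}]>0$, and dominated convergence for the numerator. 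Your argument instead disintegrates: you compute the exact posterior of $\sigma^2$ given $S^2=s^2$, observe that the $(s^2)^{\nu/2-1}$ factor cancels in the ratio (which is precisely the cancellation the paper engineers by hand via the $\delta^{\nu/2}$ normalization and the CDF asymptotics), and then a single application of dominated convergence with envelope $\sigma^{-\nu}$ gives $g(s^2)\to 0$; averaging over $\{S^2\le\delta\}$ via $\sup_{s^2\le\delta}g(s^2)$ finishes. Both proofs place the hypothesis $\mathbb{E}_G[\sigma^{-\nu}]\in(0,\infty)$ at exactly the same crux (the integrable envelope), but yours is more self-contained—no $\chi^2$-CDF lemma, no Fatou—and it proves something slightly stronger, namely the pointwise statement that $\mathbb{P}_G[P^{\text{ttest}}\le t\mid S^2=s^2]\to 1$ as $s^2\searrow 0$, from which the event-conditioned claim follows. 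The paper's approach, conversely, avoids any appeal to a particular version of a conditional density (everything is conditioning on events of positive probability), which is the only measure-theoretic bookkeeping your route leaves implicit; your disintegration identity is easily verified by Fubini, so this is a cosmetic rather than substantive gap. One minor quibble: positivity of $\mathbb{E}_G[\sigma^{-\nu}]$ is automatic for a probability measure $G$ on $(0,\infty)$, so your remark that positivity is needed to avoid a $0/0$ indeterminacy is vacuous—finiteness is the only real content of the hypothesis.
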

Hence the standard t-test p-value does not satisfy the conditional uniformity property.\footnote{By contrast, according to Proposition~\ref{prop:monotonicity},
the oracle partially Bayes p-value $P$ satisfies $\mathbb P_G[P  \leq t \cond S^2 \leq \delta] = t$ for all $t\in[0,1]$ and $\delta>0$.}
If we observe $S^2$ that is unusually small, then Proposition~\ref{prop:ttest_failure}
implies that the conditional level of the t-test will be inflated and 
the resulting inference untrustworthy. 
The concern about 
poor conditional behavior, however, is not actionable
with a single hypothesis and without prior information for $\sigma^2$: without
prior knowledge about plausible values of $\sigma^2$, it is impossible to know whether $S^2$
is unusually small. The partially Bayes model~\eqref{eq:full_hierarchical} provides knowledge
of plausible values of $\sigma^2$, since $\sigma^2 \sim G$. Such knowledge
is also provided in the empirical Bayes 
setting of the following section wherein we have access to measurements $S_1^2,\dotsc,S_n^2$ of many related variances $\sigma_1^2,\dotsc,\sigma_n^2$.
Hence the setting we consider provides a further 
demonstration of the value of large scale data~\citep{efron2010largescale} with new opportunities
that were not available previously.

\section{Empirical partially Bayes multiple hypothesis testing}
\label{sec:empirical-bayes}
\epigraph{``This formula for the mean error depends only on the observed distribution''}{--- \textup{Sir Frank Dyson}, \citeyear{dyson1926method}}

We now turn to the practically relevant case wherein we have $n$ independent draws of 
$(Z_i, S_i^2)$ from~\eqref{eq:full_hierarchical} with $G$ and $\mu_i$ unknown. Our goal
is to mimic the oracle analyst who would test $H_i: \mu_i =0$ with
the conditional p-value defined in~\eqref{eq:conditional_pvalue}.
The strategy we pursue for this task is a nonparametric generalization
of the limma approach. First, we estimate \smash{$\widehat{G}$} with 
the nonparametric maximum likelihood estimator (NPMLE) of~\citet{robbins1950generalization} and~\citet{kiefer1956consistency} based on $(S_1^2,\dotsc,S_n^2)$, which are distributed as
\begin{equation}
    \label{eq:EB}
\sigma_i^2 \simiid G,\;\;\,  S_i^2 \mid \sigma_i^2 \; \simindep \frac{\sigma_i^2}{\nu} \chi^2_{\nu} \,\;\text{ for }\; i=1,\dotsc,n,
\end{equation}
and then we use the plug-in p-values $P_{\widehat{G}}(Z_i, S_i^2)$ in place 
of $P_G(Z_i, S_i^2)$. We call this approach \emph{empirical partially Bayes hypothesis testing};
a similar nonparametric extension of limma was also considered by~\citet{lu2016variance} 
based on a nonparametric class of unimodal priors.

\subsection{Nonparametric maximum likelihood estimation for \texorpdfstring{$\chi^2$}{chi-squared} data}
\label{subsec:npmle}

Our proposed estimator of $G$ based on $(S_1^2,\dotsc,S_n^2)$ 
is defined as a solution to the following optimization problem:
\begin{equation}
\label{eq:npmle_opt}
\hG \in \argmax\cb{ \sum_{i=1}^n \log\p{ f_G(S_i^2)}\,:\,G \text{ distribution supported on } (0,\, + \infty)}.
\end{equation}
Above, we use the notation $f_G(\cdot)$ to denote the marginal
density of $S_i^2$ when $\sigma_i^2 \sim G$ (for any distribution $G$ supported on $(0,\, \infty)$),
and $S_i^2 \mid \sigma_i^2$ follows~\eqref{eq:EB}.\footnote{\label{footnote:fmodeling}
Herein we follow the notation in~\citet{efron2014two} who uses the letters ``$F$,'' respectively ``$G$,'' to denote the marginal, respectively, prior distribution in empirical Bayes problems. We use lowercase $f$ to denote the marginal density of $S_i^2$, and make the dependence of $f$ on the prior distribution $G$ explicit by writing $f=f_G$.}
More formally, for $s^2 > 0$:
\begin{equation}
    \label{eq:marginal_density}
    f_G(s^2) \equiv f_G(s^2;\; \nu) \coloneqq \int_0^{\infty} p(s^2 \mid \sigma^2, \nu) \, dG(\sigma^2),
\end{equation}
where  $p(s^2 \mid \sigma^2, \nu)$ is the (scaled) $\chi^2_{\nu}$-density which is equal to,
\begin{equation}
\label{eq:chisq_likelihood}
p(s^2 \mid \sigma^2) \equiv p(s^2 \mid \sigma^2, \nu) \coloneqq \frac{\nu^{\nu/2}}{\p{\sigma^2}^{\nu/2} 2^{\nu/2}\Gamma(\nu/2)}  \p{s^2}^{\nu/2-1} \exp\p{-\frac{\nu s^2}{2\sigma^2}}.
\end{equation}
As suggested in~\eqref{eq:marginal_density} and~\eqref{eq:chisq_likelihood},
we typically suppress the dependence on $\nu \geq 2$. The optimization problem in~\eqref{eq:npmle_opt} seeks 
the maximizer of the log marginal likelihood over all possible choices of the prior $G$.

We note a few standard properties of the
optimization problem in~\eqref{eq:npmle_opt}~\citep{jewell1982mixtures, lindsay1983geometrya}
that follow from the arguments in~\citet{lindsay1993uniqueness} upon reparametrizing the likelihood in~\eqref{eq:chisq_likelihood} through the natural parameter $1/\sigma^2$.\footnote{We also provide a self-contained proof in Supplement D.1.}
\begin{prop}[Properties of NPMLE optimization]
\label{prop:npmle_opt}
Suppose that $S_i^2 \in (0, \infty)$ for all $i$.
Then there exists a unique maximizer \smash{$\hG$} of \eqref{eq:npmle_opt}. Further, \smash{$\hG$}
is a discrete probability measure with at most $n$ points of support,\footnote{More precisely: the number of support points is bounded by the number of unique values in $\cb{S_1,\dotsc,S_n}$.} all lying in the interval
 $[\min_i\cb{S_i^2}, \max_i \cb{S_i^2}]$.
\end{prop}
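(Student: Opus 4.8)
The plan is to reduce the four assertions to the classical structure theory of nonparametric maximum likelihood estimation for mixtures of a one-parameter exponential family. First I would reparametrize by the natural parameter $\eta = 1/\sigma^2 \in (0,\infty)$: by~\eqref{eq:chisq_likelihood},
\[
p(s^2 \mid \sigma^2) = c_\nu\, (s^2)^{\nu/2-1}\, \eta^{\nu/2} \exp\!\left(-\tfrac{\nu}{2} s^2 \eta\right), \qquad c_\nu = \tfrac{\nu^{\nu/2}}{2^{\nu/2}\Gamma(\nu/2)},
\]
which is a regular one-parameter exponential family in $\eta$, with natural parameter $\eta$, sufficient statistic proportional to $s^2$, and carrier measure $c_\nu (s^2)^{\nu/2-1}\,ds^2$ on $(0,\infty)$. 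The crucial structural fact is that this kernel is strictly totally positive of all orders (being an exponential family with a strictly monotone sufficient statistic), which is exactly the hypothesis under which the general results of~\citet{lindsay1983geometrya, lindsay1993uniqueness} apply; after the reparametrization the assertions of the proposition become instances of that theory, and what remains is to indicate the mechanism behind each.

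For existence and uniqueness of the fitted marginal, I would work with the map $G \mapsto v(G) := (f_G(S_1^2),\dots,f_G(S_n^2)) \in \RR^n$, whose range is the convex hull of the likelihood curve $\mathcal{C} = \{(p(S_1^2\mid \sigma^2),\dots,p(S_n^2\mid\sigma^2)) : \sigma^2 > 0\}$. From the explicit form one checks that each coordinate of the curve tends to $0$ both as $\sigma^2 \to 0$ (exponential decay) and as $\sigma^2 \to \infty$ (polynomial decay), so $\mathcal{C} \cup \{0\}$ is compact and $\overline{\mathrm{conv}(\mathcal{C})}$ is a compact convex set. The objective $v \mapsto \sum_i \log v_i$ is continuous and strictly concave on the positive orthant and diverges to $-\infty$ as any coordinate approaches $0$; since point masses $\delta_{\sigma^2}$ produce strictly positive coordinates, a maximizer exists, and the optimal likelihood vector $v^\star = v(\hat{G})$ is unique by strict concavity.

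Next I would record the gradient (Kuhn--Tucker) characterization: $\hat{G}$ is optimal if and only if
\[
D(\sigma^2) := \sum_{i=1}^n \frac{p(S_i^2 \mid \sigma^2)}{f_{\hat{G}}(S_i^2)} - n \le 0 \quad \text{for all } \sigma^2 > 0,
\]
with equality on $\mathrm{supp}(\hat{G})$; thus the support points are exactly the global maximizers of $\sigma^2 \mapsto \sum_i c_i\, p(S_i^2\mid \sigma^2)$, where $c_i = 1/f_{\hat{G}}(S_i^2) > 0$. The localization to $[\min_i S_i^2,\max_i S_i^2]$ then falls out of a monotonicity computation: since $\partial_{\sigma^2}\log p(S_i^2\mid\sigma^2) = \tfrac{\nu}{2(\sigma^2)^2}(S_i^2 - \sigma^2)$, each summand is strictly increasing in $\sigma^2$ on $(0, S_i^2)$ and strictly decreasing on $(S_i^2,\infty)$; hence the weighted sum is strictly increasing on $(0,\min_i S_i^2)$ and strictly decreasing on $(\max_i S_i^2,\infty)$, so no maximizer --- and therefore no support point --- can lie outside the interval. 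The bound of $n$ (indeed, of the number of distinct $S_i^2$) on the number of support points follows by writing $v^\star \in \mathrm{conv}(\mathcal{C})$ as a convex combination of curve points via Carathéodory and sharpening it using that $v^\star$ lies on the supporting hyperplane $\{w : \langle 1/v^\star, w\rangle = n\}$ supplied by the gradient condition; equivalently, the strict total positivity of the kernel bounds the number of roots of $D(\sigma^2)=0$.

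The step I expect to be the main obstacle is the uniqueness of $\hat{G}$ itself, as opposed to uniqueness of the fitted vector $v^\star$: the map $G \mapsto v(G)$ is highly non-injective on arbitrary $G$, so uniqueness of $v^\star$ does not transfer automatically. Here I would invoke the identifiability of finite mixtures of a strictly totally positive exponential-family kernel --- two mixing distributions each supported on at most $n$ points and reproducing the same $n$ marginal values $f_G(S_i^2)$ must coincide. This variation-diminishing argument is precisely the content of~\citet{lindsay1993uniqueness}, which I would cite rather than redevelop the total-positivity machinery; the only thing that needs checking is that the reparametrized $\chi^2$ kernel meets its hypotheses (regularity and strict total positivity), and that was established in the first step.
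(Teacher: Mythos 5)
Your scaffolding is essentially the paper's own supplementary proof: reparametrize by the precision $1/\sigma^2$, obtain existence and uniqueness of the fitted likelihood vector from compactness/convexity of the hull of the likelihood curve plus strict concavity of $v \mapsto \sum_i \log v_i$, derive the gradient (Kuhn--Tucker) characterization, localize the support via the sign of $\partial_{\sigma^2}\log p(S_i^2\mid \sigma^2)$, and control the support size by counting roots of the gradient function. The only stylistic difference is that the paper carries out the root counting self-containedly with a P\'olya--Szeg\H{o} lemma on zeros of exponential-polynomial sums, while you outsource it to total-positivity results; the paper explicitly sanctions that route. Up to and including the localization step, your argument is sound.

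The genuine gap is in the uniqueness step, which you yourself flag as the main obstacle. The fact you propose to invoke --- ``two mixing distributions each supported on at most $n$ points and reproducing the same $n$ marginal values $f_G(S_i^2)$ must coincide'' --- is false, and it is not what \citet{lindsay1993uniqueness} proves. Already for $n=1$: by your own computation, $\sigma^2 \mapsto p(S_1^2 \mid \sigma^2)$ is strictly increasing on $(0, S_1^2)$ and strictly decreasing on $(S_1^2,\infty)$, so any value strictly below the peak is attained at two distinct points $a < S_1^2 < b$, and the distinct one-point measures $\delta_a$, $\delta_b$ reproduce the same marginal value at $S_1^2$. More generally, the difference of two $n$-atom measures is a signed combination of up to $2n$ kernels, and variation-diminishing bounds only give up to $2n-1$ zeros, so vanishing at the $n$ data points proves nothing. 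What rescues uniqueness --- and what the paper's proof (and Lindsay's) actually does --- is to first pin down the support using the optimality conditions: since the fitted vector $v^\star$ is unique, \emph{every} maximizer has the \emph{same} gradient function $D$, and the equality case of the gradient condition forces every maximizer's support into the common zero set of $D$, which the root-counting argument shows is finite with at most $N$ points ($N$ the number of distinct $S_i^2$). Only then is the difference of two maximizers a signed measure on at most $N$ \emph{common} atoms whose mixture vanishes at $N$ distinct data points; since a nonzero combination of $S \leq N$ kernels has at most $S-1$ zeros, the difference vanishes identically. Your proposal, as written, skips this support-pinning step, so the identifiability claim it rests on fails; the fix is available from ingredients you already have (the gradient characterization plus the root count), but it must be inserted before uniqueness follows.
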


As a consequence of the proposition, the NPMLE may be written as \smash{$\hG = \sum_{j=1}^{\hat{K}}\hat{g}_j \delta_{\hat{\sigma}_j^2}$},
where \smash{$\hat{K} \leq n$} is the number of components, \smash{$\delta_{\sigma^2}$} is a Dirac point mass at \smash{$\sigma^2 >0$}, and \smash{$\hat{g}_j \geq 0$} are such that \smash{$\sum_{j=1}^{\hat{K}}\hat{g}_j =1$}. All of \smash{$\hat{K}$}, \smash{$\hat{\sigma}_j^2$}, and $\hat{g}_j$ 
are determined in a fully data-driven way.
It is worthwhile to compare this result to the method of~\citet{delmar2005mixture} who assumed a clustering
structure for the variances and estimated a prior of the form \smash{$\widehat{G} = \sum_{j=1}^K \hat{g}_j \delta_{\hat{\sigma}_j^2}$}
for a pre-determined value of $K$. For the NPMLE, the number of components is automatically learned, and furthermore, as we will see below,
the NPMLE performs well for the downstream task of empirical partially Bayes testing even if no such clustering structure is present.

\begin{rema}[Computation]
\label{rema:computation}
To solve~\eqref{eq:npmle_opt} we use the interior point convex programming solver 
MOSEK~\citep{aps2020mosek}
after discretization of~\eqref{eq:npmle_opt} as proposed by~\citet{koenker2014convex}. To be concrete, we discretize
as follows: we let $a$, resp. $b$, be the $1\%$ quantile,\footnote{ \label{footnote:lower_bound}
The choice of the $1\%$ quantile for the lower end of the support of the estimated prior
is motivated by the following considerations: 
on one hand, it is a conservative choice that protects us from spuriously small p-values. The choice
implies that the conditional p-values we compute in~\eqref{eq:conditional_pvalue} will be at least
as large as \smash{$2(1-\Phi(\abs{Z}/S_{0.01}))$}, where \smash{$S_{0.01}^2$} is the $1\%$ quantile of the sample variances.
Second, \citet{lonnstedt2002replicated} consider filtering out hypotheses with variances in the bottom $1\%$
as a simple solution to avoid an excessive number of rejections of hypotheses with small sample variance.  We also assess the impact of the choice of the lower end by conducting a simulation study and sensitivity analysis in Supplement H.1.
} resp. maximum, of $\cb{S_1^2,\dotsc,S_n^2}$ and 
then we consider a logarithmically equispaced grid\footnote{\label{footnote:spacing}
The logarithmic spacing is motivated by the following heuristic considerations: first, as seen in, e.g., Fig.~\ref{fig:methylation}c), the sample variances in common applications can range over multiple orders of magnitude. Second, taking logarithms in~\eqref{eq:EB}, we find that \smash{$\log(\nu S_i^2) = \log(\sigma_i^2) + \xi_i$} with $\exp(\xi_i) \sim \chi^2_{\nu}$ independent of $\sigma_i^2$. That is, we arrive at a location model with parameters \smash{$\log(\sigma_i^2)$}. Since uniformly spaced grids perform well empirically for location models~\citep{koenker2014convex}, we choose a uniformly spaced grid for \smash{$\log(\sigma_i^2)$}, which implies a logarithmically spaced grid for $\sigma_i^2$.
} between $a$ and $b$ with $300$ points. We maximize
the marginal likelihood over all distributions supported on the aforementioned grid.
Our theoretical results below do not take into account the discretization; we refer to~\citet{dicker2016highdimensional}
and~\citet{soloff2021multivariate} for analyses of the NPMLE that also consider the discretization error.
\end{rema}

For all our theoretical results, we assume that $\nu\geq 2$ and the following.\footnote{\label{footnote:no_nu_1}We do not consider the case $\nu = 1$ throughout because it is extreme (with a Cauchy reference distribution for the t-statistic) and not as common in applications. The case $\nu=1$ would also require a different analytical approach from our treatment for $\nu \geq 2$ because the conditional density in~\eqref{eq:chisq_likelihood} is unbounded as $s^2 \searrow 0$  for $\nu=1$.} 
\begin{assu}[Compact support bounded away from $0$ and $\infty$]
\label{assu:compact_support_and_nu_EB}
Let $0< \ubar{L} \leq L \leq U \leq \bar{U} < \infty$. We assume that $\sigma_i^2 \in [L,\, U]$ for all $i$ (almost surely)
and that we compute the NPMLE \smash{$\hG$} in~\eqref{eq:npmle_opt}
under the additional constraint $G([\ubar{L}, \bar{U}])=1$. 
\end{assu}
As we explained in Footnote~\ref{footnote:lower_bound}, we view the lower bound ($L$,$\ubar{L}$) as a substantive requirement: 
it is a conservative  choice that protects us from spuriously small p-values for units with sample variance $S_i^2 \approx 0$.
The upper bound ($U$, $\bar{U}$) is a technical assumption that streamlines our proofs; it could be relaxed 
to a light tail assumption with a more careful analysis as in~\citet{zhang2009generalized}.

As is common in the literature~\citep{ghosal2001entropies, zhang2009generalized},
we study the statistical properties of \smash{$\hG$} in terms of
the Hellinger distance between \smash{$f_G$} and \smash{$f_{\hG}$}. The squared Hellinger distance
between two densities $f$ and $h$ on $(0,\, \infty)$ is defined as follows:
\begin{equation}
    \label{eq:hellinger_dist}
\Dhel^2(f, h) = \frac{1}{2} \int_0^{\infty} \p{ \sqrt{f(t)} - \sqrt{h(t)}}^2 dt.
\end{equation}

In Supplement D.2 we prove the next theorem using standard entropy arguments following~\cite{ghosal2001entropies} and~\citet{zhang2009generalized}.
\begin{theo}[Convergence in Hellinger distance]
    \label{theo:hellinger_dist_convergence}
Under Assumption~\ref{assu:compact_support_and_nu_EB}, 
there exist constants $C=C(\nu, \ubar{L}, \bar{U}) >0$ and $c=c(\nu, \ubar{L}, \bar{U}) >0 $ such that:
$$\PP[G]{\Dhel(f_{\hG},\, f_G) \geq  C\frac{\log n}{\sqrt{n}}} \leq  \exp\p{- c (\log n)^2} \text{ for all } n \in \mathbb N_{\geq 1}.$$
\end{theo}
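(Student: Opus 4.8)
The plan is to treat $\hG$ as a maximum likelihood estimator over the class of mixture densities $\mathcal{F} \coloneqq \cb{f_G : G([\ubar{L}, \bar{U}]) = 1}$ and to invoke the by-now-standard theory relating the Hellinger rate of an MLE to the bracketing entropy of its model class, as developed for Gaussian mixtures by \citet{ghosal2001entropies} and \citet{zhang2009generalized}. The starting point is the basic inequality: since $\hG$ maximizes $\sum_i \log f_G(S_i^2)$ over $G$ supported on $[\ubar{L}, \bar{U}]$, and the true $G$ is feasible by Assumption~\ref{assu:compact_support_and_nu_EB}, we have $\sum_i \log\p{f_{\hG}(S_i^2)/f_G(S_i^2)} \geq 0$. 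Controlling the left-hand side as an empirical process indexed by $\mathcal{F}$, and using the standard reduction (via the Hellinger transform of the log-likelihood ratio, as in \citet{ghosal2001entropies}) that lower-bounds the population drift by a multiple of $\Dhel^2(f_{\hG}, f_G)$, reduces the theorem to a uniform deviation bound over Hellinger balls, which is exactly what the entropy method delivers.

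The crux is to bound the bracketing entropy $\log N_{[\,]}(\epsilon, \mathcal{F}, \Dhel)$. I would first reparametrize through the natural parameter $\theta = 1/\sigma^2$, so that the kernel in~\eqref{eq:chisq_likelihood} becomes a one-parameter exponential family, analytic in $\theta$ on the compact set $[1/\bar{U}, 1/\ubar{L}]$. Two features of the kernel, both guaranteed by our assumptions, make the analysis go through: because $\nu \geq 2$ the factor $(s^2)^{\nu/2 - 1}$ does not blow up as $s^2 \searrow 0$, so the kernel is uniformly bounded; and because $\sigma^2 \leq \bar{U}$ the exponential factor gives a uniform tail bound $p(s^2 \mid \sigma^2) \lesssim \exp(-\nu s^2 / (2\bar{U}))$, so the mixture mass over $\{s^2 > T\}$ is negligible once $T \gtrsim \log(1/\epsilon)$. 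On the compact body I would approximate an arbitrary mixing distribution $G$ by a finitely supported measure $G'$ matching the moments $\int \theta^j\, dG$ for $j \leq k$; expanding $\exp(-\nu \theta s^2/2)$ in a Taylor series in $\theta$ shows that the residual $\abs{f_G - f_{G'}}$ is controlled by the tail of this series and decays factorially in $k$, so that $k \asymp \log(1/\epsilon)$ moments suffice for accuracy $\epsilon$. A net over the $k$-dimensional moment vectors (equivalently, over the $O(k)$ support points and weights) at resolution $\epsilon$ then yields $\log N_{[\,]}(\epsilon, \mathcal{F}, \Dhel) \lesssim (\log(1/\epsilon))^2$. This near-parametric entropy is precisely the mechanism that produces the $\log n / \sqrt{n}$ rate rather than a slower nonparametric one.

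With the entropy bound in hand, the rate follows from the general maximum-likelihood deviation theorem. The entropy integral satisfies $\int_0^{\delta} \sqrt{\log N_{[\,]}(u, \mathcal{F}, \Dhel)}\, du \lesssim \delta \log(1/\delta)$, and the rate $\epsilon_n$ is determined by balancing $\sqrt{n}\, \epsilon_n^2 \gtrsim \epsilon_n \log(1/\epsilon_n)$; solving gives $\epsilon_n \asymp \log n / \sqrt{n}$, matching the statement. The exponential tail $\exp(-c(\log n)^2)$ is obtained from the one-sided exponential (Bernstein-type) control of the empirical process at scale $\epsilon_n$ used in this form of the argument: the deviation probability at radius proportional to $\epsilon_n$ behaves like $\exp(-c\,n\epsilon_n^2) = \exp(-c(\log n)^2)$.

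The main obstacle is the entropy bound of the previous paragraph---specifically, making the moment-matching approximation quantitative and uniform over all mixing distributions on $[\ubar{L}, \bar{U}]$. Everything else is bookkeeping: verifying the boundedness and tail estimates for the kernel (routine given $\nu \geq 2$ and compact support), converting the moment approximation into a genuine bracketing net (standard), and invoking the off-the-shelf MLE rate theorem. The one place where the scaled-$\chi^2$ structure genuinely enters, as opposed to the Gaussian case of \citet{ghosal2001entropies}, is in verifying that the exponential-family kernel admits the same factorially-decaying Taylor control in $\theta$; this is where I expect to spend most of the effort.
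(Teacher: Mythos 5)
Your proposal is correct and takes essentially the same approach as the paper: the basic NPMLE inequality combined with a $(\log(1/\varepsilon))^2$ entropy bound for the class of scaled-$\chi^2$ mixture densities, obtained---just as in the paper's Lemma~\ref{lemm:entropy}---by passing to the precision parametrization, truncating the tail at $T\asymp\log(1/\varepsilon)$, and moment matching with $O(\log(1/\varepsilon))$ moments, followed by the balance $n\varepsilon_n^2\asymp(\log n)^2$. The only difference is packaging: you hand the deviation bound to an off-the-shelf bracketing-entropy MLE theorem (Wong--Shen / Ghosal--van der Vaart style), whereas the paper re-derives it self-containedly from sup-norm covers, an explicit tail-correction function, and a Markov bound on the square-root likelihood ratio, with both routes yielding the same rate and the $\exp(-c(\log n)^2)$ tail.
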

According to Theorem~\ref{theo:hellinger_dist_convergence}, estimation of $f_G$ in Hellinger
distance is a relatively easy statistical task that may be achieved at the 
parametric rate $1/\sqrt{n}$ up to a logarithmic factor.

With Theorem~\ref{theo:hellinger_dist_convergence} in hand and identifiability
results of~\citet{teicher1961identifiability},
we also prove the following in Supplement D.4.
\begin{coro}
\label{coro:weak_convergence}
Under Assumption~\ref{assu:compact_support_and_nu_EB}, it holds with probability $1$ that:
$$ \hG \cd G \; \text{ as } \; n \to \infty.$$
Here $\cd$ denotes weak convergence.
\end{coro}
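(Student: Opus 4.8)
The plan is to combine the quantitative Hellinger bound of Theorem~\ref{theo:hellinger_dist_convergence} with a compactness-plus-identifiability argument. First I would upgrade the high-probability bound to an almost-sure statement. Since $\sum_{n \geq 1} \exp(-c(\log n)^2) < \infty$ (the summand decays faster than any negative power of $n$), the Borel--Cantelli lemma applied to the events $\cb{\Dhel(f_{\hG}, f_G) \geq C \log n / \sqrt{n}}$ shows that, with probability one, $\Dhel(f_{\hG}, f_G) \leq C \log n / \sqrt{n}$ for all large $n$, hence $\Dhel(f_{\hG}, f_G) \to 0$ almost surely. Because the squared Hellinger distance dominates a constant multiple of the total variation distance, this also yields $\int_0^{\infty} \abs{f_{\hG}(s^2) - f_G(s^2)}\, ds^2 \to 0$ almost surely; in particular, every subsequence admits a further subsequence along which $f_{\hG} \to f_G$ Lebesgue-almost everywhere. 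I would then work on the probability-one event on which these convergences hold and argue deterministically from there.

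Next I would exploit compactness of the relevant space of priors. Under Assumption~\ref{assu:compact_support_and_nu_EB} every $\hG$ (and $G$ itself) is a probability measure supported on the fixed compact interval $[\ubar{L}, \bar{U}]$, so the family $\cb{\hG}$ is automatically tight and, by Prokhorov's theorem (equivalently Helly's selection theorem), relatively compact in the topology of weak convergence. To prove $\hG \cd G$ it therefore suffices to show that $G$ is the only possible weak subsequential limit: if so, then no subsequence can remain bounded away from $G$ in a metric for weak convergence, which forces $\hG \cd G$. Accordingly, I would take an arbitrary subsequence along which $\hG \cd G^{\star}$ for some probability measure $G^{\star}$ supported on $[\ubar{L}, \bar{U}]$, and aim to conclude $G^{\star} = G$.

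The crux is a continuity/identifiability step. For each fixed $s^2 > 0$, the map $\sigma^2 \mapsto p(s^2 \mid \sigma^2)$ in~\eqref{eq:chisq_likelihood} is continuous and bounded on the compact interval $[\ubar{L}, \bar{U}]$ (since $\sigma^2$ is bounded away from $0$ and $\infty$), so by the definition of weak convergence, $\hG \cd G^{\star}$ implies $f_{\hG}(s^2) = \int p(s^2 \mid \sigma^2)\, d\hG(\sigma^2) \to \int p(s^2 \mid \sigma^2)\, dG^{\star}(\sigma^2) = f_{G^{\star}}(s^2)$ for every $s^2$. Passing to a further subsequence so that simultaneously $f_{\hG} \to f_G$ Lebesgue-almost everywhere (from the first step), I would identify the two limits to get $f_{G^{\star}} = f_G$ almost everywhere, hence everywhere by continuity. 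Finally, the identifiability of scale mixtures of chi-square (equivalently gamma) distributions established by~\citet{teicher1961identifiability} states that $G \mapsto f_G$ is injective, so $f_{G^{\star}} = f_G$ forces $G^{\star} = G$, closing the subsequence argument.

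I expect the genuinely delicate ingredient to be the identifiability input: without it, distinct mixing distributions could in principle induce the same marginal, and the subsequential limit $G^{\star}$ need not coincide with $G$. The remaining pieces---Borel--Cantelli summability, tightness on a compact support, and continuity of the mixture map---are routine given that all priors live on the compact interval $[\ubar{L}, \bar{U}]$ and that the $\chi^2_{\nu}$ kernel is bounded and continuous there.
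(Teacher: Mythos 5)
Your proposal is correct and takes essentially the same route as the paper's own proof: Borel--Cantelli applied to the tail bound of Theorem~\ref{theo:hellinger_dist_convergence}, tightness from the compact support in Assumption~\ref{assu:compact_support_and_nu_EB}, a subsequential-limit argument using the bounded continuous kernel $\sigma^2 \mapsto p(s^2 \mid \sigma^2)$, and the identifiability result of \citet{teicher1961identifiability} (applied in the precision parametrization) to force any weak subsequential limit to equal $G$; the only difference is immaterial --- the paper identifies $f_{\widetilde{G}} = f_G$ via Scheff\'e's theorem plus a total-variation triangle inequality, whereas you extract an a.e.-convergent sub-subsequence from $L^1$ convergence. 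One minor slip worth fixing: the inequality you invoke is stated backwards --- what is true and what you need is $\TV \le \sqrt{2}\,\Dhel$ (total variation dominated by a constant times Hellinger), not that the squared Hellinger distance dominates total variation --- but the conclusion $\TV \to 0$ is unaffected.
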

We do not pursue 
the rate of the above convergence for two reasons: first, 
as in related deconvolution problems, e.g., with additive
Gaussian noise, minimax convergence rates can be very slow, that is,
polynomial in $1/\log n$~\citep{zhang1990fourier, fan1991optimal}. 
\citet[paragraph after Remark 5]{belomestny2020nonparametric} suggest that such slow
rates also hold in the deconvolution problem with \emph{multiplicative} noise
that we are studying.
Second, in what follows 
we establish that plug-in estimation of the partially Bayes p-values inherits the fast rates in
Theorem~\ref{theo:hellinger_dist_convergence} and
is not impacted by the slow rate in the convergence of \smash{$\hG$} to $G$.

\subsection{An Eddington/Tweedie-type formula for conditional p-values}
\label{subsec:tweedie}

As mentioned above, in empirical Bayes problems it is common that the prior $G$
can only be estimated at very slow rates, while the marginal density $f_G$ can be estimated
at faster rates. The statistical difficulty of an empirical Bayes analysis, that is, the difficulty
in imitating the oracle Bayesian, can vary substantially across possible models and statistical tasks, see
e.g.,~\citet{efron2014two, ignatiadis2022confidence}. In the worst case, the empirical Bayes problem
may be as difficult as the task of estimating $G$.

However, for certain empirical Bayes problems, the action of the oracle Bayesian may be described
as a benign functional of $f_G(\cdot)$; in that case the task of the empirical Bayesian is  
easier. A concrete example of such a situation occurs when the empirical Bayesian seeks
mean squared error optimal estimation of the natural parameter in an 
exponential family. For example,
the distribution of the sample variances $S_i^2$ in~\eqref{eq:EB} forms an exponential family with 
natural parameter $1/\sigma_i^2$. When $\sigma_i^2 \sim G$ as in~\eqref{eq:random_sigma},
the optimal estimator of $1/\sigma_i^2$ under squared error loss,
is  the posterior mean $\EE[G]{1/\sigma_i^2 \mid S_i^2}$ which may 
be expressed in terms of the marginal density $f_G$ of $S_i^2$ as follows:
\begin{equation}
\label{eq:tweedie}
\EE[G]{\frac{1}{\sigma_i^2} \cond S_i^2} = \frac{\nu-2}{\nu}\frac{1}{S_i^2} - \frac{2}{\nu} \frac{\partial f_G(S_i^2)}{\partial (S_i^2)} \frac{1}{f_G(S_i^2)}.
\end{equation}
The above formula is called Tweedie's formula~\citep{efron2011tweedie} and extends to other exponential families,
e.g., in the homoskedastic Gaussian case an analogous formula was credited by~\citet{dyson1926method} to Sir Arthur Eddington.

There are further settings where analogous ``$F$-formulas'' hold, see e.g., \citet{robbins1982estimating, cressie1982useful, du2022empirical, kwon2022fmodellingbased}.
Our next result demonstrates that such a formula also exists for the conditional p-values in~\eqref{eq:conditional_pvalue}; 
see Supplement E.1 for a proof.
\tcbset{fgp1/.style={no shadow,shrink tight,extrude by=1mm,colframe=blue,
  boxrule=0pt,frame style={opacity=0.3},interior style={opacity=0}}}
\tcbset{fg/.style={no shadow,shrink tight,extrude by=1mm,colframe=orange,
  boxrule=0pt,frame style={opacity=0.3},interior style={opacity=0}}}
\begin{prop}
\label{prop:pvalue}
The conditional p-value in~\eqref{eq:conditional_pvalue} may be expressed as
$$P_G(z, s^2) = C(\nu)\frac{\p{s^2}^{\nu/2-1}}{\tcbhighmath[fg]{f_G(s^2;\nu)}}\int_0^{\infty} \ind\p{t^2 \geq \frac{\nu s^2 + z^2}{\nu+1}} \frac{ (t^2)^{-(\nu-1)/2}}{\sqrt{ (\nu+1)t^2 - \nu s^2}}\; \tcbhighmath[fgp1]{f_G(t^2; \nu+1)} \,d(t^2),$$
where $C(\nu) \coloneqq \cb{(1+1/\nu)^{-\nu/2}\Gamma((\nu+1)/2)}\big/\cb{\sqrt{\pi}(\nu+1)^{-1/2}\Gamma(\nu/2)}$.
\end{prop}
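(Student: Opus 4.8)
The plan is to write $P_G(z,s^2)$, which by~\eqref{eq:conditional_pvalue} is a conditional expectation in the mixture model, as the ratio whose denominator is $f_G(s^2;\nu)$ and whose numerator is
$$N(z,s^2) \coloneqq \int_0^\infty 2\p{1-\Phi(\abs{z}/\sigma)}\, p(s^2 \mid \sigma^2;\nu)\, dG(\sigma^2),$$
and then to expose an ``extra degree of freedom'' hidden in $N$. The guiding observation is probabilistic: conditional on $\sigma^2$, we have $Z^2/\sigma^2 \sim \chi^2_1$ independently of $\nu S^2/\sigma^2 \sim \chi^2_\nu$, so $(Z^2 + \nu S^2)/\sigma^2 \sim \chi^2_{\nu+1}$; equivalently $W \coloneqq (Z^2 + \nu S^2)/(\nu+1)$ is, conditionally on $\sigma^2$, distributed as $\tfrac{\sigma^2}{\nu+1}\chi^2_{\nu+1}$ and therefore marginally has density $f_G(\cdot\,;\nu+1)$. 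This is precisely why a marginal at $\nu+1$ degrees of freedom appears in the claimed formula, with the integration variable $t^2$ playing the role of a realization of $W$.

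To make this precise I would first write the two-sided Gaussian tail as $2(1-\Phi(\abs{z}/\sigma)) = 2\int_{\abs{z}}^\infty (2\pi)^{-1/2}\sigma^{-1}\exp(-u^2/(2\sigma^2))\,du$ and substitute it into $N(z,s^2)$. Since the integrand is nonnegative, Tonelli's theorem permits interchanging the $u$- and $\sigma^2$-integrals. The crux is then a pointwise kernel identity: multiplying the Gaussian kernel by the scaled $\chi^2_\nu$ density in~\eqref{eq:chisq_likelihood} collapses the two exponentials, $\exp(-u^2/(2\sigma^2))\exp(-\nu s^2/(2\sigma^2)) = \exp(-(u^2+\nu s^2)/(2\sigma^2))$, and combines the powers of $\sigma$, $(\sigma^2)^{-1/2}(\sigma^2)^{-\nu/2} = (\sigma^2)^{-(\nu+1)/2}$, so that the product equals $p(t^2 \mid \sigma^2;\nu+1)$ evaluated at $t^2 = (u^2 + \nu s^2)/(\nu+1)$, up to a factor depending only on $\nu$, $s^2$, and $t^2$. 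Integrating this identity against $dG(\sigma^2)$ replaces $p(t^2 \mid \sigma^2;\nu+1)$ by $f_G(t^2;\nu+1)$ and produces the factors $(s^2)^{\nu/2-1}$ and $(t^2)^{-(\nu-1)/2}$ appearing in the statement, together with explicit Gamma and power constants.

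It remains to change variables from $u$ to $t^2 = (u^2+\nu s^2)/(\nu+1)$, i.e. $u = \sqrt{(\nu+1)t^2 - \nu s^2}$, which turns the lower limit $u=\abs{z}$ into the constraint $t^2 \geq (\nu s^2 + z^2)/(\nu+1)$ (the indicator) and contributes the Jacobian $du = \tfrac{\nu+1}{2}\,[(\nu+1)t^2-\nu s^2]^{-1/2}\,d(t^2)$, supplying the $\sqrt{(\nu+1)t^2-\nu s^2}$ in the denominator. The only remaining, and genuinely most error-prone, step is the bookkeeping of constants: the factor of $2$ from the two-sided tail cancels the $\tfrac12$ in the Jacobian, while the leftover factor $(\nu+1)$ from the Jacobian combines with $(2\pi)^{-1/2}$, the ratio $2^{(\nu+1)/2}/2^{\nu/2}$, the power $\nu^{\nu/2}$, and $\Gamma((\nu+1)/2)/\Gamma(\nu/2)$, and one verifies that the product equals exactly $C(\nu) = \cb{(1+1/\nu)^{-\nu/2}\Gamma((\nu+1)/2)}\big/\cb{\sqrt{\pi}(\nu+1)^{-1/2}\Gamma(\nu/2)}$. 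Dividing the resulting expression for $N(z,s^2)$ by $f_G(s^2;\nu)$ then yields the claim. I expect this constant-tracking to be the main obstacle, whereas the conceptual heart---recognizing the $\chi^2_{\nu+1}$ structure obtained by folding $Z^2$ into the sample variance---is the clean idea driving everything.
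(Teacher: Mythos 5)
Your proposal is correct and follows essentially the same route as the paper's proof: expand the two-sided Gaussian tail as an integral, interchange integrals by Tonelli, fold the Gaussian kernel into the scaled $\chi^2_\nu$ density to produce the $(\nu+1)$-degree-of-freedom kernel at $t^2 = (u^2+\nu s^2)/(\nu+1)$, integrate against $G$ to obtain $f_G(\cdot;\nu+1)$, and change variables $u \mapsto t^2$; your constant accounting also matches $C(\nu)$. The probabilistic observation that $(Z^2+\nu S^2)/(\nu+1)$ is conditionally $\tfrac{\sigma^2}{\nu+1}\chi^2_{\nu+1}$ is a nice motivation the paper leaves implicit, but it does not alter the computation.
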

In words, the p-value function $P_G(\cdot)$ only depends on the unknown $G$
through $\tcbhighmath[fg]{f_G(\cdot;\nu)}$ and $\tcbhighmath[fgp1]{f_G(\cdot;\nu+1)}$. 
$f_G(\cdot;\nu)$ is easily handled, since we have access to direct measurements from it.
As shown in Theorem~\ref{theo:hellinger_dist_convergence}, $f_G(\cdot;\nu)$ may
be estimated at a rate that is parametric (up to logarithmic factors) with the NPMLE under nonparametric specification of $G$.

The term $f_G(\cdot;\nu+1)$ is slightly unusual since it corresponds
to the density of sample variances with one additional degree of freedom 
compared to our actual observations ($\nu+1$ vs. $\nu$). Hence we do not have
access to direct measurements from $f_G(\cdot;\nu+1)$. 

Nevertheless, the following
lemma establishes that if we have \smash{$\hG$} such that 
$f_G(\cdot;\nu) \approx f_{\hG}(\cdot;\nu)$, then also
$f_G(\cdot;\nu+1) \approx f_{\hG}(\cdot;\nu+1)$. The proof in Supplement E.2
builds on a truncation argument in the transformation domain of the Mellin transform~\citep{epstein1948applications, butzer1999selfcontained, brennermiguel2021spectral}.
\begin{lemm}
    \label{lemm:closeness_of_marginals_plus_1}
Consider two distributions $G,G'$ as in~\eqref{eq:random_sigma} with $\EEInline[G]{\sigma^{-1}}, \EEInline[G']{\sigma^{-1}} < M,$
for $M < \infty$. Then there exists a positive constant $C=C(\nu, M)$ that only depends on $\nu \geq 2, M$ such that:
$$\Norm{f_G(\cdot;\nu+1) -  f_{G'}(\cdot;\nu+1)}^2_{L^2} \,\leq \, C  \psi\p{\Norm{f_G(\cdot;\nu) -  f_{G'}(\cdot;\nu)}^2_{L^2}},$$
where $\Norm{f}_{L^2}^2 := \int_{(0,\infty)}f^2(t)\,dt$ and $\psi(u) := \p{1 + \abs{\log(u)}}u$ for $u>0$.
\end{lemm}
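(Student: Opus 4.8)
The plan is to pass to the Mellin transform, under which the multiplicative-convolution structure of the marginal densities becomes a pointwise product, and then to run a frequency-truncation argument on the critical line $\Re(z)=1/2$. Writing $k_\nu$ for the density of $\chi^2_\nu/\nu$, equation~\eqref{eq:chisq_likelihood} shows that $p(s^2\mid\sigma^2,\nu)=(\sigma^2)^{-1}k_\nu(s^2/\sigma^2)$, so by~\eqref{eq:marginal_density} the marginal $f_G(\cdot;\nu)$ is the Mellin convolution of $G$ with $k_\nu$. Denoting the Mellin transform by $\mathcal{M}[h](z)=\int_0^\infty t^{z-1}h(t)\,dt$ and setting $\mathcal{M}[G](z)=\int_0^\infty(\sigma^2)^{z-1}\,dG(\sigma^2)$, I would record $\mathcal{M}[f_G(\cdot;\nu)](z)=\mathcal{M}[k_\nu](z)\,\mathcal{M}[G](z)$ and, by a Gamma-integral, $\mathcal{M}[k_\nu](z)=\Gamma(z+\tfrac{\nu}{2}-1)(\nu/2)^{1-z}/\Gamma(\nu/2)$. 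Crucially, both $G$ and $G'$ enter only through $\mathcal{M}[G]$, so the operation of raising the degrees of freedom from $\nu$ to $\nu+1$ is multiplication by the $G$-free ratio $R(z):=\mathcal{M}[k_{\nu+1}](z)/\mathcal{M}[k_\nu](z)$; by linearity $\mathcal{M}[\Delta_{\nu+1}](z)=R(z)\,\mathcal{M}[\Delta_\nu](z)$, where $\Delta_m:=f_G(\cdot;m)-f_{G'}(\cdot;m)$.

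Next I would quantify the two competing effects on the line $z=1/2+it$. The ratio $R$ equals $\Gamma(z+\tfrac{\nu-1}{2})/\Gamma(z+\tfrac{\nu-2}{2})$ times a bounded factor; since the two Gamma arguments differ by $1/2$, the asymptotic $|\Gamma(a+1/2)/\Gamma(a)|\sim|a|^{1/2}$ gives the polynomial growth bound $|R(1/2+it)|^2\le C(\nu)(1+|t|)$. This is the mild (order-$1/2$) ill-posedness of deblurring from $\nu$ to $\nu+1$ degrees of freedom. Working against it is decay coming from the moment hypothesis: on the critical line $|\mathcal{M}[G](1/2+it)|\le\mathbb{E}_G[(\sigma^2)^{-1/2}]=\mathbb{E}_G[\sigma^{-1}]<M$, so with $\mathcal{M}[\Delta_\nu]=\mathcal{M}[k_\nu](\mathcal{M}[G]-\mathcal{M}[G'])$ we obtain $|\mathcal{M}[\Delta_\nu](1/2+it)|\le 2M\,|\mathcal{M}[k_\nu](1/2+it)|$, and the standard asymptotic $|\Gamma(x+it)|\sim\sqrt{2\pi}\,|t|^{x-1/2}e^{-\pi|t|/2}$ shows that $|\mathcal{M}[k_\nu](1/2+it)|$ decays like $|t|^{(\nu-2)/2}e^{-\pi|t|/2}$.

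With these two ingredients in hand, the Mellin--Plancherel identity $\|h\|_{L^2}^2=\tfrac{1}{2\pi}\int_{-\infty}^\infty|\mathcal{M}[h](1/2+it)|^2\,dt$ (obtained from the Fourier--Plancherel theorem after the substitution $t=e^x$) converts the problem into estimating $\int|R|^2|\mathcal{M}[\Delta_\nu]|^2$. I would split this integral at a threshold $|t|=T$. On $|t|\le T$ I bound $|R|^2\le C(\nu)(1+T)$ and use Plancherel on the $\nu$-side to get a contribution $\lesssim(1+T)\|\Delta_\nu\|_{L^2}^2$; on $|t|>T$ I insert the exponential decay of $|\mathcal{M}[\Delta_\nu]|$ to bound the tail by $C(\nu,M)e^{-cT}$ for some $c<\pi$ (polynomial factors absorbed into a slightly smaller exponent). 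Writing $u:=\|\Delta_\nu\|_{L^2}^2$, this yields $\|\Delta_{\nu+1}\|_{L^2}^2\le C_1(\nu)(1+T)u+C_2(\nu,M)e^{-cT}$; optimizing over $T$ by choosing $e^{-cT}=u$ (so $T\asymp|\log u|$) produces exactly the modulus $\psi(u)=(1+|\log u|)u$, with the regime $u\ge 1$ dispatched by taking $T=0$ and using $\psi(u)\ge u$.

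The main obstacle I anticipate is the clean, uniform-in-$t$ control of the Gamma-function factors. I must verify that no poles fall on the critical line for every $\nu\ge 2$: the relevant Gamma arguments there have real parts $\nu/2$ and $(\nu-1)/2$, both $\ge 1/2>0$, so $R$ and $\mathcal{M}[k_\nu]$ are analytic and nonvanishing there. I then need to pin down the two asymptotics---polynomial growth of $R$ and exponential decay of $\mathcal{M}[k_\nu]$---with explicit constants valid for \emph{all} $t\in\mathbb{R}$ rather than merely as $|t|\to\infty$. Secondary bookkeeping includes checking that $\Delta_\nu\in L^2$ and that $\mathcal{M}[\Delta_\nu]$ is well defined on $\Re(z)=1/2$ (both follow from the moment bound together with the exponential decay of $\mathcal{M}[k_\nu]$), which legitimizes the Plancherel step.
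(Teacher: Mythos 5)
Your proposal is correct and follows essentially the same route as the paper's own proof: both pass to the Mellin transform on the line $\Re(z)=1/2$, factor the marginal transform as $\mathcal{M}[G]\cdot\mathcal{M}[k_\nu]$, exploit the $G$-free ratio $R(z)=\mathcal{M}[k_{\nu+1}](z)/\mathcal{M}[k_\nu](z)$ with its $|t|^{1/2}$ growth against the exponential decay $|t|^{c+\nu/2-1}e^{-\pi|t|/2}$ supplied by the Gamma asymptotics and the moment bound $\mathbb{E}_G[\sigma^{-1}]<M$, and then combine Mellin--Plancherel with a frequency truncation at $T\asymp|\log u|$ to produce the modulus $\psi(u)=(1+|\log u|)u$. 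The only cosmetic differences are that the paper phrases the tail bound directly through the decay of $\mathcal{M}[f_G(\cdot;\nu+1)]$ rather than through $|R|\cdot|\mathcal{M}[\Delta_\nu]|$, and it handles the large-$|t|$-only validity of the Gamma estimates by taking $T$ to be the maximum of $|\log u|$ and fixed thresholds, exactly the bookkeeping you flag as the remaining obstacle.
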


We next prove that the empirical Bayes p-values $P_{\hG}(Z_i, S_i^2)$ converge to the oracle p-values $P_G(Z_i, S_i^2)$
at almost parametric rates. 
At a high-level, the proof in Supplement E.3 proceeds as follows: let $\mathfrak{F}$ be the mapping $f_G \mapsto P_G$ (which is well-defined by identifiability of $G$). Then the integral representation of $P_G$ in Proposition~\ref{prop:pvalue}, along with Lemma~\ref{lemm:closeness_of_marginals_plus_1}, can be used to show a continuity property such that $\Dhel(f_{\hG},\, f_G)  \approx 0$ implies that $\mathfrak{F}(f_{\hG}) \approx \mathfrak{F}(f_{G})$ (in a suitable sense). We conclude by applying the convergence result for $\Dhel(f_{\hG},\, f_G)$ in Theorem~\ref{theo:hellinger_dist_convergence}. We pattern our argument following~\citet{jiang2009general}.\footnote{\label{footnote:jiangzhang}\citet{jiang2009general} consider an empirical Bayes estimand that does not have an integral representation (such as the one in Proposition~\ref{prop:pvalue}) and instead directly depends on the derivative of the marginal density, very similar to the expression in~\eqref{eq:tweedie}. The ``continuity'' argument of~\citet{jiang2009general} is a technical feat that proceeds via induction, bounding lower-order derivatives using higher-order derivatives.
}

\begin{theo}
\label{theo:pvalue_quality}
Under Assumption~\ref{assu:compact_support_and_nu_EB}, 
for any $\zeta \in (1/2,\, 1)$, there exists a constant $C = C(\nu, \ubar{L}, \bar{U}, \zeta)$ that depends only 
on $\nu, \ubar{L}, \bar{U}, \zeta$ such that:
$$ \max_{1 \leq i \leq n} \EE[G]{\abs{P_G(Z_i, S_i^2)\land \zeta - P_{\hG}(Z_i, S_i^2)\land \zeta}} \leq C \frac{(\log n)^{5/2}}{\sqrt{n}} \text{ for all } n \in \mathbb N_{\geq 2}.$$
Above we write $a \land b \coloneqq \min \cb{a, b}$ for two numbers $a, b \in \RR$. 
\end{theo}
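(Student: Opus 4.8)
The plan is to reduce the claimed bound to statements about the marginal densities $f_G(\cdot;\nu)$ and $f_G(\cdot;\nu+1)$ and then feed in Proposition~\ref{prop:pvalue}, Lemma~\ref{lemm:closeness_of_marginals_plus_1} and Theorem~\ref{theo:hellinger_dist_convergence}. First I would remove the dependence on the unknown means $\mu_i$, which is what makes the $\max_i$ harmless. Writing $\Delta(z,s^2):=P_G(z,s^2)\land\zeta-P_{\hG}(z,s^2)\land\zeta$ and conditioning on $(S_1^2,\dots,S_n^2)$ (which determines $\hG$), the conditional density of $Z_i$ is a mixture of $\nn(\mu_i,\sigma^2)$ densities with $\sigma^2\ge L$, hence bounded by $(2\pi L)^{-1/2}$ uniformly in $\mu_i$. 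Thus
\[
\EE[G]{\abs{\Delta(Z_i,S_i^2)}\mid S_1^2,\dots,S_n^2}\le (2\pi L)^{-1/2}\,J(S_i^2),\qquad J(s^2):=\int_{\RR}\abs{\Delta(z,s^2)}\,dz,
\]
and this Gaussian-density bound is exactly what erases $\mu_i$, hence the index $i$.

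Next comes the analytic core: a bound on the weighted double integral $\int\int\abs{P_G(z,s^2)-P_{\hG}(z,s^2)}\,f_G(s^2;\nu)\,dz\,d(s^2)$, which upper bounds $\int J(s^2)f_G(s^2;\nu)\,d(s^2)$ after using $\abs{a\land\zeta-b\land\zeta}\le\abs{a-b}$. Writing $P_{G'}=C(\nu)(s^2)^{\nu/2-1}I_{G'}/f_{G'}(s^2;\nu)$ with $I_{G'}$ the integral in Proposition~\ref{prop:pvalue}, I split the difference of ratios so that, crucially, the weight $f_G(s^2;\nu)$ cancels the denominators:
\[
\abs{P_G-P_{\hG}}\,f_G(s^2;\nu)\ \le\ C(\nu)(s^2)^{\nu/2-1}\abs{I_G-I_{\hG}}\ +\ P_{\hG}\,\abs{f_{\hG}(s^2;\nu)-f_G(s^2;\nu)}.
\]
For the second (``denominator'') term, $\int_{\RR}P_{\hG}(z,s^2)\,dz=\EE[\hG]{4\sigma/\sqrt{2\pi}\mid S^2=s^2}\le 4\bar{U}^{1/2}/\sqrt{2\pi}$ is bounded, so it contributes at most a constant times $\Norm{f_{\hG}(\cdot;\nu)-f_G(\cdot;\nu)}_{L^1}\le 2\sqrt2\,\Dhel(f_{\hG},f_G)$, already of order $(\log n)/\sqrt n$ by Theorem~\ref{theo:hellinger_dist_convergence}. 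For the first (``numerator'') term I integrate $\abs{I_G-I_{\hG}}$ in $z$ and apply Fubini: the indicator $t^2\ge(\nu s^2+z^2)/(\nu+1)$ confines $z$ to an interval of half-length $\sqrt{(\nu+1)t^2-\nu s^2}$, which cancels the singular factor $((\nu+1)t^2-\nu s^2)^{-1/2}$ and collapses the weight to $2(t^2)^{-(\nu-1)/2}$; a further Fubini in $s^2$ against $(s^2)^{\nu/2-1}$ (integrable at $0$ since $\nu\ge2$) then produces $(t^2)^{\nu/2}$. What remains is a weighted $L^1$ norm of the $(\nu+1)$-difference, $\int_0^\infty (t^2)^{1/2}\,\abs{f_G(t^2;\nu+1)-f_{\hG}(t^2;\nu+1)}\,d(t^2)$.

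The heart of the matter is to convert this weighted $L^1$ norm into the Hellinger rate at $\nu$ degrees of freedom. I would truncate at $T\asymp\log n$: on $t^2>T$ the integrand is dominated by $(t^2)^{1/2}(f_G+f_{\hG})(t^2;\nu+1)$, which decays like $(t^2)^{(\nu-1)/2}e^{-(\nu+1)t^2/(2\bar{U})}$ for every prior supported on $[\ubar{L},\bar{U}]$, so choosing the truncation constant in terms of $(\nu,\bar{U})$ makes the tail $o(1/\sqrt n)$. On $t^2\le T$, Cauchy--Schwarz gives $\lesssim T\,\Norm{f_G(\cdot;\nu+1)-f_{\hG}(\cdot;\nu+1)}_{L^2}$. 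Now Lemma~\ref{lemm:closeness_of_marginals_plus_1} (applicable with $M=\ubar{L}^{-1/2}$) bounds $\Norm{f_G(\cdot;\nu+1)-f_{\hG}(\cdot;\nu+1)}_{L^2}^2$ by $C\psi$ of the squared $L^2$ distance at $\nu$ degrees of freedom, which is itself $\lesssim\Dhel(f_{\hG},f_G)^2$ because both marginals are uniformly bounded for $\nu\ge2$. Feeding in $\Dhel\lesssim(\log n)/\sqrt n$, the squared $L^2$ distance at $\nu$ is of order $(\log n)^2/n$; the extra logarithm in $\psi(u)=(1+\abs{\log u})u$ upgrades this to $(\log n)^3/n$, i.e.\ $\Norm{f_G(\cdot;\nu+1)-f_{\hG}(\cdot;\nu+1)}_{L^2}\lesssim(\log n)^{3/2}/\sqrt n$; multiplying by $T\asymp\log n$ yields exactly the advertised $(\log n)^{5/2}/\sqrt n$. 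Off the good event $\mathcal E_n:=\{\Dhel(f_{\hG},f_G)\le C(\log n)/\sqrt n\}$ I use $\abs{\Delta}\le\zeta\le1$ together with the super-polynomially small probability $\exp(-c(\log n)^2)$ from Theorem~\ref{theo:hellinger_dist_convergence}.

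The step I expect to be the main obstacle is not the analytic chain above but the fact that $\hG$ is computed from the very sample $S_i^2$ at which $J$ is evaluated, so $\EE[G]{J(S_i^2)}$ is a \emph{resubstitution} quantity rather than $\int J(s^2)f_G(s^2;\nu)\,d(s^2)$, which is what the analytic core controls (bounding it on $\mathcal E_n$ by $C(\log n)^{5/2}/\sqrt n$). Since all the bounds above are expressed as integrals of $J$ against the true weight $f_G(\cdot;\nu)$, I would close the gap by symmetry --- $\EE[G]{J(S_i^2)\ind_{\mathcal E_n}}=\EE[G]{\ind_{\mathcal E_n}\,n^{-1}\sum_j J(S_j^2)}$ --- and a uniform law of large numbers over the class of candidate priors on $[\ubar{L},\bar{U}]$. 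The empirical average differs from $\int J(s^2)f_G(s^2;\nu)\,d(s^2)$ by an empirical-process term; and because $\int_{\RR}P_{G'}(z,s^2)\,dz$ is bounded uniformly in $(s^2,G')$, the functions $s^2\mapsto J_{G'}(s^2)$ (defined as $J$ but with $\hG$ replaced by a generic prior $G'$) are uniformly bounded, so an entropy bound for the mixture class --- of the same flavor as in the proof of Theorem~\ref{theo:hellinger_dist_convergence} --- makes this fluctuation $\tilde O(n^{-1/2})$, which does not dominate $(\log n)^{5/2}/\sqrt n$. Assembling the good-event bound, the bad-event bound, and the resubstitution correction gives the claim, uniformly in $i$ and in the unknown $\mu_i$.
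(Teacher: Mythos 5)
Your proposal is correct in its overall architecture, reaches the right rate, and genuinely differs from the paper's proof in execution, so a comparison is worthwhile. The paper eliminates $Z_i$ (and hence $\mu_i$) by bounding the truncated difference by $\sup_{z:\abs{z}\geq\ubar{z}}\abs{P_G(z,S_i^2)-P_{\hG}(z,S_i^2)}$ and converting the $\max_i$ to an average via exchangeability of the $S_i^2$; you instead integrate over $z$ against the conditional density of $Z_i$ given the sample variances, bounded by $(2\pi L)^{-1/2}$ uniformly in $\mu_i$ --- an equally valid and arguably cleaner reduction. Both arguments then run through Proposition~\ref{prop:pvalue}, Lemma~\ref{lemm:closeness_of_marginals_plus_1}, and Theorem~\ref{theo:hellinger_dist_convergence}, but with different decompositions. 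The paper keeps the midpoint prior $(\hG+G)/2$ in the denominator, bounds the numerator difference by a \emph{deterministic} sup-norm quantity $\zeta_n\lesssim(\log n)^{3/2}/\sqrt{n}$ on the good event (so the data-dependence of $\hG$ is harmless for that term, and the weight $1/f_G(S_i^2)$ integrates to $B_n\asymp\log n$, giving the $5/2$ exponent), and treats the leftover denominator term by a covering-plus-Hoeffding argument (Lemma~\ref{lemm:denominator_control}). You instead multiply by $f_G$ so the denominators cancel, Fubini the numerator term in $z$ and $s^2$ down to the weighted $L^1$ quantity $\int (t^2)^{1/2}\abs{f_G(t^2;\nu+1)-f_{\hG}(t^2;\nu+1)}\,d(t^2)$, and then truncate at $T\asymp\log n$ and apply Cauchy--Schwarz; your log-bookkeeping ($T$ times the $(\log n)^{3/2}/\sqrt{n}$ coming from $\psi$ applied to the squared $L^2$ distance at $\nu$) lands on exactly the same $(\log n)^{5/2}/\sqrt{n}$.

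The one step that is materially thinner than it needs to be is the resubstitution step, which you rightly flag as the crux. The symmetrization identity is fine, but the asserted uniform law of large numbers over the class $\cb{s^2\mapsto J_{G'}(s^2)}$ is not a routine consequence of the entropy bound used for Theorem~\ref{theo:hellinger_dist_convergence}: a sup-norm cover of the marginals $f_{G'}(\cdot;\nu)$ and $f_{G'}(\cdot;\nu+1)$ does not directly transfer to a sup-norm cover of the $J_{G'}$, because $P_{G'}$ divides by $f_{G'}(s^2;\nu)$, and the natural Lipschitz-type bound on $\abs{J_{G'}(s^2)-J_{G''}(s^2)}$ degrades both as $s^2\to\infty$ (where $f_{G'}$ is exponentially small) and as $s^2\to 0$ (where, for $\nu\geq 4$, the same Fubini computation produces a factor of order $(s^2)^{-(\nu-3)/2}$; for the $\abs{f_{G'}-f_{G''}}/f_{G'}$ term one even needs the cover to hold in the weighted norm $(s^2)^{-(\nu/2-1)}\abs{\cdot}$ to avoid losing smallness near the origin). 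All of this can be patched --- truncate $s^2$ to $[a_n, B_n]$ with $a_n$ polynomially small, use $\PP[G]{S_i^2<a_n}\lesssim a_n^{\nu/2}$ together with the uniform bound $J_{G'}\lesssim\bar{U}^{1/2}$ off this range, and absorb the polynomial Lipschitz constants into the cover mesh, since the entropy grows only like $\abs{\log\varepsilon}^2$ --- but it is genuine work of the same magnitude as the paper's Lemma~\ref{lemm:denominator_control}, not a corollary of it. The paper's split is engineered precisely to keep this step manageable: the empirical-process argument is applied only to the self-normalized denominator ratios, which lie in $[0,1]$ and for which the cover transfers easily; your route concentrates all of that difficulty into the final ULLN.
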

The truncation of the p-values to $\zeta \in (1/2,\, 1)$ is helpful 
in controlling the tail behavior of $P_G(z, s^2)$ when $z \approx 0$. 
In the next section we will study 
the Benjamini-Hochberg procedure; the above guarantee suffices as long as $\alpha < \zeta$, where
$\alpha$ is the nominal level at which we seek to control the false discovery rate.

\begin{exam}[Identical variances]
    \label{exam:same_variance}
Suppose that all the variances $\sigma_i^2$ are almost surely equal to $\bar{\sigma}^2 > 0$, that is, \smash{$G = \delta_{\bar{\sigma}^2}$}, where $\delta_u$ denotes the point 
mass at $u$.  In this case, the oracle procedure for testing $H_i: \mu_i=0$ reduces to the usual z-test with p-value \smash{$2\p{1-\Phi(\abs{Z_i}/\bar{\sigma})}$}.
Theorem~\ref{theo:pvalue_quality} then implies that the empirical partially Bayes p-values using the NPMLE in~\eqref{eq:npmle_opt}
automatically adapt and are close to the oracle z-test p-values.

The assumption that $G$ is a point mass at $\bar{\sigma}$, may appear to be
contrived.  In the statistical literature, related assumptions are often considered
~\citep{robbins1956empirical, ignatiadis2021empirical, soloff2021multivariate} to
demonstrate the adaptation of nonparametric empirical Bayes procedures. 
In the present setting, the assumption is substantive: the assumption
that all $\sigma_i$ are equal appears regularly in scientific publications~\citep{kerr2000analysisa,houde2011utility, weis2019comment}. 
The motivation is to stabilize the variance estimate
 when the individual sample variances are very noisy, 
(e.g.,model~\eqref{eq:EB} with small degrees of freedom $\nu$).
Our nonparametric strategy 
 automatically adapts to the equal variance
assumption, if it is indeed warranted, and can thus match the power of the oracle
z-test with known variance. The gains in that case, e.g.,
compared to the t-test, can be substantial, 
see the classical treatment of~\citet{walsh1949information} and the simulation study in Section~\ref{sec:simulations}.
\end{exam}

We conclude with an important consequence of Theorem~\ref{theo:pvalue_quality}: 
the empirical partially Bayes p-values $P_{\hG}(Z_i, S_i^2)$ 
asymptotically satisfy the uniformity properties of the oracle p-values $P_G(Z_i, S_i^2)$ that we laid
out in Proposition~\ref{prop:monotonicity}.
\begin{prop}
    \label{prop:asymptotically_uniform}
    Suppose Assumption~\ref{assu:compact_support_and_nu_EB} holds.
    Let $\Hnull \coloneqq \cb{i \in \cb{1,\dotsc,n}\,:\, \mu_i=0}$ be the subset of null hypotheses and let $\zeta \in (1/2,\, 1)$, $C = C(\nu, \ubar{L}, \bar{U}, \zeta)$ be as in Theorem~\ref{theo:pvalue_quality}.
    Then, the following hold for $P_i = P_{\hG}(Z_i, S_i^2)$:
    \begin{enumerate}[wide, noitemsep]
        \item Asymptotic conditional uniformity: $$\max_{i \in \Hnull}\cb{\EE[G]{\sup_{t \in [0,\zeta]} \abs{\PP[G]{P_i \leq t  \mid S_1^2,\dotsc,S_n^2 } - t}}}\, \leq\,  C \frac{(\log n)^{5/2}}{\sqrt{n}} \text{ for all } n \in \mathbb N_{\geq 2}.$$
        \item Asymptotic uniformity:  
         $$\max_{i \in \Hnull}\sup_{t \in [0,\zeta]} \abs{\PP[G]{P_i \leq t} - t}\, \leq\,  C \frac{(\log n)^{5/2}}{\sqrt{n}} \text{ for all } n \in \mathbb N_{\geq 2}.$$
    \end{enumerate}
\end{prop}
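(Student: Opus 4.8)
The plan is to derive both parts from three ingredients: the conditional uniformity and monotonicity of the oracle p-values (Proposition~\ref{prop:monotonicity}), the rate bound of Theorem~\ref{theo:pvalue_quality}, and a reduction that rewrites each conditional-CDF deviation as a pointwise difference between $P_G$ and $P_{\hG}$. First I would set up the reduction. Fix a null index $i \in \Hnull$, so that $\mu_i=0$. Since $\hG$ is a measurable function of $(S_1^2,\dotsc,S_n^2)$ alone, conditioning on $\cb{S_1^2,\dotsc,S_n^2}$ freezes the map $z \mapsto P_{\hG}(z, S_i^2)$, which is non-increasing in $\abs{z}$ by Proposition~\ref{prop:monotonicity}(c). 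Because the units in~\eqref{eq:full_hierarchical} are independent, the conditional law of $Z_i$ given $\cb{S_1^2,\dotsc,S_n^2}$ equals its conditional law given $S_i^2$; and since $\mu_i=0$, the definition~\eqref{eq:conditional_pvalue} gives $\PP[G]{\abs{Z_i} \geq z \mid S_i^2} = P_G(z, S_i^2)$ for every $z \geq 0$. Inverting the frozen monotone map at level $t$ yields its quantile threshold $c_{\hG}(t, S_i^2)$, characterized by $P_{\hG}(c_{\hG}(t,S_i^2), S_i^2)=t$, whence $\PP[G]{P_i \leq t \mid S_1^2,\dotsc,S_n^2} = P_G(c_{\hG}(t,S_i^2), S_i^2)$.

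Subtracting the identity $t = P_{\hG}(c_{\hG}(t,S_i^2), S_i^2)$ then produces the exact formula $\abs{\PP[G]{P_i \leq t \mid S_1^2,\dotsc,S_n^2} - t} = \abs{P_G(z_t, S_i^2) - P_{\hG}(z_t, S_i^2)}$ with $z_t \coloneqq c_{\hG}(t, S_i^2)$. As $t$ sweeps $[0,\zeta]$, monotonicity sends $z_t$ over the ray $[c_{\hG}(\zeta, S_i^2),\,\infty)$, so the inner supremum in Part~1 equals $\sup_{z \geq c_{\hG}(\zeta, S_i^2)} \abs{P_G(z, S_i^2) - P_{\hG}(z, S_i^2)}$. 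Thus Part~1 reduces to bounding $\max_{i \in \Hnull}\EE[G]{\sup_{z \geq c_{\hG}(\zeta, S_i^2)} \abs{P_G(z, S_i^2) - P_{\hG}(z, S_i^2)}}$ by $C(\log n)^{5/2}/\sqrt n$.

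This last display is the crux, and I would establish it by mirroring the proof of Theorem~\ref{theo:pvalue_quality}, with the expectation over $Z_i$ replaced by a supremum over the threshold $z$. The key structural observation from the integral representation in Proposition~\ref{prop:pvalue} is that, for fixed $s^2$, the variable $z$ enters $P_G(z,s^2)$ only through the lower endpoint $\tau=(\nu s^2 + z^2)/(\nu+1)$ of the $t^2$-integral; hence a supremum over $z$ costs nothing more than replacing $\int_{\tau}^{\infty}$ by its supremum over $\tau$, which is dominated by the full integral. Two facts keep this harmless: because $\zeta<1$ is fixed, $c_{\hG}(\zeta, S_i^2)$ is bounded away from $0$ uniformly over priors supported on $[\ubar{L},\bar{U}]$ and over $s^2$ in the relevant range, so the supremum avoids the singular region $z\approx 0$; and under Assumption~\ref{assu:compact_support_and_nu_EB} the Gaussian-type tails of $f_G(\cdot;\nu+1)$ render the contribution of large $z$ negligible relative to $n^{-1/2}$. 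On the remaining bounded window, Cauchy--Schwarz against the (square-integrable) kernel bounds $\sup_{z}\abs{P_G(z,s^2)-P_{\hG}(z,s^2)}$ by $\Norm{f_G(\cdot;\nu+1)-f_{\hG}(\cdot;\nu+1)}_{L^2}$ together with a denominator term controlled by $\abs{f_G(s^2;\nu)-f_{\hG}(s^2;\nu)}$, exactly the quantities that Lemma~\ref{lemm:closeness_of_marginals_plus_1} and Theorem~\ref{theo:hellinger_dist_convergence} drive to $0$ at the rate $(\log n)^{5/2}/\sqrt n$ (on the high-probability event of Theorem~\ref{theo:hellinger_dist_convergence}, with the complementary event contributing a negligible $\exp(-c(\log n)^2)$ remainder). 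Taking expectation over $(S_1^2,\dotsc,S_n^2)$ gives Part~1. I expect this step to be the main obstacle: verifying that the supremum over the threshold $z$ is absorbed without degrading the parametric rate, which hinges precisely on $z$ entering Proposition~\ref{prop:pvalue} only through an integration limit.

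Finally, Part~2 follows from Part~1 with no additional work. For each $t\in[0,\zeta]$, Jensen's inequality and the tower property give $\abs{\PP[G]{P_i \leq t} - t} = \abs{\EE[G]{\PP[G]{P_i \leq t \mid S_1^2,\dotsc,S_n^2} - t}} \leq \EE[G]{\sup_{s\in[0,\zeta]}\abs{\PP[G]{P_i \leq s \mid S_1^2,\dotsc,S_n^2} - s}}$; since the right-hand side is independent of $t$, taking the supremum over $t\in[0,\zeta]$ on the left preserves the inequality, and invoking the Part~1 bound completes the proof.
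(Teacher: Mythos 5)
Your proposal is correct and follows essentially the same route as the paper: the paper likewise reduces the conditional-CDF deviation to $\Delta_i = \sup_{z:\abs{z} \geq \ubar{z}}\abs{P_G(z,S_i^2) - P_{\hG}(z,S_i^2)}$ with $\ubar{z} = \ubar{L}^{1/2}z_{1-\zeta/2}$ (via the sandwich $t - \Delta_i \leq \PP[G]{P_i \leq t \mid S_1^2,\dotsc,S_n^2} \leq t + \Delta_i$, exploiting that $\Delta_i$ is a function of the sample variances alone and that the oracle p-value is conditionally uniform, rather than via your exact quantile inversion), and Part 2 is deduced by the same tower/Jensen step. The sup-over-$z$ bound that you propose to re-derive by "mirroring" Theorem~\ref{theo:pvalue_quality} is exactly what the paper's proof of that theorem already establishes internally---it bounds $\EE[G]{\sup_{z:\abs{z}\geq\ubar{z}}\abs{P_G(z,S_i^2)-P_{\hG}(z,S_i^2)}}$ by $C(\log n)^{5/2}/\sqrt{n}$ through the numerator/denominator decomposition, Cauchy--Schwarz against the kernel, and Lemma~\ref{lemm:closeness_of_marginals_plus_1}---so the proposition's proof simply cites that bound instead of repeating the argument.
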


\begin{rema}[Avoiding double-use of the data]
    \label{rema:double_use}
The empirical partially Bayes approach  proceeds in two steps: first, construct the tail functions \smash{$z \mapsto P_{\widehat{G}}(z, S_i^2)$} using all the sample variances $(S_1^2,\dotsc, S_n^2)$, and then use the remaining randomness, that is, the randomness in $(Z_1,\dotsc,Z_n)$ conditional on $(S_1^2,\dotsc,S_n^2)$, for the testing task. 
The separation of roles of $(S_1^2,\dotsc,S_n^2)$ and $(Z_1,\dotsc,Z_n)$ is conceptually appealing as it prevents the double-use of data, a point discussed by \citet{bayarri2000values} in the context of conditional predictive p-values.
The separation of roles also facilitates the  proof of the fast convergence rate (\smash{$1/\sqrt{n}$} up to logarithmic factors) in Proposition~\ref{prop:asymptotically_uniform}. At a high-level, the argument proceeds as follows.
Let \smash{$\Delta_i : = \sup_{z} \abs{ P_{\hG}(z, S_i^2) - P_G(z, S_i^2)}$}. For any $t \in [0,1]$ and $i \in \Hnull$, it holds that
$$\PP[G]{ P_{\hG}(Z_i, S_i^2) \leq t \cond S_1^2,\dotsc,S_n^2} \leq \PP[G]{P_G(Z_i, S_i^2) \leq t + \Delta_i \cond S_1^2,\dotsc,S_n^2} = (t + \Delta_i)\land 1.$$
The last equality holds because $\Delta_i$ is a function of $S_1^2,\dotsc,S_n^2$ only (since \smash{$\widehat{G}$} is a function thereof) and because the distribution of the oracle p-value \smash{$P_G(Z_i, S_i^2)$}  is uniform conditional on \smash{$S_1^2,\dotsc,S_n^2$} by Proposition~\ref{prop:pvalue} and independence of the pairs \smash{$(Z_i, S_i^2)$}, \smash{$i=1,\dotsc,n$}. 
Analogously it follows that \smash{$\mathbb P_G[P_i \leq t \cond S_1^2,\dotsc,S_n^2] \geq \max\cb{t - \Delta_i,\, 0}$}, and so the conclusion of part 1 of the proposition can be verified by controlling $\EE[G]{\Delta_i}$. This is accomplished in Supplement E.4 (with an argument that uses a slightly different definition for $\Delta_i$).

\end{rema}

\subsection{Benjamini-Hochberg with empirical partially Bayes p-values}
\label{subsec:BH}

In this subsection we describe the last step 
of Algorithm~\ref{algo:np_limma} in more detail.
Suppose we are testing $n$ hypotheses $H_1, \dotsc, H_n$ based on p-values $P_1,\dotsc,P_n$.
Given $\alpha \in (0,1)$, the~\citet{benjamini1995controlling} (BH)
procedure produces a subset $\mathcal{D}$ of $\cb{1,\dotsc,n}$
representing the indices of rejected hypotheses 
through the following rule:
\begin{equation}
\label{eq:BH}
\text{Reject } H_k \, \text{ if }\, P_k \leq P_{(k^*)}, \text{ where } k^* \coloneqq \max\cb{\ell \in \cb{0,\dotsc,n} \,:\, P_{(\ell)} \leq \frac{\alpha \cdot \ell}{n}},
\end{equation}
and $P_{(1)} \leq P_{(2)} \leq \dotsc \leq P_{(n)}$ are the order statistics of $P_1,\dotsc,P_n$
and $P_{(0)}=0$. 

The most basic type-I error result for the BH procedure is as follows.
Let $V_n = \# \mathcal{D}\cap \Hnull$ be the total number of 
false discoveries of BH, $R_n = \#\mathcal{D}$ the total number of discoveries, and 
define the false discovery rate
as the expectation of the false discovery proportion,
that is, $\mathrm{FDR}_n := \EE{ \mathrm{FDP}_n}$, $\mathrm{FDP}_n := V_n / (R_n\lor 1)$, where we write $a \lor b = \max\cb{a, b}$ for $a, b \in \RR$. 
Then, if all p-values are independent and $P_i \sim U[0,\,1]$ for $i \in \Hnull$, 
the BH procedure satisfies $\textrm{FDR}_n = \alpha n_0/n$ where $n_0 = \# \Hnull$ is the number of null hypotheses.
In words: the BH procedure controls the false
discovery rate at the slightly conservative level $\alpha n_0/n$.

In our setting, we only have approximate p-values $P_i = P_{\hG}(Z_i, S_i^2)$,
which in general will not be exactly uniformly distributed for $i \in \Hnull$
and finite $n$. Our next results builds on Proposition~\ref{prop:asymptotically_uniform} 
and demonstrates that BH applied to $P_i = P_{\hG}(Z_i, S_i^2)$ (Algorithm~\ref{algo:np_limma})
asymptotically controls the false discovery rate at level $\alpha n_0/n$ as $n \to \infty$.\footnote{\label{footnote:storey} The conservative FDR control 
of the BH procedure has motivated the development of null proportion adaptive methods, e.g., Storey's procedure~\citep{storey2004strong}. 
At a high level, null proportion adaptive methods proceed by estimating $n_0/n$ as \smash{$\widehat{\pi_0}$} and then applying the BH procedure
at level \smash{$\alpha/\widehat{\pi_0}$}. Under assumptions and a setting similar to those in Theorem~\ref{theo:limma_bh_controls_the_FDR}, it can be proven that using Storey's procedure instead of BH in Algorithm~\ref{algo:np_limma} would still asymptotically control the false discovery rate at level $\alpha$.
}
The crux of our proof, presented in Supplement F.2, employs a leave-one-out strategy inspired by \citet{roquain2022false} among others~\citep{ferreira2006benjamini,li2019multiple,ignatiadis2021covariate}. Concretely, when controlling the contribution to the false discovery rate of $i\in \Hnull$, we condition on $Z_1,\dotsc,Z_{i-1},Z_{i+1},\dotsc,Z_n$ and $S_1^2,\dotsc,S_n^2$ (including $S_i^2$). Hence we only need to consider the randomness in $Z_i$ (conditional on everything else), similarly to the proof of Proposition~\ref{prop:asymptotically_uniform}.

\begin{theo}
    \label{theo:limma_bh_controls_the_FDR}
Suppose that Assumption~\ref{assu:compact_support_and_nu_EB} holds and that there exist sequences $\kappa_n, \eta_n > 0$ such that $\PP[G,\boldmu]{R_n < n \kappa_n} \leq \eta_n$, where $R_n$ is the number of discoveries of Algorithm~\ref{algo:np_limma} applied at level $\alpha \in (0,1)$. Then there exists a constant $C=C(\ubar{L}, \bar{U},\alpha)$ such that for all $n \in \mathbb N_{\geq 2}$:
$$ \mathrm{FDR}_n -\frac{n_0}{n}\alpha\; \leq \;  \EE[G,\boldmu]{ \p{\EE[G,\boldmu]{\mathrm{FDP}_n \cond S_1^2,\dotsc,S_n^2} - \frac{n_0}{n}\alpha}\lor 0}\; \leq \;  C \frac{(\log n)^{5/2}}{n^{1/2} \kappa_n } \,+\, \eta_n.$$
\end{theo}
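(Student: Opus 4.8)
The plan is to bound the middle quantity $\EE[G,\boldmu]{\p{\EE[G,\boldmu]{\mathrm{FDP}_n \mid S_1^2,\dotsc,S_n^2} - \tfrac{n_0}{n}\alpha}\lor 0}$, since the leftmost inequality is immediate: for any integrable $X$ and constant $c$ one has $\EE{X}-c\leq\EE{\p{X-c}\lor 0}$, which I would apply with $X = \EE[G,\boldmu]{\mathrm{FDP}_n \mid S_1^2,\dotsc,S_n^2}$ and $c = n_0\alpha/n$ after the tower property. Throughout I would fix $\zeta\in(\alpha\lor\tfrac12,\,1)$ so that Theorem~\ref{theo:pvalue_quality} and Proposition~\ref{prop:asymptotically_uniform} apply, and write $E_i := \sup_{t\in[0,\zeta]}\abs{\PP[G,\boldmu]{P_i\leq t \mid S_1^2,\dotsc,S_n^2}-t}$, which is $\sigma(S_1^2,\dotsc,S_n^2)$-measurable and, by Proposition~\ref{prop:asymptotically_uniform}(1), satisfies $\max_{i\in\Hnull}\EE[G,\boldmu]{E_i}\leq C(\log n)^{5/2}/\sqrt n$.

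The engine is a leave-one-out device for BH. For each $i$ let $R_n^{-i}$ be the number of BH rejections computed from $(P_1,\dotsc,P_{i-1},0,P_{i+1},\dotsc,P_n)$, i.e., with $P_i$ forced to $0$. I would invoke two standard deterministic facts: (i) hypothesis $i$ is rejected iff $P_i\leq\alpha R_n^{-i}/n$, and on this event $R_n=R_n^{-i}$; and (ii) lowering one p-value only increases the rejection count, so $R_n^{-i}\geq R_n$. Thus $\mathrm{FDP}_n=\sum_{i\in\Hnull}\ind\p{P_i\leq\alpha R_n^{-i}/n}/R_n^{-i}$, where each $R_n^{-i}$ is independent of $P_i$. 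The second ingredient reduces the conditioning to the sample variances: for a null index $i$ set $\mathcal{F}_{-i}:=\sigma\p{\cb{Z_j:j\neq i}\cup\cb{S_1^2,\dotsc,S_n^2}}$. Since $\hG$ depends on $(S_1^2,\dotsc,S_n^2)$ alone, both $R_n^{-i}$ and $E_i$ are $\mathcal{F}_{-i}$-measurable; and since the pairs $(Z_j,S_j^2)$ are independent across $j$, the conditional law of $Z_i$—hence of $P_i=P_{\hG}(Z_i,S_i^2)$—given $\mathcal{F}_{-i}$ coincides with its law given $S_1^2,\dotsc,S_n^2$. Together with conditional uniformity of the oracle p-value (Proposition~\ref{prop:monotonicity}(a)), this yields, for every $\mathcal{F}_{-i}$-measurable $t\in[0,\zeta]$, the two-sided bound $\abs{\PP[G,\boldmu]{P_i\leq t \mid \mathcal{F}_{-i}}-t}\leq E_i$.

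The crux is the good/bad split that produces the clean additive $\eta_n$ rather than an $n_0\eta_n$. I would decompose $\mathrm{FDP}_n=\mathrm{FDP}_n\,\ind(R_n\geq n\kappa_n)+\mathrm{FDP}_n\,\ind(R_n< n\kappa_n)$. On the bad event I bound $\mathrm{FDP}_n\leq 1$ as a single quantity, so its conditional expectation is at most $\PP[G,\boldmu]{R_n< n\kappa_n \mid S_1^2,\dotsc,S_n^2}$, contributing $\eta_n$ after taking expectations. On the good event I use the leave-one-out representation: because $R_n=R_n^{-i}$ whenever $i$ is rejected, the factor $\ind(R_n\geq n\kappa_n)$ multiplying $\ind(i\text{ rejected})$ equals $\ind(R_n^{-i}\geq n\kappa_n)$, which is $\mathcal{F}_{-i}$-measurable. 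Conditioning each summand on $\mathcal{F}_{-i}$ and applying the two-sided bound with $t=\alpha R_n^{-i}/n\leq\alpha\leq\zeta$ gives the per-index bound $\alpha/n+E_i/(n\kappa_n)$, since the indicator forces $R_n^{-i}\geq n\kappa_n$ in the denominator. Summing over $i\in\Hnull$ yields $\EE[G,\boldmu]{\mathrm{FDP}_n \mid S_1^2,\dotsc,S_n^2}\leq n_0\alpha/n+\p{\sum_{i\in\Hnull}E_i}/(n\kappa_n)+\PP[G,\boldmu]{R_n< n\kappa_n \mid S_1^2,\dotsc,S_n^2}$. Applying $\p{\cdot-n_0\alpha/n}\lor 0$, taking expectations, and invoking the $E_i$-bound together with $n_0\leq n$ delivers the claimed right-hand side $C(\log n)^{5/2}/(n^{1/2}\kappa_n)+\eta_n$.

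The step I expect to be the main obstacle is precisely this bookkeeping: converting $\ind(R_n\geq n\kappa_n)$ into the $\mathcal{F}_{-i}$-measurable $\ind(R_n^{-i}\geq n\kappa_n)$ on the rejection event is what allows the bad-event mass to be charged once (as $\eta_n$) rather than once per null hypothesis (which would give $n_0\eta_n$). A secondary care point is rigorously verifying that conditioning on $\mathcal{F}_{-i}$ collapses to conditioning on $S_1^2,\dotsc,S_n^2$ for the null coordinates, which relies on the cross-unit independence in~\eqref{eq:full_hierarchical} and on $\hG$ being a function of the sample variances only.
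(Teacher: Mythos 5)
Your proof is correct and follows essentially the same route as the paper's: a leave-one-out argument for BH (the paper replaces $Z_i$ by $\pm\infty$ rather than forcing $P_i=0$, which is equivalent), a conditional-uniformity error bound for the null p-values, and a split on $\{R_n \geq n\kappa_n\}$ arranged so that the small-rejection event is charged once as $\eta_n$ rather than $n_0\eta_n$. The only cosmetic differences are that the paper conditions on $(S_1^2,\dotsc,S_n^2, R_{n,-(i)})$ and controls the error through $\Delta_i = \sup_z|P_{\widehat{G}}(z,S_i^2)-P_G(z,S_i^2)|$ from the proof of Theorem~\ref{theo:pvalue_quality}, whereas you condition on the full leave-one-out $\sigma$-algebra and invoke Proposition~\ref{prop:asymptotically_uniform} directly, and the paper performs the good/bad split after the per-index bound via a $\min\{\cdot\,,1\}$ device rather than before it.
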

The statement of the theorem is inspired by~\citet[Theorem 2]{li2019multiple}, whose FDR bound is also a function of $\kappa_n$ and $\eta_n$ and requires that the number of rejections grows asymptotically. Below, in Section~\ref{subsec:denseregime}, we show that in a commonly studied dense asymptotic regime, we may choose $\kappa_n = \Theta(1)$ and \smash{$\eta_n = \widetilde{O}(n^{-1/2})$}
in the statement of the theorem.\footnote{We write $\widetilde{O}(\psi_n)$ for a sequence $\psi_n$ as shorthand for $O(\psi_n \psi_n')$ where $\psi_n'$ is polylogarithmic in $n$.}
The theorem also permits for asymptotic $\mathrm{FDR}$ control under sparsity with $n_0/n \to 1$ as $n \to \infty$. As one example, if \smash{$\#\{i: \abs{\mu_i} \geq \sqrt{2.1 \, \bar{U}\log n}\}\;\geq\; n^{1-\gamma}$ for $\gamma \in (0,1/2)$}, then  \smash{$\mathrm{FDR}_n \leq (n_0/n)\alpha + O\p{n^{\gamma-1/2}(\log n)^{5/2}}$} by choosing $\kappa_n = \Theta(n^{-\gamma})$ and $\eta_n = O(n^{\gamma-1/2})$.\footnote{However, Theorem~\ref{theo:limma_bh_controls_the_FDR} does not permit for sparsity $n-n_0 \leq n^{1/2}$ and beyond  and so is weaker compared to the results in~\citet{roquain2022false}, who also do not require a lower bound on the number of rejections. The primary reason is that in this work, we are able to bound the additive error between data-driven and oracle p-values, while~\citet{roquain2022false} are able to also control the multiplicative error (in a suitable sense). We leave a more precise study of the multiplicative error of the empirical partially Bayes p-values to future work.}
Theorem~\ref{theo:limma_bh_controls_the_FDR} also establishes a guarantee on the expected false discovery proportion conditional on \smash{$S_1^2,\dotsc,S_n^2$}: if the rightmost side of the inequalities of Theorem~\ref{theo:limma_bh_controls_the_FDR} converges to $0$ as $n \to \infty$, then $\PP[  G,\boldmu]{\EE[G,\boldmu]{\mathrm{FDP}_n \cond S_1^2,\dotsc,S_n^2}\geq \varepsilon + n_0\alpha/n} \to 0$ as $n \to \infty$ for any $\varepsilon >0$.

\subsubsection{False discovery rate and power analysis in the dense regime} 
\label{subsec:denseregime}
In this section, we study Algorithm~\ref{algo:np_limma} in an asymptotic regime with dense signals. Our motivation is two-fold.
First, we instantiate the false discovery rate guarantee of Theorem~\ref{theo:limma_bh_controls_the_FDR} in a commonly studied asymptotic regime. Second, we derive the asymptotic power of Algorithm~\ref{algo:np_limma} and compare it to the power of the following oracle procedure: the BH procedure conducted by a statistician with access to the oracle partially Bayes p-values $P_G(Z_i, S_i^2)$ in~\eqref{eq:conditional_pvalue}.\footnote{\label{footnote:reference_oracle} This is a natural reference procedure: the conditionality principle in Section~\ref{sec:conditional_pvalues} led us to the empirical partially Bayes p-values (which are only known to the oracle) and the Benjamini-Hochberg procedure is the most commonly used multiple testing procedure that operates on p-values and provides strong control of the false discovery rate. We also refer the reader to~\citet[Section 7.4]{roquain2022false} for a convincing argument that BH is a natural reference procedure for multiple testing with oracle p-values.}

The asymptotic regime is stated below in terms of assumptions placed on the null proportion and the distribution of the oracle partially Bayes p-values $P_G(Z_i, S_i^2)$:
\begin{assu}[Dense signal asymptotics]
    \label{assu:dense_signal}
    We assume that as $n \to \infty$, $\frac{n_0}{n} \to \pi_0^{\infty}$ and $\frac{1}{n}\sum_{i=1}^n \PP[G,\mu_i ]{P_G(Z_i, S_i^2) \leq t} \to H^{\infty}(t) \text{ for all } t\in [0,1],
    $
    where $\pi_0^{\infty} \in (0,1)$ and $H^{\infty}(\cdot)$ is a continuous distribution function on $[0,1]$. We assume that $t^{\infty} := \sup\cb{t \in [0,1]\, :\, t/H^{\infty}(t) \leq \alpha}$ satisfies $t^{\infty} \in (0,\alpha)$ and that $t \mapsto t/H^{\infty}(t)$ is strictly increasing in a neighborhood of $t^{\infty}$. 
\end{assu}
Asymptotic regimes like the above are commonly studied in the multiple testing literature with closely related assumptions made in~\citet{genovese2002operating, storey2004strong, ferreira2006benjamini, du2014singleindex}. At a high-level, the assumption implies that the fraction of null hypotheses remains constant (and bounded away from $0$ and $1$), and that the alternative signals $\mu_i$ behave as if ``iid'' in the limit (cf. Section~\ref{sec:joint_hierarchical}) with sufficient signal strength. Although this asymptotic regime precludes sparse settings, it is conceptually appealing. Under this regime, (as shown by the aforementioned work) the oracle BH procedure is asymptotically equivalent to a single-threshold procedure with threshold $t^{\infty}$ that rejects $H_i$ if $P_G(Z_i, S_i^2) \leq t^{\infty}$. Furthermore, as noted by~\citet{stephens2017false}, this regime clarifies that the burden associated with multiple testing is not necessarily related to the number of tests. Under Assumption~\ref{assu:dense_signal}, as the number of tests increases, the true positives and false discoveries increase linearly, and the FDR remains approximately constant (cf. Proposition~\ref{prop:mimicking_oracle} below and the paragraph following it).

Our first result evaluates the false discovery rate $\mathrm{FDR}_n$ of the empirical partially Bayes procedure (Algorithm~\ref{algo:np_limma} for $\alpha \in (0,1)$). The proof is presented in Supplement F.3 and builds upon Theorem~\ref{theo:limma_bh_controls_the_FDR} with $\kappa_n = \Theta(1)$ and \smash{$\eta_n = \widetilde{O}(n^{-1/2})$}.
\begin{prop}
    \label{prop:dense_fdr_control}
    Under Assumptions~\ref{assu:compact_support_and_nu_EB} and~\ref{assu:dense_signal}, the following holds for any  $\xi > 5/2$: 
    $$
    \limsup_{n \to \infty} \cb{n^{1/2}(\log n)^{-\xi} \p{\mathrm{FDR}_n - \frac{n_0}{n}\alpha}} \leq 0.$$
\end{prop}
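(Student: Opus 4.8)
The plan is to invoke Theorem~\ref{theo:limma_bh_controls_the_FDR}, so that the entire task reduces to exhibiting sequences $\kappa_n,\eta_n>0$ with $\PP[G,\boldmu]{R_n < n\kappa_n}\le\eta_n$ for which one may take $\kappa_n=\Theta(1)$ and $\eta_n=\widetilde O(n^{-1/2})$ with a polylogarithmic factor of order at most $(\log n)^{5/2}$. Granting this, Theorem~\ref{theo:limma_bh_controls_the_FDR} gives $\mathrm{FDR}_n - \tfrac{n_0}{n}\alpha \le C(\log n)^{5/2}/(n^{1/2}\kappa_n)+\eta_n = O\p{(\log n)^{5/2}/n^{1/2}}$, and multiplying through by $n^{1/2}(\log n)^{-\xi}$ produces a bound of order $(\log n)^{5/2-\xi}\to 0$ for every $\xi>5/2$, which is exactly the claimed $\limsup\le 0$. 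Thus the only genuine work is a high-probability lower bound on the number of discoveries $R_n$.

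First I would reduce the rejection count to the empirical distribution of the plug-in p-values. Writing $\widehat F_n(t) := \tfrac1n\sum_{i=1}^n \ind\p{P_{\hG}(Z_i,S_i^2)\le t}$, an elementary consequence of the definition~\eqref{eq:BH} is that $R_n \ge n\widehat F_n(t_0)$ whenever $\widehat F_n(t_0)\ge t_0/\alpha$ for a fixed $t_0$ (if $m:=n\widehat F_n(t_0)$ then $P_{(m)}\le t_0\le \alpha m/n$, so $k^*\ge m$). Using Assumption~\ref{assu:dense_signal}, I would fix $t_0\in(0,t^{\infty})$ and a truncation level $\zeta\in(\alpha,1)$. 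Since $t\mapsto t/H^{\infty}(t)$ is continuous and strictly increasing near $t^{\infty}$ with $t^{\infty}/H^{\infty}(t^{\infty})=\alpha$, we obtain $H^{\infty}(t_0)>t_0/\alpha$; call the gap $\gamma := H^{\infty}(t_0)-t_0/\alpha>0$. Setting $\kappa := t_0/\alpha=\Theta(1)$, the inclusion $\cb{R_n<n\kappa}\subseteq\cb{\widehat F_n(t_0)<t_0/\alpha}$ shows it suffices to prove $\PP[G,\boldmu]{\widehat F_n(t_0)<t_0/\alpha}=\widetilde O(n^{-1/2})$.

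The core of the argument compares $\widehat F_n$ to its oracle analogue $\widehat F_n^{\mathrm{or}}(t):=\tfrac1n\sum_{i=1}^n\ind\p{P_G(Z_i,S_i^2)\le t}$. The latter is the crucial device because $P_G$ is a deterministic function, so the summands of $\widehat F_n^{\mathrm{or}}(t_0)$ are genuinely independent across $i$ with mean $\tfrac1n\sum_i\PP[G,\mu_i]{P_G(Z_i,S_i^2)\le t_0}\to H^{\infty}(t_0)=t_0/\alpha+\gamma$ by Assumption~\ref{assu:dense_signal}. Hoeffding's inequality then yields $\widehat F_n^{\mathrm{or}}(t_0)\ge t_0/\alpha+\gamma/2$ off an event of probability $\exp(-cn)$ for all large $n$. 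It remains to bound the discordance $\widehat F_n^{\mathrm{or}}(t_0)-\widehat F_n(t_0)$. With $\Delta_i:=\sup_{z}\abs{P_{\hG}(z,S_i^2)\land\zeta-P_G(z,S_i^2)\land\zeta}$ as in Remark~\ref{rema:double_use} (the truncation at $\zeta>\alpha>t_0$ being immaterial at the threshold $t_0$), any index contributing to the discordance must satisfy $P_G(Z_i,S_i^2)\in(t_0-\Delta_i,\,t_0]$, so $\widehat F_n^{\mathrm{or}}(t_0)-\widehat F_n(t_0)\le \tfrac1n\sum_i \ind\p{t_0-\Delta_i<P_G(Z_i,S_i^2)\le t_0}$.

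The main obstacle is controlling the expectation of this discordance, since $\Delta_i$ is random and cannot be bounded uniformly over $i$. Conditioning on $S_1^2,\dotsc,S_n^2$ renders each $\Delta_i$ deterministic and distributes $P_G(Z_i,S_i^2)$ as the oracle p-value given $S_i^2$; I would then bound $\PP[G,\mu_i]{t_0-\Delta_i<P_G(Z_i,S_i^2)\le t_0\mid S_i^2}\le L_0\Delta_i$ by establishing a uniform (over $i$ and over the compact variance support) bound $L_0$ on the Lebesgue density of $P_G(Z_i,S_i^2)\mid S_i^2$ near $t_0$. This density bound follows from the strict monotonicity of $z\mapsto P_G(z,s^2)$ (Proposition~\ref{prop:monotonicity}) together with Assumption~\ref{assu:compact_support_and_nu_EB}: because $t_0$ is bounded away from $0$ and $1$, the corresponding threshold in $z$ is bounded away from $0$, where $\abs{\partial_z P_G}$ is bounded below, while the density of $Z_i\mid S_i^2$ is a $\mu_i$-shift of a fixed scale mixture of Gaussians with variances in $[\ubar L,\bar U]$ and hence uniformly bounded above irrespective of $\mu_i$. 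Taking expectations and invoking the bound $\max_i\EE[G]{\Delta_i}\le C(\log n)^{5/2}/\sqrt n$ from the proof underlying Proposition~\ref{prop:asymptotically_uniform} (Supplement E.4), I obtain $\EE[G,\boldmu]{\widehat F_n^{\mathrm{or}}(t_0)-\widehat F_n(t_0)}\le 2L_0\,C(\log n)^{5/2}/\sqrt n$, whence Markov's inequality gives $\PP[G,\boldmu]{\widehat F_n^{\mathrm{or}}(t_0)-\widehat F_n(t_0)>\gamma/4}=\widetilde O(n^{-1/2})$ with polylogarithmic order $(\log n)^{5/2}$. Combining the two high-probability events, with probability $1-\widetilde O(n^{-1/2})$ one has $\widehat F_n(t_0)\ge \widehat F_n^{\mathrm{or}}(t_0)-\gamma/4\ge t_0/\alpha+\gamma/4>t_0/\alpha$, hence $R_n\ge n t_0/\alpha$. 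This delivers $\kappa_n\equiv t_0/\alpha$ and $\eta_n=\widetilde O(n^{-1/2})$, which as noted above is precisely calibrated to the $(\log n)^{5/2}$ factors and completes the argument.
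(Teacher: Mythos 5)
Your proposal is correct in its overall architecture and reaches the right conclusion, and that architecture coincides with the paper's: both reduce the proposition to Theorem~\ref{theo:limma_bh_controls_the_FDR} by exhibiting $\kappa_n = \Theta(1)$ and $\eta_n = \widetilde{O}(n^{-1/2})$, both obtain the high-probability lower bound on $R_n$ by lower-bounding the empirical process of plug-in p-values at a fixed threshold strictly below $t^{\infty}$ (using the BH threshold characterization), and both combine concentration of the \emph{oracle} empirical process with the bound $\max_i \EE{\Delta_i} \lesssim (\log n)^{5/2}/\sqrt{n}$ and Markov's inequality. Where you genuinely diverge is in the step that charges the indicator discordance $\ind(P_G(Z_i,S_i^2) \leq t_0) - \ind(P_{\hG}(Z_i,S_i^2) \leq t_0)$ to $\Delta_i$. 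The paper uses the deterministic smoothing inequality (Lemma~\ref{lemm:deterministic_indicator_to_l1}) with a \emph{fixed} bandwidth $\delta^{\infty}$, chosen so that $\alpha\cb{H^{\infty}(s^{\infty}) - H^{\infty}(s^{\infty}-\delta^{\infty})}$ is small; this needs nothing beyond the continuity of the limit $H^{\infty}$ and is agnostic to the distribution of the non-null p-values. You instead condition on $S_1^2,\dotsc,S_n^2$ and prove a new auxiliary lemma: a uniform bound $L_0$ on the conditional Lebesgue density of $P_G(Z_i,S_i^2)$ given $S_i^2$ near $t_0$, valid uniformly over $\mu_i$ and $s^2$ because the posterior of $\sigma_i^2$ given $S_i^2$ is supported in $[L,U]$ (so the conditional density of $Z_i$ is bounded above and $\abs{\partial_z P_G}$ is bounded below on the compact range of thresholds corresponding to p-values near $t_0$). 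Your route buys a more "local," self-contained probability estimate at the single threshold $t_0$ (pointwise Hoeffding suffices in place of the paper's DKW over all $t$) and avoids the $\varepsilon$--$\delta^{\infty}$ bookkeeping; the paper's route buys generality, since it never needs density bounds for non-null p-value laws and hence does not lean on the Gaussian structure or compact prior support beyond what Theorem~\ref{theo:pvalue_quality} already requires.

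One loose end in your write-up: the inequality $\PP[G,\mu_i]{t_0 - \Delta_i < P_G(Z_i,S_i^2) \leq t_0 \cond S_i^2} \leq L_0 \Delta_i$ only follows from a density bound "near $t_0$" when $\Delta_i$ is small; for non-null $\mu_i$ the conditional density of the oracle p-value blows up near $0$, so the bound cannot hold with a single constant over the whole interval $(t_0-\Delta_i, t_0]$ when $\Delta_i$ is large. This is fixed in one line: split on $\cb{\Delta_i > t_0/2}$, bound that indicator by $(2/t_0)\Delta_i$, and apply the density bound (valid on $[t_0/2,\, t_0]$) on the complement, giving the charge $(L_0 + 2/t_0)\Delta_i$ in all cases; the expectation bound on $\Delta_i$ then carries the argument through unchanged.
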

Hence the FDR is controlled up to an error that decays at a nearly-parametric rate. By comparison, the oracle procedure  has exact finite-sample control of the false discovery rate, that is, $\mathrm{FDR}^{\text{or}}_n = (n_0/n)\alpha$ for all $n$. 

We next turn to the important issue of power. We consider two commonly used criteria to assess the type-II error performance of a multiple testing procedure:
\begin{equation}
    \label{eq:power_criteria}
\mathrm{Pow}_n := \EE{\frac{R_n - V_n}{n-n_0}},\;\;\;\;\, \mathrm{FNDR}_n := \EE{ \frac{\sum_{i=1}^n \ind(\mu_i \neq 0,\; i \notin \mathcal{D}) }{(n-R_n)\lor 1}}.
\end{equation}
Above, $\mathrm{Pow}_n$ is the expected proportion of correctly rejected hypotheses among all non-null hypotheses (see e.g,~\citet{ferreira2006benjamini}) and $\mathrm{FNDR}_n$ is the false nondiscovery rate introduced by~\citet{genovese2002operating}. The above quantities refer to Algorithm~\ref{algo:np_limma}; we  also define $\mathrm{Pow}^{\text{or}}_n$, and $\mathrm{FNDR}^{\text{or}}_n$ for the analogous quantities for the oracle procedure. The proof of the following result is presented in Supplement F.4 and calls upon the results in~\citet[Section 3]{ferreira2006benjamini}.
\begin{prop}
\label{prop:mimicking_oracle}
Under Assumptions~\ref{assu:compact_support_and_nu_EB} and~\ref{assu:dense_signal}, both the oracle procedure and Algorithm~\ref{algo:np_limma} have asymptotically the same power in terms of the criteria in~\eqref{eq:power_criteria}, i.e.,  
$$
\limsup_{n \to \infty} \abs{\mathrm{Pow}_n - \mathrm{Pow}^{\mathrm{or}}_n} =0,\;\;\;\, \limsup_{n \to \infty}\abs{\mathrm{FNDR}_n -\mathrm{FNDR}^{\mathrm{or}}_n} = 0.
$$
Both criteria have asymptotic limits that may be expressed in terms of $t^{\infty}$, $\pi_0^{\infty}$, and $\alpha$:
$$
\lim_{n \to \infty} \mathrm{Pow}_n = \frac{t^{\infty}}{\alpha}\frac{1-\alpha \pi_0^{\infty}}{1-\pi_0^{\infty}},\;\;\;\, \lim_{n \to \infty}\p{1- \mathrm{FNDR}_n} = \pi_0^{\infty}\frac{\alpha (1-t^{\infty})}{\alpha - t^{\infty}}.
$$
\end{prop}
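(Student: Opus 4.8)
The plan is to reduce the empirical partially Bayes procedure to the oracle procedure at the level of the \emph{empirical distribution} of the p-values, and then to read both limits off the classical Benjamini--Hochberg power analysis of \citet{ferreira2006benjamini}. Write $F_n(t) = \frac1n \sum_{i=1}^n \ind(P_G(Z_i, S_i^2) \le t)$ for the empirical c.d.f. of the oracle p-values and $\hat F_n(t) = \frac1n \sum_{i=1}^n \ind(P_{\hG}(Z_i, S_i^2) \le t)$ for that of the empirical partially Bayes p-values, and fix $\zeta \in (\alpha, 1)$. The first step is to establish uniform convergence of both c.d.f.s to $H^{\infty}$ on $[0,\zeta]$. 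For the oracle p-values this is a law-of-large-numbers statement: the summands $\ind(P_G(Z_i, S_i^2) \le t)$ are independent across $i$ (the pairs $(Z_i, S_i^2)$ are independent and $G$ is fixed), so $F_n(t)$ concentrates around $\frac1n\sum_i \mathbb{P}(P_G(Z_i,S_i^2)\le t)$, which tends to $H^{\infty}(t)$ by Assumption~\ref{assu:dense_signal}; since $H^{\infty}$ is continuous, a Pólya-type argument upgrades pointwise to uniform convergence, giving $\sup_{t\in[0,\zeta]}|F_n(t) - H^{\infty}(t)| \to 0$ in probability.

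The second step transfers this to $\hat F_n$, and here the only real subtlety of the proof arises: because $\hG$ is fitted on all of $S_1^2,\dotsc,S_n^2$, the empirical p-values $P_{\hG}(Z_i,S_i^2)$ are \emph{dependent} across $i$, so the clean analysis of \citet{ferreira2006benjamini} cannot be applied to them directly. I would bypass this by controlling $\sup_{t\in[0,\zeta]}|\hat F_n(t) - F_n(t)|$ via the rate in Theorem~\ref{theo:pvalue_quality}. For any $\delta>0$, the indices at which $\ind(P_{\hG}(Z_i,S_i^2)\le t)$ and $\ind(P_G(Z_i,S_i^2)\le t)$ disagree are contained in $\{i: P_G(Z_i,S_i^2)\in(t-\delta,t+\delta)\}$ together with $\{i: |P_{\hG}(Z_i,S_i^2)\wedge\zeta - P_G(Z_i,S_i^2)\wedge\zeta|>\delta\}$. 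The first set has cardinality at most $n(F_n(t+\delta)-F_n(t-\delta))$, which is uniformly small for small $\delta$ by Step~1 and the uniform continuity of $H^{\infty}$ on $[0,\zeta]$; the second has cardinality at most $\delta^{-1}\sum_i |P_{\hG}(Z_i,S_i^2)\wedge\zeta - P_G(Z_i,S_i^2)\wedge\zeta|$, whose expectation is $O(n\delta^{-1}(\log n)^{5/2}n^{-1/2})=o(n)$ for fixed $\delta$ by Theorem~\ref{theo:pvalue_quality}. Letting $n\to\infty$ and then $\delta\to 0$ yields $\sup_{t\in[0,\zeta]}|\hat F_n(t)-F_n(t)|\to 0$, and hence $\sup_{t\in[0,\zeta]}|\hat F_n(t)-H^{\infty}(t)|\to 0$, both in probability.

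With both empirical c.d.f.s uniformly close to $H^{\infty}$, the third step identifies the Benjamini--Hochberg cutoffs. The BH threshold can be written as $\hat t_n = \sup\{t\le\zeta: \hat F_n(t)\ge t/\alpha\}$ (and $t_n^{\mathrm{or}}$ analogously for $F_n$); since $t^{\infty}=\sup\{t:H^{\infty}(t)\ge t/\alpha\}$ lies in $(0,\alpha)\subset(0,\zeta)$ and $t\mapsto t/H^{\infty}(t)$ crosses $\alpha$ transversally there (the strict monotonicity in Assumption~\ref{assu:dense_signal}), the standard argument of \citet{genovese2002operating} and \citet{ferreira2006benjamini} gives $\hat t_n \to t^{\infty}$ and $t_n^{\mathrm{or}}\to t^{\infty}$ in probability. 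Consequently the rejection sets $\{i:P_{\hG}(Z_i,S_i^2)\le \hat t_n\}$ and $\{i:P_G(Z_i,S_i^2)\le t_n^{\mathrm{or}}\}$ differ in only $o(n)$ indices (again using $\sup|\hat F_n - F_n|\to 0$ and that $H^{\infty}$ has no atom at $t^{\infty}$), so the true-discovery and false-nondiscovery counts of Algorithm~\ref{algo:np_limma} and of the oracle agree up to $o(n)$.

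Finally I would compute the common limits. Under the single-threshold limit at $t^{\infty}$, the fraction of rejections tends to $H^{\infty}(t^{\infty})=t^{\infty}/\alpha$, the fraction of null rejections to $\pi_0^{\infty}t^{\infty}$ (oracle nulls are exactly uniform), and the fraction of non-nulls to $1-\pi_0^{\infty}$; substituting into $\mathrm{Pow}_n$ and $\mathrm{FNDR}_n$ and simplifying yields exactly $t^{\infty}\alpha^{-1}(1-\alpha\pi_0^{\infty})/(1-\pi_0^{\infty})$ and $\pi_0^{\infty}\alpha(1-t^{\infty})/(\alpha-t^{\infty})$ for $\lim_n \mathrm{Pow}_n$ and $\lim_n(1-\mathrm{FNDR}_n)$. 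Because the power proportions are bounded in $[0,1]$ and both denominators $n-n_0$ and $(n-R_n)\vee 1$ are $\Theta(n)$ (here $\pi_0^{\infty}<1$ and $t^{\infty}<\alpha$ are used), the bounded convergence theorem upgrades the in-probability convergence of the proportions to convergence of their expectations, and the corresponding oracle limits follow from \citet[Section~3]{ferreira2006benjamini}; equality of the two limits then gives $\limsup_n|\mathrm{Pow}_n-\mathrm{Pow}_n^{\mathrm{or}}|=0$ and $\limsup_n|\mathrm{FNDR}_n-\mathrm{FNDR}_n^{\mathrm{or}}|=0$. The main obstacle throughout is the dependence among the empirical p-values; the $L^1$ rate of Theorem~\ref{theo:pvalue_quality} combined with the continuity of $H^{\infty}$ is precisely what lets me couple $\hat F_n$ to $F_n$ and thereby import the independence-based oracle analysis.
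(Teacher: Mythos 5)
Your proposal is correct and follows essentially the same route as the paper's proof: uniform (Glivenko--Cantelli/DKW-type) convergence of the empirical p-value c.d.f.s to $H^{\infty}$, with the dependence of the $P_{\hG}(Z_i,S_i^2)$ handled by exactly the smoothing-plus-$L^1$-rate coupling the paper implements via Lemma~\ref{lemm:deterministic_indicator_to_l1} and Theorem~\ref{theo:pvalue_quality}, followed by an appeal to \citet[Section 3, Corollary 3.3]{ferreira2006benjamini} and dominated convergence for the stated limits. The only difference is expository: you spell out the threshold convergence and limit computations that the paper delegates to the cited corollary.
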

The result of the proposition can be intuitively understood as follows. Both Algorithm~\ref{algo:np_limma} and the oracle procedure behave asymptotically as the single-threshold procedure that rejects $H_i$ if $P_G(Z_i, S_i^2) \leq t^{\infty}$.
The number of total discoveries divided by $n$ concentrates around $H^{\infty}(t^{\infty})$, while the number of false discoveries divided by $n$ concentrates around $\pi_0^{\infty} t^{\infty}$. Furthermore, by Proposition~\ref{prop:dense_fdr_control}, the false discovery rate concentrates around $\pi_0^{\infty}\alpha$.  The asymptotic power and false nondiscovery rate can be derived from the former quantities.

\section{Multiple hypothesis testing in the compound setting}
\epigraph{``Let us use a mixed model, even if it might not be appropriate"}{--- \textup{Hans C. van Houwelingen}, \citeyear{vanhouwelingen2014role}}
\label{sec:compound}

\noindent 

In this section we study our procedure in the purely
frequentist compound setting wherein $\sigma_i^2$ are fixed---there may 
be strong conceptual reasons to prefer such a viewpoint, see, e.g.,~\citet[Supplementary Material]{ebrahimpoor2021inflated} and~\citet*[Appendix A]{battey2022partial}.
Formally, 
we observe independent (but not identically distributed) draws of $(Z_i, S_i^2)$ from~\eqref{eq:full_sampling}
with $\boldsymbol{\sigma} = \boldsymbol{\sigma}_n  = (\sigma_1,\dotsc,\sigma_n)$, $\boldsymbol{\mu} = \boldsymbol{\mu}_n = (\mu_1,\dotsc,\mu_n)$ deterministic 
but unknown. We nevertheless proceed as before and
pretend that the data generating process is given by~\eqref{eq:full_hierarchical}.
Our main result in this section is that the conclusion of Theorem~\ref{theo:limma_bh_controls_the_FDR} 
on the asymptotic false discovery rate control continues to hold in the 
compound setting. 

It is instructive to consider a single hypothesis, say $n=i=1$,
as in Section~\ref{sec:conditional_pvalues}. Suppose we posit 
model~\eqref{eq:full_hierarchical} for a pre-specified $G$
and compute $P_i = P_G(Z_i, S_i^2)$ as in~\eqref{eq:conditional_pvalue}. However,
we seek inferential validity under~\eqref{eq:full_sampling},
that is, for fixed $\sigma_1$. Then, we may interpret $P_i$ 
as a conditional predictive p-value, an inferential device introduced by~\citet*{bayarri2000values} for integrating out nuisance parameters with a Bayesian predictive distribution that does not entail double-use of the data (cf. Remark~\ref{rema:double_use}). Let $\sigma_1$ be fixed, $\mu_1=0$ and take $\nu \to \infty$. 
Then (under regularity assumptions on $G$) it may be verified (directly or with the results of~\citet{robins2000asymptotic}) 
that $P_1$ is asymptotically uniform. 

In contrast, our asymptotic results are attained as $n \to \infty$ and $\nu$ is \emph{fixed}.
For example, Theorem~\ref{manualtheorem:limma_bh_controls_the_FDR} below establishes asymptotic control
of the false discovery rate with Algorithm~\ref{algo:np_limma} as $n \to \infty$ and $\nu$ is fixed.
We find this conclusion striking. For example, if $\mu_1=0$, then 
$P_G(Z_1, S_1^2)$ will not be uniform for \emph{any} choice of $G$, unless
$G = \delta_{\sigma_1}$. This is in sharp contrast to e.g., the result in 
Proposition~\ref{prop:monotonicity}, wherein $P_G(Z_1, S_1^2)$ is uniform 
conditional on $S_1^2$. In other words, in the compound setting $P_{\hG}(Z_i, S_i^2)$
are not asymptotically ``valid'' p-values. Nevertheless,
the asymptotic false discovery rate guarantee holds (Theorem~\ref{manualtheorem:limma_bh_controls_the_FDR}). 
The reason is that while type-I error for individual hypotheses is not correctly controlled, 
the \emph{average} type-I error across hypotheses \emph{is controlled}---such notion of average significance
control was introduced by~\citet{armstrong2022false}. We make this argument rigorous below.

To explain the core ideas, we will first provide a summary
of compound decision theory in the Gaussian sequence model
in Section~\ref{subsec:recap_compound} and then explain
how compound decision theory applies to our setting in
Section~\ref{subsec:average_significance}.

\subsection{A modicum of compound decision theory}
\label{subsec:recap_compound}

We provide a bird's eye overview of compound decision theory 
in the Gaussian sequence model, and refer the
reader for more details to \citet{robbins1951asymptotically, copas1969compounda, zhang2003compound, jiang2009general, weinstein2021permutation}
and references therein. Consider the following two Gaussian location sequence models (where $M$ is a distribution on $\RR$):
\begin{subequations}
    \label{eq:gaussian_sequence}
\begin{align}
 &  \boldsymbol{\mu} = (\mu_1,\dotsc,\mu_n) \text{ fixed},\;\;  && Z_i \phantom{ \cond \mu_i } \simindep \nn(\mu_i, \,1) \text{ for }i=1,\dotsc,n \label{eq:compound_gaussian_sequence} \\
 & \mu_i \simiid M \text{ for }i=1,\dotsc,n,\;\; & &  Z_i \cond \mu_i \simindep \nn(\mu_i, \,1) \text{ for }i=1,\dotsc,n. \label{eq:eb_gaussian_sequence}
\end{align}
\end{subequations}
In the first model, the parameters of interest $\mu_i$ are fixed, while in the second 
model they are randomly distributed according to $M$. The crux of compound decision theory is that 
for certain statistical tasks, models~\eqref{eq:compound_gaussian_sequence} and 
\eqref{eq:eb_gaussian_sequence} behave very ``similarly'' under the choice of prior $M = M(\boldsymbol{\mu}) = \frac{1}{n}\sum_{i=1}^n \delta_{\mu_i}$,
that is, when $M$ is equal to the empirical measure of $\mu_1,\dotsc,\mu_n$.

Perhaps the most famous result in compound decision theory is the fundamental
theorem of compound decisions. Suppose we seek to estimate $\mu_1,\dotsc,\mu_n$ in either of the models
in~\eqref{eq:gaussian_sequence} with $\hat{\mu}_i = \eta(Z_i)$ where $\eta:\RR \to \RR$ is a fixed 
function. Then:
\begin{equation}
    \label{eq:fundamental_compound_decisions}
\EE[\boldsymbol{\mu}]{\frac{1}{n} \sum_{i=1}^n \p{\eta(Z_i) - \mu_i}^2} =\frac{1}{n}\sum_{i=1}^n \EE[\mu_i]{ \p{\eta(Z_i) - \mu_i}^2} = \EE[\mu \sim M(\boldsymbol{\mu})]{\big ( \eta(Z) - \mu \big )^2}.
\end{equation}
The critical observation is that the LHS is the mean squared error in estimating $\mu_i$ with $\hat{\mu}_i$
under the compound model~\eqref{eq:compound_gaussian_sequence} while the RHS is the mean squared 
under the hierarchical model~\eqref{eq:eb_gaussian_sequence}. Under~\eqref{eq:eb_gaussian_sequence}, $(\mu_i, Z_i)$
are $\text{iid}$ and so we drop the subscript $i$ in the RHS of~\eqref{eq:fundamental_compound_decisions}.
The equality in~\eqref{eq:fundamental_compound_decisions} is merely formal;
however a deep (and challenging to prove) consequence is that 
an empirical Bayes approach that estimates $M$ as \smash{$\widehat{M}$} based on $Z_1,\dotsc,Z_n$
and then uses \smash{$\hat{\eta}(z) = \EE[\widehat{M}]{\mu \mid Z_i=z}$} will perform well in 
mean squared error under both models in~\eqref{eq:gaussian_sequence}~\citep{jiang2009general}.

\subsection{Average significance controlling tests}
\label{subsec:average_significance}

We are ready to study our empirical partially Bayes multiple testing method in the compound setting.
In Section~\ref{sec:empirical-bayes} we treated
$\boldmu = (\mu_1,\dotsc,\mu_n)$ as fixed, but assumed that \smash{$\sigma_i^2 \sim G$}. Now  we also treat $\boldsigma = (\sigma_1,\dotsc,\sigma_n)$
as fixed. As in our recap of compound decision theory in Section~\ref{subsec:recap_compound}, 
the crux of our argument will be that the setting with $\boldsigma$ fixed 
is quantitatively similar to the hierarchical model~\eqref{eq:full_hierarchical} 
with a specific choice of prior $G$ for $\sigma_i^2$, namely the empirical distribution
of the $\sigma_i^2$:
\begin{equation}
\label{eq:empirical_sigma_distribution}
G \equiv G(\boldsigma) \coloneqq \frac{1}{n} \sum_{i=1}^n \delta_{\sigma_i^2}.
\end{equation}  
The compound decision argument suggests considering the following
compound ``p-value'' in lieu of the conditional p-values in~\eqref{eq:conditional_pvalue}:
\begin{equation} 
    \label{eq:compound_pvalue}
P_{\boldsigma}(z, s^2) \coloneqq P_{G(\boldsigma)}(z, s^2) = \frac{ \sum_{j=1}^n 2(1-\Phi(\abs{z}/\sigma_j)) p(s^2 \mid \sigma_j^2)}{\sum_{j=1}^n  p(s^2 \mid \sigma_j^2)}.
\end{equation}
Suppose we knew $\boldsigma$ and could use $P_{\boldsigma}(Z_i, S_i^2)$, 
$i=1,\dotsc,n,$ to test the null hypotheses $H_i: \mu_i=0$. Would we have 
any frequentist guarantees? The following theorem
proves that this is indeed the case and the $P_{\boldsigma}(Z_i, S_i^2)$
correspond to average significance controlling tests introduced by \citet{armstrong2022false}.
\begin{theo}
    \label{theo:avg_significance_controlling}
Consider $n$ independent draws from model~\eqref{eq:full_sampling} with $\boldmu$ and $\boldsigma$ fixed. Denote 
the null indices by $\Hnull \coloneqq \cb{1 \leq i \leq n\,:\, \mu_i=0}$. The
oracle compound p-values $P_{\boldsigma}(Z_i, S_i^2)$ in~\eqref{eq:compound_pvalue} satisfy 
the following guarantee:
\begin{equation}
    \label{eq:avg_significance_control}
\frac{1}{n}\sum_{i \in \Hnull} \PP[\boldsigma]{ P_{\boldsigma}(Z_i, S_i^2) \leq t } = \frac{1}{n}\sum_{i \in \Hnull} \PP[\sigma_i]{ P_{\boldsigma}(Z_i, S_i^2) \leq t }  \leq t \text{ for all } t \in [0,1].
\end{equation}
\end{theo}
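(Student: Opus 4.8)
The plan is to reduce the compound (fixed-$\boldsigma$) statement to the hierarchical uniformity already established in Proposition~\ref{prop:monotonicity}(b), by taking the prior to be the empirical measure $G(\boldsigma) = \frac{1}{n}\sum_{j=1}^n \delta_{\sigma_j^2}$ of the fixed variances. The first equality in the theorem is immediate: the map $P_{\boldsigma} = P_{G(\boldsigma)}$ is a deterministic function of its arguments once $\boldsigma$ is fixed, and the event $\cb{P_{\boldsigma}(Z_i, S_i^2) \leq t}$ depends only on the pair $(Z_i, S_i^2)$, whose law under~\eqref{eq:full_sampling} with $\mu_i = 0$ depends on $\sigma_i$ alone. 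Hence the joint-model probability $\PP[\boldsigma]{\,\cdot\,}$ collapses to the single-coordinate probability $\PP[\sigma_i]{\,\cdot\,}$.

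For the inequality I would introduce the auxiliary hierarchical model~\eqref{eq:full_hierarchical} with prior $G = G(\boldsigma)$ and $\mu = 0$. Since $P_{\boldsigma} = P_{G(\boldsigma)}$ is precisely the oracle conditional p-value attached to this prior, Proposition~\ref{prop:monotonicity}(b) supplies unconditional uniformity, $\PP[G(\boldsigma)]{P_{G(\boldsigma)}(Z, S^2) \leq t} = t$ for all $t \in [0,1]$. Because $G(\boldsigma)$ is discrete with atoms $\sigma_j^2$ each of mass $1/n$, marginalizing over the prior (the law of total probability over the atoms) decomposes this identity into an average of per-variance rejection probabilities:
\[
t \;=\; \PP[G(\boldsigma)]{P_{G(\boldsigma)}(Z, S^2) \leq t} \;=\; \frac{1}{n}\sum_{j=1}^n \PP[\sigma_j]{P_{\boldsigma}(Z, S^2) \leq t},
\]
where each summand is evaluated under $\mu = 0$ and variance $\sigma_j^2$.

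The conclusion then follows by restriction. For every null index $i \in \Hnull$ we have $\mu_i = 0$, so the law of $(Z_i, S_i^2)$ coincides exactly with the term indexed by $\sigma_i^2$ in the display above; since $\Hnull \subseteq \cb{1,\dotsc,n}$ and each summand is non-negative, discarding the non-null coordinates yields
\[
\frac{1}{n}\sum_{i \in \Hnull} \PP[\sigma_i]{P_{\boldsigma}(Z_i, S_i^2) \leq t} \;\leq\; \frac{1}{n}\sum_{j=1}^n \PP[\sigma_j]{P_{\boldsigma}(Z, S^2) \leq t} \;=\; t.
\]

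I do not expect a genuine computational obstacle; the whole argument is the compound-decision device recalled in Section~\ref{subsec:recap_compound}, now applied to the nuisance parameters rather than the means. The only step demanding care is the bookkeeping that matches each per-null rejection probability to an atom of the empirical prior while respecting the asymmetry that the hierarchical average runs over all $n$ coordinates whereas the target sum ranges only over $\Hnull$. The slack between the two is exactly the non-negative contribution of the non-null coordinates, and it is precisely this slack that turns the hierarchical \emph{equality} into the desired average-significance \emph{inequality} $\leq t$.
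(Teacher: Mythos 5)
Your proof is correct and is essentially the paper's own argument: both rest on identifying $P_{\boldsymbol{\sigma}} = P_{G(\boldsymbol{\sigma})}$ with the oracle conditional p-value under the empirical prior $G(\boldsymbol{\sigma})$, invoking the unconditional uniformity of Proposition~\ref{prop:monotonicity}b, using the formal compound-decision identity to equate the hierarchical probability with the average of per-variance (null) rejection probabilities, and discarding the non-negative non-null terms. The only cosmetic difference is direction: the paper builds up from the null sum (via $Z_i' = Z_i - \mu_i$) to the full average and then applies uniformity, whereas you start from uniformity and decompose downward via the law of total probability over the atoms of $G(\boldsymbol{\sigma})$.
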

\begin{proof}
Let $Z_i' = Z_i - \mu_i$ so that $Z_i'=Z_i$ for $i \in \mathcal{H}_0$. By definition, $P_{\boldsigma}(Z_i, S_i^2) = P_{G(\boldsigma)}(Z_i, S_i^2)$
with $G(\boldsigma)$ defined in~\eqref{eq:empirical_sigma_distribution}. Then:
$$
\begin{aligned}
  \frac{1}{n}\sum_{i \in \mathcal{H}_0} \PP[\sigma_i]{ P_{G(\boldsigma)}(Z_i,S_i^2) \leq t }  
\leq \frac{1}{n}\sum_{i =1}^n \PP[\sigma_i]{ P_{G(\boldsigma)}(Z_i',S_i^2) \leq t } = \PP[G(\boldsigma)]{P_{G(\boldsigma)}(Z',S^2) \leq t}.
\end{aligned}
$$
The last equality is only formal (cf. Section~\ref{subsec:recap_compound}).
Nevertheless, this formal equality enables us to conclude, since by Proposition~\ref{prop:monotonicity} it holds that 
$\PP[G(\boldsigma)]{P_{G(\boldsigma)}(Z',S^2) \leq t} = t$ for all $t \in [0,1]$.
\end{proof}
We comment on the average significance controlling property
in~\eqref{eq:avg_significance_control} as far as it pertains
to empirical partially Bayes multiple hypothesis testing and refer
the reader to \citet{armstrong2022false} and \citet*{armstrong2022robust} for elaboration in other settings.
Theorem~\ref{theo:avg_significance_controlling} has two weaknesses compared to 
Proposition~\ref{prop:monotonicity}b. First, control only holds on \emph{average} 
(rather than holding for each individual null hypothesis), and second 
the guarantee is inflated by $n/n_0$, where $n_0 = \#\Hnull$.\footnote{
In the setting of Proposition~\ref{prop:monotonicity}b: $\sum_{i \in \Hnull} \mathbb P_G[ P_G(Z_i, S_i^2) \leq t ] = t n_0$.}
The first weakness seems unavoidable in view of the nature of the compound 
setting. The second weakness is inherent to the 
empirical partially Bayes approach we have described (and which is predominant
in practice): by applying the NPMLE in~\eqref{eq:npmle_opt} to $S_i^2$, $i=1,\dotsc,n$, we estimate
the empirical distribution of all the $\sigma_i^2$, that is, $G(\boldsigma)$ defined in~\eqref{eq:empirical_sigma_distribution}.
However, the correct reference distribution is the empirical distribution
of the null $\sigma_i^2$, that is, $G((\sigma_i)_{i \in \Hnull})$.\footnote{ To estimate
\smash{$G((\sigma_i)_{i \in \Hnull})$}, we would also need to use information about the \smash{$\mu_i$}, 
which is contained in the $Z_i$, but not the \smash{$S_i^2$}. 
This hints at possible 
benefits of joint empirical Bayesian modeling of $\mu_i$ and \smash{$\sigma_i^2$} compared
to the empirical partially Bayes approach.}

For concreteness, suppose that $n=2$, $\nu=3$, and $i=1$ is a null hypothesis, while $i=2$
is an alternative hypothesis. Further suppose that $\sigma_1^2 = 1, \sigma_2^2 = 0.5$, that is, the alternative
has a smaller variance than the null. Then we can numerically compute for $t=0.01$,
that $\PP[\sigma_1]{ P_{(\sigma_1, \sigma_2)}(Z_1, S_1^2) \leq 0.01 } \approx 0.0199$,
i.e., the upper bound $n/n_0 \cdot t = 0.02$ in~\eqref{eq:avg_significance_control} is nearly tight.
The intuitive reason that type-I error control is lost by a factor of nearly $2$ is that the 
reference distribution for the null variances, that is $G = \delta_1$, is pushed 
stochastically downwards to $G = (\delta_{1} + \delta_{0.5})/2$ 
due to the presence of a smaller variance under the alternative.

To recap, even under adversarial arrangement of
the variances $\sigma_i^2$, the oracle compound p-values have the type-I error guarantee in~\eqref{eq:avg_significance_control}
which is only inflated by a factor $n/n_0$. In many applications there will only be few alternatives,
so that $n/n_0 \approx 1$. A further silver lining is that the average significance control 
guarantee in~\eqref{eq:avg_significance_control} suffices to control the false discovery rate asymptotically when applying
the Benjamini-Hochberg procedure. This result was shown by~\citet{armstrong2022false} and is consistent with an earlier observation by~\citet{efron2007size}, who noted that the control of the false discovery rate does not require identical null distributions, but rather that the average density of the nulls behaves like a uniform density.
Hence, below (Theorem~\ref{manualtheorem:limma_bh_controls_the_FDR}) we establish a false discovery control guarantee 
for Algorithm~\ref{algo:np_limma} in the compound setting that is analogous
to the result in Theorem~\ref{theo:limma_bh_controls_the_FDR}.

\subsection{Asymptotic results in the compound setting}

In this section we show that most of the asymptotic results of Section~\ref{sec:empirical-bayes} have an
analogue in the compound setting wherein $\boldsigma = (\sigma_1, \dotsc, \sigma_n)$ is fixed.  

To state our results, it will be helpful to introduce the averaged density $f_{\boldsigma}$ of the $S_i^2$:
\begin{equation}
\label{eq:average_marginal_density} 
f_{\boldsigma}(s^2) \equiv  f_{\boldsigma}(s^2;\; \nu) \coloneqq \frac{1}{n} \sum_{i=1}^n  p(s^2 \mid \sigma_i^2, \nu).
\end{equation} 
Note that, as in the general compound argument, $f_{\boldsymbol{\sigma}}(\cdot)$ is formally equal to
the marginal density $f_{G(\boldsigma)}(\cdot)$ in~\eqref{eq:marginal_density} with the prior
$G(\boldsigma)$ defined in~\eqref{eq:empirical_sigma_distribution}.
The probabilistic interpretation of these two objects however is distinct.

First we state the matching result to Theorem~\ref{theo:hellinger_dist_convergence}:

\begin{manualtheorem}{\getrefnumber{theo:hellinger_dist_convergence}*}
    \label{manualtheorem:hellinger_dist_convergence}
Under Assumption~\ref{assu:compact_support_and_nu_EB}, 
there exist constants $C=C(\nu, \ubar{L}, \bar{U}) >0$ and $c=c(\nu, \ubar{L}, \bar{U}) >0 $ such that:
$$\PP[\boldsigma]{\Dhel(f_{\hG},\, f_{\boldsigma}) \geq  C\log n/\sqrt{n}} \leq  \exp\p{- c (\log n)^2} \text{ for all } n \in \mathbb N_{\geq 1}.$$
\end{manualtheorem}
Compared to Theorem~\ref{theo:hellinger_dist_convergence}, Theorem~\ref{manualtheorem:hellinger_dist_convergence} only has the following differences:
first, the probability in the statement is taken with respect to the $n$ independent draws of $S_i^2$ in~\eqref{eq:EB} with $\boldsigma$ fixed
(rather than \smash{$\sigma_i^2 \simiid G$}), and second, the Hellinger distance bound pertains to the average
density $f_{\boldsigma}$ (rather than $f_G$). The argument for these two theorems are almost identical (cf. \citet{zhang2009generalized} 
in the Gaussian location setting) and so we also prove both in the Supplement concurrently.\footnote{While we do not explicitly
state it here, Corollary~\ref{coro:weak_convergence} also has an analogue in the compound setting: for any
bounded and continuous function $\psi: [0, \infty) \to \RR$ it holds that $\int \psi(u) (d\hG(u) - dG(\boldsigma)(u))$ converges to 
$0$ in probability. We refer the reader to~\citet[Theorem 4]{greenshtein2022generalized} for the rationale and rigorous
justification of such a result.}    

The following theorem, which parallels Theorem~\ref{theo:pvalue_quality}, also follows immediately:
\begin{manualtheorem}{\getrefnumber{theo:pvalue_quality}*}
    \label{manualtheorem:pvalue_quality}
Under Assumption~\ref{assu:compact_support_and_nu_EB}, for any $\zeta \in (1/2,\, 1)$, there exists a constant $C = C(\nu, \ubar{L}, \bar{U}, \zeta)$ that depends only 
on $\nu, \ubar{L}, \bar{U}, \zeta$ such that:
$$ \frac{1}{n}\sum_{i=1}^{n} \EE[\boldsigma]{\abs{P_{\boldsigma}(Z_i, S_i^2)\land \zeta - P_{\hG}(Z_i, S_i^2)\land \zeta}} \leq C \frac{(\log n)^{5/2}}{\sqrt{n}} \text{ for all } n \in \mathbb N_{\geq 2}.$$
\end{manualtheorem}
The difference between the conclusions of the above result with that of Theorem~\ref{theo:pvalue_quality} is that we only bound the expectation averaged over all $i=1,\dotsc,n$.

Building on Theorem~\ref{manualtheorem:pvalue_quality}, we can prove a variant of Proposition~\ref{prop:asymptotically_uniform} in the compound setting.
\begin{manualprop}{\getrefnumber{prop:asymptotically_uniform}*}
    \label{manualprop:asymptotically_uniform}
Fix $\zeta \in (1/2,\, 1)$. Under Assumption~\ref{assu:compact_support_and_nu_EB}, there exists a constant $C = C(\nu, \ubar{L}, \bar{U}, \zeta)$ such that for all $n \in \mathbb N_{\geq 2}$
$$\sup_{t \in [0,\zeta]} \cb{ \frac{1}{n}\sum_{i \in \Hnull}\PP[\boldsigma]{P_i \leq t} - t}\,\leq \, \EE[\boldsigma]{\sup_{t \in [0,\zeta]} \cb{ \frac{1}{n}\sum_{i \in \Hnull}\ind(P_i \leq t) - t}}\, \leq\,  C \frac{(\log n)^{5/4}}{n^{1/4}}.$$
\end{manualprop}
In the compound setting, the proof argument of Proposition~\ref{prop:asymptotically_uniform} no longer applies as the oracle null p-values are not uniform conditional on \smash{$S_1^2,\dotsc,S_n^2$}. Instead we apply a smoothing argument that is commonly used to translate bounds in the $1$-Wasserstein metric to bounds in the Kolmogorov-Smirnov metric (e.g., \citealp[Proposition 1.2.2]{ross2011fundamentalsa}). This smoothing argument leads to the slow down of the rate to $n^{-1/4}$ in the compound setting---we do not know if this is a proof artifact or a fundamental difference between the two settings. On the other hand, one appealing feature of Proposition~\ref{manualprop:asymptotically_uniform} compared to Proposition~\ref{prop:asymptotically_uniform} is that it also controls the supremum of the empirical process of the null p-values.

As already announced, the asymptotic false discovery rate control guarantee of Theorem~\ref{theo:limma_bh_controls_the_FDR}
also continues to hold in the compound setting.
Asymptotic false discovery rate control with p-values satisfying the average significance control property~\eqref{eq:avg_significance_control} 
was already shown by~\citet{armstrong2022false}. Here our argument also accounts for the fact that the average significance control property
is only approximately satisfied since the BH procedure uses $P_i = P_{\hG}(Z_i, S_i^2)$ rather than $P_{\boldsigma}(Z_i, S_i^2)$.
\begin{manualtheorem}{\getrefnumber{theo:limma_bh_controls_the_FDR}*}
        \label{manualtheorem:limma_bh_controls_the_FDR}
Suppose that Assumption~\ref{assu:compact_support_and_nu_EB} holds and that there exist sequences $\kappa_n, \eta_n > 0$ such that $\PP[\boldsigma,\boldmu]{R_n < \kappa_n n} \leq \eta_n$, where $R_n$ is the number of discoveries of Algorithm~\ref{algo:np_limma} applied at level $\alpha \in (0,1)$. Then there exists a constant $C=C(\ubar{L}, \bar{U},\alpha)$ such that
$$ \mathrm{FDR}_n \; = \; \EE[\boldsigma, \boldmu]{\mathrm{FDP}_n} \; \leq \; \alpha \,+\, C \frac{(\log n)^{5/4}}{n^{1/4}\kappa_n } \,+\, \eta_n \;\; \text{ for all }\, n \in \mathbb N_{\geq 2}.$$
\end{manualtheorem}
In a dense regime analogous to that of Section~\ref{subsec:denseregime}, we may choose $\kappa_n = \Theta(1)$ and \smash{$\eta_n = \widetilde{O}(n^{-1/2})$}. It is worth pointing out three differences in the statement of Theorem~\ref{manualtheorem:limma_bh_controls_the_FDR} compared to Theorem~\ref{theo:limma_bh_controls_the_FDR}.
First, in the latter, the FDR is controlled at level $(n_0/n)\alpha$, while here it is controlled only at level $\alpha$:
BH is conservative by a factor $n_0/n$~\citep[equation (2)]{benjamini1995controlling}, the oracle compound p-values may be anticonservative by a factor of at most $n/n_0$ 
(Theorem~\ref{theo:avg_significance_controlling}), and these two behaviors cancel out.\footnote{\label{footnote:storey_fails}Our conclusion in Footnote~\ref{footnote:storey} no longer holds and null-proportion adaptive methods such as Storey's procedure may not control the FDR in the compound setting (see
results of simulation study in Section~\ref{sec:simulations}).} Second, in Theorem~\ref{manualtheorem:limma_bh_controls_the_FDR} there is no guarantee on the expected false discovery proportion conditional on the sample variances. Finally, the rates are slowed down by the same factor as in Propositions~\ref{prop:asymptotically_uniform} versus~\ref{manualprop:asymptotically_uniform}.

\section{Results under a joint hierarchical model}
\label{sec:joint_hierarchical}
So far, we have treated the following two cases: $\boldmu=(\mu_1,\dotsc,\mu_n)$ is fixed and \smash{$\sigma_i^2 \simiid G$} (Section~\ref{sec:empirical-bayes})
and both $\boldmu$ and \smash{$\boldsigma^2 = (\sigma_1^2,\dotsc,\sigma_n^2)$} are fixed (Section~\ref{sec:compound}). It is instructive
to also study Algorithm~\ref{algo:np_limma} when \emph{both} $\mu_i$ and \smash{$\sigma_i^2$} are random; 
the following discussion will further illuminate the distinction between the formal results in Sections~\ref{sec:empirical-bayes}
and~\ref{sec:compound}. To be concrete, we suppose that there exists
an unknown distribution $\Pi$ supported on $\RR \times \RR_{>0}$ such that
\begin{equation}
    \label{eq:joint_EB}
(\mu_i,\,\sigma_i^2) \simiid \Pi\,\, \text{ for }\,\, i =1,\dotsc,n.
\end{equation}
The distribution $G(\cdot; \Pi)$ of $\sigma_i^2$ is given by marginalizing $\Pi$ over $\mu_i$, that is, 
$G$ is defined as follows for any Borel set $A \subset [0, \infty)$:
\begin{equation}
    \label{eq:marginalized_G}
     G(A; \Pi) \coloneqq \Pi(\RR \times A).
\end{equation}
We then have that $\sigma_i^2 \simiid G(\cdot; \Pi)$, i.e., the statement in~\eqref{eq:random_sigma} holds with prior $G(\cdot; \Pi)$.

We may directly apply the results of Section~\ref{sec:compound} by conditioning on the random vectors $\boldmu$ and $\boldsigma$,
in which case we are back to draws from model~\eqref{eq:full_sampling}. Below we also address the following question:
when are the results of Section~\ref{sec:empirical-bayes} applicable? For example, when can we argue that the empirical partially
Bayes p-values for null hypotheses are approximately uniform conditional on \smash{$(S_1^2,\dotsc,S_n^2)$} (Proposition~\ref{prop:asymptotically_uniform} in Section~\ref{sec:empirical-bayes})?
In the following we demonstrate that it suffices that $\ind(\mu_i=0)$ is 
independent of $\sigma_i^2$ under $\Pi$ in~\eqref{eq:joint_EB}, and we also contextualize this requirement.

As our initial attempt to applying the results of Section~\ref{sec:empirical-bayes}, we condition on $\boldmu$ (not on $\boldsigma$). 
Then,~\eqref{eq:full_hierarchical} specifies the following (where we make the conditioning
on $\boldmu$ explicit in the notation): there exists a distribution \smash{$\widetilde{G}$} such 
that\footnote{Here we are implicitly using the $\text{iid}$ structure in~\eqref{eq:joint_EB} which implies that
the conditional distribution of \smash{$\sigma_i^2$} given $\boldmu$ is equal to the conditional distribution of \smash{$\sigma_i^2$} given $\mu_i$.}
$$\sigma_i^2 \cond \mu_i \; \simiid \; \widetilde{G} \; \text{ for }\; i =1, \dotsc, n.$$
For the above to hold for almost all $\boldmu$ from~\eqref{eq:joint_EB}, we require that the marginals of $\Pi$ factorize, 
that is, we require that $\sigma_i^2$ is \emph{independent} of $\mu_i$ in which case $\widetilde{G} = G(\cdot; \Pi)$ defined in~\eqref{eq:marginalized_G}. 
Hence, in the above case, we may apply the results from Section~\ref{sec:empirical-bayes} by conditioning on $\boldmu$.
The assumption of independence is relatively common in the literature, see e.g., 
\citet[equation (2.2)]{ghosal2001entropies}, \citet{jiang2020general, zheng2021mixtwice}, and \citet{soloff2021multivariate}.
This assumption can be reasonable in some situations, namely when one believes that all primary parameters $\mu_i$
are of similar magnitude for all $i$~\citep[Section 8]{efron1973stein}. On the other hand, it is also well recognized
that the assumption may be violated in problems of 
practical interest~\citep{stephens2017false,  gu2017unobserved, weinstein2018grouplinear, chen2022gaussian}.

It turns out, however, that independence of $\mu_i$ and $\sigma_i^2$ is not necessary to e.g., prove
an analogous result to Proposition~\ref{prop:asymptotically_uniform} in Section~\ref{sec:empirical-bayes}:
in the empirical
partially Bayes approach we model and estimate the prior distribution of the nuisance parameters $\sigma_i^2$ based on $(S_1^2,\dotsc,S_n^2)$
only, i.e., a vector of statistics that are ancillary for $\mu_i$ (Proposition~\ref{prop:minimal_sufficient}).
Hence most of the technical results in Section~\ref{sec:empirical-bayes}, e.g., Theorem~\ref{theo:hellinger_dist_convergence} and Corollary~\ref{coro:weak_convergence}
only rely on the marginal distribution $G(\cdot; \Pi)$ of the $\sigma_i^2$; and not the joint distribution $\Pi$ in~\eqref{eq:joint_EB}.
The next proposition provides a sufficient condition, substantially more general than independence of $\mu_i$ and $\sigma_i^2$,
such that the oracle conditional ``p-values''
$P_{G(\cdot; \Pi)}(Z_i, S_i)$, i.e., the p-values defined in~\eqref{eq:conditional_pvalue} for the prior $G(\cdot; \Pi)$, are bona-fide 
conditional p-values that satisfy the properties in Proposition~\ref{prop:monotonicity}; see Supplement G for a proof.

\begin{prop}
    \label{prop:joint_hierarchical}
Suppose $(\mu_i, \sigma_i^2) \sim \Pi$ as in~\eqref{eq:joint_EB} and $(Z_i, S_i^2)$ are 
generated as in~\eqref{eq:full_sampling} conditional on \smash{$\mu_i, \sigma_i^2$}. Suppose further that, \begin{equation}
    \label{eq:indep_under_null}
\ind(\mu_i=0)\; \text{ is independent of }\,\sigma_i^2,
\end{equation}
then it holds $\Pi$-almost surely that:
$$ \PP[\Pi]{P_{ G(\cdot; \Pi)}(Z_i, S_i^2) \leq t \cond S_i^2, \mu_i=0  } = t \text{ for all } t \in [0,\,1].$$
\end{prop}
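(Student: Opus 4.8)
The plan is to reduce the claim to the conditional uniformity already proved in Proposition~\ref{prop:monotonicity}a for the oracle model~\eqref{eq:full_hierarchical}. The essential observation is that conditioning on the null event $\{\mu_i=0\}$ turns the joint hierarchical model~\eqref{eq:joint_EB}--\eqref{eq:full_sampling} into precisely the oracle model~\eqref{eq:full_hierarchical} with $\mu=0$ and prior equal to the marginal $G(\cdot;\Pi)$ from~\eqref{eq:marginalized_G}. Since the p-value map $z\mapsto P_{G(\cdot;\Pi)}(z,s^2)$ in~\eqref{eq:conditional_pvalue} is a fixed deterministic function once the prior $G(\cdot;\Pi)$ is specified, the whole task is to identify the conditional law of its random argument $(Z_i,S_i^2)$ given $(S_i^2,\mu_i=0)$. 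Throughout I assume $\PP[\Pi]{\mu_i=0}>0$, so that the conditioning is well defined; otherwise the assertion is vacuous.

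First I would compute the conditional law of $\sigma_i^2$ given $\{\mu_i=0\}$. This is the single place where hypothesis~\eqref{eq:indep_under_null} is used: because $\ind(\mu_i=0)$ is independent of $\sigma_i^2$ under $\Pi$, conditioning on the event $\{\mu_i=0\}=\{\ind(\mu_i=0)=1\}$ leaves the law of $\sigma_i^2$ unchanged, whence $\sigma_i^2\mid\{\mu_i=0\}\sim G(\cdot;\Pi)$. I would emphasize that \emph{without} the independence assumption the null-conditional prior of $\sigma_i^2$ would be $G((\sigma_i^2)\mid\mu_i=0)$, which can differ from the marginal $G(\cdot;\Pi)$ that the empirical partially Bayes procedure actually targets; this mismatch is exactly the phenomenon quantified in the compound analysis of Section~\ref{sec:compound}.

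Next I would assemble the full conditional law. Given $\sigma_i^2$ and $\{\mu_i=0\}$, the pair $(Z_i,S_i^2)$ is drawn as in~\eqref{eq:full_sampling} with mean zero, i.e., $(Z_i,S_i^2)\mid\sigma_i^2,\mu_i=0\,\sim\,\nn(0,\sigma_i^2)\otimes(\sigma_i^2/\nu)\chi^2_\nu$. Combining this with the previous step, the joint law of $(\sigma_i^2,Z_i,S_i^2)$ conditional on $\{\mu_i=0\}$ coincides exactly with the law of the triple $(\sigma^2,Z^{H_0},S^2)$ generated in the display following~\eqref{eq:conditional_pvalue} under $G=G(\cdot;\Pi)$. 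In particular, the conditional law of $(Z_i,S_i^2)$ given $\{\mu_i=0\}$ equals the law of $(Z,S^2)$ in model~\eqref{eq:full_hierarchical} with $\mu=0$ and prior $G(\cdot;\Pi)$.

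With this distributional identity in hand, the conclusion follows by invoking Proposition~\ref{prop:monotonicity}a with the prior $G(\cdot;\Pi)$: the conditional distribution of $P_{G(\cdot;\Pi)}(Z_i,S_i^2)$ given $S_i^2$, within the event $\{\mu_i=0\}$, is uniform on $[0,1]$ for $\Pi$-almost every value of $S_i^2$. To promote the statement to hold simultaneously for all $t\in[0,1]$ on a single $\Pi$-null set, I would use that $t\mapsto\PP[\Pi]{P_{G(\cdot;\Pi)}(Z_i,S_i^2)\le t\mid S_i^2,\mu_i=0}$ is non-decreasing and right-continuous in $t$, so verifying the identity on a countable dense subset of $[0,1]$ suffices. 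I do not expect a genuine obstacle: the argument is a short reduction whose only substantive ingredient is the identification of the null-conditional prior of $\sigma_i^2$ as $G(\cdot;\Pi)$, and the measure-theoretic ``for all $t$'' upgrade is routine.
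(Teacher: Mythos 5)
Your proposal is correct and follows essentially the same route as the paper's proof: both use the independence assumption~\eqref{eq:indep_under_null} to identify the null-conditional law of $\sigma_i^2$ as $G(\cdot;\Pi)$, deduce that the conditional law of $(Z_i,S_i^2)$ given $\{\mu_i=0\}$ matches the oracle model~\eqref{eq:full_hierarchical} with $\mu=0$ and prior $G(\cdot;\Pi)$, and then invoke the conditional uniformity of Proposition~\ref{prop:monotonicity}. The only cosmetic difference is that the paper formalizes the distributional identification via an auxiliary coupled variable $Z_i^{H_0}\mid \mu_i,\sigma_i^2,Z_i,S_i^2\sim\nn(0,\sigma_i^2)$, while you argue directly on conditional laws and add the (routine) positive-probability and countable-dense-subset caveats.
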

The assumption in~\eqref{eq:indep_under_null} seems unavoidable for the empirical partially Bayes methodology---it is the price
we pay for using all of \smash{$(S_1^2,\dotsc,S_n^2)$} to get a reference \emph{null} distribution of $Z_i$ conditional on $S_i^2$.
At the same time, \eqref{eq:indep_under_null} encapsulates a requirement that is substantially weaker than independence of $\mu_i$ and $\sigma_i^2$. For example, 
~\citet{stephens2017false} considers the following modeling assumption:
there exists $a \geq 0$ such that $\mu_i/\sigma_i^{a}$ is independent of $\sigma_i^2$ under $\Pi$.\footnote{By taking $a=0$,
this assumption posits independence of $\mu_i$ and $\sigma_i^2$.} This assumption implies the statement in~\eqref{eq:indep_under_null}.\footnote{The reason
is that $\ind(\mu_i =0) = \ind(\mu_i/\sigma_i^{a}=0)$.} \citet{lonnstedt2002replicated} posit that $\mu_i \mid \mu_i \neq 0, \sigma_i^2 \sim \mathcal{N}(0, \gamma \sigma_i^2)$ for some $\gamma > 0$, which also implies~\eqref{eq:indep_under_null} (as $\mu_i/\sigma_i$ is independent of $\sigma_i^2$ under $\Pi$).
From a practical perspective, we anticipate that~\eqref{eq:indep_under_null}
will approximately hold in many situations (and if it does not; then our methods still have the guarantees of Section~\ref{sec:compound}).

\section{Applications}
\label{sec:applications}

As we mentioned in the Introduction, the framework
we consider is widely applicable. 
Here we complement the differential methylation
analysis from Section~\ref{subsec:methylation}
by presenting two additional use-cases: detecting differentially expressed genes with microarray data
and detecting differentially abundant proteins with mass spectrometry data.
We further choose these datasets so as to get insights into empirical partially Bayes
analyses with different choices of the degrees of freedom $\nu$: $\nu=4$ for 
the methylation dataset, $\nu=11$ for the microarray dataset and $\nu=28$ for the proteomics
dataset.

\subsection{Differentially expressed genes after treatment with Ibrutinib}
\label{subsec:ibrutinib}

Here we consider a typical data analysis with microarray data.
As explained in the Introduction (also see~\citet{efron2010largescale} for an eloquent
exposition), microarrays were the driving force
of several developments in multiple testing and empirical Bayes methodology (e.g., limma)
in the beginning of the century. Nowadays the microarray technology has been largely
superseded by RNA-Seq, but nevertheless it finds use, e.g., 
for clinical applications~\citep{klaus2018end}.

\begin{figure}
    \centering
    \includegraphics[width=0.99\linewidth]{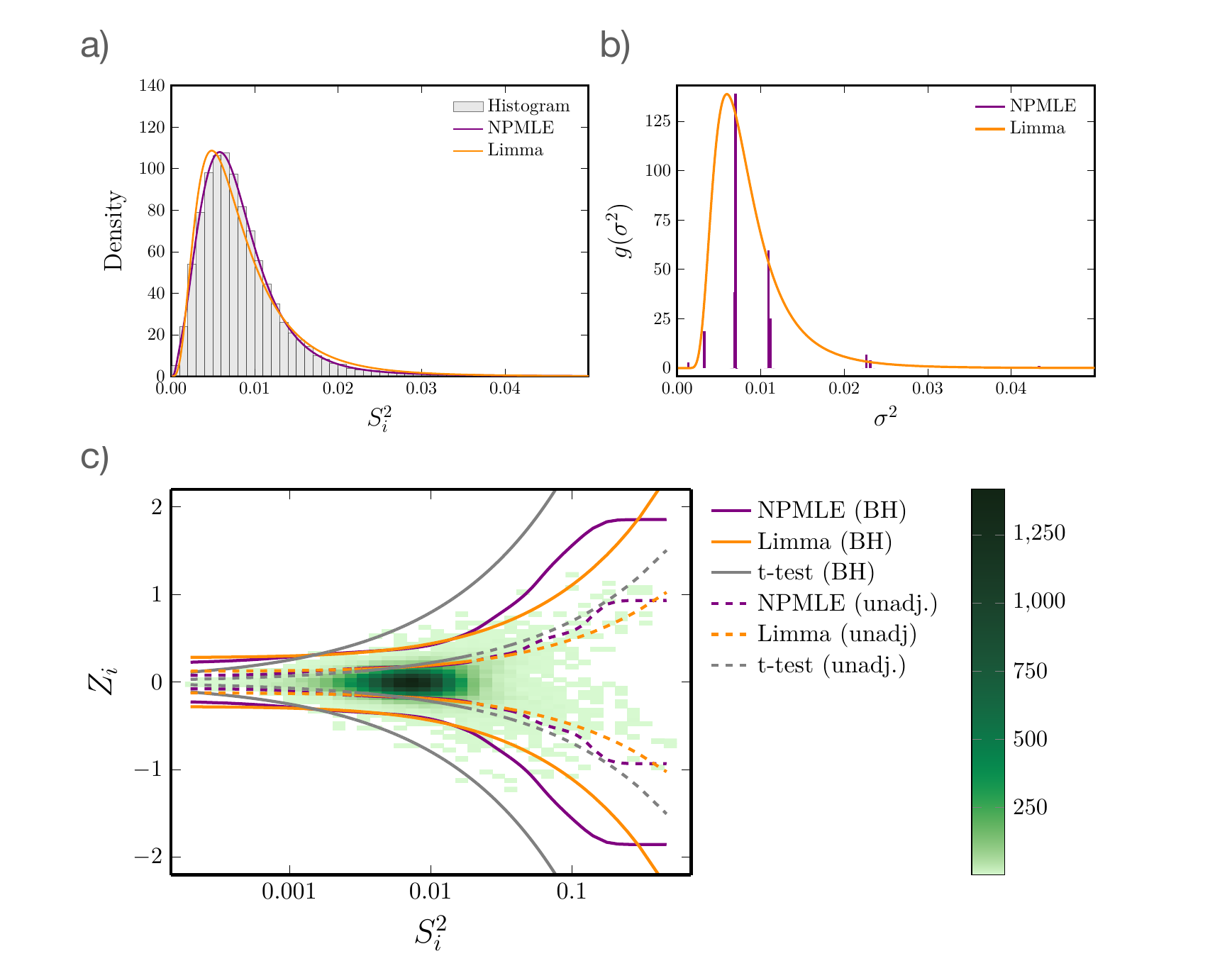}
    \caption{\textbf{Empirical partially Bayes analysis with microarray data of \citet{dietrich2017drugperturbationbased}}: {\small 
Each unit $i$ corresponds to a microarray probe ($n=48,100$) and its data is summarized as in~\eqref{eq:full_sampling}
with $\nu = 11$ degrees of freedom.
The three panels are analogous to the three panels of Fig.~\ref{fig:methylation}.}}
    \label{fig:microarray}
\end{figure}

The concrete application we consider is the following:
\citet{dietrich2017drugperturbationbased} collected samples from 12 
chronic lymphocytic leukemia (CLL) patients. Cells were incubated for 12 hours
with the drug Ibrutinib (which is used to treat CLL) and compared against
negative controls. Gene expression was measured with Illumina (incorporation, San Diego, California, USA) microarrays.
After performing a paired analysis for each probe
(pairing each patient's treated sample vs. the negative control),
the data may be summarized in the form~\eqref{eq:full_sampling} with $n=48,100$ probes
and $\nu = 12-1=11$ degrees of freedom.

Applying Benjamini-Hochberg to control the FDR at $\alpha=0.05$, we get
$92$ discoveries with the NPMLE-based p-values, $107$ discoveries with limma (the estimated prior~\eqref{eq:limma_shrinkage_assumption} 
has parameters $\hat{s}_0^2 \approx 0.007$, $\hat{\nu}_0 \approx 10.4$), and $14$ discoveries
with the standard t-test. Thus also in this problem, the empirical partially Bayes
approaches showcase substantial increase in discoveries compared to the t-test. 
The results are visualized in Fig.~\ref{fig:microarray}. We note
(Fig.~\ref{fig:microarray}a) that the NPMLE leads to a slightly better fit 
to the histogram of sample variances $S_i^2$ compared
to limma's inverse gamma model. The latter slightly overshoots for $S_i^2 \geq 0.02$,
and this in turn leads to less conservative rejection regions (Fig.~\ref{fig:microarray}c)
for large values of $S_i^2$ (which in turns explains the slight increase of 
discoveries with limma compared to the NPMLE).

\subsection{Differentially abundant proteins in breast cancer subtypes}
\label{subsec:proteomics}
 
Our next application pertains to a proteomics dataset analyzed by \citet{terkelsen2021high}, who conducted liquid chromatography tandem mass spectrometry (LC-MS/MS) on 34 tumor interstitial fluid samples from three breast cancer subtypes: luminal, Her2, and triple negative, processed in four experimental batches. For each of $n = 6,763$ proteins, the outcome variable is the $\log 2$-transformed intensity, measured through LC-MS/MS against a pooled internal standard. The investigators fit a linear model for each protein, with a design matrix including indicators for the $4$ batches and $3$ subtypes, estimating each variance with  $\nu = 34 - 1 - (4-1) - (3-1) = 28$ degrees of freedom. Here we seek to test for each protein whether the contrast for the difference of the coefficients for the luminal subtype
minus the coefficient for the Her2 subtype is equal to $0$. This problem can be cast into the framework 
of this paper with $\nu=28$ degrees of freedom (see Proposition~\ref{rema:contrasts}).

\begin{figure}
    \centering
    \includegraphics[width=0.99\linewidth]{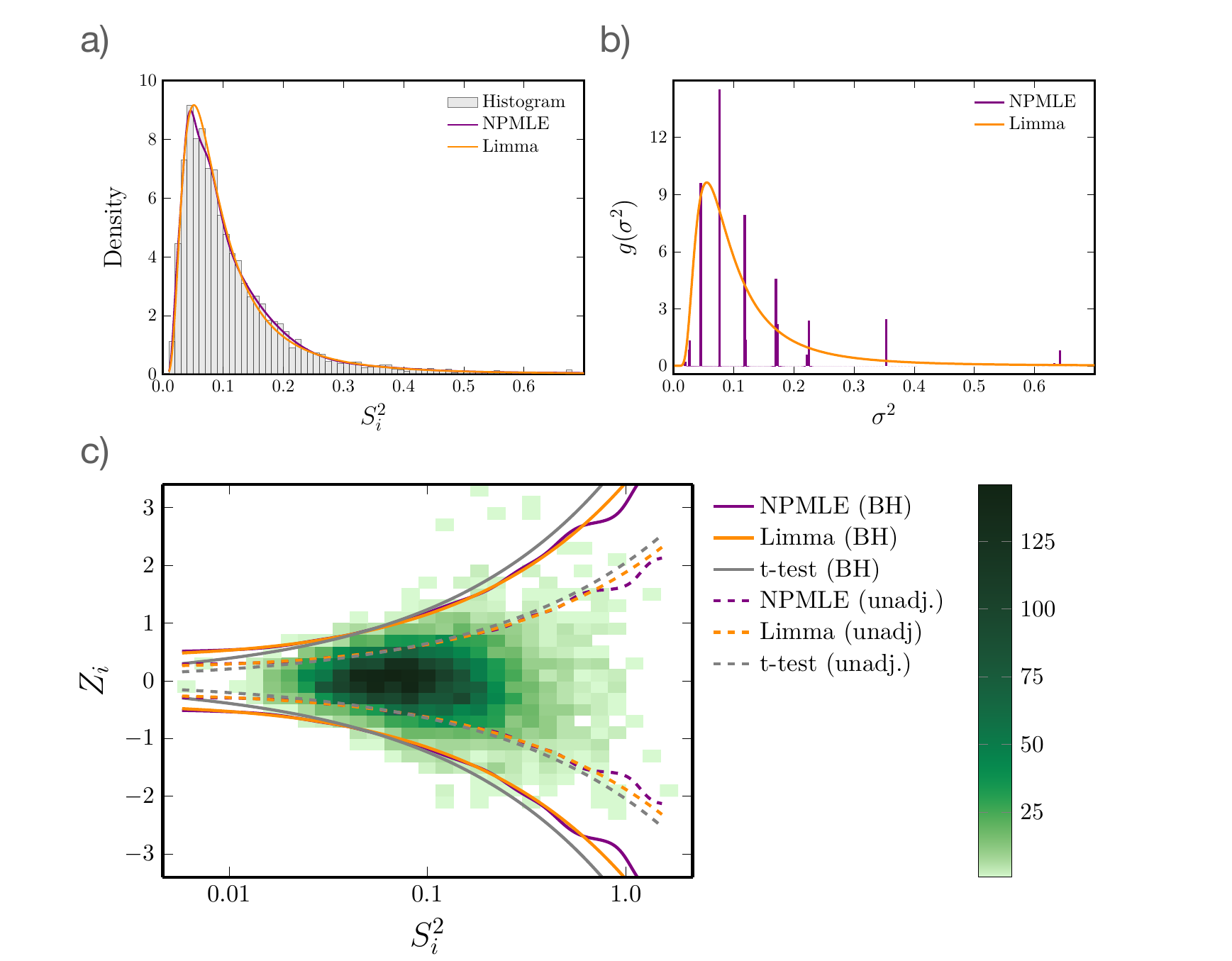}
    \caption{\textbf{Empirical partially Bayes analysis with proteomics data of \citet{terkelsen2021high}}: {\small 
    Each unit $i$ corresponds to a protein ($n=6,763$) and its data is summarized as in~\eqref{eq:full_sampling}
    with $\nu = 28$ degrees of freedom. The three panels are analogous to the three panels of Fig.~\ref{fig:methylation}.}}  
\label{fig:proteomics_bc}
\end{figure}

The Benjamini-Hochberg procedure (applied to control the FDR at $\alpha=0.05$) yields 
$93$ discoveries with the NPMLE-based p-values, $102$ discoveries with limma (the estimated prior~\eqref{eq:limma_shrinkage_assumption} 
has parameters $\hat{s}_0^2 \approx 0.077$, $\hat{\nu}_0 \approx 5.2$), and $74$ discoveries
with a standard t-test. As in our other applications, the empirical partially Bayes methods lead to more
discoveries compared to the standard t-test, however the difference is less pronounced in this case. The reason is that
the degrees of freedom are larger in this problem, so that there is less benefit in an empirical Bayes treatment of the sample variances $S_i^2$,
which are already reasonably accurate estimates of $\sigma_i^2$.\footnote{Indeed, all three p-values are asymptotically equivalent as $\nu \to \infty$.}
We visualize the results in Fig.~\ref{fig:proteomics_bc}. In Fig.~\ref{fig:proteomics_bc}c) we see that indeed the rejection thresholds
of the t-test are more closely aligned with the empirical partially Bayes testing thresholds compared to Fig.~\ref{fig:methylation}c) and Fig.~\ref{fig:microarray}c).
Nevertheless, it still holds true that the t-test is more liberal for small values of $S_i^2$, and more conservative for larger values of $S_i^2$.

\section{Simulations}
\label{sec:simulations}
In this section we conduct a simulation study to accompany our theoretical findings and the data analyses above.
Throughout our simulations, we generate data from model~\eqref{eq:full_hierarchical} with 
$n=10,000$. We vary the degrees of freedom $\nu \in \cb{2,4,8,16,32,64}$ and consider
three choices of prior $G$ for $\sigma_i^2$: 
\begin{enumerate}[wide, noitemsep, labelindent=0pt]
\item A point mass at $\sigma_i^2 =1$, i.e., $G= \delta_{1}$ as considered, e.g., in Example~\ref{exam:same_variance}.
\item The scaled inverse $\chi^2$ prior in~\eqref{eq:limma_shrinkage_assumption} with $\nu=6$ and $s_0^2=1$. 
\item The two point prior that assigns equal mass to $\sigma^2 = 10$ and $\sigma^2 =1$, that is, $G = ( \delta_{10} + \delta_1)/2$.
\end{enumerate}
The $\mu_i$ are generated as follows: first, we let $\mu_i=0$ for $9000$ of hypotheses 
(that is, the null proportion is equal to $0.9$) with the null indices picked at random. 
For the alternative hypotheses, we let $\mu_i \sim \nn(0,\, 16 \sigma_i^2)$. (In Supplement H.2, we also provide results for alternative signals drawn as $\mu_i \sim \nn(0,\, 16)$ and $\mu_i \sim \delta_4$.)

We compute p-values with the following four methods: 
\begin{enumerate}[wide, noitemsep, labelindent=0pt]
\item Standard \textbf{t-test}.
\item \textbf{Limma}.
\item \textbf{NPMLE}-based empirical partially Bayes.
\item \textbf{Oracle} partially Bayes that is provided with access to the ground truth $G$.
\end{enumerate}
We correct
for multiple testing with the Benjamini-Hochberg (\textbf{BH}) procedure
applied at a nominal false discovery rate level $\alpha = 0.1$.
For the oracle partially Bayes p-values, we also apply an alternative multiple testing
strategy based on Storey's null proportion adaptive procedure~\citep{storey2004strong}.\footnote{The rationale for 
considering Storey's procedure is to demonstrate that null proportion adaptive procedures may not control the $\mathrm{FDR}$
in settings wherein BH does; see the discussion in Footnote~\ref{footnote:storey_fails} following Theorem~\ref{manualtheorem:limma_bh_controls_the_FDR}.}
In total this leads to 5 methods under evaluation---t-test (BH), limma (BH), NPMLE (BH), oracle (BH),
and oracle (Storey). Finally, we evaluate methods in terms of their false discovery rate (FDR), and
power, which we define as $\mathrm{Pow}_n$ in~\eqref{eq:power_criteria}. We also consider a third metric that captures type-I error conditional on $S_1^2,\dotsc,S_n^2$. Concretely, this metric is defined as the probability that the p-value of the null hypothesis with the smallest sample variance is less than or equal to $0.2$:
\begin{equation}
\label{eq:minsamplevar}
\mathrm{MinSVarFP}(\leq 0.2) := \mathbb P[P_{\hat{i}(\mathrm{min})} \leq 0.2],\text{ where }\, \hat{i}(\mathrm{min}) := \argmin\cb{ S_i^2\,:\, i \in \Hnull}.
\end{equation}
If the null p-values are exactly uniform conditional on $S_1^2,\dotsc,S_n^2$ (as is the case for the oracle p-values in the setting of Proposition~\ref{prop:monotonicity}), then this probability should be equal to $0.2$. We note that the metric in~\eqref{eq:minsamplevar} only depends on the p-values, and not the multiple testing procedure. Thus, the results for oracle (BH) and oracle (Storey) are identical for this metric.
All three metrics are computed by averaging over $3,000$ Monte Carlo replicates of each simulation setting. 
\begin{figure}
    \centering
    \includegraphics[width=0.99\linewidth]{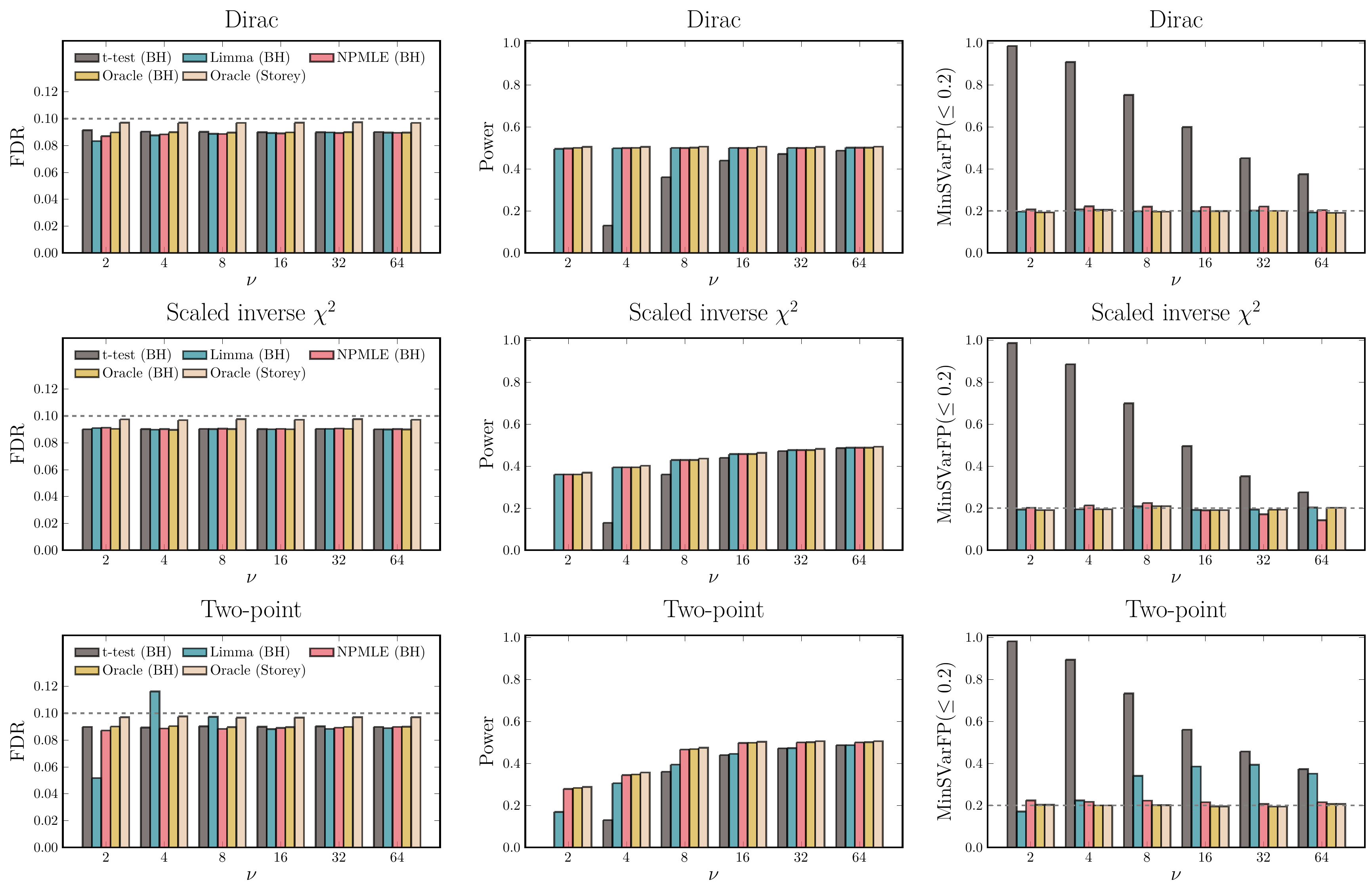}
    \caption{\textbf{Simulation results:} {\small 
    We compare $5$ methods---t-test (BH), limma (BH), NPMLE (BH), oracle (BH), and oracle (Storey)---across simulation settings
    that span three different choices of priors $G$ for the variances $\sigma_i^2$ (rows of the figure), as well as varying 
    degrees of freedom $\nu \in \cb{2,4,8,16,32,64}$ ($x$-axis of each plot). 
    The panels of the first column illustrate the false discovery rate of the different method, the second their power, and the third a metric for the type-I error of the null p-value with minimal sample variance (defined in~\eqref{eq:minsamplevar}).
    }}
    \label{fig:main_simulations}
\end{figure}
The results of the simulation are shown in Fig.~\ref{fig:main_simulations}. 
The first row shows the results for the simulation with the Dirac prior. 
All methods control the false discovery rate at $\alpha=0.1$.\footnote{In fact, we expect them to control the false discovery rate
at level $0.09 = n_0/n \cdot \alpha$, see e.g., Theorem~\ref{theo:limma_bh_controls_the_FDR} and the discussion preceding it.}
Limma and NPMLE are well-specified here\footnote{For limma this is not immediately obvious.
Note that the distribution of $\sigma_i^2$ in~\eqref{eq:limma_shrinkage_assumption}
converges weakly to $\delta_1$ as $\nu_0 \to \infty$ and $s_0^2=1$.}and have power similar to the oracle test (which here is just a z-test with known variance \smash{$\sigma^2=1$},
cf. Example~\ref{exam:same_variance}),
which is substantially greater than the power of the t-test. For example, for $\nu=2$, the t-test
has power almost equal to $0$, while the other methods have power $\approx 0.5$. The t-test becomes more competitive
as $\nu$ increases, while the performance of the other methods remains almost constant as $\nu$ increases (since here the empirical
partially Bayes p-values are determined by the point mass prior with the $\chi^2$-likelihood playing a minimal role). 
The second row of Fig.~\ref{fig:main_simulations} shows the scaled inverse $\chi^2$ simulation. Here the conclusions
are similar to the Dirac prior. However, the power of the (empirical) partially Bayes methods is not as 
large as in the Dirac case and it increases with $\nu$; the reason is that in this setting there is unexplained heterogeneity in the variances.
The results for the two point prior are shown in the last row of Fig.~\ref{fig:main_simulations}. 
In contrast to the previous two simulation settings, 
here the limma model is misspecified. We observe that this misspecification can lead to both conservative results (that is, FDR
substantially below nominal), as occurs when $\nu=2$, but also to anticonservative results (FDR above nominal)
as occurs when $\nu=4$. Finally, we note that throughout these simulations, Storey's multiple testing correction with oracle p-values
leads to FDR control (as follows from Proposition~\ref{prop:monotonicity}b and~\citet{storey2004strong}) and slightly higher power
than its BH counterpart. 
As regards the error metric in~\eqref{eq:minsamplevar}, we observe that the NPMLE empirical partially Bayes p-values have satisfying conditional uniformity properties in that $\mathrm{MinSVarFP}(\leq 0.2) \approx 0.2$ across settings with only small downwards deviations for the scaled inverse $\chi^2$ prior with $\nu \geq 32$; see Supplement H.1 for further discussion. In concordance with Proposition~\ref{prop:ttest_failure}, the t-test p-values strongly violate conditional uniformity. For the  two point prior and $\nu \geq 8$, limma also anti-conservatively violates conditional uniformity.\\

\noindent \textbf{Adversarial simulation:} The results of the preceding simulations are in accordance
to the theoretical results in Sections~\ref{sec:conditional_pvalues},~\ref{sec:empirical-bayes}, and~\ref{sec:joint_hierarchical}. 
We next consider a challenging setting for the empirical partially Bayes methods. We repeat the simulations from above
with the following tweak: we do not choose the null indices at random, 
but instead we sort the variances $\sigma_i^2$ and let $\mu_i=0$ for the $9,000$ hypotheses with the largest 
$\sigma_i^2$. Hence we are violating~\eqref{eq:indep_under_null} and the results of Section~\ref{sec:empirical-bayes}
are not applicable. Furthermore, as explained at the 
end of Section~\ref{subsec:average_significance}, 
the constellation in which the alternative hypotheses have smaller variances than the nulls
is adversarial/challenging for the empirical partially Bayes methods.
Yet, as explained in Section~\ref{sec:joint_hierarchical},
the theoretical results of the compound setting (Section~\ref{sec:compound}) are applicable. 

\begin{figure}
    \centering
    \includegraphics[width=0.99\linewidth]{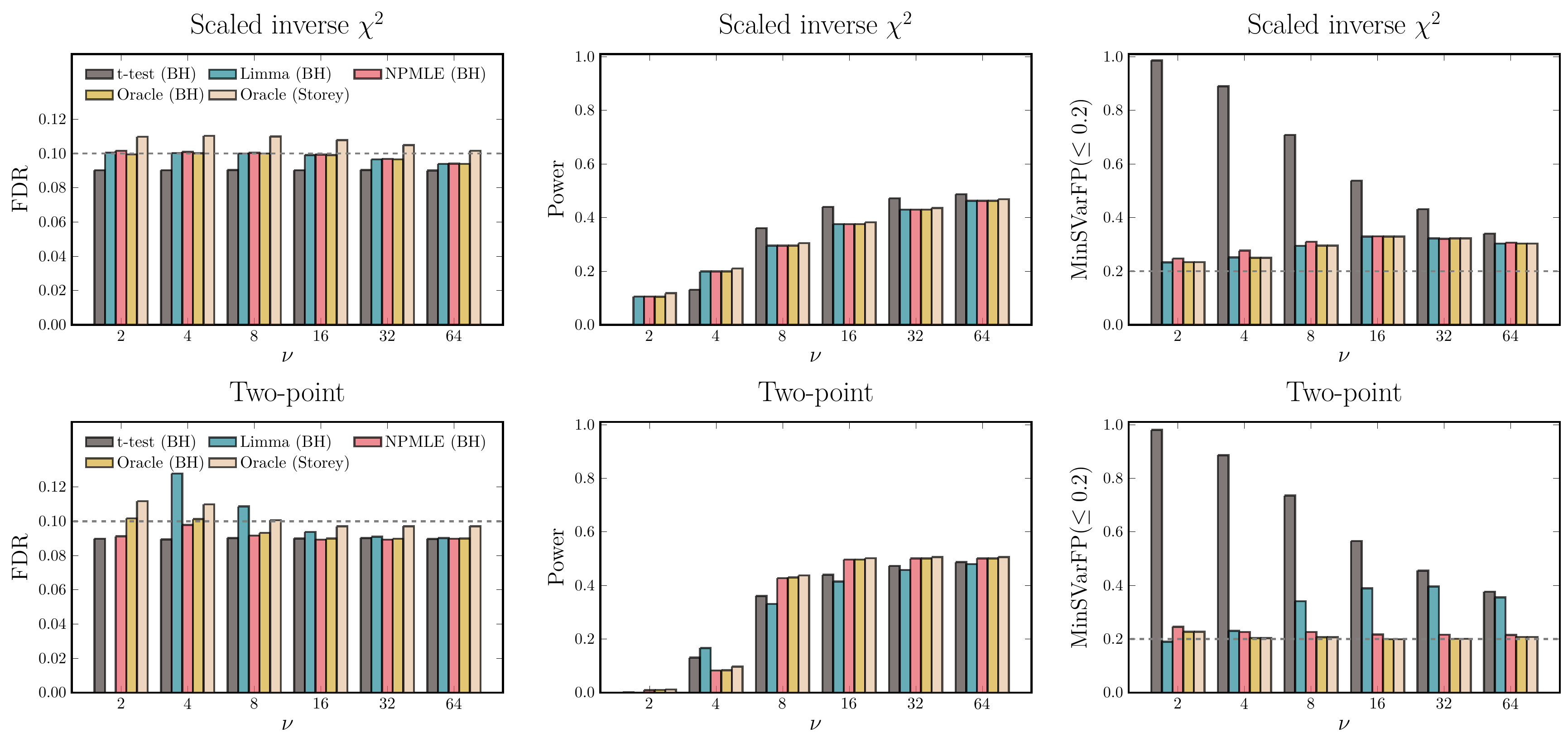}
    \caption{\textbf{Adversarial simulation results:} {\small 
    The panels in this figure are analogous to the panels of the last two rows of Fig.~\ref{fig:main_simulations}. 
    In contrast to Fig.~\ref{fig:main_simulations}, wherein the indices of null hypotheses were chosen at random, here
    null hypotheses are enriched for large variances $\sigma_i^2$.
    }}
    \label{fig:adv_simulations}
\end{figure}

Results of this simulation study are shown in Fig.~\ref{fig:adv_simulations}.\footnote{ Fig.~\ref{fig:adv_simulations} omits
the Dirac prior since then the adversarial and non-adversarial simulations are identical.}
We make the following observations: first, as explained in Section~\ref{sec:compound}, we observe that Storey's procedure
with the oracle partially Bayes p-values no longer controls the FDR in contrast to the BH procedure. Second, we note 
that under such adversarial arrangement of null hypotheses, in some cases the t-test can have more power than the empirical partially Bayes based 
approaches. For example, for the scaled inverse $\chi^2$ prior, the t-test has more power than the empirical partially Bayes approaches
for $\nu \geq 8$ degrees of freedom, but less power for $\nu \leq 4$ degrees of freedom. An explanation for this phenomenon is that under 
the adversarial constellation we consider, partially Bayes p-values are smaller than they should be for null hypotheses (so that Storey's procedure no longer controls FDR),
but larger than they should be for alternative hypotheses (that is, the smaller sample variances of the alternatives are shrunk upwards towards the bulk 
of larger sample variances of the nulls). When the degrees of freedom are very small, shrinkage (even in the wrong direction) is sufficiently
beneficial to lead to an increase in power compared to the t-test, while for larger degrees of freedom the t-test outperforms the partially Bayes p-values.
In terms of the metric for the conditional type-I error, $\mathrm{MinSVarFP}(\leq 0.2)$ as defined in~\eqref{eq:minsamplevar}, all methods are inflated for the scaled inverse $\chi^2$ prior. This does not contradict our results, since the conditional guarantees we derived do not hold in the compound setting. Nevertheless, the empirical partially Bayes methods still perform better than the t-test in terms of this metric.

All in all, we consider the results of this simulation to be encouraging: even under this adversarial setting (that would be unlikely
to occur in many real applications), Algorithm~\ref{algo:np_limma} with the NPMLE performs well, and in all cases
closely tracks the performance of the oracle partially Bayes p-values.

\section{Discussion}
\label{sec:discussion}

In this paper, we have made progress towards theoretically
understanding the problem
of empirical partially Bayes (multiple) hypothesis testing. The basic 
idea is to apply empirical Bayes techniques to ``shrink'' the nuisance
parameters, while treating the primary parameter of interest in standard
frequentist fashion. As we have mentioned throughout this work, the limma
approach and model is ubiquitous in applications of high-throughput biology. 
The more general idea of empirical partially Bayes testing comes up in even more
applications (albeit not under this name), e.g., in some of the most popular
approaches for analyzing RNA-Seq data~\citep{robinson2010edger, love2014moderateda}
based on models for count data.
We hope that our work will spur further research into empirical partially Bayes testing
and illuminate \emph{why} this approach has been so successful in practice.

We have also made methodological progress by deriving a fully nonparametric
generalization of the limma approach and by showing that even under 
nonparametric assumptions it is possible to approximate the ``oracle'' p-values
at a rate that is parametric (that is, of order $1/\sqrt{n}$) up to a logarithmic
factor. This suggests that our approach may have substantial benefits in practice
and at little cost when the parametric model is justified.
Further work could also consider finite-sample issues in more detail and 
modify the procedure to be more conservative, e.g., based on the ideas in
\citet{phipson2016robust, stephens2017false, ignatiadis2022confidence}, and \citet{xie2022discussion}.

Finally, our results also have implications for practitioners who use the 
parametric limma approach. They suggest the following:
\begin{enumerate}[wide, noitemsep, labelindent=0pt]
    \item \textbf{Sample variance histogram as a diagnostic:} The inverse chi-squared assumption for $G$ in~\eqref{eq:limma_shrinkage_assumption} 
    underlying the limma method may appear to be overly strong in applications. Nevertheless, the results of Proposition~\ref{prop:pvalue}
    and Lemma~\ref{lemm:closeness_of_marginals_plus_1} imply that limma can perform well even if \smash{$\hG^{\text{limma}}$} is a bad approximation of the
    ``true'' $G$ as long as the parametric model provides a good fit for the marginal density of the sample variances,
    that is, $f_{\hG^{\text{limma}}} \approx f_G$. The latter may be visually assessed by overlaying the implied marginal density
    with a histogram of the sample variances as e.g., in Fig.~\ref{fig:methylation}a) in this manuscript, 
    as well as \citet[Fig. 3 and 4]{wright2003random} and~\citet[Supplementary Fig. 1]{kammers2015detecting}.
    Hence we recommend accompanying a limma analysis,
    when appropriate, with this diagnostic plot.
    \item \textbf{Benjamini-Hochberg compared to Storey's adjustment:} A well-known disadvantage of the Benjamini-Hochberg (BH) method
    is that it is not null proportion adaptive, that is, it typically controls the false discovery rate at level 
    $\alpha \cdot \# \Hnull / n$ when applied at level $\alpha$ and so can be slightly conservative. 
    Adaptive methods such as Storey's procedure~\citep{storey2004strong} address this issue 
    by estimating the null proportion $\#\Hnull / n$ and then appropriately inflating the level at which the BH procedure is applied. 
    Our results in Sections~\ref{sec:compound},~\ref{sec:joint_hierarchical}, as well as 
    the simulation study in Section~\ref{sec:simulations} suggest that one should be cautious when applying adaptive multiple testing procedures
    in the context of limma (or more generally Algorithm~\ref{algo:np_limma}): adaptive procedures can fail to control
    the false discovery rate, e.g., when~\eqref{eq:indep_under_null} is violated, while the BH procedure can still work
    in such settings. Hence BH is to be preferred; we note that the software underlying limma indeed uses BH by default. 
\end{enumerate}

\subsection*{Reproducibility}
All numerical results of this paper are fully third-party reproducible, 
and we provide the code and data in Github under the following link:\\ 
\url{https://github.com/nignatiadis/empirical-partially-bayes-paper}.

\subsection*{Acknowledgments}
We would like to thank Taehyun Kim, Roger Koenker, and Peter McCullagh for helpful feedback on this manuscript. We also would like to thank the editor, Enno Mammen, the associate editor, and three anonymous reviewers for their constructive comments and suggestions. This work was completed in part with resources provided by the University of Chicago’s Research Computing Center. This research was supported by NSF grant DMS-2015376.

\bibliographystyle{abbrvnat}
\bibliography{chisq_eb}

\newpage 

\appendix

\setcounter{page}{1}
\renewcommand{\thepage}{S\arabic{page}} 
\setcounter{table}{0}
\renewcommand{\thetable}{S\arabic{table}}

\setcounter{figure}{0}
\renewcommand{\thefigure}{S\arabic{figure}}

\setcounter{equation}{0}
\renewcommand{\theequation}{S\arabic{equation}}

\setcounter{prop}{0}
\renewcommand{\theprop}{S\arabic{prop}}
\setcounter{footnote}{0}
\renewcommand{\thefootnote}{S\arabic{footnote}}

\renewcommand{\theHprop}{S\arabic{prop}}

\section{General remarks on proofs and notation}
\label{sec:general_proof_remarks}
In the main text for many of the theoretical statements we provided two versions: one that applies to the hierarchical setting with \smash{$\sigma_i^2 \simiid G$}~\eqref{eq:random_sigma},
and a second starred version that applies to the compound setting in which $\boldsigma = (\sigma_1, \dotsc, \sigma_n)$ is fixed. For example, we have
Theorem~\ref{theo:hellinger_dist_convergence} and Theorem~\ref{manualtheorem:hellinger_dist_convergence}. Furthermore, in the main text we clarified whether an expectation
or probability is computed with respect to the hierarchical or compound setting by using the subscript $G$ ($\EE[G]{\cdot}$, $\PP[G]{\cdot}$), resp. $\boldsigma$  
($\EE[\boldsigma]{\cdot}$, $\PP[\boldsigma]{\cdot}$).

Throughout the supplement we often omit the subscript $G$ and $\boldsigma$, in which case an argument is to be understood as going through in both
the hierarchical and compound settings. This allows us to provide unified proofs for the two versions of our theoretical results. We clarify within the proofs
whenever a step requires different arguments for the two cases. For the unified treatment, we also introduce the following 
notation: 
\begin{itemize}
\item We write $P_i^* = P_*(z, s^2)$ for the oracle p-values, where:
\begin{itemize}
\item $P_*(z, s^2) = P_G(z,s^2)$ as defined in~\eqref{eq:conditional_pvalue} 
when $\sigma_i^2 \sim G$ (see~\eqref{eq:random_sigma}), and 
\item $P_*(z,s^2) = P_{\boldsigma}(z, s^2)$ as defined in~\eqref{eq:compound_pvalue} in the compound setting.
\end{itemize}
\item Analogously, we use the notation $f_*$ to denote either the marginal density (under the true prior $G$) or the average marginal density, that is we write
\begin{itemize}
    \item $f_*=f_G$ as defined in~\eqref{eq:marginal_density} when $\sigma_i^2 \sim G$ (see~\eqref{eq:random_sigma}), and
    \item $f_*=f_{\boldsigma}$ as defined in~\eqref{eq:average_marginal_density} in the compound setting.
\end{itemize}
\item We write $G_*=G$ when $\sigma_i^2 \sim G$ (as in~\eqref{eq:random_sigma}) and  $G_*=G(\boldsigma)$ in the compound setting.
\end{itemize}

\noindent \textbf{Further notation:} Throughout the proofs we write $A \lesssim_{a} B$ to denote that there exists a constant $C=C(a)$ that depends only on $a$
such that $A \leq C \cdot B$. Analogously we also write $A \gtrsim_a B$. 

\section{Properties of \texorpdfstring{$\chi^2$-}{chi-squared }likelihood and precision parametrization}
\label{sec:precision_parametrization}

Often in the proofs it will be convenient to reparameterize the likelihood by the precisions $\tau_i^2 = 1/\sigma_i^2$: 

\begin{equation}
\label{eq:chisq_precision_likelihood}
p(s^2 \mid \tau^2) \equiv p(s^2 \mid \tau^2, \nu)  \coloneqq \frac{\nu^{\nu/2} \p{\tau^2}^{\nu/2}}{2^{\nu/2}\Gamma(\nu/2)}  \p{s^2}^{\nu/2-1} \exp\p{-\frac{\nu \tau^2 s^2}{2}}.
\end{equation}

\begin{lemm}
    \label{lemm:chi_square_likelihood}
    The $\chi^2$-likelihood $p(s^2 \cond \tau^2) = p(s^2 \cond \tau^2, \nu)$ has the following properties ($\nu \geq 2$).
    \begin{enumerate}[label=\Alph*.]
        \item $\sup\cb{p(s^2 \cond \tau^2)   \,:\, s^2 > 0,\; 0 < \tau^2 \leq L^{-1} } \lesssim_{\nu} L^{-1} \lesssim_{\nu, L} 1.$
        \item  $\tau^2 \mapsto p(s^2 \cond \tau^2)$ is decreasing in the interval $[1/s^2,\,\infty)$ and
        increasing in $(0, 1/s^2)$.
        \item It holds that:
        $$ p(s^2 \cond \tau^2) \leq p(s^2 \cond 1/U) \lesssim_{\nu, U} \exp\p{- \nu s^2 / (4U)} \text{ for } s^2 \geq U \geq 1/\tau^2.$$
        In particular, suppose $s^2 \geq 4(\kappa/\nu) U$ with $\kappa \geq  \nu/4$ and that $U \geq 1/\tau^2$, then:
        $$ p(s^2 \cond \tau^2)  \leq p(s^2 \cond 1/U) \lesssim_{\nu, U} \exp(-\kappa). $$
        \item $\sup\cb{ \frac{\partial p(s^2 \mid \tau^2)}{\partial (\tau^2)}\,:\, \tau^2 > 0, s^2>0} \lesssim_{\nu} 1.$
    \end{enumerate}
\end{lemm}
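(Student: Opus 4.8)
The plan is to reduce everything to direct differentiation of the log-likelihood plus one structural observation about the product $s^2\tau^2$. Write $c_\nu := \nu^{\nu/2}/(2^{\nu/2}\Gamma(\nu/2))$, so that in the precision parametrization $p(s^2\mid\tau^2) = c_\nu (\tau^2)^{\nu/2}(s^2)^{\nu/2-1}\exp(-\nu\tau^2 s^2/2)$. I would prove Part B first, since Parts A and C build on it: differentiating gives $\partial_{\tau^2}\log p(s^2\mid\tau^2) = \tfrac{\nu}{2}\left(1/\tau^2 - s^2\right)$, which is positive exactly when $\tau^2 < 1/s^2$ and negative when $\tau^2 > 1/s^2$. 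This yields the stated monotonicity, with a single critical point at $\tau^2 = 1/s^2$.

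For Part A, fix $\tau^2$ and maximize over $s^2$. The stationarity condition $\partial_{s^2}\log p = (\nu/2-1)/s^2 - \nu\tau^2/2 = 0$ gives the mode $s^2_\ast = (\nu-2)/(\nu\tau^2)$ for $\nu>2$ (for $\nu=2$ the density is decreasing in $s^2$ and the supremum is approached as $s^2\searrow 0$). The key point on substituting back is that the powers of $\tau^2$ collapse, so that $\sup_{s^2>0} p(s^2\mid\tau^2)$ equals a $\nu$-dependent constant times $\tau^2$. Since this is increasing in $\tau^2$, maximizing over $0<\tau^2\leq L^{-1}$ gives $\lesssim_\nu L^{-1}$, and then $L^{-1}\lesssim_{\nu,L}1$ holds trivially.

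For Part C, observe that $s^2\geq U\geq 1/\tau^2$ forces $\tau^2 \geq 1/U \geq 1/s^2$, so both $\tau^2$ and $1/U$ lie in the decreasing region of Part B with $1/U\leq \tau^2$; hence $p(s^2\mid\tau^2)\leq p(s^2\mid 1/U)$. It then remains to bound $p(s^2\mid 1/U) = c_\nu U^{-\nu/2}(s^2)^{\nu/2-1}\exp(-\nu s^2/(2U))$. I would split $\exp(-\nu s^2/(2U)) = \exp(-\nu s^2/(4U))\cdot \exp(-\nu s^2/(4U))$ and use the elementary bound $\sup_{x>0}x^a e^{-bx}\lesssim_{a,b}1$ with $a=\nu/2-1\geq 0$, $b=\nu/(4U)$ to absorb the factor $(s^2)^{\nu/2-1}\exp(-\nu s^2/(4U))$ into a constant depending on $\nu,U$; this gives $p(s^2\mid\tau^2)\lesssim_{\nu,U}\exp(-\nu s^2/(4U))$. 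For the sharpened conclusion, the hypothesis $\kappa\geq\nu/4$ guarantees $s^2\geq 4(\kappa/\nu)U\geq U$, so the monotonicity step applies, and $s^2\geq 4\kappa U/\nu$ makes $\nu s^2/(4U)\geq\kappa$, yielding $\lesssim_{\nu,U}\exp(-\kappa)$.

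For Part D, the clean observation is that $\partial_{\tau^2}p(s^2\mid\tau^2)$ depends on $(s^2,\tau^2)$ only through the product $u:=s^2\tau^2$: a direct computation collapses the two terms coming from the product rule into $\partial_{\tau^2}p = c_\nu\tfrac{\nu}{2}\,u^{\nu/2-1}(1-u)\exp(-\nu u/2)$. This is a fixed function of $u$ on $(0,\infty)$—continuous, bounded near $0$ since $\nu/2-1\geq 0$, and decaying to $0$ as $u\to\infty$—hence bounded by a constant depending only on $\nu$, which is precisely the claim. Overall the arguments are elementary; the only mild subtleties are noticing in Part A that the maximized density is exactly linear in $\tau^2$ (so the envelope over $\tau^2\leq L^{-1}$ is clean) and the product-structure reduction in Part D, which turns a two-variable supremum into a one-variable boundedness statement. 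The single place where it is easy to drop a hypothesis is the use of $\kappa\geq\nu/4$ in Part C to ensure $s^2\geq U$ before invoking monotonicity.
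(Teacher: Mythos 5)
Your proof is correct and follows essentially the same route as the paper's: monotonicity of $\tau^2 \mapsto p(s^2 \mid \tau^2)$ from the sign of the derivative, Part C via that monotonicity plus splitting $\exp(-\nu s^2/(2U))$ into two equal factors, and Parts A and D by reducing to boundedness of $x^a e^{-bx}$ on $(0,\infty)$. The only cosmetic difference is that you compute the modes explicitly (and make the product structure $u = s^2\tau^2$ explicit in Part D) where the paper invokes the single inequality $x^k e^{-x} \le k^k e^{-k}$; these are the same elementary fact.
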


\begin{proof}[Proof of Lemma~\ref{lemm:chi_square_likelihood}]
The following inequality will be useful:
\begin{equation}
    \label{eq:basic_gamma_inequality}
    x^k \exp(-x) \leq k^k \exp(-k) \text{ for all } x > 0, k \geq 0,
\end{equation}
where we interpret $0^0$ as $1$.
The inequality follows by observing that the LHS is maximized when $x=k$.

Let us now start with the claim in part A. Note that,
$$ p(s^2 \cond \tau^2) \lesssim_{\nu}  \p{\tau^2}^{\nu/2} \p{\frac{ \nu s^2}{2}}^{\nu/2-1} \exp\p{-\frac{\nu \tau^2 s^2}{2}} = \tau^2 \cdot \cb{\p{\frac{ \nu \tau^2 s^2 }{2}}^{\nu/2-1} \exp\p{-\frac{\nu \tau^2 s^2}{2}}  }.$$
We apply inequality~\eqref{eq:basic_gamma_inequality} with $k=\nu/2-1$ and we conclude that the result in A. holds by noting that we take the supremum over $\tau^2 \leq L^{-1}$.

We next take the derivative of~\eqref{eq:chisq_precision_likelihood} with respect to $\tau^2$:
$$
\frac{\partial p(s^2 \mid \tau^2)}{\partial (\tau^2)} = \frac{\nu^{\nu/2} \p{s^2}^{\nu/2-1}}{2^{\nu/2}\Gamma(\nu/2)}\frac{\nu}{2}\p{\tau^2}^{\nu/2-1}\p{1- \tau^2 s^2} \exp\p{-\frac{\nu \tau^2 s^2}{2}}.
$$
From here we can read off the monotonicity claimed in B. The result in C. follows by the monotonicity property: for $s^2 \geq U \geq 1/\tau^2$ we have that $1/s^2 \leq 1/U \leq \tau^2$ and so:
$$
\begin{aligned}
p(s^2 \cond \tau^2) & \leq p(s^2 \cond 1/U) \\ 
& \lesssim_{\nu} \frac{1}{U^{\nu/2}}  \p{s^2}^{\nu/2-1} \exp\p{-\frac{\nu s^2}{2 U}} \\ 
& = \cb{ \frac{1}{U^{\nu/2}}  \p{s^2}^{\nu/2-1} \exp\p{-\frac{\nu s^2}{4 U}}}\exp\p{-\frac{\nu s^2}{4 U}} \\ 
& \lesssim_{\nu, U} \exp\p{-\frac{\nu s^2}{4 U}}.
\end{aligned}
$$
Thus, if $s^2 \geq 4(\kappa/\nu) U$ with $\kappa \geq \nu/4$ we have that $s^2 \geq U$ and so for all $\tau^2 \geq 1/U$:
$$ p(s^2 \cond \tau^2) \leq p(s^2 \cond 1/U)  \lesssim_{\nu, U} \exp(-\kappa).$$
Finally,
for part D. we apply inequality~\eqref{eq:basic_gamma_inequality} twice: once with $k=\nu/2$ and once with $k=\nu/2-1$.
\end{proof}

\section{Proofs for Section~\ref{sec:conditional_pvalues}}

\subsection{Proof of Proposition~\ref{prop:minimal_sufficient}}
\label{subsec:proof_prop_minimal_sufficient}
\begin{proof}[Proof of Proposition~\ref{prop:minimal_sufficient}]
Ancillarity of $S^2$ follows since under~\eqref{eq:full_hierarchical},
the distribution of $S^2$ does not depend on $\mu$: $S^2$ is 
distributed as the product $\sigma^2 \cdot X$
for independently drawn $\sigma^2 \sim G$,
and $\nu X \sim \chi^2_{\nu}$ (where neither $G$, nor
$\chi^2_{\nu}$ depends on $\mu$).

We continue with minimal sufficiency of $(Z, S^2)$.
Sufficiency of $(Z, S^2)$ is immediate in this setting 
(it is the full data generated from model~\eqref{eq:full_hierarchical}).
Hence it remains to prove minimality; we will use 
the characterization of minimal sufficiency
in terms of likelihood shapes~\citep[Theorem 3.11]{keener2010theoretical}.
Write $p(z, s^2 \cond \mu)$ for 
the joint density of $(z, s^2)$ on $\RR \times \RR_+$ 
under~\eqref{eq:full_hierarchical} with respect to the Lebesgue
measure, where we make the
dependence on $\mu$ explicit. Next, fix $x_1=(z_1, s_1^2)$, $x_2=(z_2, s_2^2)$
with $s_1^2, s_2^2 > 0$ and suppose that:
\begin{equation}
    \label{eq:lik_shape}
p(z_1, s_1^2 \cond \mu) = C(x_1, x_2) p(z_2, s_2^2 \cond \mu) \text{ for all } \mu,
\end{equation}
with $C(\cdot)$ being a function of $x_1, x_2$ only (and not of $\mu$).
To prove minimality,
it suffices to prove that~\eqref{eq:lik_shape} implies that
$(z_1, s_1^2)=(z_2, s_2^2)$.

We will start by showing $z_1 = z_2$. Suppose otherwise, i.e., $z_1 \neq z_2$.
Using the precision parametrization described in Supplement~\ref{sec:precision_parametrization},
~\eqref{eq:lik_shape} is equivalent to:
$$
\begin{aligned}
&\int (s_1^2)^{\nu/2-1} (\tau^2)^{(\nu+1)/2}\exp(-\tau^2(z_1-\mu)^2/2)\exp(-\nu\tau^2s_1^2/2) dH(\tau^2) \\ 
& \;\;\;\;\;\;\;\;\;\;= C(x_1, x_2) \int (s_2^2)^{\nu/2-1} (\tau^2)^{(\nu+1)/2}\exp(-\tau^2(z_2-\mu)^2/2)\exp(-\nu\tau^2s_2^2/2) dH(\tau^2).
\end{aligned}
$$
Now pick $\mu$ such that:
$(z_2-\mu)^2 - (z_1 - \mu)^2 = (s_1^2 - s_2^2)\nu$. This is possible because we 
assumed $z_1 \neq z_2$.
Then, the terms within $\exp(\cdot)$ on the LHS and RHS become identical, i.e., for some $\ell = \ell(x_1,x_2) > 0$:
$$\int (s_1^2)^{\nu/2-1} (\tau^2)^{(\nu+1)/2}\exp(-\ell \tau^2)dH(\tau^2) = C(x_1, x_2)\int (s_2^2)^{\nu/2-1} (\tau^2)^{(\nu+1)/2}\exp(-\ell \tau^2)dH(\tau^2).$$
This in turn implies that $C(x_1, x_2) =  (s_1^2)^{\nu/2-1}/ (s_2^2)^{\nu/2-1}$ and so:
$$
\begin{aligned}
&\int  (\tau^2)^{(\nu+1)/2}\exp(-\tau^2/2\cb{ (z_1-\mu)^2 +\nu s_1^2}) dH(\tau^2) \\ 
& \;\;\;\;\;\;\;\;\;\;= \int  (\tau^2)^{(\nu+1)/2}\exp(-\tau^2/2\cb{ (z_2-\mu)^2 +\nu s_2^2}) dH(\tau^2) \text{ for all } \mu \in \RR.
\end{aligned}
$$
However, the above leads to a contradiction: suppose without loss of 
generality that $s_1^2 \geq s_2^2$ and pick $\mu=z_2$. Then $(z_1-\mu)^2 +\nu s_1^2 > \nu s_2^2 = (z_2-\mu)^2 +\nu s_2^2$, so
that the above equality could not hold. Thus $z_1 = z_2$.

It remains to prove that also $s_1^2 = s_2^2$. We have already shown
that $z_1 = z_2$. By translation, we may assume without loss of generality
that in fact $z_1 = z_2 =0$. Further let $u = \mu^2/2$ and $\tilde{C}(x_1,x_2) = C(x_1,x_2)((s_2/s_1)^2)^{\nu/2-1}$. Then:
$$
\begin{aligned}
&\int  \exp(-u\tau^2) (\tau^2)^{(\nu+1)/2}\exp(-\tau^2\nu s_1^2/2) dH(\tau^2) \\ 
& \;\;\;\;\;\;\;\;\;\;= \tilde{C}(x_1, x_2) \int \exp(-u\tau^2) (\tau^2)^{(\nu+1)/2}\exp(-\tau^2\nu s_2^2/2) dH(\tau^2) \text{ for all } u \geq 0.
\end{aligned}
$$
On the LHS and RHS we have two finite measures supported on $(0, \infty)$
that have identical Laplace transforms.  Hence by uniqueness
of the Laplace transform (e.g., \citet[Theorem 22.2]{billingsley1995probability}) these
two measures must be identical, which is to say:
$$ (\tau^2)^{(\nu+1)/2}\exp(-\tau^2\nu s_1^2/2)  =  \tilde{C}(x_1, x_2) (\tau^2)^{(\nu+1)/2}\exp(-\tau^2\nu s_2^2/2)\;\;\, H\text{-almost surely}.$$
Since $G$, and so $H$ is not degenerate, the above must hold
for at least two values of $\tau \in (0,\;\infty)$, which implies that $s_1^2 = s_2^2$.
\end{proof}

\subsection{Proof of Proposition~\ref{prop:limma_as_conditional}}
\label{sec:proof_prop_limma_as_conditional}

\begin{proof}[Proof of Proposition~\ref{prop:limma_as_conditional}]
    
It will be convenient to parameterize the problem in terms of the precision $\tau^2=1/\sigma^2$
as in Supplement~\ref{sec:precision_parametrization}. In what follows 
we may assume without loss of generality that $\mu=0$; otherwise
one can consider the translated family $Z \mapsto Z-\mu$.

The joint density of $Z, S^2, \tau^2$ with respect to the Lebesgue measure 
on $\RR \times \RR_{>0} \times \RR_{>0}$ is given by (up to a constant):
$$
\begin{aligned}
& p(z, s^2, \tau^2)  \\ 
=\; & C\p{\tau^2}^{1/2}\exp\p{-\frac{z^2\tau^2}{2}} \cdot \p{s^2}^{\nu/2-1} \p{\tau^2}^{\nu/2} \exp\p{-\frac{\nu s^2 \tau^2}{2}} \cdot  \p{\tau^2}^{\nu_0/2-1}\exp\p{-\frac{\nu_0 s_0^2 \tau^2}{2}}\\ 
=\; & C\p{s^2}^{\nu/2-1}  \p{\tau^2}^{(\nu_0 + \nu +1)/2 - 1} \exp\p{- \frac{ (z^2 + \nu s^2 + \nu_0 s_0^2)\tau^2}{2}}.
\end{aligned}
$$
Hence marginalizing over $\tau^2$:
$$
\begin{aligned}
p(z, s^2) &= \int_{0}^{\infty}p(z, s^2, \tau^2) d(\tau^2) \\ 
&= C \p{s^2}^{\nu/2-1}  \int_{0}^{\infty}\p{\tau^2}^{(\nu_0 + \nu +1)/2 - 1} \exp\p{- \frac{ (z^2 + \nu s^2 + \nu_0 s_0^2)\tau^2}{2}}d(\tau^2)\\ 
&= C \p{s^2}^{\nu/2-1} \p{\frac{z^2 + \nu s^2 + \nu_0 s_0^2}{2}}^{-(\nu_0 + \nu +1)/2 } \Gamma((\nu_0+ \nu + 1)/2) \\ 
&= C' \p{s^2}^{\nu/2-1}\p{z^2 + \nu s^2 + \nu_0 s_0^2}^{-(\nu_0 + \nu +1)/2 }.
\end{aligned}
$$
We make the following parenthetical remark: the joint marginal distribution of $(Z, S^2)$ is called the Student-Siegel 
distribution in~\citet[Table 2.2., page 25]{aitchison1975statistical}.

Continuing with our proof, let $\tilde{t} = z  / \sqrt{(\nu_0 s_0^2 + \nu s^2)/(\nu_0 + \nu)}$ and consider the mapping $\psi(z, s^2) = (\tilde{t}, s^2)$. Then,
$$\abs{\text{det}( \nabla \psi(z, s^2))} = \abs{ \partial \psi_1(z, s^2) / \partial z } =  \abs{ \partial \tilde{t} / \partial z } = 1 /\sqrt{(\nu_0 s_0^2 + \nu s^2)/(\nu_0 + \nu)}.$$
Hence:
$$
\begin{aligned}
p(\tilde{t}, s^2) &= p( \psi^{-1}(\tilde{t},s^2))/\abs{\text{det}( \nabla \psi(z, s^2))} \\ 
&=  C' \p{s^2}^{\nu/2-1}\sqrt{(\nu_0 s_0^2 + \nu s^2)/(\nu_0 + \nu)}\p{ \tilde{t}^2\frac{\nu_0 s_0^2 + \nu s^2}{\nu_0 + \nu} + \nu s^2 + \nu_0 s_0^2}^{-(\nu_0 + \nu +1)/2 }\\ 
&= C{''} \p{\tilde{t}^2 + \nu_0 + \nu }^{-(\nu_0 + \nu +1)/2}  \cdot \p{s^2}^{\nu/2-1} (\nu_0 s_0^2 + \nu s^2)^{-(\nu_0 + \nu)/2}.
\end{aligned}
$$
From the above result we can immediately read off the following two
distribution statements: $\widetilde{T}$ and $S^2$ are independent (with respect to their
marginal distribution---they are \emph{not} independent conditional on 
$\tau^2$). Second, $\widetilde{T} \sim t_{\nu_0+\nu}$. We also refer
the reader to~\citet[Section 4]{smyth2004linear} for further
discussion of the above distributional conclusions.

It remains to note the following two distributional equalities to conclude the proof of the first part of the proposition. First,
$$ Z \mid (S^2=s^2) \;\;   \stackrel{\mathcal{D}}{=}\;\;  \tilde{S} \cdot \widetilde{T} \mid (S^2=s^2),$$
since $Z = \widetilde{T} \widetilde{S}$. Second,
$$ \tilde{S} \cdot \widetilde{T} \mid (S^2=s^2) \;\;  \stackrel{\mathcal{D}}{=} \;\; \tilde{s}\cdot T_{\nu_0+\nu},$$
since $\widetilde{S}$ is a deterministic function of $S^2$, $S^2$ and $\widetilde{T}$
are independent, and $\widetilde{T} \sim t_{\nu_0+\nu}$.

For the second part of the proposition, fix $z \in \RR$. 
Also write $\tilde{s}$, $\tilde{t} = z/\tilde{s}$ for the
observed values of $\widetilde{S}$ and $\widetilde{T}$.
It holds that:
$$
\begin{aligned}
P_G(z, s^2) &\stackrel{(i)}{=} \PP[G]{ \abs{Z^{H_0}} \geq \abs{z} \cond S^2=s^2} \\ 
& \stackrel{(ii)}{=} \PP[G]{ \tilde{s}\cdot T_{\nu_0+\nu} \geq \abs{z}} \\ 
& = \PP[G]{ T_{\nu_0+\nu} \geq \abs{z}/\tilde{s}} \\ 
& = \PP[G]{ T_{\nu_0+\nu} \geq \abs{\tilde{t}}}\\ 
& \stackrel{(iii)}{=} P^{\text{limma}}_{\nu_0, s_0^2}(z, s^2).
\end{aligned}
$$
In $(i)$ we used the definition of the conditional p-values in~\eqref{eq:conditional_pvalue}.
In $(ii)$ we used the distributional result of the first part of the proposition.
Finally, in $(iii)$ we used the definition of the limma p-value in~\eqref{eq:moderated_t}. 

Finally, for completeness we also prove the RHS equality of~\eqref{eq:conditional_pvalue}. Note that,
$$
\begin{aligned}
\PP[G]{ \abs{Z^{H_0}} \geq \abs{z} \;\cond\; S^2=s^2} &=\EE[G]{ \PP[G]{\abs{Z} \geq \abs{z} \;\cond\; \sigma^2, S^2=s^2} \;\cond\;  S^2=s^2} \\ 
&= \EE[G]{ \PP[G]{\abs{Z^{H_0}} \geq \abs{z} \;\mid\; \sigma^2} \;\cond\; S^2=s^2} \\ 
&=  \EE[G]{ 2(1-\Phi(|z|/\sigma)) \;\cond\;  S^2=s^2}.
\end{aligned}
$$
\end{proof}

\subsection{Proof of Proposition~\ref{prop:monotonicity}}
\label{sec:proof_prop_monotonicity}
\begin{proof}
As mentioned directly after the proposition, the first result is an
application of the probability integral transform conditional on $S^2$.
The second result follows by iterated expectation:
$$ \PP[G]{P_G(Z, S^2) \leq t} = \EE[G]{\PP[G]{P_G(Z, S^2) \leq t \cond  S^2}} = \EE[G]{t} = t \text{ for all } t \in [0,\,1].$$
It remains to prove the third result regarding the monotonicity.
We use the precision parameterization of Supplement~\ref{sec:precision_parametrization} wherein $\tau^2 \sim H$. 
We also let $\bar{\Phi} = 1 - \Phi$. By definition,
$P_G(z, s^2)/2 = \EE[G]{ \bar{\Phi}(\abs{z}\tau) \mid S^2=s^2}$. The monotonicity in terms of $\abs{z}$ follows because
$ \bar{\Phi}(\abs{z}\tau) \leq \bar{\Phi}(\abs{z'}\tau)$ for any $\abs{z'} \leq \abs{z}$. 

To show monotonicity with respect to $s^2$, we first note that (as $S^2 \mid \tau^2 \sim \chi^2_{\nu}/(\nu \tau^2)$ and $\tau^2 \sim H$):
$$
\begin{aligned}
\frac{1}{2}P_G(z, s^2)=  \frac{ \int \bar{\Phi}(\abs{z}\tau)(\tau^2)^{\nu/2}\exp\p{-\frac{\nu \tau^2 s^2}{2}} dH(\tau^2)}{\int (\tau^2)^{\nu/2}\exp\p{-\frac{\nu \tau^2 s^2}{2}} dH(\tau^2)}.
\end{aligned}
$$
Let $A_s$ be the probability measure that is absolutely continuous with respect to $H$ with Radon-Nikodym derivative:
$$ \frac{dA_s(\tau^2)}{dH} = (\tau^2)^{\nu/2}\exp\p{-\frac{\nu \tau^2 s^2}{2}} \bigg / \int (\tau^2)^{\nu/2}\exp\p{-\frac{\nu \tau^2 s^2}{2}} dH(\tau^2).$$
Then, taking the derivative of $P_G(z, s^2)$ with respect to $s^2$, we find that:
$$
\frac{\partial P_G(z, s^2)}{\partial (s^2)} =  -\nu\p{\int \bar{\Phi}(\abs{z}\tau)\tau^2 dA_s(\tau^2) -  \int \bar{\Phi}(\abs{z}\tau) dA_s(\tau^2)\int \tau^2 dA_s(\tau^2)}.
$$
To show the monotonicity claim, it suffices to show that $\partial P_G(z, s^2) / \partial s^2 \geq 0$, i.e., that $\Cov[A_s]{\bar{\Phi}(\abs{z}\tau),\, \tau^2} \leq 0$ when $\tau^2 \sim A_s$. 
But note that $\tau^2 \mapsto \tau^2$ is non-decreasing and that $\tau^2 \mapsto \bar{\Phi}(\abs{z}\tau)$ is non-increasing. Hence the
conclusion follows by Chebyshev's ``other'' inequality (e.g.,~\citet{fink1984chebyshev}).
\end{proof}

\subsection{Proof of Proposition~\ref{prop:ttest_failure}}
\label{sec:proof_prop_ttest_failure}
\begin{lemm}
    \label{lemm:chisquare_cdf}
Let $X_{\nu}$ be a random variable distributed according to the 
$\chi^2_{\nu}$-distribution ($\nu \geq 2$ degrees of freedom) and denote its distribution function by $F_{\nu}$. Then:
$$F_{\nu}(u)u^{-\nu/2} \to c(\nu) \coloneqq \frac{1}{2^{\nu/2}\Gamma(\nu/2+1)}> 0 \text{ as } u \searrow 0.$$
Furthermore, there exists $C(\nu)$ such that:
$$F_{\nu}(u) \leq C(\nu) u^{\nu/2} \text{ for all } u > 0.$$
\end{lemm}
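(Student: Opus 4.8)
The plan is to work directly with the explicit Lebesgue density of the $\chi^2_\nu$ law, namely $f_\nu(t) = \{2^{\nu/2}\Gamma(\nu/2)\}^{-1}\, t^{\nu/2-1} e^{-t/2}$ for $t>0$, and to exploit that the only obstruction to integrating $t^{\nu/2-1}$ in closed form is the factor $e^{-t/2}$, which is essentially constant near the origin. First I would record the elementary identity $2^{\nu/2}\Gamma(\nu/2)\cdot(\nu/2) = 2^{\nu/2}\Gamma(\nu/2+1)$, so that $1/c(\nu) = (\nu/2)\cdot 2^{\nu/2}\Gamma(\nu/2)$; this is exactly the normalizing constant produced by integrating the pure power $t^{\nu/2-1}$ from $0$ to $u$.

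For the global bound (the second assertion), I would simply use $e^{-t/2}\le 1$ for all $t\ge 0$ to obtain
\[
F_\nu(u) = \frac{1}{2^{\nu/2}\Gamma(\nu/2)}\int_0^u t^{\nu/2-1} e^{-t/2}\,dt \;\le\; \frac{1}{2^{\nu/2}\Gamma(\nu/2)}\int_0^u t^{\nu/2-1}\,dt \;=\; c(\nu)\,u^{\nu/2},
\]
valid for every $u>0$. This already supplies $C(\nu)=c(\nu)$ and, a fortiori, the upper half of the squeeze needed for the limit, namely $F_\nu(u)\,u^{-\nu/2}\le c(\nu)$.

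For the matching lower bound I would restrict the exponential to its minimum over the interval of integration: since $e^{-t/2}\ge e^{-u/2}$ for $t\in[0,u]$, the identical computation yields $F_\nu(u)\ge e^{-u/2}\,c(\nu)\,u^{\nu/2}$, that is, $F_\nu(u)\,u^{-\nu/2}\ge e^{-u/2}\,c(\nu)$. Letting $u\searrow 0$ and squeezing between $e^{-u/2}c(\nu)\to c(\nu)$ and the constant $c(\nu)$ delivers the claimed limit. There is no genuine obstacle in this lemma: the argument is a two-sided Riemann-type estimate on a single integral, and the only point requiring minor care is that the hypothesis $\nu\ge 2$ guarantees $t^{\nu/2-1}$ is integrable at $0$ (indeed bounded when $\nu=2$), so every integral above is finite and all the manipulations are rigorous.
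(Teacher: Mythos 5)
Your proof is correct, and it takes a genuinely different (and more elementary) route than the paper. The paper establishes the limit by expanding $e^{-s/2}$ in a power series, integrating term by term to get $F_\nu(s)=\frac{1}{2^{\nu/2}\Gamma(\nu/2)}\sum_{k\ge 0}\frac{(-1)^k}{2^k k!(k+\nu/2)}s^{k+\nu/2}$, and reading off the leading coefficient; it then proves the global bound separately, by a monotonicity argument on $h(u)=C(\nu)u^{\nu/2}-F_\nu(u)$ with $C(\nu)$ chosen large enough that $h'\ge 0$. You instead sandwich the exponential factor, $e^{-u/2}\le e^{-t/2}\le 1$ on $[0,u]$, integrate the pure power $t^{\nu/2-1}$ once, and obtain
\[
e^{-u/2}\,c(\nu)\,u^{\nu/2}\;\le\;F_\nu(u)\;\le\;c(\nu)\,u^{\nu/2}\quad\text{for all }u>0,
\]
which delivers both assertions simultaneously by a squeeze. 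Your argument is shorter, avoids any interchange of summation and integration, and yields the sharp explicit constant $C(\nu)=c(\nu)$ (sharp because $c(\nu)$ is the limit of $F_\nu(u)u^{-\nu/2}$), whereas the paper only asserts existence of some $C(\nu)$. What the paper's series approach buys in exchange is the full small-$u$ asymptotic expansion of $F_\nu$ with all higher-order terms, but none of that extra information is needed for this lemma or its use in Proposition~\ref{prop:ttest_failure}.
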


\begin{proof}
Let us write $f_{\nu} = F'_{\nu}$ for the density of $X_{\nu}$. Then, we have:
$$ f_{\nu}(s) = \frac{1}{2^{\nu/2}\Gamma(\nu/2)} (s)^{\nu/2-1} \exp(-s/2) = \frac{1}{2^{\nu/2}\Gamma(\nu/2)}  \sum_{k=0}^{\infty} \frac{(-1)^k}{2^k k!} s^{k + \nu/2 - 1}.$$
Integrating the above term by term (and noting that all series defined are absolutely convergent), we see that:
$$ F_{\nu}(s) = \frac{1}{2^{\nu/2}\Gamma(\nu/2)}  \sum_{k=0}^{\infty} \frac{(-1)^k}{2^k k! (k+\nu/2)} s^{k + \nu/2}.$$
From here we can immediately read off the first claim about the limit of $F_{\nu}(u)u^{-\nu/2}$ as $u \to 0$.

For the second claim, let us define the function $h(u) \coloneqq  C(\nu) u^{\nu/2} - F_{\nu}(u)$ (where we will pick $C(\nu)$ below). Note
that $h(0) = 0$ and 
$$h'(u) = (\nu/2) C(\nu) u^{\nu/2-1} -  \frac{1}{2^{\nu/2}\Gamma(\nu/2)} u^{\nu/2-1} \exp(-u/2),$$
and so this can be
made $\geq 0$ for all $u>0$ by choosing $C(\nu)$ large enough. 
\end{proof}

\begin{proof}[Proof of Proposition~\ref{prop:ttest_failure}]
The uniformity of the t-test p-values and their monotonicity are elementary. It remains to prove the failure of conditional uniformity,
which is equivalent to the following statement:
$$ \lim_{\delta \searrow 0} \PP[G]{P^{\text{ttest}}(Z, S^2) > \alpha \cond S^2 \leq \delta} = 0 \text{ for all }\alpha \in (0,\,1].$$
We start by observing that the event
$\cb{P^{\text{ttest}}(Z, S^2) > \alpha}$ is identical
to the event $\cb{\abs{Z} < q S}$, where 
$q>0$ is the $(1-\alpha/2)$-quantile of the t-distribution
with $\nu$-degrees of freedom.
Hence:
$$
\begin{aligned}
\PP[G]{P^{\text{ttest}}(Z, S^2) > \alpha \cond S^2 \leq \delta} &= \PP[G]{\abs{Z} < q S  \cond S^2 \leq \delta} \\ 
&= \EE[G]{ \PP[G]{\abs{Z} < q S \cond \sigma^2, S^2 } \cond S^2 \leq \delta} \\ 
&= \EE[G]{ \p{2\Phi(q S/\sigma )-1} \cond S^2 \leq \delta} \\ 
&= \frac{\EE[G]{ \p{2\Phi(q S/\sigma)-1} \ind(S^2 \leq \delta)}}{ \PP[G]{ S^2 \leq \delta} }.
\end{aligned}
$$
As in the statement of Lemma~\ref{lemm:chisquare_cdf}, we let $X_{\nu}$ be a random variable distributed according to the 
$\chi^2_{\nu}$-distribution and denote its distribution function by $F_{\nu}$. Then:
$$
\begin{aligned}
\PP[G]{ S^2 \leq \delta} &= \PP[G]{X_{\nu} \leq \nu \delta/\sigma^2} \\ 
&= \EE[G]{ \PP[G]{X_{\nu}  \leq \nu \delta/\sigma^2   \cond \sigma^2   }} \\ 
& = \EE[G]{ F_{\nu}(\nu \delta/\sigma^2 )}.
\end{aligned}
$$
By Lemma~\ref{lemm:chisquare_cdf}, $F_{\nu}(u)u^{-\nu/2} \to c(\nu) > 0$ as $u \to 0$. 
Hence:
$$
\begin{aligned}
&\liminf_{\delta \to 0} \cb{\PP[G]{ S^2 \leq \delta} \big / \delta^{\nu/2}} \\ 
&\;\;\;\;\;\;\;\;= \liminf_{\delta \to 0} \cb{\EE[G]{ \p{\sigma^2/\nu}^{-\nu/2} \cdot F_{\nu}(\nu \delta/\sigma^2)  \cdot \p{\nu \delta/\sigma^2}^{-\nu/2}}} \\
&\;\;\;\;\;\;\;\;\stackrel{(*)}{\geq} \EE[G]{ \liminf_{\delta \to 0} \cb{\p{\sigma^2/\nu}^{-\nu/2} \cdot F_{\nu}(\nu \delta/\sigma^2)  \cdot \p{\nu \delta/\sigma^2}^{-\nu/2}}} \\
&\;\;\;\;\;\;\;\; = c(\nu)\nu^{\nu/2} \EE[G]{\sigma^{-\nu}} > 0.
\end{aligned}
$$
In $(*)$ we used Fatou's lemma.
Next, we recall also by Lemma~\ref{lemm:chisquare_cdf} that there exists $C(\nu)$ such that:
$$F_{\nu}(u) \leq C(\nu) u^{\nu/2} \text{ for all } u > 0.$$
Hence, we also get that:
$$
\begin{aligned}
&\limsup_{\delta \to 0} \cb{\EE[G]{ \p{2\Phi(q S/\sigma)-1} \ind( S^2 \leq \delta)}  \big / \delta^{\nu/2}} \\ 
&\;\;\;\;\;\;\;\;\leq \limsup_{\delta \to 0} \cb{\EE[G]{ \p{2\Phi(q \sqrt{\delta}/\sigma)-1} \ind( S^2 \leq \delta)}  \big / \delta^{\nu/2}} \\ 
&\;\;\;\;\;\;\;\;=\limsup_{\delta \to 0} \cb{\EE[G]{ \p{2\Phi(q \sqrt{\delta}/\sigma)-1} F_{\nu}(\nu \delta/\sigma^2)  \big / \delta^{\nu/2}}} \\
&\;\;\;\;\;\;\;\;\lesssim_{\nu} \limsup_{\delta \to 0} \cb{\EE[G]{ \p{2\Phi(q \sqrt{\delta}/\sigma)-1} \sigma^{-\nu}}} \\ 
&\;\;\;\;\;\;\;\; = 0,
\end{aligned}
$$
where in the last step we used dominated convergence. To conclude, we write:
$$ \PP[G]{P^{\text{ttest}}(Z, S^2) > \alpha \cond S^2 \leq \delta} = \frac{\EE[G]{ \p{2\Phi(q S/\sigma)-1} \ind(S^2 \leq \delta)} \big / \delta^{\nu/2} }{ \PP[G]{ S^2 \leq \delta} \big / \delta^{\nu/2}},$$
and take limits as $\delta \to 0$.
\end{proof}

\section{Proofs for Section~\ref{subsec:npmle}}

\subsection{Proof of Proposition~\ref{prop:npmle_opt}}
\label{subsec:prop_npmle_opt}
We will need the following result:
\begin{lemm}[\citet{polya1925aufgaben}, Page 48]
    \label{lemm:polyaszego}
Let $P_1(x),\dotsc,P_n(x)$ be polynomials $\neq 0$ of 
degrees $m_1-1,\dotsc,m_n-1$, where $m_i \geq 1$ for all $i$. Also let $a_1, \dotsc, a_n \in \RR$.
Then, the function
$$g(x) = \sum_{i=1}^n P_i(x) \exp(a_i x),$$
has at most $(\sum_{i=1}^n m_i) - 1$ zeros in $\RR$ (counting multiplicity).
\end{lemm}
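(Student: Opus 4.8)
The plan is to prove this by induction on the total order $N := \sum_{i=1}^n m_i$, which counts the number of free coefficients in $g$, with the inductive engine being Rolle's theorem applied to a suitable derivative of $g$. Before starting I would reduce to the case in which the exponents $a_1,\dots,a_n$ are pairwise distinct: if two of them coincide one merges the corresponding polynomial factors, which only decreases $N$ and hence strengthens the bound. The base case $N=1$ forces $n=1$ and $m_1=1$, so that $g(x)=c\,e^{a_1 x}$ with $c\neq 0$, and this has $0 = N-1$ real zeros, as required.

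For the inductive step, suppose $g$ has $Z$ real zeros counted with multiplicity; the goal is to show $Z \le N-1$. The key manipulation is to peel off one exponential and then differentiate it away. Since $e^{-a_1 x} > 0$ everywhere, the function $h(x) := e^{-a_1 x} g(x) = P_1(x) + \sum_{i=2}^n P_i(x)\,e^{(a_i-a_1)x}$ has exactly the same real zeros as $g$, with the same multiplicities. I would then differentiate $h$ exactly $m_1$ times. Because $\deg P_1 = m_1-1$, the purely polynomial term $P_1$ is annihilated, while for each $i \ge 2$ the Leibniz rule gives $\frac{d^{m_1}}{dx^{m_1}}\big[P_i(x)e^{(a_i-a_1)x}\big] = Q_i(x)\,e^{(a_i-a_1)x}$ for a polynomial $Q_i$ whose leading coefficient is $(a_i-a_1)^{m_1}$ times that of $P_i$; here the distinctness $a_i \neq a_1$ is exactly what guarantees $\deg Q_i = \deg P_i = m_i-1$ and $Q_i \not\equiv 0$. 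Hence $h^{(m_1)}$ is again an exponential polynomial of the same form, now with only $n-1$ terms and total order $\sum_{i=2}^n m_i = N - m_1 < N$, so the induction hypothesis applies to it and yields at most $(N-m_1)-1$ zeros.

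It remains to relate the zero counts of $h$ and $h^{(m_1)}$, and this is where the multiplicity bookkeeping enters. The elementary fact I would invoke is that one differentiation loses at most one real zero counted with multiplicity: if the distinct zeros of a smooth $f$ are $x_1 < \dots < x_p$ with multiplicities $\mu_1,\dots,\mu_p$, then $f'$ vanishes to order $\mu_j-1$ at each $x_j$ (contributing $\sum_j(\mu_j-1)$) and, by Rolle's theorem, at least once strictly between each consecutive pair (contributing $p-1$), for a total of at least $\big(\sum_j \mu_j\big) - 1$. Iterating this $m_1$ times shows $h^{(m_1)}$ has at least $Z - m_1$ zeros counted with multiplicity. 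Combining with the induction hypothesis gives $Z - m_1 \le (N-m_1) - 1$, which rearranges to $Z \le N-1$ and closes the induction.

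The main obstacle I anticipate is not any single hard estimate but rather making the multiplicity accounting fully rigorous: one must view the zeros of $g$ as zeros of the analytic function $h$ (so that \emph{multiplicity} is unambiguously the order of vanishing), check that the Rolle-plus-vanishing-order count does not double count the interior zeros produced strictly between consecutive $x_j$ against those forced at the $x_j$ themselves, and confirm that differentiating $m_1$ times preserves the degrees of the remaining polynomial factors — a step that silently relies on having first reduced to pairwise distinct exponents. Everything else is routine.
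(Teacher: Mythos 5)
The paper itself does not prove this lemma: it is quoted with a citation to P\'olya--Szeg\H{o} (page 48 of the cited volume), so there is no internal proof to compare against. Your argument is the classical one---and essentially the proof given in that source: reduce to pairwise distinct exponents, multiply by $e^{-a_1 x}$, differentiate $m_1$ times to annihilate $P_1$, apply the induction hypothesis to the resulting shorter exponential polynomial, and transfer the zero count back via the Rolle-type fact that each differentiation loses at most one zero counted with multiplicity. Your bookkeeping is sound: $h = e^{-a_1 x}g$ is entire with the same zeros and orders as $g$; at a zero of order $\mu_j$ the derivative vanishes to order $\mu_j - 1$, and Rolle supplies an additional zero strictly between consecutive zeros, so no double counting occurs; and the Leibniz computation gives $\deg Q_i = \deg P_i$ with leading coefficient $(a_i - a_1)^{m_1}$ times that of $P_i$, which is exactly where distinctness of the exponents is used.

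Two small repairs are needed. First, your inductive step silently assumes $n \geq 2$. The induction on $N$ must also cover the case $n = 1$ with $m_1 = N > 1$; there, $h = P_1$ and $h^{(m_1)} \equiv 0$, so the induction hypothesis cannot be invoked (your own formula would assert ``at most $-1$ zeros'' for the zero function). That case must be dispatched directly: $g = P_1 e^{a_1 x}$ has at most $\deg P_1 = N - 1$ zeros by the fundamental theorem of algebra. Second, in the merging reduction you should note that if $P_i + P_j \equiv 0$ the merged term is dropped (further decreasing $N$), and that in the extreme case where all terms cancel---e.g. $P_1 = 1$, $P_2 = -1$, $a_1 = a_2$, giving $g \equiv 0$---the statement as printed is actually false. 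The lemma therefore implicitly requires distinct $a_i$ (or $g \not\equiv 0$), which is how P\'olya--Szeg\H{o} state it and how the paper applies it: the exponents there are $-\nu S_i^2/2$ at distinct values of $S_i^2$, and the paper's uniqueness argument explicitly carries the ``or identically zero'' alternative. With these two patches your proof is complete and matches the classical route.
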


\begin{proof}[Proof of Proposition~\ref{prop:npmle_opt} ]
We prove these results when the problem is parametrized in terms of precisions $\tau^2$
as in Section~\ref{sec:precision_parametrization}. The results for this parametrization
will then imply the statement of the proposition via the pushforward
$\tau^2 \mapsto \frac{1}{\tau^2} = \sigma^2$.

Let $N$ be the number of unique values among $\cb{S_1^2,\dotsc,S_n^2}$. 
Without loss of generality we assume that these unique values occur in the order $S_1^2,\dotsc,S_{N}^2$. We 
also let $w_i \geq 1$ be the multiplicity of $S_i^2$ (so that, $\sum_{i=1}^{N} w_i = n$ and $1 \leq N \leq n$.)
A NPMLE with respect to the precision parametrization is defined as follows:
\begin{equation*}
\widehat{H} \in \argmax\cb{ \sum_{i=1}^{N} w_i \log\p{ f_H(S_i^2)}\,:\,H \text{ distribution supported on } (0,\infty)},
\end{equation*}
Let us start by defining $\mathcal{H}$ as the class of all measures with mass $\leq 1$ supported 
on $(0,\, \infty)$ and $\mathcal{H}_1 \subset  \mathcal{H}$ the subclass of distributions. 

We define the map,
$$\psi: \mathcal{H} \to \RR^N,\, \text{ where }\, \psi(H)=(\psi_1(H),\dotsc,\psi_N(H)) \,\text{ with } \, \psi_i(H) = \int p(S_i^2 \mid \tau^2) dH(\tau^2).$$
The image $\psi(\mathcal{H}) \subset \RR^N$ is convex. It is also compact by the Helly Bray selection theorem \citep[Theorem 6.8]{breiman1992probability} and
the fact that $\tau^2 \mapsto p(S_i^2 \mid \tau^2)$ is bounded and continuous (and also converges to $0$
as $\tau \searrow 0$ or $\tau \nearrow +\infty$). Furthermore, there must 
exist unique $\hat{\beta} \in \psi(\mathcal{H})$ such that:
\begin{equation}
    \label{eq:unique_hat_beta}
\hat{\beta} \in \argmax\cb{ h(\beta)\,:\, \beta \in  \psi(\mathcal{H})},\;  h(\beta) \coloneqq \sum_{i=1}^N w_i \log( \beta_i).
\end{equation}
This follows by strict concavity of $h(\beta)$ as well as the fact that $h(\beta) \to -\infty$ as any $\beta_i \to 0$. 
In particular there also must exist $\widehat{H} \in \mathcal{H}$ such that $\beta = \psi(\widehat{H})$.
Our next goal is to prove that $\widehat{H}$ is in fact a probability measure. Suppose otherwise, i.e.,
$\widehat{H}((0, \infty)) =  1 - \eta$ for some $\eta >0$. Let $t$ be any point in the interval $ (0, \infty)$
and consider the measure $\widetilde{H} = \widehat{H} + \eta \delta_{t}$. Then $\widetilde{H} \in \mathcal{H}$ 
and $h(\psi(\widetilde{H})) > h(\psi( \widehat{H}))$ which is a contradiction. Thus $\widehat{H} \in \mathcal{H}_1$.

For any two distributions $H_1, H_2 \in \mathcal{H}_1$ we define the directional derivative from $H_2$ to $H_1$ as:
$$ D(H_1, H_2) = \lim_{\lambda \searrow 0} \frac{ h(\psi( \lambda H_1 + (1-\lambda) H_2 )) - h(\psi(H_2))}{\lambda} = \sum_{i=1}^N w_i \frac{\psi_i(H_1) - \psi_i(H_2)}{\psi_i(H_2)}.$$ 
Since $\widehat{H}$ maximizes the log marginal likelihood, it must hold that $D(H_1, \widehat{H}) \leq 0$ for any $H_1 \in \mathcal{H}_1$.
In particular it must also hold for any Dirac measure $H_1 = \delta_{\tau^2}$ with $\tau^2 \in  (0,\, \infty)$.
Hence for such $\tau^2$ we get by rearranging the inequality $D(\delta_{\tau^2}, \widehat{H}) \leq 0$ that:
\begin{equation}
\label{eq:directional_derivative_inequality}
\xi(\tau^2) \coloneqq \sum_{i=1}^N w_i \frac{p(S_i^2 \mid \tau^2)}{f_{\widehat{H}}(S_i^2)} \leq n \text{ for all } \tau^2 \in  (0,\, \infty),
\end{equation}
where above we introduced the function $\xi(\cdot)$.
The integral of the LHS (i.e., of $\xi(\cdot)$) of the inequality with respect to $\widehat{H}$ is equal to $\sum_{i=1}^N w_i = n$, and so
we get an equality of the integrated LHS and RHS of~\eqref{eq:directional_derivative_inequality} $\text{w.r.t.}$ $\widehat{H}$. This implies that
equality must hold $\widehat{H}$ almost surely, i.e.,
\begin{equation}
    \label{eq:directional_derivative_equality}
\xi(\tau^2) = \sum_{i=1}^N w_i \frac{p(S_i^2 \mid \tau^2)}{f_{\widehat{H}}(S_i^2)} = n\;\;\;\; \widehat{H}\text{-almost surely}.
\end{equation}
Absorbing constants with respect to $\tau^2$ for the $i$-th summand into a constant $c_i$, we may write $\xi(\cdot)$ as:
$$\xi(\tau^2) = \sum_{i=1}^N c_i \p{\tau^2}^{\nu/2} \exp\p{-\frac{\nu \tau^2 S_i^2}{2}}.$$
Next, the derivative of $\xi$ with respect to $\tau^2$ is equal to:
\begin{equation}
    \label{eq:xiprime}
\xi'(\tau^2) = \frac{d \xi(\tau^2)}{d(\tau^2)} =  \sum_{i=1}^N c_i \cdot \frac{\nu}{2} \p{\tau^2}^{\nu/2-1} \exp\p{-\frac{\nu \tau^2 S_i^2}{2}} \p{ 1 - \tau^2 S_i^2}.
\end{equation}
We see that $\xi'(\tau^2) =0$ for $\tau^2 \in (0, \infty)$ is equivalent to:
$$
\zeta(\tau^2):=\sum_{i=1}^N c_i  \exp\p{-\frac{\nu \tau^2 S_i^2}{2}} \p{ 1 - \tau^2 S_i^2} = 0.
$$
But $\zeta$ is of the form considered in Lemma~\ref{lemm:polyaszego} with $m_i = 2$ and thus
can have at most $\sum_{i=1}^N m_i - 1 = 2N-1$ zeros. It follows that the number of zeros of the function $\tau^2 \mapsto \xi(\tau^2) - n$ is at most $2N$ (else $\tau^2 \mapsto \zeta(\tau^2)$ would have at least $2N$ zeros) and so the support of $\widehat{H}$ must be finite by~\eqref{eq:directional_derivative_equality}.

From here we can derive our first conclusion regarding bounds for the support of $\widehat{H}$. First we will prove that $\widehat{H}$ has zero mass in the interval $(0,\min_i \cb{1/S_i^2})$. 
By~\eqref{eq:xiprime} it holds that $\xi'(\tau^2) > 0$ in the interval $(0,\,  \min\cb{1/S_i^2})$,
that is, $\xi(\cdot)$ is strictly increasing. Now suppose there is $\tau^2 \in (0,\,  \min_i\cb{1/S_i^2})$ in the support of $\widehat{H}$. Then $\xi(\tau^2) = n$ by~\eqref{eq:directional_derivative_equality} and
$\xi(\tau^2 + \varepsilon) > n$ for sufficiently small $\varepsilon$, in contradiction to~\eqref{eq:directional_derivative_equality}. 
Thus we conclude that $\widehat{H}$ assigns zero mass to $(0,\min_i \cb{1/S_i^2})$. 
Analogously we can argue that $\widehat{H}$ has zero mass in the interval $(\max_i \cb{1/S_i^2}, +\infty)$ since
on the latter interval it holds that $\xi'(\tau^2) < 0$, i.e., $\xi(\cdot)$ is strictly decreasing.

It remains to accomplish the following tasks: to provide an upper bound on the cardinality of the support of $\widehat{H}$
and to prove the uniqueness of $\widehat{H}$.  For the upper bound on the cardinality, suppose the support of $\widehat{H}$ consists of $S$ points. 
Then for each support point the function $\xi(\tau^2)$ 
has a local maximum due to~\eqref{eq:directional_derivative_inequality} and~\eqref{eq:directional_derivative_equality}.
Furthermore, between any consecutive support points, there must be a local minimum. Hence $\xi'$ 
must have at least $2S-1$ zeros. We also argued earlier that $\xi'$ can have at most $2N-1$ zeros. Hence $2N - 1 \geq 2S-1$, i.e., $S \leq N \leq n$.

It remains to prove the uniqueness of $\widehat{H}$. Suppose there is another maximizer $\widetilde{H}$ 
of the log marginal likelihood. From the beginning of our argument, namely~\eqref{eq:unique_hat_beta},
we then know that it must be the case that 
\begin{equation} 
    \label{eq:marginal_density_equality}
f_{\widetilde{H}}(S_i^2) = f_{\widehat{H}}(S_i^2) = \hat{\beta}_i \text{ for } i =1,\dotsc,N.
\end{equation}
Hence~\eqref{eq:directional_derivative_equality} is identical for both $\widetilde{H}$
and $\widehat{H}$, and it must hold that $\widetilde{H}(\mathcal{S}) = \widehat{H}(\mathcal{S})=1$, 
where $\mathcal{S}$ is the set of roots in~\eqref{eq:directional_derivative_equality}. Thus we can write:
$$ \widehat{H} = \sum_{j=1}^S  \widehat{\pi}_j \delta_{\tau_j^2},\;\;\; \widetilde{H} = \sum_{j=1}^S  \widetilde{\pi}_j \delta_{\tau_j^2},$$
where we indexed the elements of $\mathcal{S}$ as $\cb{\tau_j^2}$. By~\eqref{eq:marginal_density_equality} 
we get:
$$
\begin{aligned}
&\sum_{j=1}^S  \p{ \widehat{\pi}_j - \widetilde{\pi}_j} p(S_i^2 \mid \tau_j^2) =   f_{\widehat{H}}(S_i^2) - f_{\widetilde{H}}(S_i^2)=     0\, \text{ for }\, i =1,\dotsc,N, \\ 
\Longrightarrow \;& \sum_{j=1}^S  \p{ \widehat{\pi}_j - \widetilde{\pi}_j} \p{\tau_j^2}^{\nu/2}  \exp\p{-\frac{\nu \tau_j^2 S_i^2}{2}} = 0\, \text{ for }\, i =1,\dotsc,N.
\end{aligned}
$$
Hence the function $s \mapsto \sum_{j=1}^S  \p{ \widehat{\pi}_j - \widetilde{\pi}_j} \p{\tau_j^2}^{\nu/2} \exp\p{-\frac{\nu \tau_j^2 s}{2}}$ has at least 
$N$ zeros. On the other hand, by Lemma~\ref{lemm:polyaszego} with $m_i=1$, it can have at most $S-1$ zeros; otherwise it must be identically equal to $0$. 
Suppose it is not identical to zero. Then $S-1 \geq N$. But we already proved 
that $S \leq N$, which leads to a contradiction. We conclude that the above function is identically equal to zero, which means
that $\widehat{\pi}_j = \widetilde{\pi}_j$ for all $j$, that is, $\widehat{H}=\widetilde{H}$ and $\widehat{G} = \widetilde{G}$.
\end{proof}

\subsection{Proof of Theorems~\ref{theo:hellinger_dist_convergence} and~\ref{manualtheorem:hellinger_dist_convergence}}
\label{sec:proof_of_theo_hellinger_dist_convergence}
We first define the class of marginal densities of $S^2$ when
$\sigma^2 \in [L,\,U]$ almost surely with $0 < L < U < \infty$:
\begin{equation}
\label{eq:constrained_precision}
\mathcal{F} \equiv \mathcal{F}(L, U, \nu) \coloneqq \cb{f_G(\cdot) = f_G(\cdot;\nu)\,:\, G([L,\, U])=1}. 
\end{equation}
For the proof of the theorem we will use the following lemmata (which we prove later). Before stating the first lemma, we introduce the concept of an $(\Norm{\cdot}_{\infty}, \varepsilon)$-cover. A set $\mathcal{C}$ is said to be an $(\Norm{\cdot}_{\infty}, \varepsilon)$-cover of $\mathcal{F}(L,U, \nu)$ if for every $f \in \mathcal{F}(L,U, \nu)$, there exists some $g \in \mathcal{C}$ such that $\Norm{f - g}_{\infty} \leq \varepsilon$. The cardinality of the minimal $(\Norm{\cdot}_{\infty}, \varepsilon)$-cover of $\mathcal{F}(L,U, \nu)$ is denoted by $N(\varepsilon,\, \mathcal{F}(L,U, \nu),\, \Norm{\cdot}_{\infty})$.

\begin{lemm}
\label{lemm:entropy}
For all $\varepsilon >0$ sufficiently small (that is, with $\varepsilon \leq c \equiv c(\nu, L, U)$), it holds that: 
$$\log\p{N(\varepsilon,\, \mathcal{F}(L,U, \nu),\, \Norm{\cdot}_{\infty})} \lesssim_{\nu, L , U }   \abs{\log(\varepsilon)}^2.$$
\end{lemm}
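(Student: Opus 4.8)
The plan is to follow the metric-entropy strategy for mixture densities developed by \cite{ghosal2001entropies} and adapted to the scaled-$\chi^2$ kernel as in \citet{zhang2009generalized}. Throughout I work in the precision parametrization of Supplement~\ref{sec:precision_parametrization}, so that a density $f_G \in \mathcal{F}(L,U,\nu)$ is a mixture of the kernels $p(\cdot \mid \tau^2)$ in~\eqref{eq:chisq_precision_likelihood} over $\tau^2 \in [1/U, 1/L]$, a compact interval bounded away from $0$ and $\infty$. The argument proceeds in three steps: (i) truncate the domain of $s^2$; (ii) approximate every mixing distribution by a discrete one with only $O(\log(1/\varepsilon))$ atoms via moment matching; and (iii) cover the resulting finite-dimensional family of discrete mixtures, for which the Lipschitz estimates of Lemma~\ref{lemm:chi_square_likelihood} give a $\abs{\log \varepsilon}^2$ bound. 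For the truncation I would use part C of Lemma~\ref{lemm:chi_square_likelihood}, which gives $p(s^2 \mid \tau^2) \lesssim_{\nu, U} \exp(-\nu s^2/(4U))$ uniformly over $\tau^2 \le 1/L$ once $s^2 \ge U$. Integrating over $G$ shows $f_G(s^2) \lesssim_{\nu,U} \exp(-\nu s^2/(4U))$ uniformly over $G$ with $G([L,U])=1$, so that on the tail $\{s^2 > T\}$ with $T \asymp \abs{\log \varepsilon}$ every density in $\mathcal{F}(L,U,\nu)$ is bounded by $\varepsilon/2$. It therefore suffices to construct an $\varepsilon$-cover of the restrictions to the compact interval $s^2 \in (0, T]$, on which part A of Lemma~\ref{lemm:chi_square_likelihood} gives a uniform bound on the kernel.

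The crux is the moment-matching step. Expanding the exponential factor of~\eqref{eq:chisq_precision_likelihood} gives $p(s^2 \mid \tau^2) = c_\nu (s^2)^{\nu/2-1} \sum_{k \ge 0} \frac{(-\nu s^2/2)^k}{k!} (\tau^2)^{\nu/2 + k}$, so that $f_G(s^2)$ depends on $G$ only through the generalized moments $\int (\tau^2)^{\nu/2+k}\, dG$. Given $G$, I would invoke the classical moment/Carathéodory theory to produce a discrete $G'$ supported on $O(N)$ points of $[1/U,1/L]$ whose total mass and first $N$ generalized moments $\int (\tau^2)^{\nu/2+k}\,dG$, $k=0,\dots,N-1$, agree with those of $G$; the difference $f_G - f_{G'}$ then involves only the series tail $k \ge N$, which—using $\tau^2 \le 1/L$ and $s^2 \le T$—is bounded (up to $\nu, L$-constants) by $(s^2)^{\nu/2-1}\sum_{k \ge N} (\nu s^2/(2L))^k/k!$. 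A Stirling estimate shows this tail is $\le \varepsilon/2$ once $N \asymp T \asymp \abs{\log \varepsilon}$, uniformly over $s^2 \in (0,T]$, after absorbing the polylogarithmic prefactor $(s^2)^{\nu/2-1} \le T^{\nu/2-1}$. Combined with the tail truncation this yields $\Norm{f_G - f_{G'}}_\infty \le \varepsilon$ with $N \lesssim_{\nu,L,U} \abs{\log \varepsilon}$ atoms.

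Finally I would cover the family of mixtures with at most $O(N)$ atoms. Such a mixture is indexed by $(w_1,\dots,w_N, \sigma_1^2, \dots, \sigma_N^2)$ in the compact set $\Delta_{N-1}\times [L,U]^N$, and parts A and D of Lemma~\ref{lemm:chi_square_likelihood} (the latter converted to a $\sigma^2$-derivative using $\sigma^2 \ge L$) show that $f_{G'}(s^2) = \sum_j w_j\, p(s^2\mid \sigma_j^2)$ is Lipschitz in these parameters uniformly in $s^2$, with constant depending only on $\nu, L, U$: perturbing the weights contributes $(\sup p)\Norm{w-\tilde w}_1$ and perturbing the locations contributes $(\sup|\partial_{\sigma^2} p|)\sum_j w_j\abs{\sigma_j^2-\tilde\sigma_j^2}$. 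An $\eta$-net of $\Delta_{N-1}$ in $\ell^1$ and of $[L,U]^N$ in the weighted sup-distance has cardinality $(C/\eta)^{O(N)}$; taking $\eta \asymp \varepsilon$ produces an $(\varepsilon/2)$-cover of the $O(N)$-atom mixtures of size $\exp(O(N \abs{\log \varepsilon}))$. Since every $f_G \in \mathcal{F}(L,U,\nu)$ is $(\varepsilon/2)$-close to some such mixture by the previous step, this net is an $\varepsilon$-cover of $\mathcal{F}(L,U,\nu)$, and its log-cardinality is $O(N\abs{\log \varepsilon}) = O(\abs{\log \varepsilon}^2)$, as claimed. I expect the moment-matching step to be the main obstacle: making the series-tail bound uniform in $s^2$ requires the interplay between the analyticity of the kernel in $\tau^2$ on the bounded precision interval and the exponential truncation at large $s^2$, and some care is needed to handle the fractional power $(\tau^2)^{\nu/2}$ (e.g.\ by matching ordinary moments of the tilted measure $(\tau^2)^{\nu/2}\,dG$) when $\nu$ is odd.
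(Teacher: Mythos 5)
Your proposal is correct and follows essentially the same route as the paper's proof: truncation of the $s^2$-domain at $T\asymp\abs{\log\varepsilon}$ via Lemma~\ref{lemm:chi_square_likelihood}C, reduction to discrete mixing distributions with $O(\abs{\log\varepsilon})$ atoms by matching the generalized moments $\int(\tau^2)^{\nu/2+k}\,dG$ through Carath\'eodory (the paper phrases the series-tail bound as a Taylor remainder, which is equivalent), and a parametric net over weights and atoms using the Lipschitz estimates of Lemma~\ref{lemm:chi_square_likelihood}A and~D, giving log-cardinality $O(\abs{\log\varepsilon}^2)$. Your concern about the fractional power $(\tau^2)^{\nu/2}$ is not an obstacle, since Carath\'eodory's theorem applies to arbitrary continuous moment functions, which is exactly how the paper handles it.
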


\begin{lemm}
\label{lemm:chisq_tail_second_moment}
Suppose $\sigma_i^2 \leq U$ almost surely for all $i$. Then, for any $C>0, \gamma >0$, $n \in \mathbb N_{\geq 1}$ and any $B \geq 4U$, it holds that:
$$\PP{   \prod_{i: S_i^2 > B}\frac{C\p{ S_i^2}^2}{\eta B^2} \geq \exp(2\gamma)}  \leq \exp\p{-2\gamma + \frac{n C (B^2 + 8 UB + 32 U^2)}{ \eta B^2} \exp(-B/(4U))  }.$$
\end{lemm}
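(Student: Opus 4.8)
The plan is to bound the tail probability by a one-line Chernoff-type (i.e.\ $\theta=1$) application of Markov's inequality to the random product, and then control its expectation by reducing everything to a single per-coordinate tail integral. Writing $W \coloneqq \prod_{i:\, S_i^2 > B}\tfrac{C(S_i^2)^2}{\eta B^2}$, Markov gives $\PP{W \geq \exp(2\gamma)} \leq \exp(-2\gamma)\,\EE{W}$, so it suffices to show $\EE{W} \leq \exp\p{\tfrac{nC(B^2+8UB+32U^2)}{\eta B^2}\exp(-B/(4U))}$. Since the $S_i^2$ are independent (in both the hierarchical and the compound settings), I factorize $W = \prod_{i=1}^n a_i$ where $a_i \coloneqq \tfrac{C(S_i^2)^2}{\eta B^2}$ if $S_i^2 > B$ and $a_i \coloneqq 1$ otherwise, so $\EE{W} = \prod_{i=1}^n \EE{a_i}$. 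Each factor obeys $\EE{a_i} = 1 - \PP{S_i^2 > B} + \tfrac{C}{\eta B^2}\EE{(S_i^2)^2\ind(S_i^2 > B)} \leq 1 + \tfrac{C}{\eta B^2}\EE{(S_i^2)^2\ind(S_i^2 > B)} \leq \exp\p{\tfrac{C}{\eta B^2}\EE{(S_i^2)^2\ind(S_i^2 > B)}}$, using $1+x\leq e^x$. Taking the product reduces the whole statement to the per-coordinate bound $\EE{(S_i^2)^2\ind(S_i^2 > B)} \leq (B^2 + 8UB + 32U^2)\exp(-B/(4U))$.

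The key observation for the per-coordinate bound is the identity $(B^2 + 8UB + 32U^2)\exp(-B/(4U)) = \int_B^\infty u^2\, \tfrac{1}{4U}\exp(-u/(4U))\, du$ (integration by parts for the exponential rate $1/(4U)$). Hence it suffices to dominate the conditional density pointwise on the tail: I claim that for $\sigma^2 \leq U$ and $u \geq 4U$ one has $p(u \mid \sigma^2) \leq \tfrac{1}{4U}\exp\p{-u/(4U)}$. Granting this, because $B \geq 4U$ and $\sigma_i^2 \leq U$ almost surely, $\EE{(S_i^2)^2\ind(S_i^2>B) \mid \sigma_i^2} = \int_B^\infty u^2 p(u\mid\sigma_i^2)\,du \leq \int_B^\infty u^2\,\tfrac{1}{4U}\exp(-u/(4U))\,du$, and integrating out $\sigma_i^2$ (or, in the compound setting, using $\sigma_i^2\le U$ directly) closes the per-coordinate estimate.

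The remaining task, which I expect to be the main obstacle, is the pointwise density bound. First I would invoke the monotonicity of $\tau^2 \mapsto p(s^2\mid\tau^2)$ from Lemma~\ref{lemm:chi_square_likelihood}.C: for $u \geq U \geq \sigma^2$ we have $p(u\mid\sigma^2) \leq p(u\mid U)$, so it is enough to treat $\sigma^2 = U$. Setting $\phi(u) \coloneqq \log\p{4U\, p(u\mid U)} + u/(4U)$, I aim to show $\phi(u) \leq 0$ for $u \geq 4U$. A short calculation gives $\phi'(u) = (\nu/2 - 1)/u - (2\nu-1)/(4U)$, whose unique zero $u^\star = 2U(\nu-2)/(2\nu-1)$ satisfies $u^\star < U < 4U$; hence $\phi$ is decreasing on $[4U,\infty)$ and it suffices to verify $\phi(4U) \leq 0$. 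The $\log U$ terms cancel, leaving the scale-free quantity $\phi(4U) = \tfrac{\nu}{2}\log(2\nu) - \log\Gamma(\nu/2) - 2\nu + 1$. Applying the Stirling lower bound $\log\Gamma(\nu/2) \geq \tfrac12\log(2\pi) + \tfrac{\nu-1}{2}\log(\nu/2) - \nu/2$ and simplifying reduces this to $\phi(4U) \leq \nu\p{\log 2 - \tfrac32} + \tfrac12\log\nu + \p{1 - \tfrac12\log(4\pi)}$, which is negative at $\nu=2$ and strictly decreasing in $\nu$ (its derivative is $\log 2 - \tfrac32 + \tfrac{1}{2\nu} < 0$), hence $\leq 0$ for all $\nu \geq 2$.

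Finally I would assemble the pieces: substituting the per-coordinate bound into $\EE{W} \leq \exp\p{\tfrac{C}{\eta B^2}\sum_{i=1}^n \EE{(S_i^2)^2\ind(S_i^2>B)}}$ and combining with the Markov step produces exactly the claimed inequality. Everything except the uniform-in-$\nu$ verification $\phi(4U) \leq 0$ is routine (independence, $1+x \leq e^x$, and a single one-dimensional integral), so the only genuinely delicate point is that the clean scale-free constant $1/(4U)$ in the density envelope is the same for every $\nu \geq 2$, which is precisely what the Stirling estimate secures.
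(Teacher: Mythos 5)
Your proof is correct, and its outer skeleton---Markov's inequality applied to the product, factorization over the independent $S_i^2$, the bound $x^{\ind(\cdot)}\le 1+x\ind(\cdot)$, and $1+x\le e^x$---is exactly the paper's. Where you genuinely diverge is in how the truncated moment $\EE{(S_i^2)^2\ind(S_i^2>B)}$ is controlled. The paper uses the layer-cake formula $\EE{(S_i^2)^2\ind(S_i^2>B)}=2\int_0^\infty t\,\PP{S_i^2\ind(S_i^2>B)\ge t}\,dt$ together with the tail bound $\PP{S_i^2\ge t}\le \exp(-t/(4U))$ for $t\ge 4U$, recycled from the proof of Lemma~\ref{lemm:variance_tail_prob} (which rests on Laurent--Massart $\chi^2$ concentration). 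You instead establish a pointwise density envelope $p(u\mid\sigma^2)\le \tfrac{1}{4U}\exp(-u/(4U))$ for $u\ge 4U$, $\sigma^2\le U$---via the monotonicity in Lemma~\ref{lemm:chi_square_likelihood} plus a calculus-and-Stirling argument---and integrate it directly; your integration-by-parts identity then reproduces the paper's constant $(B^2+8UB+32U^2)\exp(-B/(4U))$ exactly, which is no accident, since integrating your envelope recovers precisely the paper's tail bound. I verified your computations: $\phi'(u)=(\nu/2-1)/u-(2\nu-1)/(4U)$, the cancellation of the $\log U$ terms giving $\phi(4U)=\tfrac{\nu}{2}\log(2\nu)-\log\Gamma(\nu/2)-2\nu+1$, and the Stirling estimate showing this is negative and decreasing in $\nu\ge 2$ are all right (one cosmetic point: at $\nu=2$ the ``unique zero'' $u^\star=0$ lies outside the domain, but then $\phi'<0$ everywhere, so the monotonicity conclusion stands). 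What your route buys is a self-contained, $\nu$-uniform sharpening of the qualitative bound $p(s^2\mid 1/U)\lesssim_{\nu,U}\exp(-\nu s^2/(4U))$ in Lemma~\ref{lemm:chi_square_likelihood}C into an explicit envelope with constants independent of $\nu$; what it costs is the Stirling verification, which the paper avoids entirely by reusing its existing tail lemma.
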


\begin{proof}[Proof of Theorems~\ref{theo:hellinger_dist_convergence} and~\ref{manualtheorem:hellinger_dist_convergence}]

Recall from Supplement~\ref{sec:general_proof_remarks} that we write $f_*=f_G$, resp. $f_*=f_{\boldsigma}$ depending on whether we are studying the hierarchical or compound setting.
Our goal is to obtain bounds on 
\begin{equation}
    \label{eq:tail_event}
\PP{ A_n}, \text{ where } A_n \coloneqq \cb{\Dhel(f_{\hG}, f_*) \geq \varepsilon_n}.
\end{equation}
We will figure out later 
what the choice of $\varepsilon_n$ should be.

Our strategy is as follows. Consider the following class of marginal densities:
\begin{equation}
    \label{eq:constrained_class_large_hellinger}
\mathcal{F}(\varepsilon_n, \ubar{L}, \bar{U}, \nu) = \cb{f_{\tG}(\cdot) = f_{\tG}(\cdot;\nu)\,:\, \tG([\ubar{L},\, \bar{U}])=1, \Dhel(f_{\tG}, f_*) \geq \varepsilon_n}.
\end{equation}
This is essentially the class of mixture densities in~\eqref{eq:constrained_precision} subject to the additional constraint that their Hellinger
distance to $f_*$ is sufficiently large. Furthermore, let 
$\mathcal{S} = \cb{f_{j}\, : \, j \in \mathcal{J}}$, $\mathcal{J}=\cb{1,\dotsc,J}$, $J = \#\mathcal{S}$,
be a \emph{proper} $(\Norm{\cdot}_{\infty}, \eta)$-cover of~\eqref{eq:constrained_class_large_hellinger}.\footnote{By ``proper cover'',
we mean that the centers of the cover are themselves elements of~\eqref{eq:constrained_class_large_hellinger}.}
Lemma~\ref{lemm:entropy} already provides a cover of a larger class of functions. However,
the latter cover is not a proper cover for~\eqref{eq:constrained_class_large_hellinger}, that is the cover will contain elements
that do not necessarily lie in the class~\eqref{eq:constrained_class_large_hellinger}. However, by a standard argument we can turn an improper
cover to a proper cover at the cost of doubling the radius (i.e., an $\eta/2$-cover in Lemma~\ref{lemm:entropy} yields
a $\eta$-cover for~\eqref{eq:constrained_class_large_hellinger}). Hence we get that:
$$\log J \lesssim_{\nu, \ubar{L} , \bar{U}}   \abs{\log(\eta)}^2.$$
On the event $A_n$ defined in~\eqref{eq:tail_event}, there must exist
$\widehat{j} \in \mathcal{J}$ such that $\Norm{f_{\widehat{j}} - f_{\hG}}_{\infty}\leq \eta$. 
This further implies that on $A_n$:
$$f_{\hG}(s^2) \leq f_{\widehat{j}}(s^2) + \eta \leq \max_{j \in \mathcal{J}} f_j(s^2) + \eta\; \text{ for all } s^2>0.$$
From here we proceed as follows. We introduce the following function parameterized by $B$:
$$\eta(z) \coloneqq \eta\ind\cb{\abs{z}\leq B} + \eta \frac{B^2}{z^2}\ind\cb{\abs{z} > B},\, \text{ for }\, z>0.$$
Notice that by construction
$$ \int_0^{\infty} \eta(z) dz = \eta B +  \eta B = 2\eta B,$$
and also
$$ f_{\hG}(s^2) \leq   f_{\widehat{j}}(s^2) + \eta(s^2) \leq \max_{j \in \mathcal{J}} \cb{ f_j(s^2) +  \eta(s^2)} \text{ if } s^2 \leq B,\;  \text{ and } f_{\hG}(s^2)  \leq C(\nu, \ubar{L}) \text{ otherwise}.$$
The boundedness required for the ``otherwise'' part of the branch above follows by Lemma~\ref{lemm:chi_square_likelihood}A with $C(\nu, \ubar{L})>0$ 
being a constant that only depends on $\nu$ and $\ubar{L}$.
For any $f$, write:
$$L_n(f, f_*) = \prod_{i=1}^n \frac{f(S_i^2)}{f_*(S_i^2)}.$$
Since $f_{\hG}$ is the NPMLE, it must hold that $L_n(f_{\hG}, f_*) \geq 1$.
Next, on the event $A_n$:
$$
\begin{aligned}
L_n(f_{\hG}, f_*) &\leq \prod_{i: S_i^2 \leq B} \frac{ f_{\widehat{j}}(S_i^2) +  \eta(S_i^2)}{f_*(S_i^2)} \prod_{i: S_i^2 > B}  \frac{C(\nu, \ubar{L})}{f_*(S_i^2)} \\
&= \prod_{i=1}^n \frac{ f_{\widehat{j}}(S_i^2) +  \eta(S_i^2)}{f_*(S_i^2)} \prod_{i: S_i^2 > B}  \frac{ C(\nu,\ubar{L})}{ f_{\widehat{j}}(S_i^2) +  \eta(S_i^2)}\\
&\leq \prod_{i=1}^n \frac{ f_{\widehat{j}}(S_i^2) +  \eta(S_i^2)}{f_*(S_i^2)} \prod_{i: S_i^2 > B}  \frac{ C(\nu, \ubar{L})}{\eta(S_i^2)}.
\end{aligned}
$$
Pick any $\gamma$. We get:
$$ 
\begin{aligned}
\PP{\Dhel(f_{\hG}, f_*) \geq \varepsilon_n} 
&= \PP{\Dhel(f_{\hG}, f_*) \geq \varepsilon_n,\;    L_n(f_{\widehat{G}_n}, f_*) \geq 1} \\ 
& \leq   \PP{ \cb{\p{\sup_{j \in \mathcal{J}} \prod_{i=1}^n \frac{ f_j(S_i^2) +  \eta(S_i^2)}{f_*(S_i^2)}} \cdot \p{\prod_{i: S_i^2 > B}  \frac{C(\nu, \ubar{L})}{\eta(S_i^2)}}} \geq 1} \\
& \leq   \PP{ \p{\sup_{j \in \mathcal{J}} \prod_{i=1}^n \frac{ f_j(S_i^2) +  \eta(S_i^2)}{f_*(S_i^2)}}\geq \exp(-2\gamma)}  + \PP{   \prod_{i: S_i^2 > B}  \frac{ C(\nu,\ubar{L})}{\eta(S_i^2)}  \geq \exp(2\gamma)}.
\end{aligned}
$$
In Lemma~\ref{lemm:chisq_tail_second_moment} we already bounded the second probability. Hence it remains to bound the first term. 
Observe that,
$$
\begin{aligned}
&\PP{ \p{\sup_{j \in \mathcal{J}} \prod_{i=1}^n \frac{ f_j(S_i^2) +  \eta(S_i^2)}{f_*(S_i^2)}}\geq \exp(-2\gamma)} \\ 
&\;\;\;\;\;\;\;\; \leq J\sup_{j \in \mathcal{J}} \PP{  \prod_{i=1}^n \frac{ f_j(S_i^2) +  \eta(S_i^2)}{f_*(S_i^2)}   \geq \exp(-2\gamma)} \\ 
&\;\;\;\;\;\;\;\; = J\sup_{j \in \mathcal{J}} \PP{  \prod_{i=1}^n \sqrt{\frac{ f_j(S_i^2) +  \eta(S_i^2)}{f_*(S_i^2)}}   \geq \exp(-\gamma)} \\ 
&\;\;\;\;\;\;\;\; \leq J\sup_{j \in \mathcal{J}}\cb{ \exp(\gamma) \prod_{i=1}^n \EE{\sqrt{\frac{ f_j(S_i^2) +  \eta(S_i^2)}{f_*(S_i^2)}}}} \\ 
&\;\;\;\;\;\;\;\; \leq J\sup_{j \in \mathcal{J}}\cb{ \exp(\gamma) \exp\p{\sum_{i=1}^n \cb{\EE{\sqrt{\frac{ f_j(S_i^2) +  \eta(S_i^2)}{f_*(S_i^2)}}}-1}}} \\ 
&\;\;\;\;\;\;\;\; \stackrel{(*)}{=} J\sup_{j \in \mathcal{J}} \cb{ \exp(\gamma) \exp\p{ n \cb{\int_0^{\infty} \sqrt{f_j(z) + \eta(z)}\sqrt{f_*(z)}dz -1}}}\\
&\;\;\;\;\;\;\;\; \stackrel{(**)}{\leq} J\exp(\gamma)\exp\p{ - n\Dhel^2(f_j, f_*) + n\sqrt{2 \eta B}} \\
&\;\;\;\;\;\;\;\;  \leq \exp\cb{-n\varepsilon_n^2 + n \sqrt{2 \eta B} + \gamma + \log J}.
\end{aligned}
$$
In $(*)$ we have used the following argument, which is slightly different in the hierarchical setting compared to the compound setting. In the hierarchical setting:
\begin{equation*}
\sum_{i=1}^n \EE[G]{\sqrt{\frac{ f_j(S_i^2) +  \eta(S_i^2)}{f_G(S_i^2)}}} = \sum_{i=1}^n \int_0^{\infty} \sqrt{\frac{ f_j(z) +  \eta(z)}{f_G(z)}} f_G(z)dz =  n \int_0^{\infty} \sqrt{f_j(z) + \eta(z)}\sqrt{f_G(z)}dz.
\end{equation*}
In the compound setting:
$$
\begin{aligned}
\sum_{i=1}^n \EE[\sigma_i]{\sqrt{\frac{ f_j(S_i^2) +  \eta(S_i^2)}{f_{\boldsigma}(S_i^2)}}} &= \sum_{i=1}^n \int_0^{\infty} \sqrt{\frac{ f_j(z) +  \eta(z)}{f_{\boldsigma}(z)}} p(z \mid \sigma_i^2)dz \\ 
&= n \int_0^{\infty} \sqrt{\frac{ f_j(z) +  \eta(z)}{f_{\boldsigma}(z)}} \p{\frac{1}{n}\sum_{i=1}^n p(z \mid \sigma_i^2)}dz \\
&=  n \int_0^{\infty} \sqrt{f_j(z) + \eta(z)}\sqrt{f_{\boldsigma}(z)} dz.
\end{aligned}
$$
In $(**)$ we used the following argument:
$$
\begin{aligned}
\int_0^{\infty} \sqrt{f_j(z) + \eta(z)}\sqrt{f_*(z)}dz - 1 &\leq \int_0^{\infty} \p{\sqrt{f_j(z) f_*(z)} +  \sqrt{\eta(z) f_*(z)}} dz - 1 \\ 
&\leq - \Dhel^2(f_j, f_*)  + \p{ \int_0^{\infty }\eta(z)dz}^{1/2}\p{ \int_0^{\infty }f_*(z)dz}^{1/2} \\ 
&= - \Dhel^2(f_j, f_*) + \sqrt{2 \eta B}.
\end{aligned}
$$ 
We are now ready to pick all parameters. First we pick $\gamma = (\log n)^2$ and $\eta = 1/n^2$. Then $\log J \lesssim_{\nu , \ubar{L}, \bar{U}} (\log n)^2$. Also let $B=16\bar{U}(1 \lor \log n)$.
Finally, we pick:
$$ \varepsilon_n^2 = C \frac{(\log n)^2}{n},$$
where we pick $C$ sufficiently large so that $n \varepsilon_n^2$ dominates all other terms that are of order $(\log n)^2$.
\end{proof}

\subsection{Proof of Lemmata~\ref{lemm:entropy} and~\ref{lemm:chisq_tail_second_moment}}

\begin{lemm}
\label{lemm:variance_tail_prob}
Take any $G$ supported on $[0,\,U]$ and suppose $\nu \geq 2$. 
Then, for any $\varepsilon \in (0,\,1/e)$ it holds that:
$$ \PP[G]{ S^2 \geq t(\varepsilon)} \leq \varepsilon, \text{ where } t(\varepsilon) = 4U\abs{\log \varepsilon}.$$
\end{lemm}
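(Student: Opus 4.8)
The plan is to reduce the mixture tail bound to a single chi-square upper-tail estimate, exploiting that $\sigma^2 \le U$ almost surely. Writing $S^2 = (\sigma^2/\nu)X_\nu$ with $X_\nu \sim \chi^2_\nu$ drawn independently of $\sigma^2 \sim G$ (notation as in Lemma~\ref{lemm:chisquare_cdf}), I would condition on $\sigma^2$: for any $t>0$,
$$\PP[G]{S^2 \geq t \mid \sigma^2} = \PP{X_\nu \geq \nu t/\sigma^2} \leq \PP{X_\nu \geq \nu t/U},$$
since $\sigma^2 \le U$. Integrating over $\sigma^2 \sim G$ gives $\PP[G]{S^2 \geq t} \leq \PP{X_\nu \geq \nu t/U}$, so it suffices to control the upper tail of $X_\nu$ at $x := \nu\, t(\varepsilon)/U = 4\nu\abs{\log\varepsilon}$.

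Next I would apply the Cramér–Chernoff bound for the chi-square law. Starting from the moment generating function $\EE{e^{\lambda X_\nu}} = (1-2\lambda)^{-\nu/2}$ for $\lambda \in (0,1/2)$ and optimizing $e^{-\lambda x}(1-2\lambda)^{-\nu/2}$ over $\lambda$ (the optimizer is $\lambda = \tfrac12(1-\nu/x)$, which lies in $(0,1/2)$ whenever $x>\nu$) yields the standard estimate
$$\PP{X_\nu \geq x} \leq \p{x/\nu}^{\nu/2}\exp\p{-(x-\nu)/2}, \qquad x \geq \nu.$$
Substituting $x = 4\nu L$ with $L := \abs{\log\varepsilon} = -\log\varepsilon$ — here $L>1$ because $\varepsilon \in (0,1/e)$, and $x = 4\nu L > \nu$, so the bound applies — it remains to verify that
$$\p{4L}^{\nu/2}\exp\!\left(-\tfrac{\nu}{2}(4L-1)\right) \leq e^{-L} = \varepsilon,$$
which after taking logarithms is the inequality $\tfrac{\nu}{2}\bigl(\log(4L) - 4L + 1\bigr) \leq -L$.

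The remaining work is an elementary one-variable verification, and this is the only step where I expect any care is needed (it is also where the hypothesis $\nu \ge 2$ enters). The bracketed quantity $\log(4L) - 4L + 1$ is negative for $L \ge 1$: it equals $\log 4 - 3 < 0$ at $L = 1$ and has derivative $1/L - 4 < 0$. Multiplying a negative number by $\nu/2 \ge 1$ only makes it smaller, so it suffices to check the $\nu$-free inequality $\log(4L) - 4L + 1 \leq -L$, i.e. $g(L) := \log(4L) - 3L + 1 \leq 0$. Since $g(1) = \log 4 - 2 < 0$ and $g'(L) = 1/L - 3 < 0$ for $L \ge 1$, the function $g$ is negative on $[1,\infty)$, establishing the claim for all $\varepsilon \in (0,1/e)$. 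Everything outside this last inequality is a routine chi-square Chernoff computation, so I anticipate no serious obstacle.
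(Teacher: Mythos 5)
Your proposal is correct and takes essentially the same route as the paper's proof: both use $\sigma^2 \le U$ to reduce the mixture tail to the $\chi^2_\nu$ upper tail at $\nu t(\varepsilon)/U = 4\nu\abs{\log\varepsilon}$, and both finish with an exponential-moment (Chernoff-type) tail bound plus an elementary inequality in which $\nu \ge 2$ enters. The only difference is bookkeeping: the paper cites the Laurent--Massart inequality $\PP{X_\nu \ge \nu + 2\sqrt{\nu t'} + 2t'} \le \exp(-t')$ and verifies $\nu + 2\sqrt{\nu t'} + 2t' \le 4\nu t'$ for $t' \ge 1$, whereas you derive the equivalent bound $\PP{X_\nu \ge x} \le (x/\nu)^{\nu/2}\exp(-(x-\nu)/2)$ directly from the moment generating function, which makes your argument self-contained at the cost of the one-variable verification you carry out.
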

\begin{proof}[Proof of Lemma~\ref{lemm:variance_tail_prob}]
Let $t' \geq 1$ and define $t = 4Ut'$. Since $\nu \geq 2$, it follows that
$$\nu+ 2\sqrt{\nu t'} + 2t' \leq (3\nu+2)t' \leq 4\nu t' = (\nu / U)t.$$
Write $S^2 = X_{\nu} \sigma^2 / {\nu}$, where $X_{\nu}$ follows the $\chi^2_{\nu}$-distribution (chi-square distribution with $\nu$ degrees of freedom).
Then, as $\sigma^2 \leq U$ with probability $1$,
$$
\begin{aligned}
\PP[G]{ S^2 \geq t} &= \PP[G]{ \frac{\sigma^2}{\nu} X_{\nu} \geq t} 
\leq \PP[G]{X_{\nu} \geq \frac{\nu}{U}t} 
\leq \PP[G]{X_{\nu} \geq \nu + 2\sqrt{\nu t'} + 2 t'}  
 \stackrel{(*)}{\leq} \exp(-t').
\end{aligned}
$$
The inequality in $(*)$ follows by standard concentration results for $\chi^2$-random variables (e.g.,~\citet[Lemma 1]{laurent2000adaptive}).
Hence, for $\varepsilon \in (0,1/e)$ and $t' = \abs{\log \varepsilon} \geq 1$, i.e., 
for $t(\varepsilon) = 4U\abs{\log \varepsilon}$, it follows 
that $\PP[G]{ S^2 \geq t(\varepsilon)} \leq \varepsilon$. 
\end{proof}

\begin{proof}[Proof of Lemma~\ref{lemm:entropy}]
We will use the precision parametrization (Supplement~\ref{sec:precision_parametrization}) of the priors
for most of the proof. Throughout we fix $H$ supported on $[U^{-1},\, L^{-1}]$.\\

\noindent \textbf{Step 1:} Let $\varepsilon >0$ be given. In subsequent steps we will only seek to control the marginal
densities for $s^2 \in (0,\, B]$. We choose this $B$ in such a way that:
$$f_H(s^2) \lesssim_{\nu, L, U}  \varepsilon \text{ for all } s^2 \geq B.$$ 
By Lemma~\ref{lemm:chi_square_likelihood}C, we have that for $\kappa \geq \nu/4$
and $B = 4(\kappa/\nu)U$, then $p(s^2 \mid \tau^2) \lesssim_{\nu, U} \exp(-\kappa)$.
Hence we can pick $\kappa = \max\cb{\nu/4,\, \log(1/\varepsilon)}$ and 
$$ B = (4/\nu) U \max\cb{\nu/4,\, \log(1/\varepsilon)}.$$

\noindent \textbf{Step 2:} In addition to $H$, take another distribution $H'$
supported on $[U^{-1},\, L^{-1}]$. Then, for $s^2 \leq B$ and any $K \in \mathbb N_{\geq 1}$ it holds that:
$$ 
\begin{aligned}
&f_H(s^2) - f_{H'}(s^2)\\
&\;\;\;= C(\nu) (s^2)^{\nu/2-1} \int \p{\tau^2}^{\nu/2} \exp\p{-\frac{\nu \tau^2 s^2}{2}}\p{dH(\tau^2)-dH'(\tau^2)} \\
&\;\;\;\stackrel{(*)}{=} C(\nu) (s^2)^{\nu/2-1} \int \p{\tau^2}^{\nu/2} \p{R_K(\tau, s) + \sum_{k=0}^K \frac{(-1)^k \nu^k (\tau^2)^{k} (s^2)^{k}}{k! 2^k}}\p{dH(\tau^2)-dH'(\tau^2)} \\
&\;\;\;= C(\nu) (s^2)^{\nu/2-1} \int \p{R_K(\tau, s)\p{\tau^2}^{\nu/2}  \, +\,\sum_{k=0}^K \frac{(-1)^k \nu^k (\tau^2)^{k+ \nu/2} (s^2)^{k}}{k! 2^k}}\p{dH(\tau^2)-dH'(\tau^2)}.
\end{aligned}
$$
In $(*)$ we applied Taylor's theorem. The remainder term $R_K$ therein satisfies:
$$\abs{R_K(\tau, s)} \leq \frac{\p{\nu \tau^2 s^2/2}^{K+1}}{(K+1)!} \leq \p{ \frac{e \nu \tau^2 s^2 /2}{K+1}}^{K+1} \leq \p{ \frac{2\nu L^{-1} B}{K+1}}^{K+1},$$ 
where we used that $(K+1)! \geq ((K+1)/e)^{K+1}$ from Stirling's approximation.
We now build upon a moment matching argument.
Suppose $H'$ matches the following (fractional) moments:
\begin{equation}
    \label{eq:fractional_moment_match}
\int (\tau^2)^{k+ \nu/2} dH(\tau^2) = \int (\tau^2)^{k+ \nu/2} dH'(\tau^2) \text{ for } k=0,\dotsc, K.
\end{equation}
Then:
$$ 
\begin{aligned}
\abs{f_H(s^2) - f_{H'}(s^2)} & = C(\nu) (s^2)^{\nu/2-1} \int R_K(\tau, s)\p{\tau^2}^{\nu/2}\p{dH(\tau^2)-dH'(\tau^2)} \\ 
& \lesssim_{\nu}  (B)^{\nu/2-1}  L^{-\nu/2} \p{ \frac{2\nu L^{-1} B}{K+1}}^{K+1}.
\end{aligned}
$$
Now let $K \geq \max\cb{ 2^{\nu/2} e \nu L^{-1} B,\, \log(1/\varepsilon)}$. Then noting that $K^{\nu/2-1} \leq \p{2^{\nu/2-1}}^{K+1}$, we get:
$$
\begin{aligned}
&(B)^{\nu/2-1}  L^{-\nu/2} \p{ \frac{2\nu L^{-1} B}{K+1}}^{K+1}  \\  
&\;\;\;\;\;\;
 \lesssim_{\nu, L} K^{\nu/2-1} \p{ \frac{2\nu L^{-1} B}{K+1}}^{K+1}  
 \leq  \p{ \frac{2^{\nu/2}\nu L^{-1} B}{K+1}}^{K+1} 
 \leq e^{-(K+1)} \leq \varepsilon.
\end{aligned}
$$

\noindent \textbf{Step 3:} By Caratheodory's theorem, there exists
a discrete distribution $H'$ supported on $[U^{-1}, L^{-1}]$ with at most $K' = K+2$ 
points of support that matches the above moments of $H$.

\noindent \textbf{Step 4:}
In Step 3 we constructed $H'$ of the following form:
\begin{equation}
\label{eq:discrete_measure}
H' = \sum_{j=1}^{K'} \pi_j \delta_{x_j},
\end{equation}
where $U^{-1} \leq x_1 \leq \dotsc \leq x_{K'} \leq L^{-1}$ and $\pi \in \text{Simplex}(K')$, the probability simplex embedded in $\RR^{K'}$.
Now take another measure of the form:
$$
H'' = \sum_{k=1}^{K'} p_j \delta_{y_j},
$$
where $U^{-1} \leq y_1 \leq \dotsc \leq y_{K'} \leq L^{-1}$ and $p \in \text{Simplex}(K')$.
Then:
$$
\begin{aligned}
    \abs{f_{H'}(s^2) - f_{H''}(s^2)} &= \abs{ \sum_{j=1}^{K'} \pi_j p(s^2 \cond x_j) - \sum_{j=1}^{K'} p_j p(s^2 \cond y_j)}  \\
    &=  \abs{\sum_{j=1}^{K'} \pi_j \p{p(s^2 \cond x_j) - p(s^2 \cond y_j)} + \sum_{j=1}^{K'} \p{\pi_j - p_j}  p(s^2 \cond y_j)} \\
    &\lesssim_{\nu, L,U} \max_{j=1,\dotsc,K'} \cb{ \abs{x_j - y_j}} \, + \,  \Norm{\pi - p}_1.
\end{aligned}
$$
In the last step we used Lemma~\ref{lemm:chi_square_likelihood}, parts A. and D.\\

\noindent \textbf{Step 5 (putting everything together):} It only remains to provide an explicit construction of the cover.
To this end, first let $\Xi$ be an $(\varepsilon,\,\abs{\cdot})$-cover of $[U^{-1},\, L^{-1}]$. By picking
an equidistant grid we see that $\abs{\Xi} \lesssim_{L,U} 1/\varepsilon$.

Second, let $\mathcal{S}(\text{Simplex}(K'))$ be an $(\varepsilon,\,\Norm{\cdot}_1)$-cover of $\text{Simplex}(K')$.
A volume argument yields $\abs{\mathcal{S}(\text{Simplex}(K'))} \leq (5/\varepsilon)^{K'}$ for $\varepsilon \leq 1$. 

We then define:
$$\mathcal{H} \coloneqq \cb{ H'' = \sum_{j=1}^{K'} p_j \delta_{y_j} \, : \, p \in\mathcal{S}(\text{Simplex}(K')),\; y_j \in \Xi }.$$
Combining the previous steps and the triangle inequality,
we see that for any $f = f_H \in \mathcal{F}$, there exists $H'' \in \mathcal{H}$ such that
$$\Norm{f_H - f_{H''}}_{\infty} \lesssim_{\nu, L, U} \varepsilon,$$
and so our cover of~\eqref{eq:constrained_precision} consists of all elements of the form
$ f_{H''}(\cdot) = \int p(\cdot \cond \tau^2) dH''(\tau^2)$ with $H'' \in \mathcal{H}$. It only
remains to count $\abs{\mathcal{H}}$. We have that:

$$
\begin{aligned}
\abs{\mathcal{H}}  \leq \binom{\Xi}{K'} \cdot \mathcal{S}(\text{Simplex}(K'))) \leq  \p{\frac{C(L,U)}{\varepsilon}}^{K'}   \cdot \p{\frac{5}{\varepsilon}}^{K'}.
\end{aligned}
$$
Hence:
$$ \log(\abs{\mathcal{H}} ) \lesssim_{L, U} K' \log(1/\varepsilon).$$
The above is a $C(\nu, L, U)\cdot \varepsilon$ cover for a constant $C(\nu, L, U)$, and so:
$$\log\p{N(\varepsilon, \mathcal{F}(L,U, \nu), \Norm{\cdot}_{\infty})} \lesssim_{\nu, L, U} K' \log(C(\nu,L,U)/\varepsilon) \lesssim_{\nu, L, U} K' \log(1/\varepsilon).$$
Observe that our construction allows to choose $K' \lesssim_{\nu, L , U} \log(1/\varepsilon)$ to conclude. 
\end{proof}

\begin{proof}[Proof of Lemma~\ref{lemm:chisq_tail_second_moment}]
$$
\begin{aligned}
&   \PP{   \prod_{i: S_i^2 > B}\frac{C\p{S_i^2}^2}{\eta B^2} \geq \exp(2\gamma)} \\ 
& \;\;\;\;\;\;\; \leq \exp(-2\gamma) \prod_{i=1}^n \EE{  \p{\frac{C\p{S_i^2}^2}{\eta B^2}}^{ \ind (S_i^2 > B)}} \\ 
& \;\;\;\;\;\;\; \leq \exp(-2\gamma) \prod_{i=1}^n \EE{ \p{1+\ind (S_i^2 > B) \frac{C\p{S_i^2}^2}{\eta B^2}}} \\ 
& \;\;\;\;\;\;\; \leq \exp(-2\gamma) \exp\p{ \frac{C}{\eta B^2} \sum_{i=1}^n \EE{ \ind (S_i^2 > B) \p{S_i^2}^2}}.
\end{aligned}
$$
Hence it suffices to bound the inner expectation.  We have that:
$$ 
\begin{aligned}
\EE{ \ind (S_i^2 > B) \p{S_i^2}^2} &= 2\int_0^{\infty} t \PP{S_i^2 \ind (S_i^2 > B) \geq t}\, dt  \\ 
& = 2 \p{\int_0^{B} t \PP{S_i^2 \geq B}\,dt \, + \, \int_B^{\infty} t \PP{S_i^2 \geq t}\, dt} \\  
& \stackrel{(*)}{\leq} B^2 \exp(-B/(4U)) + 2\int_B^{\infty} t \exp(-t/(4U)) dt \\ 
& = \p{B^2+ 8 UB + 32 U^2} \exp(-B/(4U)).
\end{aligned}
$$
The tail bound $(*)$ follows precisely as in the proof of Lemma~\ref{lemm:variance_tail_prob} and we use the fact here that $B \geq 4U$. 
\end{proof}

\subsection{Proof of Corollary~\ref{coro:weak_convergence}}
\label{subsec:proof_coro_weak_convergence}

\begin{proof}
Our proof operates on the following event
$$ A \coloneqq  \bigcup_{k=1}^{\infty} \bigcap_{ n=k}^{\infty} A_n,\, \text{ where }\, A_n \coloneqq  \cb{\Dhel(f_{\hG_n}, f_*) < \frac{C \log n}{\sqrt{n}}},$$
with the constant $C$ in the statement of Theorem~\ref{theo:hellinger_dist_convergence}. 
We make the dependence of $\hG$ on $n$ explicit by writing $\hG_n \equiv \hG$.
By Theorem~\ref{theo:hellinger_dist_convergence}  we also have that $\PP[G]{A_n^c} \leq \exp(-c (\log n)^2)$, where 
$A_n^c$ denotes the complement of $A_n$, and so:
$$ \sum_{n=1}^{\infty} \PP[G]{A_n^c} < \infty.$$
Thus, by the first Borel-Cantelli Lemma (e.g., \citep[Theorem 4.3]{billingsley1995probability}), it follows 
that $\PP[G]{A} = 1$. We will prove that $\hG_n \cd G$ on the event $A$.

First by construction it holds that $(\hG_n)_{n \geq 1}$ is tight (since $\hG_n$ is supported on the compact
set $[\ubar{L}, \bar{U}]$). Next take any subsequence $\hG_{n_k}$ such that $\hG_{n_k} \cd \widetilde{G}$
as $k \to \infty$ for some probability measure $\widetilde{G}$. By~\citet[Chapter 25, corollary on page 337]{billingsley1995probability}, 
the weak convergence $\hG_n \cd G$ will follow if we can show that $\widetilde{G} = G$.

To see this, first note that for any $u>0$:
$$ f_{\hG_{n_k}}(u) = \int p(u \mid \sigma^2) d\hG_{n_k}(\sigma^2) \to \int p(u \mid \sigma^2 ) d\widetilde{G}(\sigma^2) =  f_{\widetilde{G}}(u)\; \text{ as } k \to \infty.$$
Here we used the definition of weak convergence along with the facts that $\sigma^2 \mapsto p(u \mid \sigma^2)$ 
is continuous and bounded on $[\ubar{L}, \bar{U}]$. Hence by Scheffe's theorem~\citep[Theorem 16.12.]{billingsley1995probability}
applied to the Lebesgue measure on $(0, \infty)$, we get convergence in total variation distance ($\TV$),
$$\TV( f_{\hG_{n_k}},\, f_{\widetilde{G}}) \coloneqq \frac{1}{2}\int\abs{ f_{\hG_{n_k}}(u)  - f_{\widetilde{G}}(u)}\,du \, \to \, 0\; \text{ as }\; k \to \infty.$$
On the other hand:
$$ \TV(f_{\hG_{n_k}}, \, f_G ) \stackrel{(*)}{\leq} \sqrt{2} \Dhel(f_{\hG_{n_k}}, \, f_G)   \, \stackrel{(**)}{\to} \, 0\; \text{ as }\; k \to \infty.$$
$(**)$ follows because on the event $A$ it holds that:
$$ \limsup_{k \to \infty} \Dhel(f_{\hG_{n_k}}, \, f_G) \, \leq \, \limsup_{k \to \infty} \frac{C \log n_k}{\sqrt{n_k}} \to 0,\; \text{ as }\; k \to \infty.$$
$(*)$ is a standard result that follows
e.g., by the Cauchy-Schwarz inequality. Hence by the triangle inequality, we find that \smash{$\TV(f_{\widetilde{G}}, \; f_G ) =0$}, which implies 
that $f_G(u) = f_{\widetilde{G}}(u)$ for almost all $u>0$, and so by continuity, for all $u>0$.
Let $H$ and \smash{$\widetilde{H}$} be the push-forwards of $G$ and \smash{$\widetilde{G}$} under the map $\sigma^2 \mapsto 1/\sigma^2$. 
In other words, we are switching to the precision parametrization of Supplement~\ref{sec:precision_parametrization}.  
Then it follows by~\citet[Section 4, Example 1]{teicher1961identifiability} that \smash{$H = \widetilde{H}$}.
This in turn implies that \smash{$G = \widetilde{G}$}, and so we conclude with the proof
of this corollary.
\end{proof}

\section{Proofs for Section~\ref{subsec:tweedie}}
\subsection{Proof of Proposition~\ref{prop:pvalue}}
\label{subsec:proof_prop_pvalue}
\begin{proof}
To simplify notation suppose without loss of generality that $z \geq 0$. Then:
$$
\begin{aligned}
&2\EE{(1-\Phi(z/ \sigma)) \mid S^2=s^2} \\ 
=\;\; &  2\frac{1}{f_G(s^2;\nu)}\int_0^{\infty} (1-\Phi(z/ \sigma))  p( s^2 \cond \sigma^2,\nu) dG(\sigma^2) \\
=\;\; & 2\frac{1}{f_G(s^2;\nu)}\int_0^{\infty} \p{\int_z^{\infty} \frac{1}{\sqrt{2\pi \sigma^2}} \exp\p{- \frac{u^2}{2\sigma^2}}\, du} p( s^2 \mid \sigma^2) dG(\sigma^2) \\
=\;\; & \sqrt{\frac{2}{\pi}}\frac{(\nu/2)^{\nu/2}}{\Gamma(\nu/2)}\frac{\p{s^2}^{\nu/2-1}}{f_G(s^2;\nu) }\int_z^{\infty} \int_0^{\infty}  \p{\sigma^2}^{-1/2} \p{\sigma^2}^{-\nu/2}   \exp\p{-\frac{\nu s^2 + u^2}{2\sigma^2}} \,dG(\sigma^2)\,du\\
=\;\; & \sqrt{\frac{2}{\pi}}\frac{(\nu/2)^{\nu/2}}{\Gamma(\nu/2)}\frac{\p{s^2}^{\nu/2-1}}{f_G(s^2;\nu) }\int_z^{\infty} \int_0^{\infty}  \p{\sigma^2}^{-\frac{\nu+1}{2}}   \exp\p{-\frac{(\nu+1)\cb{\p{\nu s^2 + u^2}/(\nu+1)}}{2\sigma^2}} \,dG(\sigma^2)\,du\\
=\;\; & \sqrt{\frac{2}{\pi}}\frac{(\nu/2)^{\nu/2}}{\Gamma(\nu/2)}\frac{\p{s^2}^{\nu/2-1}}{f_G(s^2;\nu) }\int_z^{\infty}  \Gamma\p{\frac{\nu+1}{2}}\p{\frac{2}{\nu+1}}^{\frac{\nu+1}{2}}\p{\frac{\nu s^2 + u^2}{\nu+1}}^{-\frac{\nu-1}{2}} f_G\p{\frac{\nu s^2 + u^2}{\nu+1}; \nu+1}\,du \\
=\;\; & \sqrt{\frac{2}{\pi}}\frac{\sqrt{2} (1+1/\nu)^{-\nu/2}\Gamma((\nu+1)/2)}{(\nu+1)^{1/2}\Gamma(\nu/2)}\frac{\p{s^2}^{\nu/2-1}}{f_G(s^2;\nu) }\int_z^{\infty} \p{\frac{\nu s^2 + u^2}{\nu+1}}^{1/2-\nu/2} f_G\p{\frac{\nu s^2 + u^2}{\nu+1}; \nu+1}\,du \\ 
=\;\; & \frac{(1+1/\nu)^{-\nu/2}\Gamma((\nu+1)/2)}{\sqrt{\pi}(\nu+1)^{-1/2}\Gamma(\nu/2)}\frac{\p{s^2}^{\nu/2-1}}{f_G(s^2;\nu) }\int_0^{\infty} \frac{ (t^2)^{-\frac{\nu-1}{2}}}{\sqrt{ (\nu+1)t^2 - \nu s^2}} f_G(t^2; \nu+1)\ind\p{t^2 \geq \frac{\nu s^2 + z^2}{\nu+1}} \,d(t^2). 
\end{aligned}
$$
\end{proof}
\subsection{Proof of Lemma~\ref{lemm:closeness_of_marginals_plus_1}}
\label{sec:proof_lemm_closeness_of_marginals_plus_1}
As mentioned in the main text, our proof of Lemma~\ref{lemm:closeness_of_marginals_plus_1} builds upon Mellin transform analysis.
For conciseness we do not provide the necessary background and definitions here and instead refer to~\citet{brennermiguel2021spectral};
our notation in the proof matches the notation therein. We do however provide references for any mathematical property of the Mellin transform
that we invoke.

Our proof also requires the following preliminary lemmata.

\begin{lemm}[Theorem 4.4.3. in~\citet{gabcke1979neue}]
    \label{lemm:incomplete_gamma_tail}
Let $\kappa \geq 1$. Then, for any $T \geq \kappa$ it holds that:
$$ \int_T^{\infty} t^{\kappa - 1}\exp(-t)\,dt \leq \kappa T^{\kappa - 1} \exp(-T).$$
\end{lemm}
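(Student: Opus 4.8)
The plan is to establish the bound by integration by parts, reducing the tail integral to a self-referential inequality, and then closing with an elementary algebraic comparison in which the hypothesis $T \geq \kappa$ enters decisively. First I would write $I := \int_T^\infty t^{\kappa - 1} e^{-t}\,dt$ and integrate by parts with $u = t^{\kappa-1}$, $dv = e^{-t}\,dt$. Since $t^{\kappa-1} e^{-t} \to 0$ as $t \to \infty$ (for fixed $\kappa$) and the boundary contribution at $t = T$ is $T^{\kappa-1} e^{-T}$, this yields
$$I = T^{\kappa-1} e^{-T} + (\kappa-1)\int_T^\infty t^{\kappa-2} e^{-t}\,dt.$$

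Second, on the integration domain $t \geq T$ I would bound $t^{\kappa-2} = t^{\kappa-1}/t \leq t^{\kappa-1}/T$, which turns the remainder integral back into a multiple of $I$ itself:
$$I \leq T^{\kappa-1} e^{-T} + \frac{\kappa-1}{T}\,I.$$
Because $T \geq \kappa > \kappa - 1$, the coefficient $(\kappa-1)/T$ is strictly less than $1$, and $I$ is finite, so I may rearrange to isolate $I$, obtaining
$$I \leq \frac{T^{\kappa-1} e^{-T}}{1 - (\kappa-1)/T} = \frac{T^{\kappa} e^{-T}}{T - \kappa + 1},$$
where the denominator $T - \kappa + 1$ is positive by the assumption $T \geq \kappa$.

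Finally I would compare this intermediate bound with the claimed bound. After cancelling the common positive factor $T^{\kappa-1} e^{-T}$, the target inequality $I \leq \kappa T^{\kappa-1} e^{-T}$ reduces to $T/(T-\kappa+1) \leq \kappa$, i.e. to $T(1-\kappa) \leq \kappa(1-\kappa)$. For $\kappa = 1$ both sides vanish (and indeed the original integral equals $e^{-T}$ exactly, with equality in the claim); for $\kappa > 1$ the factor $(1-\kappa)$ is negative, so dividing reverses the inequality into $T \geq \kappa$, which is precisely the hypothesis. This closes the argument.

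There is no serious obstacle here; the only points requiring care are confirming that the self-bounding step is legitimate (which rests on $I < \infty$ together with $(\kappa-1)/T < 1$) and tracking the sign of $(1-\kappa)$ in the final comparison so that the direction of the hypothesis $T \geq \kappa$ is used correctly. I note that an entirely equivalent route avoids integration by parts altogether: substituting $t = T + s$ and using the elementary inequality $(1 + s/T)^{\kappa-1} \leq e^{(\kappa-1)s/T}$ (valid since $\kappa - 1 \geq 0$) produces the same intermediate bound $T^{\kappa} e^{-T}/(T - \kappa + 1)$, after which the identical algebraic verification applies.
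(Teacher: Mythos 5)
Your proof is correct, and while it opens exactly as the paper does---integration by parts with $u=t^{\kappa-1}$, $dv=e^{-t}\,dt$, giving $I = T^{\kappa-1}e^{-T} + (\kappa-1)\int_T^\infty t^{\kappa-2}e^{-t}\,dt$---the way you close the argument is genuinely different. The paper bounds the remainder integral directly: it substitutes $x=1/t$ to get $\int_0^{1/T} x^{-\kappa}e^{-1/x}\,dx$ and observes that $h(x)=x^{-\kappa}e^{-1/x}$ is increasing on $(0,1/T)$ (this is where the hypothesis $T\geq\kappa$ enters there, since $h'>0$ exactly on $(0,1/\kappa)$), so the integral is at most $\tfrac{1}{T}\cdot h(1/T)=T^{\kappa-1}e^{-T}$, and the claimed bound follows immediately as $(1+(\kappa-1))T^{\kappa-1}e^{-T}$. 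You instead absorb the remainder back into $I$ via $t^{\kappa-2}\leq t^{\kappa-1}/T$, solve the self-referential inequality (legitimately, since you note $I<\infty$ and $(\kappa-1)/T<1$), and use $T\geq\kappa$ only in the final algebraic comparison $(\kappa-1)(T-\kappa)\geq 0$. A pleasant byproduct of your route is that your intermediate bound $T^{\kappa}e^{-T}/(T-\kappa+1)$ is at least as sharp as the stated bound $\kappa T^{\kappa-1}e^{-T}$, strictly so when $T>\kappa>1$; the paper's route, by contrast, needs no finiteness-and-rearrange step and no case distinction at $\kappa=1$ (its parts identity already covers it since the remainder vanishes). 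Your alternative via $t=T+s$ and $(1+s/T)^{\kappa-1}\leq e^{(\kappa-1)s/T}$ is also valid and is arguably the cleanest of the three, dispensing with integration by parts entirely.
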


\begin{proof}
The proof appears in~\citet{gabcke1979neue}. Since the latter is written in German, we provide the argument here as well.
In the case $\kappa=1$, the above in fact holds with equality (by integrating the exponential function). Henceforth we assume that $\kappa > 1$.
By partial integration:
$$ \int_T^{\infty} t^{\kappa - 1}\exp(-t)\,dt = T^{\kappa-1}\exp(-T) + (\kappa-1) \int_T^{\infty} t^{\kappa - 2} \exp(-t)\,dt.$$
Hence we now look at the second summand of the RHS. Consider the transformation $x = 1/t$, then:
$$\int_T^{\infty} t^{\kappa - 2} \exp(-t)\,dt =  \int_0^{1/T} x^{-\kappa} \exp(-1/x) \,dx \stackrel{(*)}{\leq} \frac{1}{T} T^{\kappa}\exp(-T) = T^{\kappa-1}\exp(-T).$$
Combining with the preceding display yields the result of the lemma. 

It remains to justify $(*)$. Define $h(x) = x^{-\kappa} \exp(-1/x)$. This function has derivative $h'(x) = \exp(-1/x)\p{-\kappa x^{-\kappa - 1} +  x^{-\kappa -2}}$.
Hence $h'(x) > 0$ for all $x \in (0, 1/T)$, that is, $h$ is non-decreasing in the interval $(0, 1/T)$.
\end{proof}

\begin{lemm}[Corollary 1.4.4 in~\citet{andrews1999special}]
    \label{lemm:gamma_bounds}
    Fix $a_1 \leq a_2$. Then, for any $a \in [a_1, a_2]$, it holds that as $\abs{b} \to \infty$,
    $$\abs{\Gamma(a+\complexi b)} = \sqrt{2\pi}\abs{b}^{a-1/2}\exp(-\pi \abs{b}/2)\cb{1+O(1/\abs{b})},$$
    and the constant in $O(\cdot)$ only depends on $a_1$ and $a_2$.
\end{lemm}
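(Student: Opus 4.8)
The plan is to derive the stated modulus asymptotics directly from Stirling's expansion for the logarithm of the Gamma function, which is the standard route to Corollary~1.4.4 of~\citet{andrews1999special}. First I would reduce to the case $b \to +\infty$: since $a$ is real we have $\overline{\Gamma(a+\complexi b)} = \Gamma(a-\complexi b)$, hence $\abs{\Gamma(a-\complexi b)} = \abs{\Gamma(a+\complexi b)}$, and it suffices to treat $b>0$. For $b$ sufficiently large (a threshold depending only on $a_1, a_2$) the point $z = a+\complexi b$ lies in a sector $\abs{\arg z}\leq \pi - \delta$ for fixed $\delta>0$, because $\arg z \to \pi/2$ uniformly for $a \in [a_1,a_2]$; this keeps us away from the poles on the nonpositive real axis and inside the region where Stirling's expansion
$$\log\Gamma(z) = \p{z-\tfrac12}\log z - z + \tfrac12\log(2\pi) + O\p{1/\abs{z}}$$
holds with an implied constant uniform over the sector.

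Next I would take real parts. Writing $z = r\exp(\complexi\theta)$ with $r = \abs{z} = \sqrt{a^2+b^2}$ and $\theta = \arg z$, so that $\log z = \log r + \complexi\theta$, one gets
$$\operatorname{Re}\!\left[\p{z-\tfrac12}\log z - z\right] = \p{a-\tfrac12}\log r - b\,\theta - a.$$
The heart of the computation is then the expansion of $\log r$ and $\theta$ as $b\to\infty$ with $a$ bounded. Here $\log r = \log b + \tfrac12\log\p{1+a^2/b^2} = \log b + O\p{1/b^2}$, while the identity $\arctan x + \arctan(1/x) = \pi/2$ gives, uniformly in the sign of $a$, that $\theta = \tfrac{\pi}{2} - \arctan(a/b) = \tfrac{\pi}{2} - a/b + O\p{1/b^3}$, hence $b\,\theta = \tfrac{\pi b}{2} - a + O\p{1/b^2}$. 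Substituting these into the display and cancelling the $+a$ against $-a$ yields
$$\log\abs{\Gamma(a+\complexi b)} = \p{a-\tfrac12}\log b - \tfrac{\pi b}{2} + \tfrac12\log(2\pi) + O\p{1/b},$$
where the $O(1/b)$ term from Stirling absorbs the $O(1/b^2)$ remainders.

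Exponentiating and using $\exp\p{O(1/b)} = 1 + O(1/b)$ then gives exactly $\abs{\Gamma(a+\complexi b)} = \sqrt{2\pi}\,\abs{b}^{a-1/2}\exp(-\pi\abs{b}/2)\cb{1+O(1/\abs{b})}$ for $b>0$, and the case $b<0$ follows from the conjugate-symmetry reduction of the first step.

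The only points requiring genuine care are uniformity and the bookkeeping of the error terms. For uniformity I would verify that each expansion above (that of $\log r$, of $\theta$, and the Taylor remainder in $\arctan$) has its implied constant controlled solely by $\sup_{a\in[a_1,a_2]}\abs{a}$, and that Stirling's remainder is uniform on the sector $\abs{\arg z}\leq\pi-\delta$; the latter is precisely the uniform form of Stirling's theorem in~\citet{andrews1999special}. For the bookkeeping, the subtlety is confirming that the two subleading contributions $\p{a-\tfrac12}(\log r - \log b)$ and $b\theta - \p{\tfrac{\pi b}{2}-a}$ are each $O(1/b^2)$, so that they are negligible against the $O(1/b)$ Stirling term; this is immediate once $a$ is confined to a compact interval. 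I would regard establishing these uniform-in-$a$ bounds, rather than the leading-order computation (which is a routine consequence of Stirling's formula), as the main obstacle.
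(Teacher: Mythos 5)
Your proof is correct. Note that the paper itself gives no proof of this lemma: it is quoted verbatim as Corollary 1.4.4 of \citet{andrews1999special}, and the uniform Stirling expansion on sectors $\abs{\arg z}\leq \pi-\delta$ that you start from is precisely how that corollary is derived in the cited source, so your argument is the standard one rather than a genuinely different route. The computation checks out, including the cancellation of $+a$ from $b\theta$ against $-a$ from $\operatorname{Re}(-z)$, and the identity $\theta = \pi/2 - \arctan(a/b)$ holds directly for every real $a$ and $b>0$ (via $\cot\theta = a/b$ with $\pi/2-\theta \in (-\pi/2,\pi/2)$), so you do not even need the case split suggested by the $\arctan x + \arctan(1/x)$ identity; uniformity over $a\in[a_1,a_2]$ follows, as you say, since every remainder is controlled by $\sup_{a\in[a_1,a_2]}\abs{a}$ and the sector threshold.
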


\begin{proof}[Proof of Lemma~\ref{lemm:closeness_of_marginals_plus_1}]
Below we carry out a slightly more general argument which allows to bound the $L^2$ distance weighted by
the function $x \mapsto x^{2c-1}$. We fix $c$ for now, and suppose that it satisfies the following inequality:\footnote{The result stated in the lemma
corresponds to the choice $c=1/2$. For $c=1/2$, the inequality~\eqref{eq:c_inequality} is satisfied, since we only consider $\nu \geq 2$.}
\begin{equation}
\label{eq:c_inequality}
c \geq \frac{3}{2} - \frac{\nu}{2} > 1 -  \frac{\nu}{2}.
\end{equation}
For the results
in this slightly more general setting, we also consider the more general form of our assumption on $G,G'$, namely that:
\begin{equation}
    \label{eq:sigma_assumption_moment}
\EE[G]{ \p{\sigma^2}^{c-1} } \leq M, \;\; \EE[G']{ \p{\sigma^2}^{c-1} } \leq M,
\end{equation}
which again reduces to the assumption of the lemma when $c=1/2$.

Let $X_{\nu} \sim \chi^2_{\nu}/\nu$. Then, $S^2$ in~\eqref{eq:EB} is equal
in distribution to $\sigma^2 X_{\nu}$.
We write $p_{\nu}(\cdot)$ for the Lebesgue density of $X_{\nu}$, that is
$$ p_{\nu}(x) = \frac{ \nu^{\nu/2} }{2^{\nu/2} \Gamma(\nu/2)} x^{\nu/2-1} \exp\p{-\frac{\nu x}{2}},\;\text{ for }\, x>0.$$
We first verify that the following integrals are finite:
$$ \int_0^{\infty} x^{c-1}p_{\nu}(x)\,dx \; < \;\infty,\;\;\;\;\int_0^{\infty} x^{2c-1}p^2_{\nu}(x)\,dx \; <\; \infty.$$
For the first integral:
$$ \int_0^{\infty} x^{c-1}p_{\nu}(x)\,dx =  \frac{ \nu^{\nu/2} }{2^{\nu/2} \Gamma(\nu/2)} \int_0^{\infty} x^{(c + \nu/2-1) -1} \exp\p{-\frac{\nu x}{2}}dx =  \p{\frac{2}{\nu}}^{c -1}\frac{\Gamma(c + \nu/2 -1)}{\Gamma(\nu/2)} < \infty.$$
Above we used the fact that $c+\nu/2-1 > 0$ by~\eqref{eq:c_inequality}. For the second integral:
$$
\begin{aligned}
\int_0^{\infty} x^{2c-1}p^2_{\nu}(x)\,dx \lesssim_{\nu} \int_0^{\infty} x^{2c-1}  \p{x^{\nu/2-1}   \exp\p{-\frac{\nu x}{2}}}^2\,dx =  \int_0^{\infty} x^{(2c+\nu-2)-1}   \exp\p{-\nu x}\,dx,
\end{aligned}
$$
hence this integral is also finite since $2c + \nu-2 >0$ by~\eqref{eq:c_inequality}. Using the above result,~\eqref{eq:sigma_assumption_moment} and Lemma 2.2a) in~\citet{butzer1999selfcontained}, we also get that:
$$ 
\int_0^{\infty} x^{2c-1} f_G^2(x; \nu) dx \leq M^2 \int_0^{\infty} x^{2c-1}p^2_{\nu}(x)\,dx < \infty.
$$
Furthermore:
$$ \int_0^{\infty} x^{c-1}f_G(x; \nu)\,dx = \EE[G]{(S^2)^{c-1}} = \EE[G]{\p{\sigma^2}^{c-1} X_{\nu}^{c-1} } = \EE[G]{\p{\sigma^2}^{c-1}} \EE{X_{\nu}^{c-1}} < \infty.$$ 
The above steps establish integrability conditions that simplify the application of the Mellin transform theory (and the bounds hold verbatim with $G'$ replacing $G$). 

The next calculation computes the Mellin transform of $p_{\nu}$ 
for $c$ that satisfies~\eqref{eq:c_inequality} and arbitrary $t \in \RR$ (its definition being shown in the first line below). Define
$$
\begin{aligned}
\MelS{p_{\nu}}{c}(t) &:= \EE[X\sim p_{\nu}]{ X^{c-1+\complexi t}} \\ 
&= \int_0^{\infty} x^{c + \nu/2 -1+\complexi t - 1} \frac{\nu^{\nu/2}}{2^{\nu/2}\Gamma(\nu/2)}   \exp\p{-\frac{\nu x}{2}}\,dx\\ 
&= \int_0^{\infty} \p{\frac{2}{\nu}}^{c -1+\complexi t} u^{c + \nu/2 -1+\complexi t - 1}\frac{1}{\Gamma(\nu/2)}   \exp\p{-u}\,du\\ 
&= \p{\frac{2}{\nu}}^{c -1+\complexi t}\frac{\Gamma(c + \nu/2 -1+\complexi t)}{\Gamma(\nu/2)}.
\end{aligned}
$$
Write $r(t) \coloneqq \MelS{p_{\nu+1}}{c}(t)/ \MelS{p_{\nu}}{c}(t)$, which by the preceding calculation has absolute value equal to
$$
\abs{r(t)} = \p{\frac{\nu}{\nu+1}}^{c-1} \frac{ \Gamma(\nu/2)}{\Gamma((\nu+1)/2)} \frac{\abs{\Gamma(c + (\nu+1)/2 -1+\complexi t)}}{\abs{\Gamma(c + \nu/2 -1+\complexi t)}}.
$$
Hence, using Lemma~\ref{lemm:gamma_bounds}, we see that there exist positive constants $k=k(\nu, c)$ and $K = K(\nu, c)$ that depend only on $\nu$ and $c$ such that:
\begin{equation}
\label{eq:mellin_transform_ratio_bound}
\begin{aligned}
    \abs{r(t)} \leq K(\nu, c) \abs{T}^{1/2}\; \text{ for all } \abs{t} \leq \abs{T},\text{ and all}\, \abs{T} \geq k(\nu, c).
\end{aligned}
\end{equation}
Let us also compute the Mellin transform of $f_G(\cdot; \nu)$:
$$\MelS{f_{G}(\cdot;\nu)}{c}(t) =  \EE[G]{(S^2)^{c-1+\complexi t}} = \EE[G]{\p{\sigma^2}^{c-1+\complexi t} X_{\nu}^{c-1+\complexi t} } = \EE[G]{\p{\sigma^2}^{c-1+\complexi t}} \MelS{p_{\nu}}{c}(t).$$
This computation, \eqref{eq:sigma_assumption_moment}, and Lemma~\ref{lemm:gamma_bounds} furnish the existence of positive constants  $\ell=\ell(\nu, c)$ and $L = L(\nu, c, M)$ such that:
\begin{equation}
\label{eq:mellin_upper_bound}
\abs{ \MelS{f_G(\cdot;\nu+1)}{c}(t)} \leq  L(\nu, c, M) \abs{t}^{c + \nu/2 -1}\exp(-\pi \abs{t}/2) \text{ for all } \abs{t} \geq \ell(\nu, c).
\end{equation}
Before the main argument, we still need the following result that relates the Mellin transform of $f_G(\cdot; \nu)$ and $f_G(\cdot; \nu+1)$ via the ratio $r(\cdot)$
defined above.
$$
\begin{aligned}
\MelS{f_{G}(\cdot;\nu+1)}{c}(t) &= \EE[G]{\p{\sigma^2}^{c-1+\complexi t}}\MelS{p_{\nu+1}}{c}(t)  \\ 
&=\EE[G]{\p{\sigma^2}^{c-1+\complexi t}} \MelS{p_{\nu}}{c}(t) r(t) \\ 
&= \MelS{f_{G}(\cdot;\nu)}{c}(t)\cdot r(t).
\end{aligned}
$$
We are ready to proceed with our main argument. In $(*)$ below 
we apply the Plancherel isometry for the 
Mellin transform, see e.g., Lemma 2.3. in~\citet{butzer1999selfcontained}.
$$
\begin{aligned}
&\int_0^{\infty} x^{2c-1}\p{f_G(x; \nu+1) - f_{G'}(x; \nu+1)}^2\,dx \\ 
\stackrel{(*)}{=}\;\;&\frac{1}{2\pi} \int_{\RR} \abs{ \MelS{f_G(\cdot; \nu+1)}{c}(t) - \MelS{f_{G'}(\cdot; \nu+1)}{c}(t)}^2\,dt \\ 
=\;\;&\frac{1}{2\pi} \int_{\RR} \abs{r(t)}^2 \abs{ \MelS{f_G(\cdot; \nu)}{c}(t) - \MelS{f_{G'}(\cdot; \nu)}{c}(t)}^2\,dt \\
=\;\;&\frac{1}{2\pi} \p{ \int_{\abs{t} \geq T} \abs{r(t)}^2 \abs{ \MelS{f_G(\cdot; \nu)}{c}(t) - \MelS{f_{G'}(\cdot; \nu)}{c}(t)}^2\,dt\,+\, \int_{\abs{t} < T} \dotsc \,dt}\\
=\;\;& \text{I} + \text{II},
\end{aligned}
$$
where $\text{I}$, resp. $\text{II}$ refer to the two integrals and we will pick $T \geq \ell(\nu, c)\lor k(\nu, c) \lor (2c + \nu)$ later. 
We analyze the two integrals in turn. By~\eqref{eq:mellin_upper_bound},
$$
\begin{aligned}
\text{I} &= \frac{1}{2\pi} \int_{\abs{t} \geq T} \abs{ \MelS{f_G(\cdot; \nu+1)}{c}(t) - \MelS{f_{G'}(\cdot; \nu+1)}{c}(t)}^2\,dt\\
&\leq 2 L(\nu,c, M) \int_{\abs{t} \geq T}  \abs{t}^{2c + \nu-2}\exp(-\pi \abs{t})\,dt  \\ 
&\stackrel{(**)}{\lesssim_{\nu, c, M}}  T^{2c + \nu-2 }\exp(-\pi T).
\end{aligned}
$$
For $(**)$ we used Lemma~\ref{lemm:incomplete_gamma_tail} also noting that $2c + \nu -1 \geq 2 \geq 1$ by~\eqref{eq:c_inequality}.
For the second integral, by~\eqref{eq:mellin_transform_ratio_bound},
$$
\begin{aligned}
\text{II} &= \frac{1}{2\pi} \int_{\abs{t} < T} \abs{r(t)}^2 \abs{ \MelS{f_G(\cdot; \nu)}{c}(t) - \MelS{f_{G'}(\cdot; \nu)}{c}(t)}^2\,dt\\
&\leq \frac{1}{2\pi} K(\nu, c)^2 T \int_{\RR} \abs{ \MelS{f_G(\cdot; \nu)}{c}(t) - \MelS{f_{G'}(\cdot; \nu)}{c}(t)}^2\,dt \\ 
&= K(\nu, c)^2 \cdot T \cdot \int_0^{\infty} x^{2c-1}\p{f_G(x; \nu) - f_{G'}(x; \nu)}^2\,dx.
\end{aligned}
$$
In the second line we used~\eqref{eq:mellin_transform_ratio_bound} and for the last step we used the Plancherel isometry for the Mellin transform.
Now let us call:
$$\rho(\nu, c) \coloneqq \int_0^{\infty} x^{2c-1}\p{f_G(x; \nu) - f_{G'}(x; \nu)}^2\,dx.$$
We have proved that:
$$ 
\rho(\nu+1, c) \lesssim_{\nu, c, M} T^{2c + \nu-2}\exp(-\pi T) + T\rho(\nu,c).
$$
The above bound holds for any $c$ that satisfies~\eqref{eq:c_inequality}. We now specialize to the case discussed in the lemma,
namely $c=1/2$. 
Then, the above general inequality reduces to:
$$ 
\rho(\nu+1, 1/2) \lesssim_{\nu, M} T^{\nu-1}\exp(-\pi T) + T\rho(\nu, 1/2).
$$
Using e.g.,~\eqref{eq:basic_gamma_inequality}, we get $T^{\nu-1}\exp(-\pi T) \lesssim_{\nu} \exp(-T)$, so that:
$$ 
\rho(\nu+1, 1/2) \lesssim_{\nu, M} \exp(-T) + T\rho(\nu, 1/2).
$$
Let us pick
$$T = \abs{\log(\rho(\nu, 1/2))} \lor \ell(\nu, 1/2)\lor k(\nu, 1/2) \lor (1 + \nu).$$
Notice that
$$ T \; \lesssim_{\nu} \; 1 + \abs{ \log(\rho(\nu, 1/2))},$$
and also that
$$  \exp(- T) \leq \exp(\log(\rho(\nu, 1/2))) = \rho(\nu, 1/2).$$
Hence,
$$ \rho(\nu+1, 1/2) \lesssim_{\nu, M} (1 + \abs{ \log(\rho(\nu, 1/2))}) \rho(\nu, 1/2).$$
The above is the claim made in the lemma, since by definition:
$$\rho(\nu, 1/2) = \Norm{f_G(\cdot;\nu) -  f_{G'}(\cdot;\nu)}^2_{L^2},\;\; \rho(\nu+1, 1/2) = \Norm{f_G(\cdot;\nu+1) -  f_{G'}(\cdot;\nu+1)}^2_{L^2}.$$
\end{proof}

\subsection{Proof of Theorems~\ref{theo:pvalue_quality} and~\ref{manualtheorem:pvalue_quality}}
\label{subsec:proof_theo_pvalue_quality}

\begin{proof}[Proof of Theorems~\ref{theo:pvalue_quality} and~\ref{manualtheorem:pvalue_quality}]
We start by the following observation. For any distribution $\widetilde{G}$ such that
$\sigma_i^2 \geq \ubar{L}$, it holds that 
$P_{\widetilde{G}}(z, s^2) \geq 2\bar{\Phi}(\abs{z}/\ubar{L}^{1/2})$, where $\bar{\Phi} := 1 - \Phi$ is the survival function of the standard normal distribution. Let $z_{1-\zeta/2}$ be the $(1-\zeta/2)$-quantile of the standard
normal distribution. Then for $\abs{z} \leq \ubar{z} := \ubar{L}^{1/2} z_{1-\zeta/2}$, it holds that:
$$P_{\widetilde{G}}(z, s^2) \geq 2\bar{\Phi}(z_{1-\zeta/2}) = \zeta.$$ 
Henceforth (as mentioned in Supplement~\ref{sec:general_proof_remarks}) 
we write $P_*(z, s^2)$ to denote either $P_G(z, s^2)$ (hierarchical setting) or $P_{\boldsigma}(z, s^2) = P_{G(\boldsigma)}(z, s^2)$ (compound setting).

By the above considerations we conclude that $P_*(z, s^2)\land \zeta - P_{\hG}(Z_i, S_i^2)\land \zeta =\zeta-\zeta= 0$ for 
$\abs{z} \leq \ubar{L}^{1/2} z_{1-\zeta/2}$. On the other hand, for all $i$, it holds that
$$\abs{ P_*(Z_i, S_i^2)\land \zeta - P_{\hG}(Z_i, S_i^2)\land \zeta} \leq \abs{P_*(Z_i, S_i^2) - P_{\hG}(Z_i, S_i^2)}.$$
This holds since the projection of a number in $[0,\,1]$ to the set $[0,\,\zeta]$ is a contraction.
Combining the above two observations and letting $\tilde{Z}_i \coloneqq  \text{sign}(Z_i) \cdot \p{\abs{Z_i} \lor \ubar{z}}$, 
we find the following: 
\begin{equation}
    \label{eq:truncated_to_sup}
\begin{aligned}
\abs{ P_*(Z_i, S_i^2)\land \zeta - P_{\hG}(Z_i, S_i^2)\land \zeta} &\leq \abs{P_*(\tilde{Z}_i, S_i^2) - P_{\hG}(\tilde{Z}_i, S_i^2)} \\ 
&\leq \sup_{z: \abs{z} \geq \ubar{z}}\abs{P_*(z, S_i^2) - P_{\hG}(z, S_i^2)}.
\end{aligned}
\end{equation}
We now need one argument that is slightly different for Theorem~\ref{theo:pvalue_quality} versus Theorem~\ref{manualtheorem:pvalue_quality}. 
To bound the LHS in the inequality stated in Theorem~\ref{manualtheorem:pvalue_quality} we build on the inequality
$$ 
\frac{1}{n}\sum_{i=1}^n \EE[\boldsigma]{\abs{ P_*(Z_i, S_i^2)\land \zeta - P_{\hG}(Z_i, S_i^2)\land \zeta}} \leq \frac{1}{n} \sum_{i=1}^n \EE[\boldsigma]{\sup_{z: \abs{z} \geq \ubar{z}}\abs{P_*(z, S_i^2) - P_{\hG}(z, S_i^2)}},
$$
which follows by averaging~\eqref{eq:truncated_to_sup} over $i=1,\dotsc,n$. To bound the LHS of the inequality 
in the hierarchical version of Theorem~\ref{theo:pvalue_quality}, let us first fix $j \in \cb{1, \dotsc, n}$. We have that:
$$ 
\begin{aligned}
\EE[G]{\abs{ P_*(Z_j, S_j^2)\land \zeta - P_{\hG}(Z_j, S_j^2)\land \zeta}} &\leq \EE[G]{\sup_{z: \abs{z} \geq \ubar{z}}\abs{P_*(z, S_j^2) - P_{\hG}(z, S_j^2)}} \\ 
&=  \frac{1}{n} \sum_{i=1}^n \EE[G]{\sup_{z: \abs{z} \geq \ubar{z}}\abs{P_*(z, S_i^2) - P_{\hG}(z, S_i^2)}}.
\end{aligned}
$$
For the last equality we used the fact that under~\eqref{eq:full_hierarchical}, $(S_1^2, \dotsc, S_n^2)$ are all exchangeable. Thus:
$$\max_{i=1,\dotsc,n} \EE[G]{\abs{ P_*(Z_i, S_i^2)\land \zeta - P_{\hG}(Z_i, S_i^2)\land \zeta}} \leq  \frac{1}{n} \sum_{i=1}^n \EE[G]{\sup_{z: \abs{z} \geq \ubar{z}}\abs{P_*(z, S_i^2) - P_{\hG}(z, S_i^2)}}.$$
In either case, the preceding arguments establish that it suffices to bound
$$\frac{1}{n} \sum_{i=1}^n \EE{\sup_{z: \abs{z} \geq \ubar{z}}\abs{P_*(z, S_i^2) - P_{\hG}(z, S_i^2)}},$$
and we devote the rest of this proof to this task. Let $A$ be the event:
\begin{equation}
\label{eq:event_hellinger_distance_close_to_0}
A \coloneqq  \cb{\Dhel(f_{\hG}, f_*) < C \log n /\sqrt{n}},
\end{equation}
with the constant $C$ in the statement of Theorem~\ref{theo:hellinger_dist_convergence}.
We have that:
$$  \EE{\sup_{z: \abs{z} \geq \ubar{z}}\abs{P_*(z, S_i^2) - P_{\hG}(z, S_i^2)}} \leq \PP{A^c} + \EE{\sup_{z: \abs{z} \geq \ubar{z}}\abs{P_*(z, S_i^2) - P_{\hG}(z, S_i^2)}\, \ind(A)}
$$ 
Above we used the fact that $P_*(z, S_i^2),\, P_{\hG}(z, S_i^2) \in [0,\,1]$.

Next, for any distribution $\widetilde{G}$ supported on $(0, \infty)$, and any $z \in \RR$, $s^2 > 0$, let us write:
$$
\begin{aligned}
    & N_{\widetilde{G}}(z, s^2)  &&\coloneqq P_{\widetilde{G}}(z, s^2) \cdot f_{\widetilde{G}}(s^2), \\ 
    & N_i(z, \widetilde{G}) &&\coloneqq N_{\widetilde{G}}(z, S_i^2),\, \text{ and }\\
    & D_i(\widetilde{G}) &&\coloneqq f_{\widetilde{G}}(S_i^2).
\end{aligned}
$$
By these definitions it holds that $P_{\widetilde{G}}(z, S_i^2) = N_i(z, \widetilde{G})/D_i(\widetilde{G})$.
Throughout the rest of the proof (and as announced in Supplement~\ref{sec:general_proof_remarks}), we also write $G_*$ for $G$ in the hierarchical setting or for $G(\boldsigma)$ in the compound setting. We also let 
$\hG_* = (\hG + G_*)/2$.
Then:
$$
\begin{aligned}
&\abs{P_*(z, S_i^2) - P_{\hG}(z, S_i^2)} \\ 
&\;\;\;\; = \abs{\frac{N_i(z, G_*)}{D_i(G_*)} - \frac{N_i(z, \hG)}{D_i(\hG)}} \\ 
&\;\;\;\;= \abs{\frac{N_i(z, G_*)}{D_i(G_*)} - \frac{N_i(z, G_*)}{D_i(\hG_*)}  +  \frac{N_i(z, G_*)}{D_i(\hG_*)}-  \frac{N_i(z, \hG)}{D_i(\hG_*)}+  \frac{N_i(z, \hG)}{D_i(\hG_*)} -  \frac{N_i(z, \hG)}{D_i(\hG)}} \\ 
&\;\;\;\;\leq \frac{N_i(z, G_*)}{D_i(G_*)} \frac{\abs{D_i(G_*) - D_i(\hG_*)}}{ D_i(\hG_*)} + \frac{\abs{N_i(z, G_*) - N_i(z, \hG)}}{D_i(\hG_*)} + \frac{N_i(z, \hG)}{D_i(\hG)} \frac{\abs{D_i(\hG_*) - D_i(\hG)}}{ D_i(\hG_*)} \\ 
&\;\;\;\;\leq \frac{\abs{N_i(z, G_*) - N_i(z, \hG)}}{D_i(\hG_*)} + \frac{\abs{D_i(G_*) - D_i(\hG)}}{ D_i(\hG_*)}.
\end{aligned}
$$
In the last step we used two facts: first, it holds that $N_i(z_i, \tG)/D_i(\tG) \in [0,\,1]$ for all $\tG$ (since they correspond to conditional p-values), and 
second, the map $\tG \mapsto D_i(\tG)$ is linear, which implies that:
$$ D_i(G_*) - D_i(\hG_*) = (D_i(G_*) - D_i(\hG))/2,\;\;   D_i(\hG_*) - D_i(\hG) = (D_i(G_*) - D_i(\hG))/2.$$
Combining all of the above results, we get:
$$
\begin{aligned}
& \frac{1}{n} \sum_{i=1}^n \EE{\sup_{z: \abs{z} \geq \ubar{z}}\abs{P_*(z, S_i^2) - P_{\hG}(z, S_i^2)}} \\ 
&\;\;\;\; \leq  \PP{A^c} \, + \, \frac{1}{n}\sum_{i=1}^{n} \EE{\sup_{z: \abs{z} \geq \ubar{z}}\abs{\frac{N_i(z, G_*) - N_i(z,\hG)}{D_i(\hG_*)}
} \ind(A)} \, + \frac{1}{n}\sum_{i=1}^{n} \EE{\abs{\frac{D_i(G_*) - D_i(\hG)}{D_i(\hG_*)}} \ind(A)} \\ 
&\;\;\;\; = \PP{A^c} \, + \, \text{I} \, + \, \text{II}.
\end{aligned}
$$
By Theorem~\ref{theo:hellinger_dist_convergence}, $\PP[G]{A^c}  \leq \exp(-c (\log n)^2)$. 
Furthermore, in two upcoming Lemmata~\ref{lemm:numerator_control} and~\ref{lemm:denominator_control} we bound terms $\text{I}$
and $\text{II}$ above and so conclude. 
\end{proof}

\begin{lemm}
    \label{lemm:numerator_control}
It holds that:
$$\frac{1}{n} \sum_{i=1}^n \EE{\sup_{z: \abs{z} \geq \ubar{z}}\abs{\frac{N_i(z, G_*) - N_i(z, \hG)}{D_i(\hG_*)}} \ind(A)}  \lesssim_{\ubar{L}, \bar{U}, \nu, \zeta}  \frac{(\log n)^{5/2}}{\sqrt{n}}.$$
\end{lemm}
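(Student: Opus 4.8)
The plan is to exploit the integral representation of Proposition~\ref{prop:pvalue}, which shows that the numerator $N_{\tG}(z,s^2) = P_{\tG}(z,s^2)\,f_{\tG}(s^2)$ depends on a prior $\tG$ \emph{only} through the shifted marginal $f_{\tG}(\cdot;\nu+1)$, because the factor $f_{\tG}(s^2;\nu)$ appearing in the denominator of $P_{\tG}$ cancels. Writing $h \coloneqq \abs{f_{G_*}(\cdot;\nu+1) - f_{\hG}(\cdot;\nu+1)}$, this gives
$$\abs{N_{G_*}(z,s^2) - N_{\hG}(z,s^2)} \leq C(\nu)(s^2)^{\nu/2-1}\int_{t^2 \geq \frac{\nu s^2 + z^2}{\nu+1}} \frac{(t^2)^{-(\nu-1)/2}}{\sqrt{(\nu+1)t^2 - \nu s^2}}\, h(t^2)\,d(t^2).$$
Taking the supremum over $\abs{z} \geq \ubar{z} \coloneqq \ubar{L}^{1/2}z_{1-\zeta/2}$ only enlarges the integration domain to $t^2 \geq a_0 \coloneqq (\nu s^2 + \ubar{z}^2)/(\nu+1)$, on which $(\nu+1)t^2 - \nu s^2 \geq \ubar{z}^2 > 0$, so the kernel has no singularity. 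Cauchy--Schwarz then separates the purely deterministic factor $\Norm{K(s^2,\cdot)}_{L^2}^2 \coloneqq \int_{t^2\geq a_0}\frac{(t^2)^{-(\nu-1)}}{(\nu+1)t^2-\nu s^2}\,d(t^2)$ from $\Norm{h}_{L^2}$.

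Next I would control $\Norm{h}_{L^2}$ on the event $A$ in~\eqref{eq:event_hellinger_distance_close_to_0}. Boundedness of the $\chi^2$-densities (Lemma~\ref{lemm:chi_square_likelihood}A) turns the Hellinger bound into $\Norm{f_{\hG}(\cdot;\nu)-f_*(\cdot;\nu)}_{L^2}^2 \lesssim \Dhel^2(f_{\hG},f_*) \lesssim (\log n)^2/n$ on $A$; Lemma~\ref{lemm:closeness_of_marginals_plus_1}, applicable since both priors are supported in $[\ubar{L},\bar{U}]$ so that $\EE{\sigma^{-1}}\leq \ubar{L}^{-1/2}$, transfers this to the shifted marginals, yielding $\Norm{h}_{L^2}^2 \leq C\psi((\log n)^2/n) \lesssim (\log n)^3/n$. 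Hence on $A$ the scalar $\Norm{h}_{L^2}$ is \emph{deterministically} at most $\varepsilon_n \coloneqq C(\log n)^{3/2}/\sqrt{n}$.

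The crux is then a split on $\cb{S_i^2 \leq B_n}$ and $\cb{S_i^2 > B_n}$ with $B_n \coloneqq 4\bar{U}\log n$, which is what lets me sidestep the double-use-of-data difficulty. On the tail I use only the crude pointwise bound $\abs{N_{G_*}(z,S_i^2)-N_{\hG}(z,S_i^2)}/f_{\hG_*}(S_i^2) \leq 2$, valid because $N_{\tG}\leq f_{\tG}$ and $f_{\hG_*}=\tfrac12(f_{\hG}+f_*)$, together with Lemma~\ref{lemm:variance_tail_prob}, which gives $\PP{S_i^2 > B_n}\leq n^{-1}$; the tail contribution is thus $\lesssim n^{-1}$. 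On the bulk I bound $1/f_{\hG_*}\leq 2/f_*$ and factor out the scalar $\Norm{h}_{L^2}\ind(A)\leq\varepsilon_n$, leaving the deterministic quantity $\Phi(s^2) \coloneqq (s^2)^{\nu/2-1}\Norm{K(s^2,\cdot)}_{L^2}/f_*(s^2)$. Since $\Phi$ no longer involves $\hG$, averaging over $i$ in either the hierarchical or the compound setting cancels $f_*$ legitimately and reduces $\tfrac1n\sum_i\EE{\Phi(S_i^2)\ind(S_i^2\leq B_n)}$ to $\int_0^{B_n}(s^2)^{\nu/2-1}\Norm{K(s^2,\cdot)}_{L^2}\,d(s^2)$.

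Finally I would estimate the kernel integral via the substitution $w=(\nu+1)t^2-\nu s^2$ and a split at $w=\nu s^2$, obtaining $\Norm{K(s^2,\cdot)}_{L^2}^2\lesssim_{\nu,\ubar{z}}(s^2)^{-(\nu-1)}(1+\abs{\log s^2})$; the integrand is then $\lesssim (s^2)^{-1/2}(1+\abs{\log s^2})^{1/2}$, so $\int_0^{B_n}(\cdots)\lesssim \sqrt{B_n}\,(1+\log B_n)^{1/2}\lesssim \log n$. Combining, the bulk term is $\lesssim \varepsilon_n\log n = C(\log n)^{5/2}/\sqrt{n}$, which dominates the $n^{-1}$ tail and gives the claim. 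The main obstacle is exactly this balancing: dividing by $f_*$ over the whole half-line diverges at infinity, so truncation at $B_n=\Theta(\log n)$ is essential, and one must \emph{not} carry the refined Cauchy--Schwarz bound into the tail, since its prefactor $(s^2)^{\nu/2-1}/f_*(s^2)\lesssim\exp(\nu s^2/(2L))$ would produce a polynomial-in-$n$ factor there; the trivial bound $\leq 2$ must be used instead.
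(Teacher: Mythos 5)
Your proposal is correct and takes essentially the same approach as the paper's proof: the same integral representation from Proposition~\ref{prop:pvalue} followed by Cauchy--Schwarz, the same Hellinger-to-$L^2$ conversion combined with Lemma~\ref{lemm:closeness_of_marginals_plus_1} on the event $A$, and the same tail/bulk split at $B_n = 4\bar{U}\log n$ with a trivial bound on the tail and the cancellation of $f_*$ against the sampling density on the bulk (in both the hierarchical and compound settings). The only difference is cosmetic: the paper uniformizes the kernel factor $(s^2)^{\nu/2-1}\Norm{K(s^2,\cdot)}_{L^2}$ over $s^2$ (obtaining a constant numerator bound $\zeta_n \lesssim (\log n)^{3/2}/\sqrt{n}$) and then pays a factor $B_n$ via $\EE{\ind(S_i^2 \leq B_n)/f_*(S_i^2)} = B_n$, whereas you retain the $s^2$-dependence and integrate it directly, paying only $\sqrt{B_n \log B_n}$ --- a marginally sharper bookkeeping that lands at the same $(\log n)^{5/2}/\sqrt{n}$ rate.
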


\begin{proof}
Below we will prove that there exists non-random $\zeta_n > 0$ such that for all $i$:
$$\sup_{z: \abs{z} \geq \ubar{z}}\abs{N_i(z, G_*) - N_i(z,\hG)} \ind(A) \leq \zeta_n.$$
Let $B_n > 0$ be another non-random sequence that we will pick later. Also
recall that $D_i(\hG_*) = D_i(G_*)/2 + D_i(\hG)/2 \geq D_i(G_*)/2 = f_*(S_i^2)/2$. We get:
$$ \abs{\frac{N_i(z,G_*) - N_i(z,\hG)}{D_i(\hG_*)}} \ind(A) \leq 4 \ind(S_i^2 > B_n) \,+\, 2\zeta_n \frac{1}{f_*(S_i^2)}\ind(S_i^2 \leq B_n),$$ 
where we also used the fact that $N_i(z,\tG)/D_i(\tG) \in [0,\,1]$ for any distribution $\tG$. 

Taking $B_n = 4\bar{U}\log n$ for $n \geq 3$, we get via Lemma~\ref{lemm:variance_tail_prob} that $\PP{S_i^2 > B_n} \leq 1/n$. 
Hence:
$$ 
\frac{1}{n}\sum_{i=1}^n \EE{\sup_{z: \abs{z} \geq \ubar{z}} \abs{\frac{N_i(z,G_*) - N_i(z,\hG)}{D_i(\hG_*)}} \ind(A)}  \leq \frac{4}{n} + 2\zeta_n \frac{1}{n}\sum_{i=1}^n \EE{ \frac{\ind(S_i^2 \leq B_n) }{f_*(S_i^2)}}  \stackrel{(*)}{\leq} \frac{4}{n} + 2 \zeta_n  B_n. 
$$
$(*)$ requires a slightly different argument in the hierarchical vs. compound setting. In the hierarchical setting it suffices to note that:
\begin{equation} 
\label{eq:integral_of_1_div_f}
\EE[G]{ \frac{\ind(S_i^2 \leq B_n)}{f_G(S_i^2)}} = \int_{0}^{B_n} \frac{1}{f_G(t)}f_G(t)\,dt = B_n.
\end{equation}
In the compound setting:
$$
\begin{aligned}
\frac{1}{n} \sum_{i=1}^n \EE[\sigma_i]{ \frac{\ind(S_i^2 \leq B_n)}{f_{\boldsigma}(S_i^2)}} &=  \frac{1}{n} \sum_{i=1}^n  \int_{0}^{B_n} \frac{1}{f_{\boldsigma}(t)}p(t \cond \sigma_i^2)  \,dt \\ 
&=\int_{0}^{B_n} \frac{1}{f_{\boldsigma}(t)}\p{ \frac{1}{n} \sum_{i=1}^n p(t \cond \sigma_i^2) }\, dt \\ 
&=\int_{0}^{B_n} \frac{1}{f_{\boldsigma}(t)} f_{\boldsigma}(t) \, dt = B_n.
\end{aligned}
$$
It remains to determine $\zeta_n$. Recall that we had defined (in the proof of Theorem~\ref{theo:pvalue_quality}) 
$ N_i(z, \widetilde{G}) = N_{\widetilde{G}}(z, S_i^2)$ for any distribution $\widetilde{G}$. 
By Proposition~\ref{prop:pvalue}, for any $z \geq \ubar{z}$ (we take $z$ to be $>0$ without loss of generality) and $s^2>0$, it holds that:
$$
\begin{aligned}
&\abs{N_{G_*}(z, s^2) - N_{\hG}(z, s^2)}/ \cb{C(\nu)\p{s^2}^{\nu/2-1} } \\ 
&\;\;\;\; =  \abs{\int_0^{\infty} \ind\p{t^2 \geq \frac{\nu s^2 + z^2}{\nu+1}} \frac{ (t^2)^{-(\nu-1)/2}}{\sqrt{ (\nu+1)t^2 - \nu s^2}}\; \p{ f_{G_*}(t^2; \nu+1) - f_{\hG}(t^2; \nu+1)}\,d(t^2)} \\ 
&\;\;\;\; \leq \cb{\int_{ \frac{\nu s^2 + z^2}{\nu+1}}^{\infty}  \frac{ (t^2)^{-(\nu-1)}}{(\nu+1)t^2 - \nu s^2}d(t^2)}^{1/2} \cb{ \int_0^{\infty}\p{f_{G_*}(t^2; \nu+1) - f_{\hG}(t^2; \nu+1)}^2 \,d(t^2)}^{1/2}.
\end{aligned}
$$
We bound the two terms in the product in turn. We bound the first term separately for $\nu \geq 3$ and $\nu=2$. For $\nu\geq 3$:
$$
\begin{aligned}
\int_{ \frac{\nu s^2 + z^2}{\nu+1}}^{\infty}  \frac{ (t^2)^{-(\nu-1)}}{(\nu+1)t^2 - \nu s^2}d(t^2)  \leq \frac{1}{(\nu-2) z^2} \sqb{ -\p{t^2}^{-\nu+2}}^{\infty}_{\frac{\nu s^2 + z^2}{\nu+1}}  
= \frac{(\nu+1)^{\nu-2}}{(\nu-2)} \frac{1}{z^2 \p{ \nu s^2 + z^2}^{\nu-2}}.
\end{aligned}
$$
For $\nu=2$:
$$
\begin{aligned}
\int_{ \frac{2 s^2 + z^2}{3}}^{\infty}  \frac{ (t^2)^{-1}}{3t^2 - 2s^2}d(t^2) &= \frac{1}{2s^2}\sqb{ \log(3t^2 - 2s^2) - \log(t^2) }^{\infty}_{\frac{2 s^2 + z^2}{3}}  = \frac{1}{2s^2} \log\p{\frac{2s^2}{z^2}+1} \leq \frac{1}{z^2}.
\end{aligned}
$$
Hence for any $\nu \geq 2$:
$$
\int_{ \frac{\nu s^2 + z^2}{\nu+1}}^{\infty}  \frac{ (t^2)^{-(\nu-1)}}{(\nu+1)t^2 - \nu s^2}d(t^2)  \lesssim_{ \nu}  \frac{1}{z^2 \p{ \nu s^2 + z^2}^{\nu-2}}.
$$
For the second term, it will be helpful to first note the following:
$$
\begin{aligned}
&\int_0^{\infty}\p{f_{G_*}(u; \nu) - f_{\hG}(u;\nu)}^2 \,d(u) \\
&\;\;\;\; = \int_0^{\infty}\p{\sqrt{f_{G_*}(u;\nu)} + \sqrt{f_{\hG}(u;\nu)}}^2 \p{\sqrt{f_{G_*}(u;\nu)} - \sqrt{f_{\hG}(u;\nu)}}^2 \,d(u)  \\ 
& \;\;\;\; \lesssim_{\ubar{L}, \nu} \frac{1}{2}\int_0^{\infty} \p{\sqrt{f_{G_*}(u;\nu)} - \sqrt{f_{\hG}(u;\nu)}}^2 \,d(u) \\ 
& \;\;\;\; =  \Dhel^2(f_{G_*}(\cdot;\nu), f_{\hG}(\cdot;\nu)).
\end{aligned}
$$
For the step with the inequality $\lesssim_{\ubar{L}, \nu}$ we used Lemma~\ref{lemm:chi_square_likelihood}A.
Next we will use Lemma~\ref{lemm:closeness_of_marginals_plus_1}, which in combination with the above display yields that
$$
\begin{aligned}
\int_0^{\infty}\p{f_{G_*}(t^2; \nu+1) - f_{\hG}(t^2; \nu+1)}^2 \,d(t^2) \lesssim_{\ubar{L}, \bar{U}, \nu} \abs{\log(\Dhel^2(f_{G_*}, f_{\hG}))}\Dhel^2(f_{G_*}, f_{\hG}),
\end{aligned}
$$
where $f_{G_*}(\cdot) = f_G(\cdot; \nu)$ and $f_{\hG}(\cdot) = f_{\hG}(\cdot; \nu)$. On the event $A$ (see~\eqref{eq:event_hellinger_distance_close_to_0}), the RHS above is $\lesssim_{\ubar{L}, \bar{U}, \nu} (\log n)^3/n$. Combining the above results, we find that on the event $A$
and for any $z$ such that $\abs{z} \geq \ubar{z} > 0$:
$$\abs{N_{G_*}(z, s^2) - N_{\hG}(z, s^2)} \lesssim_{\ubar{L}, \bar{U}, \nu} \frac{\p{s^2}^{\nu/2-1}}{z \p{ \nu s^2 + z^2}^{\nu/2-1}} \cdot \frac{(\log n)^{3/2}}{\sqrt{n}}  \lesssim_{\ubar{z}, \ubar{L}, \bar{U}, \nu} \frac{(\log n)^{3/2}}{\sqrt{n}}.$$
Thus we may take:
$$ \zeta_n  \lesssim_{\ubar{L}, \bar{U}, \nu}  \frac{(\log n)^{3/2}}{\sqrt{n}}.$$
\end{proof}

\begin{lemm}
    \label{lemm:denominator_control}
$$ \frac{1}{n}\sum_{i=1}^{n} \EE{\abs{\frac{D_i(G_*) - D_i(\hG)}{D_i(\hG_*)}}\ind(A)} \lesssim_{\nu, \ubar{L}, \bar{U}} \frac{\log n}{\sqrt{n}}.$$
\end{lemm}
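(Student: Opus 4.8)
The plan is to follow the same template as the proof of Lemma~\ref{lemm:numerator_control}: split according to whether $S_i^2$ is large, dispatch the tail with the crude observation that the ratio is everywhere at most $2$ together with Lemma~\ref{lemm:variance_tail_prob}, and treat the bulk through an integral identity against $1/f_*$. The one genuinely new feature is that, unlike the numerator $N_i(z,\cdot)$ which admits the integral representation of Proposition~\ref{prop:pvalue} and can therefore be controlled in sup-norm at the fast rate, the raw density difference $|f_*-f_{\hG}|$ cannot be bounded deterministically in sup-norm better than $n^{-1/3}$ on the event $A$ from~\eqref{eq:event_hellinger_distance_close_to_0} (an $L^2$-to-$L^\infty$ interpolation with a bounded derivative gives only that). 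I would therefore \emph{not} aim for a pointwise bound on the numerator but instead reduce everything to the $L^1$ distance $\int_0^\infty|f_*-f_{\hG}|\,du$, which is controlled at the fast rate.

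Concretely, since $f_* = 2f_{\hG_*}-f_{\hG}\le 2f_{\hG_*}$, we have $D_i(G_*)/D_i(\hG_*)\le 2$ pointwise, whence
\[
\frac{|D_i(G_*)-D_i(\hG)|}{D_i(\hG_*)}\;=\;\frac{|f_*(S_i^2)-f_{\hG}(S_i^2)|}{f_*(S_i^2)}\cdot\frac{f_*(S_i^2)}{f_{\hG_*}(S_i^2)}\;\le\;2\,\frac{|f_*(S_i^2)-f_{\hG}(S_i^2)|}{f_*(S_i^2)}.
\]
With $B_n = 4\bar{U}\log n$, the contribution of $\{S_i^2>B_n\}$ is at most $\tfrac1n\sum_i 2\,\PP{S_i^2>B_n}\le 2/n$ by Lemma~\ref{lemm:variance_tail_prob} (taking $\varepsilon = 1/n$), using only that the ratio never exceeds $2$. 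On $\{S_i^2\le B_n\}$ I would use the weight $1/f_*$ exactly as in Lemma~\ref{lemm:numerator_control}: integrating the displayed bound against the law of $S_i^2$ turns $\tfrac{1}{f_*}$ into Lebesgue measure and leaves $\int_0^{B_n}|f_*-f_{\hG}|\,du$. The key quantitative input is then the deterministic bound, valid on the event $A$,
\[
\int_0^{\infty}|f_*(u)-f_{\hG}(u)|\,du\;\le\;2\sqrt{2}\,\Dhel(f_*,f_{\hG})\;\le\;2\sqrt{2}\,C\,\frac{\log n}{\sqrt{n}},
\]
which is the only place the event $A$ (hence Theorem~\ref{theo:hellinger_dist_convergence}) enters; here I use the standard inequality relating total variation to Hellinger distance together with the boundedness of the densities from Lemma~\ref{lemm:chi_square_likelihood}A.

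The main obstacle is that the integral identity used in the previous paragraph is not literally valid: the integrand $\tfrac{|f_*-f_{\hG}|}{f_*}(S_i^2)$ depends on $\hG$, which is itself a function of $S_i^2$ (and of all the other sample variances), so the data average $\tfrac1n\sum_i\EE{\cdots}$ does not equal $\EE{\int_0^{B_n}|f_*-f_{\hG}|\,du}$ — the gap is exactly the in-sample/out-of-sample discrepancy, and replacing $\hG$ by a supremum over a Hellinger ball recovers only the slow $n^{-1/3}$ rate. To retain the fast rate I would decouple via a leave-one-out device: let $\hG^{(i)}$ be the NPMLE computed from $\{S_j^2:j\ne i\}$, which is independent of $S_i^2$. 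For $\hG^{(i)}$ the integral identity \emph{is} exact, and $\Dhel(f_{\hG^{(i)}},f_*)\lesssim_{\nu,\ubar{L},\bar{U}} \log n/\sqrt n$ on a high-probability event by Theorem~\ref{theo:hellinger_dist_convergence} applied with $n-1$ samples, so the leave-one-out version of the bulk term is $\lesssim_{\nu,\ubar{L},\bar{U}}\log n/\sqrt n$. What remains, and what I expect to be the technically hardest step, is a stability estimate showing that replacing $\hG$ by $\hG^{(i)}$ perturbs $\tfrac1n\sum_i\EE{\tfrac{|f_*-f_{\hG}|}{f_{\hG_*}}(S_i^2)\ind(A)}$ by at most $O(\log n/\sqrt n)$ — i.e.\ that removing one observation moves the NPMLE marginal density by $O(1/n)$ in the relevant weighted sense; the weight $1/f_{\hG_*}$ must be controlled near the endpoints of $[0,B_n]$, once more using the density bounds of Lemma~\ref{lemm:chi_square_likelihood} and the tail bound of Lemma~\ref{lemm:variance_tail_prob}.
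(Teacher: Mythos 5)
Your reduction steps are fine: the pointwise bound $D_i(G_*)/D_i(\hG_*)\le 2$, the tail treatment via Lemma~\ref{lemm:variance_tail_prob}, and the total-variation-to-Hellinger inequality are all correct, and you have correctly isolated the real difficulty of this lemma, namely that $\hG$ is a function of $S_i^2$ itself, so the integral identity $\EE{\,|f_*-f_{\hG}|(S_i^2)/f_*(S_i^2)\,}=\int|f_*-f_{\hG}|$ is simply false as written. The problem is that your proposed repair does not close the argument: the entire difficulty is shifted onto the ``stability estimate'' that replacing $\hG$ by the leave-one-out NPMLE $\hG^{(i)}$ changes the averaged weighted $L^1$ quantity by $O(\log n/\sqrt n)$, i.e.\ that deleting one observation moves the NPMLE marginal density by $O(1/n)$ in the relevant sense. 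You give no argument for this, it is established nowhere in the paper, and it is not a standard fact one can cite: perturbation bounds for the NPMLE (a solution of an infinite-dimensional, non-strongly-concave maximum likelihood problem) of this $O(1/n)$ strength are a genuinely open-ended technical question, not a routine step. A secondary issue of the same kind: the event $A$ in the statement is defined through the full-data $\hG$, so even the indicator $\ind(A)$ does not decouple from $S_i^2$; this is fixable by working with separate high-probability events for each $\hG^{(i)}$, but it is another place where your outline silently relies on decoupling that has not been justified.

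The paper's proof avoids the stability question entirely by uniformizing over a deterministic net instead of decoupling: it takes a proper $(\|\cdot\|_\infty, 1/n)$-cover $\{f_{G_j}\}_{j\in\mathcal J}$ of the Hellinger ball $\mathcal{F}^-(\ubar{L},\bar{U},\nu)$ around $f_*$ (entropy $\log J\lesssim(\log n)^2$ by Lemma~\ref{lemm:entropy}), replaces $\hG$ on the event $A$ by the nearest net point $\widehat{j}$ at cost $O(\eta B_n)=O(\log n/n)$, and then splits into (i) an empirical-process term, controlled by Hoeffding's inequality plus a union bound over $\mathcal J$ at cost $\sqrt{\log J/n}\lesssim \log n/\sqrt n$, and (ii) a population term for \emph{fixed} $G_j$, where your integral identity is legitimate and yields $\EE{\bigl(\tfrac{f_*-f_{G_j}}{f_*+f_{G_j}}\bigr)^2(S_i^2)}\le 4\Dhel^2(f_*,f_{G_j})\lesssim (\log n)^2/n$. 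Since the net is deterministic, independence is never an issue, and the only price is logarithmic. If you want to salvage your route you would need to prove the NPMLE leave-one-out stability bound from scratch; the covering argument is the standard way to sidestep exactly that.
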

\begin{proof}
Let $\eta = 1/n$.
The starting point of the proof will be similar to the start of the proof of Theorem~\ref{theo:hellinger_dist_convergence} and will
consist of picking a suitable proper $(\eta, \Norm{\cdot}_{\infty})$-cover of the following class:
\begin{equation}
        \label{eq:constrained_class_small_hellinger}
\mathcal{F}^-(\ubar{L}, \bar{U}, \nu) \coloneqq \cb{f_{\tG}(\cdot) = f_{\tG}(\cdot;\nu)\,:\, \tG([\ubar{L},\, \bar{U}])=1,\; \Dhel(f_{\tG}, f_*) < \frac{C \log n}{\sqrt{n}}}.
\end{equation}
The constant $C>0$ is as in Theorem~\ref{theo:hellinger_dist_convergence}. This definition is similar to the one 
in~\eqref{eq:constrained_class_large_hellinger} except we now only consider densities close to $f_*$ (rather than far). 

Let us call the $\eta$-cover $\mathcal{S} = \cb{f_{j}\, : \, j \in \mathcal{J}}$, $\mathcal{J}=\cb{1,\dotsc,J}$, $J = \#\mathcal{S}$.
Furthermore, since this is a proper cover, for each $j$ there exists a distribution $G_j$ such that $f_j = f_{G_j}$.
With an analogous argument as in the beginning of the proof of Theorem~\ref{theo:hellinger_dist_convergence}, we have that:
$$\log J \lesssim_{\nu, \ubar{L} , \bar{U}}   \abs{\log \eta }^2 = (\log n)^2.$$
We are ready to start with the main argument of the lemma.
\begin{align}
&\frac{1}{n}\sum_{i=1}^{n} \EE{\abs{\frac{D_i(G_*) - D_i(\hG)}{D_i(\hG_*)}}\ind(A)} \nonumber \\ 
&\;\;\;\; =  \frac{2}{n}\sum_{i=1}^{n} \EE{\abs{\frac{D_i(G_*) - D_i(\hG)}{D_i(G_*) +D_i(\hG)}}\ind(A)} \nonumber \\ 
&\;\;\;\; \stackrel{(*)}{\leq}  \frac{2}{n} \,+\, 8\eta \bar{U} \log n \,+\,   \EE{\sup_{j \in \mathcal{J}}\cb{\frac{2}{n}\sum_{i=1}^{n} \abs{\frac{D_i(G_*) - D_i(G_j)}{D_i(G_*) +D_i(G_j)}}}}  \nonumber \\ 
&\;\;\;\; \leq   \frac{2}{n} \,+\, 8\eta \bar{U} \log n \,+\,   \EE{\sup_{j \in \mathcal{J}}\cb{\frac{2}{n}\sum_{i=1}^{n} \p{\abs{\frac{D_i(G_*) - D_i(G_j)}{D_i(G_*) +D_i(G_j)}}-\EE{\abs{\frac{D_i(G_*) - D_i(G_j)}{D_i(G_*) +D_i(G_j)}}}}}}  \nonumber\\ 
&\;\;\;\;\;\;\;\;\;\;\;\;\;\;\;\;\;\; +\,  \sup_{j \in \mathcal{J}}\cb{\EE{\frac{2}{n}\sum_{i=1}^{n} \abs{\frac{D_i(G_*) - D_i(G_j)}{D_i(G_*) +D_i(G_j)}}}} \nonumber\\ 
&\;\;\;\; =   \frac{2}{n} \,+\, 8\eta \bar{U} \log n \,+\, \text{I} \,+\, \text{II}.  \label{eq:marginal_density_relative_difference_inequality}
\end{align}
We first justify $(*)$. On the event $A$ (see~\eqref{eq:event_hellinger_distance_close_to_0}), there must exist a (random) index $\widehat{j}$ 
such that \smash{$\NormInline{f_{G_{\widehat{j}}} - f_{\hG}}_{\infty} \leq \eta$}.  Let $B_n \coloneqq 4\bar{U}\log n$, so that
$\PP{S_i^2 \geq B_n} \leq 1/n$ ($n \geq 3$, Lemma~\ref{lemm:variance_tail_prob}). Then:
$$
\begin{aligned}
&\frac{1}{n}\sum_{i=1}^{n} \EE{\abs{\frac{D_i(G_*) - D_i(\hG)}{D_i(G_*) +D_i(\hG)}}\ind(A)} \\  
&\;\;\;\;\;\;\leq \frac{1}{n}\sum_{i=1}^{n} \cb{\EE{\abs{\frac{D_i(G_*) - D_i(\hG)}{D_i(G_*) +D_i(\hG)}}\ind(A)\ind(S_i^2 \leq B_n)} \,+\, \PP{S_i^2 > B_n }} \\ 
&\;\;\;\;\;\; \leq \frac{1}{n} \,+\, \frac{1}{n}\sum_{i=1}^{n} \EE{\abs{\frac{D_i(G_*) - D_i(G_{\widehat{j}})}{D_i(G_*) +D_i(G_{\widehat{j}})}}\ind(A)\ind(S_i^2 \leq B_n)} \\ 
&\;\;\;\;\;\;\;\;\;\;\;\;  + \; \frac{1}{n}\sum_{i=1}^n \EE{\abs{\frac{D_i(G_*) - D_i(\hG)}{D_i(G_*) +D_i(\hG)} - \frac{D_i(G_*) - D_i(G_{\widehat{j}})}{D_i(G_*) +D_i(G_{\widehat{j}})}}\ind(A)\ind(S_i^2 \leq B_n)}.
\end{aligned}
$$
We may bound the second summand in the last display by noting that $\widehat{j} \in \mathcal{J}$ on the event $A$, and then taking the supremum over all $j \in \mathcal{J}$. 
On the other hand, we can show that the last summand is at most $2\eta B_n$:
$$
\begin{aligned}
&\frac{1}{n}\sum_{i=1}^n \EE{\abs{\frac{D_i(G_*) - D_i(\hG)}{D_i(G_*) +D_i(\hG)} - \frac{D_i(G_*) - D_i(G_{\widehat{j}})}{D_i(G_*) +D_i(G_{\widehat{j}})}}\ind(A)\ind(S_i^2 \leq B_n) } \\ 
&\;\;\;\;\;\; \leq \frac{1}{n}\sum_{i=1}^n \EE{\abs{\frac{2 D_i(G_*) (D_i(G_{\widehat{j}}) - D_i(\hG))}{(D_i(G_*) +D_i(G_{\widehat{j}}))(D_i(G_*) +D_i(\hG))}}\ind(A)\ind(S_i^2 \leq B_n)} \\ 
&\;\;\;\;\;\; \leq 2\eta \frac{1}{n}\sum_{i=1}^n \EE{\frac{\ind(S_i^2 \leq B_n)}{D_i(G_*)}} \\ 
&\;\;\;\;\;\; = 2\eta B_n.\\ 
\end{aligned}
$$
The argument for the penultimate line is identical to the argument in equation~\eqref{eq:integral_of_1_div_f} (as well as just below that equation 
for the compound setting).

We still need to bound $\text{I}$ and $\text{II}$ in~\eqref{eq:marginal_density_relative_difference_inequality}. We start with $\text{II}$. Fix any $j \in \mathcal{J}$. Then,
$$
\begin{aligned}
\EE{\frac{2}{n}\sum_{i=1}^{n} \abs{\frac{D_i(G_*) - D_i(G_j)}{D_i(G_*) +D_i(G_j)}}} & \leq \frac{2}{n}\sum_{i=1}^n \EE{\p{\frac{D_i(G_*) - D_i(G_j)}{D_i(G_*) +D_i(G_j)}}^2}^{1/2} \\ 
& \leq 2 \cb{\frac{1}{n}\sum_{i=1}^n \EE{\p{\frac{D_i(G_*) - D_i(G_j)}{D_i(G_*) +D_i(G_j)}}^2}}^{1/2}.
\end{aligned}
$$
For both inequality steps above we used Jensen's inequality. Continuing:
$$
\begin{aligned}
&\frac{1}{n}\sum_{i=1}^n \EE{\p{\frac{D_i(G_*) - D_i(G_j)}{D_i(G_*) +D_i(G_j)}}^2} \\ 
&\;\;\;\; = \frac{1}{n}\sum_{i=1}^n \EE{\p{\frac{f_*(S_i^2) - f_{G_j}(S_i^2)}{f_*(S_i^2) + f_{G_j}(S_i^2)}}^2} \\ 
&\;\;\;\; = \int \p{\frac{f_{*}(u) - f_{G_j}(u)}{f_*(u) + f_{G_j}(u)}}^2f_*(u) \, du \\ 
&\;\;\;\; = \int \p{ \sqrt{f_*(u)} - \sqrt{f_{G_j}(u)}}^2 \frac{ \p{\sqrt{f_*(u)} + \sqrt{f_{G_j}(u)}}^2}{f_*(u) + f_{G_j}(u)   } \frac{f_*(u)}{f_*(u) + f_{G_j}(u)} \, du \\ 
&\;\;\;\; \leq 2 \int \p{ \sqrt{f_*(u)} - \sqrt{f_{G_j}(u)}}^2  du \\ 
&\;\;\;\; = 4 \Dhel^2(f_*, \, f_{G_j}) \; \leq \; 4 C \frac{(\log n)^2}{n}.
\end{aligned}
$$
At last, let us turn our attention to $\text{I}$. We introduce the notation
$$ U_{ij} = \abs{\frac{D_i(G_*) - D_i(G_j)}{D_i(G_*) +D_i(G_j)}},\, \text{ for }\, i \in \cb{1,\dotsc,n},\, j \in \mathcal{J},$$
and note that for any $j$, $U_{ij}$ is a function of $S_i^2$ only. Furthermore, $U_{ij}$ is bounded and takes values in the interval $[0,1]$. Hence,
by Hoeffding's inequality (for fixed $j$) and any $t \geq 0$:
$$ 
\PP{ \abs{\frac{1}{n} \sum_{i=1}^n \p{U_{ij} -\EE{U_{ij}}}} \geq t} \leq \exp\p{ - 2 n t^2}.
$$
By applying the union bound over $j \in \mathcal{J}$, we find that
$$ 
\PP{\sup_{j \in \mathcal{J}}\cb{ \abs{\frac{1}{n} \sum_{i=1}^n \p{U_{ij} -\EE{U_{ij}}}}} \geq t} \leq J\exp\p{ - 2 n t^2}.
$$
Hence:
$$ 
\begin{aligned}
&\EE{\sup_{j \in \mathcal{J}}\cb{ \p{ \frac{1}{n} \sum_{i=1}^n \p{U_{ij} -\EE{U_{ij}}}}}}^2 \\ 
&\;\;\;\;\leq \EE{\sup_{j \in \mathcal{J}}\cb{ \p{ \frac{1}{n} \sum_{i=1}^n \p{U_{ij} -\EE{U_{ij}}}}^2}} \\ 
&\;\;\;\; = \int_0^{\infty} \PP{\sup_{j \in \mathcal{J}} \abs{ \frac{1}{n} \sum_{i=1}^n \p{U_{ij} -\EE{U_{ij}}}} \geq \sqrt{t}}\, dt \\ 
&\;\;\;\; \leq \int_0^{\infty}  \min\cb{1, \;   J\exp\p{ - 2 n t}} \, dt \\ 
&\;\;\;\; = \frac{\log J}{2n} + J \sqb{ -\frac{1}{2n} \exp(-2nt)}_{u^*}^{\infty} \\ 
&\;\;\;\; = \frac{1+\log J}{2n} \lesssim_{\nu, \ubar{L}, \bar{U}} \frac{ (\log n)^2}{n}. 
\end{aligned}
$$
Above $u^*$ was chosen such that, $2nu^*= \log J$, i.e., $u^*= \log J/(2n)$.

Let us put everything together. In~\eqref{eq:marginal_density_relative_difference_inequality}, we showed that:
$$\frac{1}{n}\sum_{i=1}^{n} \EE{\abs{\frac{D_i(G_*) - D_i(\hG)}{D_i(\hG_*)}}\ind(A)} \leq  \frac{2}{n} \,+\, 8\eta \bar{U} \log n \,+\, \text{I} \,+\, \text{II},$$
with $\eta = 1/n$. We also showed that:
$$ \text{I}  \lesssim_{\nu, \ubar{L}, \bar{U}} \frac{\log n}{\sqrt{n}},$$
and that,
$$ \text{II} \leq 4 \sqrt{C} \frac{ \log n}{\sqrt{n}}.$$
Hence:
$$ \frac{1}{n}\sum_{i=1}^{n} \EE{\abs{\frac{D_i(G_*) - D_i(\hG)}{D_i(\hG_*)}}\ind(A)} \lesssim_{\nu, \ubar{L}, \bar{U}} \frac{\log n}{\sqrt{n}}.$$
\end{proof}

\subsection{Proof of Propositions~\ref{prop:asymptotically_uniform} and~\ref{manualprop:asymptotically_uniform}}
\label{subsec:proof_coro_asymptotically_uniform}

As explained after Proposition~\ref{manualprop:asymptotically_uniform}, for this result, our proof techniques and results for the empirical Bayes and compound settings deviate. For this reason, we provide the proofs separately for the two settings.

\begin{proof}[Proof of Proposition~\ref{prop:asymptotically_uniform}]
Let us fix $i \in \Hnull$.  We write $P_i = P_{\hG}(Z_i,S_i^2)$, $P_i^* = P_G(Z_i, S_i^2)$. As in the proof of Theorem~\ref{theo:pvalue_quality} in Supplement~\ref{subsec:proof_theo_pvalue_quality}, we let $\ubar{z} := \ubar{L}^{1/2} z_{1-\zeta/2}$. We also let
\begin{equation}
    \label{eq:sup_deviation_pvalues}
\Delta_i := \sup_{z: \abs{z} \geq \ubar{z}}\abs{P_*(z, S_i^2) - P_{\hG}(z, S_i^2)}.
\end{equation}
If $\abs{Z_i} < \ubar{z}$, then $P_i > 2(1-\Phi(\ubar{z}/\ubar{L}^{1/2}))= \zeta$, and so for any $t \in [0,\zeta]$ it holds that
$$\cb{ P_i \leq t} = \cb{P_i \leq t,\; \abs{Z_i} \geq \ubar{z}}.$$ 
Observe that,
$$
\begin{aligned}
\PP[G]{ P_i \leq t \cond S_1^2,\dotsc,S_n^2} &= \PP[G]{ \cb{P_i \leq t}\cap \cb{\abs{Z_i} \geq \ubar{z}} \cond S_1^2,\dotsc,S_n^2}\\ 
&= \PP[G]{ \cb{P_i^* \leq t + (P_i^* - P_i)}\cap \cb{\abs{Z_i} \geq \ubar{z}} \cond S_1^2,\dotsc,S_n^2} \\ 
&\leq \PP[G]{ P_i^* \leq t + \Delta_i \cond S_1^2,\dotsc,S_n^2} \\ 
& \stackrel{(*)}{\leq} t + \Delta_i.
\end{aligned}
$$
In step $(*)$ we used the fact that $P_i^*$ is uniformly distributed conditional on $S_1^2,\dotsc,S_n^2$ and that $\Delta_i$ is a function of $S_1^2,\dotsc,S_n^2$ only. Arguing analogously, we may also establish that:
$$ \PP[G]{ P_i \leq t \cond S_1^2,\dotsc,S_n^2} \geq t - \Delta_i.$$
Hence:
$$ \sup_{t \in [0, \zeta]} \abs{\PP[G]{ P_i \leq t \cond S_1^2,\dotsc,S_n^2} - t} \leq \Delta_i.$$
It follows that:
$$ \EE{\sup_{t \in [0, \zeta]} \abs{\PP[G]{ P_i \leq t \cond S_1^2,\dotsc,S_n^2} - t} } \leq \EE{\Delta_i}.$$
The proof of Theorem~\ref{theo:pvalue_quality} furnishes that
$$\EE{\Delta_i} \lesssim_{\ubar{L}, \bar{U}, \nu, \zeta}  \frac{(\log n)^{5/2}}{\sqrt{n}}.$$
Taking a maximum over all $i \in \Hnull$, we conclude with the first part of the proposition.
Asymptotic uniformity follows from conditional uniformity shown above as follows:
$$
\begin{aligned}
    &\max_{i \in \Hnull}\sup_{t \in [0,\zeta]} \abs{\PP[G]{P_i \leq t} - t} \\ 
    &\;\;\;\; = \max_{i \in \Hnull}\sup_{t \in [0,\zeta]} \abs{ \EE{ \PP[G]{P_i \leq t \cond S_1^2,\dotsc,S_n^2} - t}} \\ 
    &\;\;\;\; \leq \max_{i \in \Hnull} \EE{\sup_{t \in [0,\zeta]} \abs{  \PP[G]{P_i \leq t \cond S_1^2,\dotsc,S_n^2} - t}},
\end{aligned}
$$
and we already upper bounded the last expression in the previous part of the proposition.
\end{proof}

\begin{proof}[Proof of Proposition~\ref{manualprop:asymptotically_uniform}]

Let us fix $i \in \Hnull$. We write $P_i = P_{\hG}(Z_i,S_i^2)$ and $P_i^* = P_{\boldsigma}(Z_i, S_i^2)$.
Let $\zeta' = (1+\zeta)/2 \in (1/2,1)$ and take any $\delta \in (0, \zeta')$. Then by Lemma~\ref{lemm:deterministic_indicator_to_l1} below, 
it holds for $t \leq \zeta' - \delta$ that:
$$\ind(P_i \leq t)\;\leq \;\ind(P_i^* \leq t + \delta) + \frac{1}{\delta}\abs{P_i \land \zeta' - P_i^* \land \zeta'}.$$
Summing the above inequality over $i \in \Hnull$ and rearranging, we find that:
$$ \frac{1}{n} \sum_{i \in \Hnull} \ind(P_i \leq t) - t \leq \frac{1}{n} \sum_{i \in \Hnull} \ind(P_i^* \leq t+\delta) -(t+\delta) + \delta + \frac{1}{\delta}\frac{1}{n}\sum_{i \in \Hnull}\abs{P_i \land \zeta' - P_i^* \land \zeta'}.$$
By Theorem~\ref{theo:avg_significance_controlling}, it also holds that:
$$\frac{1}{n} \sum_{i \in \Hnull} \PP[\boldsigma]{P_i^* \leq t+\delta} \leq t+\delta.$$
Thus,
$$ 
\begin{aligned}
&\sup_{t \in [0, \zeta'-\delta]}\cb{\frac{1}{n} \sum_{i \in \Hnull} \ind(P_i \leq t) - t }\\ 
&\;\;\;\;\; \leq \sup_{t \in [0, \zeta'-\delta]}\abs{\frac{1}{n} \sum_{i \in \Hnull}\cb{ \ind(P_i^* \leq t+\delta) -\PP[\boldsigma]{P_i^* \leq t+\delta}}} + \delta + \frac{1}{\delta}\frac{1}{n}\sum_{i \in \Hnull}\abs{P_i \land \zeta' - P_i^* \land \zeta'}.
\end{aligned}
$$
In the next step we take expectations. By Lemma~\ref{lemm:dkw} below, it holds that:
$$\EE{\sup_{t}\abs{\frac{1}{n} \sum_{i \in \Hnull}\cb{ \ind(P_i^* \leq t+\delta) -\PP[\boldsigma]{P_i^* \leq t+\delta}}}} \leq \frac{n_0}{n} \sqrt{\frac{2e}{n_0}} \leq \sqrt{\frac{2e}{n}}.$$
Hence,
$$ \EE{\sup_{t \in [0, \zeta'-\delta]}\cb{\frac{1}{n} \sum_{i \in \Hnull} \ind(P_i \leq t) - t }} \leq  \sqrt{\frac{2e}{n}} + \delta + \frac{\varepsilon_n}{\delta},$$
where by
Theorem~\ref{manualtheorem:pvalue_quality}, 
$$\varepsilon_n = C(\nu, \ubar{L}, \bar{U}, \zeta') \frac{(\log n)^{5/2}}{\sqrt{n}},$$ 
and $C(\nu, \ubar{L}, \bar{U}, \zeta')$ is the constant specified in the theorem statement. 

Let $n$ be large enough such that $\varepsilon_n^{1/2} \leq (1-\zeta)/2$. For such $n$, we let $\delta = \varepsilon_n^{1/2}$ and it holds that $\delta < \zeta'$ and $\zeta'-\delta \geq \zeta$. We conclude that for $n$ large enough,
$$\EE{\sup_{t \in [0, \zeta]}\cb{\frac{1}{n} \sum_{i \in \Hnull} \ind(P_i \leq t) - t }} \leq  \sqrt{\frac{2e}{n}} + 2\sqrt{C(\nu, \ubar{L}, \bar{U}, \zeta')} \frac{(\log n)^{5/4}}{n^{1/4}}.
$$ 
By possibly inflating the constant, we have proved the second inequality of the proposition. The first inequality directly follows since:
$$    \sup_{t \in [0, \zeta]}\cb{ \EE{\frac{1}{n} \sum_{i \in \Hnull} \ind(P_i \leq t) - t }} \leq     \EE{\sup_{t \in [0, \zeta]}\cb{\frac{1}{n} \sum_{i \in \Hnull} \ind(P_i \leq t) - t }}. $$ 
\end{proof}

\subsection{Auxiliary lemmata on empirical distributions}
We first present a simple smoothing lemma.
\begin{lemm}
    \label{lemm:deterministic_indicator_to_l1}
Let $\zeta \in (0, 1)$, $\delta \in (0,\, \zeta)$ and $t \in [0,\,\zeta - \delta]$. Then:
$$ \ind(P_i  \leq t) - \ind(P_i^* \leq t+\delta) \leq \frac{1}{\delta}\abs{P_i \land \zeta - P_i^* \land \zeta}.$$
Similarly, for any $t \in (0,\,\zeta]$ and $\delta \in (0,\, t)$:
$$ \ind(P_i  \leq t) - \ind(P_i^* \leq t-\delta) \geq -\frac{1}{\delta}\abs{P_i \land \zeta - P_i^* \land \zeta}.$$
\end{lemm}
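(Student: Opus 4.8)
The plan is to prove both inequalities as deterministic pointwise statements (for each fixed realization of $P_i,P_i^*$) via a simple two-case split on the sign of the left-hand side, exploiting the fact that the bound is vacuous whenever that sign is favorable. The only substantive point is to check that truncation at level $\zeta$ does not spoil the relevant threshold comparisons, and this is precisely where the hypotheses $\delta<\zeta$ (respectively $\delta<t\leq\zeta$) enter.

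For the first inequality I would first observe that the right-hand side is nonnegative, so if $\ind(P_i\leq t)-\ind(P_i^*\leq t+\delta)\leq 0$ there is nothing to prove. The only remaining case is when this difference equals $1$, i.e.\ $P_i\leq t$ while $P_i^*>t+\delta$. Since all thresholds satisfy $t\leq t+\delta\leq\zeta$, the truncation is harmless: from $P_i\leq t<\zeta$ we get $P_i\land\zeta=P_i\leq t$, while $P_i^*>t+\delta$ together with $\zeta\geq t+\delta$ forces $P_i^*\land\zeta\geq t+\delta$ (regardless of whether $P_i^*$ exceeds $\zeta$). Subtracting gives $P_i^*\land\zeta-P_i\land\zeta\geq\delta$, hence $\tfrac{1}{\delta}\abs{P_i\land\zeta-P_i^*\land\zeta}\geq 1$, which matches the value $1$ of the left-hand side.

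For the second inequality the argument is symmetric: now the right-hand side is nonpositive, so the only case to verify is when the left-hand side equals $-1$, i.e.\ $P_i>t$ and $P_i^*\leq t-\delta$. Here $P_i^*\leq t-\delta<t\leq\zeta$ yields $P_i^*\land\zeta=P_i^*\leq t-\delta$, while $P_i>t$ with $\zeta\geq t$ yields $P_i\land\zeta\geq t$; therefore $P_i\land\zeta-P_i^*\land\zeta\geq\delta$ and again $\tfrac{1}{\delta}\abs{P_i\land\zeta-P_i^*\land\zeta}\geq 1$, giving $-1\geq-\tfrac{1}{\delta}\abs{P_i\land\zeta-P_i^*\land\zeta}$ as claimed.

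I do not anticipate any genuine obstacle, as the statement is an elementary deterministic inequality rather than a probabilistic one. The only place demanding a little care is confirming that replacing $P_i,P_i^*$ by $P_i\land\zeta,P_i^*\land\zeta$ preserves the comparisons against $t$, $t+\delta$, and $t-\delta$; this is exactly the role of the constraints on $\delta$ and $t$, which guarantee that the truncation level $\zeta$ never lies strictly below the thresholds being compared.
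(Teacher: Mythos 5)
Your proof is correct and is essentially the same argument as the paper's: both reduce to the observation that the difference of indicators can only be nonzero (equal to $1$, resp.\ $-1$) on an event where the constraints on $t$ and $\delta$ force the truncated values $P_i \land \zeta$ and $P_i^* \land \zeta$ to lie on opposite sides of a gap of width $\delta$, so the scaled absolute difference is at least $1$. The paper organizes this as a chain of indicator inequalities (first passing to truncated indicators, then bounding the difference by the indicator of the joint event) while you do an explicit case split, but the content is identical.
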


\begin{proof}
For the first part, we note:
    $$
    \begin{aligned}
    \ind(P_i  \leq t) - \ind(P_i^* \leq t+\delta) & \leq \ind(P_i \land \zeta \leq t) - \ind(P_i^*\land \zeta < t+\delta) \\ 
    & \leq \ind(P_i \land \zeta \leq t,\; P_i^* \land \zeta \geq t+\delta) \\
    & \leq \frac{1}{\delta}\abs{P_i \land \zeta - P_i^* \land \zeta}.
    \end{aligned}
    $$
For the second part, we argue analogously that:
$$
\begin{aligned}
\ind(P_i  \leq t) - \ind(P_i^* \leq t-\delta) 
& \geq  \ind(P_i \land \zeta < t) - \ind(P_i^*\land \zeta \leq t-\delta) \\ 
& \geq -\ind(P_i\land \zeta \geq t,\; P_i^* \land \zeta \leq t-\delta) \\ 
& \geq -\frac{1}{\delta}\abs{P_i \land \zeta - P_i^* \land \zeta}.
\end{aligned}
$$
\end{proof}

Next, we state a version of the celebrated Dvoretzky–Kiefer–Wolfowitz inequality that applies in the compound setting.

\begin{lemm}[Bretagnolle-Dvoretzky–Kiefer–Wolfowitz]
    \label{lemm:dkw}
Let $U_{i,n} \in [0,\,1]$, $i=1,\dotsc,n$ be independent (but not necessarily identically distributed) random variables. Then:
$$\PP{\sup_{t \in [0,1]} \abs{  \frac{1}{n} \sum_{i=1}^n  \cb{\ind( U_{i,n} \leq t) - \PP{ U_{i,n} \leq t}}} \geq \varepsilon} \leq 2e \exp(-n \varepsilon^2) \text{ for all } \varepsilon \geq 0.$$
Furthermore, 
$$\EE{\sup_{t \in [0,1]} \abs{  \frac{1}{n} \sum_{i=1}^n  \cb{\ind( U_{i,n} \leq t) - \PP{ U_{i,n} \leq t}}}} \leq \sqrt{\frac{2e}{n}}.$$
\end{lemm}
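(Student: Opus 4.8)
The plan is to treat the two claims separately: the uniform tail bound for the non-identically-distributed empirical process is the substantive part, while the expectation bound will follow from it by a routine integration. Throughout write $F_n(t) = \frac1n\sum_{i=1}^n \ind(U_{i,n}\le t)$ for the random empirical distribution function and $\bar F_n(t) = \frac1n\sum_{i=1}^n \PP{U_{i,n}\le t}$ for its deterministic average, so that the quantity of interest is $X := \sup_{t\in[0,1]}\abs{F_n(t) - \bar F_n(t)}$. Both $F_n$ and $\bar F_n$ are nondecreasing and $[0,1]$-valued, and for each fixed $t$ the centered sum $n(F_n(t)-\bar F_n(t)) = \sum_{i=1}^n (\ind(U_{i,n}\le t) - \PP{U_{i,n}\le t})$ is a sum of independent variables each supported in an interval of length one.

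First I would establish the tail bound $\PP{X\ge\varepsilon}\le 2e\,\exp(-n\varepsilon^2)$. This is the Dvoretzky--Kiefer--Wolfowitz inequality in the form valid for independent but not identically distributed summands, due to Bretagnolle. The feature distinguishing it from the classical i.i.d.\ statement is that the summands are no longer exchangeable, so one cannot reduce directly to a single uniform empirical process. The natural route is a coupling: write $U_{i,n} = F_i^{-1}(W_i)$ with $W_i$ i.i.d.\ uniform on $[0,1]$ and $F_i$ the distribution function of $U_{i,n}$, so that $\ind(U_{i,n}\le t) = \ind(W_i \le F_i(t))$ and $F_n(t)-\bar F_n(t) = \frac1n\sum_i(\ind(W_i\le F_i(t)) - F_i(t))$; as $t$ increases the thresholds $(F_1(t),\dotsc,F_n(t))$ sweep out a monotone curve, and Bretagnolle's argument controls the supremum over this curve, paying a factor $e$ relative to Massart's sharp i.i.d.\ bound. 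I expect this to be the main obstacle. A naive discretization of $[0,1]$ into a grid adapted to $\bar F_n$, followed by Hoeffding's inequality (giving $\PP{F_n(t)-\bar F_n(t)\ge\lambda}\le \exp(-2n\lambda^2)$ at a fixed $t$) and a union bound, does yield a sub-Gaussian tail, but only with a prefactor that grows polynomially in $n$ near the crossover scale $\varepsilon\asymp\sqrt{(\log n)/n}$, so it does not by itself deliver a constant independent of $n$; securing the dimension-free constant is exactly what Bretagnolle's refinement supplies, and I would invoke it (weakening constants if convenient, since the application only needs some $n$-free sub-Gaussian DKW bound).

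Given the tail bound, the expectation bound is routine. Since $0\le X\le 1$, I would integrate the tail and substitute $u=\varepsilon\sqrt n$:
\[
\EE{X} = \int_0^\infty \PP{X>\varepsilon}\,d\varepsilon \le \int_0^\infty \min\cb{1,\,2e\,\exp(-n\varepsilon^2)}\,d\varepsilon = \frac{1}{\sqrt n}\int_0^\infty \min\cb{1,\,2e\,\exp(-u^2)}\,du.
\]
It then suffices to verify the calculus fact $\int_0^\infty \min\cb{1,2e\,e^{-u^2}}\,du\le\sqrt{2e}$. Splitting at the crossover $u_\star=\sqrt{\log(2e)}$ and using $e^{-u^2}\le (u/u_\star)e^{-u^2}$ for $u\ge u_\star$ gives $\int_0^\infty\min\cb{1,2e\,e^{-u^2}}\,du \le u_\star + \frac{2e}{u_\star}\int_{u_\star}^\infty u\,e^{-u^2}\,du = u_\star + \frac{1}{2u_\star}$, since $e^{-u_\star^2}=1/(2e)$. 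As $u_\star + \frac{1}{2u_\star}$ is numerically below $\sqrt{2e}$, this yields $\EE{X}\le \sqrt{2e/n}$, completing the argument.
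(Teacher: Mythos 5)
Your proposal is correct, and it handles the substantive part of the lemma --- the sub-Gaussian tail bound with an $n$-free constant for independent, non-identically distributed summands --- exactly as the paper does, namely by invoking Bretagnolle's refinement of the DKW inequality (the paper cites Bretagnolle (1981) together with Massart's tight constant, via Donoho's Lemma 7.1). Where you diverge is in the routine passage from the tail bound to the expectation bound. You integrate the tail of $X = \sup_t \abs{F_n(t)-\bar F_n(t)}$ directly, truncating the bound at $1$ and splitting at the crossover $u_\star = \sqrt{\log(2e)}$; this requires the calculus verification $u_\star + 1/(2u_\star) \le \sqrt{2e}$, and in fact yields a slightly sharper constant ($\approx 1.69$ in place of $\sqrt{2e}\approx 2.33$). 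The paper instead bounds the second moment: since $\mathbb{P}[X \ge \sqrt{t}\,] \le 2e\exp(-nt)$ for all $t\ge 0$, it computes $\mathbb{E}[X^2] = \int_0^\infty \mathbb{P}[X \ge \sqrt{t}\,]\,dt \le \int_0^\infty 2e\exp(-nt)\,dt = 2e/n$ --- no truncation at $1$ is needed, because the exponential bound integrates cleanly even where it exceeds $1$ --- and then concludes $\mathbb{E}[X]\le \sqrt{\mathbb{E}[X^2]} \le \sqrt{2e/n}$ by Jensen's inequality. The paper's route avoids the crossover computation entirely; yours buys a marginally better constant at the cost of that computation. Both arguments are valid and deliver the stated bound.
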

\begin{proof}
The high-probability inequality is due to~\citet{bretagnolle1981} along with Massart's
tight constant~\citep{massart1990tight}, see also 
\citet[Lemma 7.1]{donoho2006asymptotic}.\footnote{The extra factor ``$e$'' is not needed when all $U_{i,n}$ are also identically distributed. In that case, the inequality reduces to the standard Dvoretzky–Kiefer–Wolfowitz inequality with Massart's tight constant.}

For the bound on the expectation, we use the following standard argument. Let,
$$X := \sup_{t \in [0,1]} \abs{  \frac{1}{n} \sum_{i=1}^n  \p{\ind( U_{i,n} \leq t) - \PP{ U_{i,n} \leq t}}}.$$
By the high probability inequality above, it follows that $\PP{X \geq \varepsilon} \leq 2e \exp(-n \varepsilon^2)$ and so:
$$ \EE{X^2} = \int_0^{\infty} \PP{X^2 \geq t} \, dt = \int_0^{\infty} \PP{X \geq \sqrt{t}} \, dt \leq \int_0^{\infty} 2e \exp(-n t) \, dt = \frac{2e}{n}.$$
Hence by Jensen's inequality, $\EE{X} \leq \sqrt{2e/n}$.
\end{proof}

\section{Proofs for Section~\ref{subsec:BH}}

\subsection{Further background on the Benjamini-Hochberg procedure}
\label{sec:bh_empirical_process}
For the proofs it will be helpful to recall the following equivalent characterization of the Benjamini-Hochberg procedure~\eqref{eq:BH} through
an empirical process viewpoint~\citep{storey2004strong}. Let
\begin{equation}
    \label{eq:rejections_process}
    R_n(t) \coloneqq \sum_{i=1}^n \ind(P_i \leq t).
\end{equation}
Also let:
\begin{equation}
    \label{eq:bh_threshold}
\hat{t} \coloneqq \sup\cb{t \in [0,\alpha] : \frac{nt}{R_n(t)\lor 1} \leq \alpha}.
\end{equation}
Then the following holds for the Benjamini-Hochberg procedure applied at level $\alpha$: 
\begin{equation}
H_i \text{ is rejected } \; \text{if and only if} \; P_i \leq \hat{t}.
\end{equation}
In particular, the total number of rejections of BH, $R_n$ is equal to $R_n(\hat{t})$ 
(where we slightly abuse notation). In analogy to the empirical process of total discoveries~\eqref{eq:rejections_process}, 
we may also
define the empirical process of null discoveries
$$V_n(t) \coloneqq \sum_{i \in \Hnull} \ind(P_i \leq t),$$
so that the number of false discoveries of BH is equal to $V_n(\hat{t})$. Finally, with the above notation,
the false discovery rate is equal to $\EE{ V_n(\hat{t}) / (R_n(\hat{t})\lor 1)}$.

\subsection{Proof of Theorems~\ref{theo:limma_bh_controls_the_FDR} and~\ref{manualtheorem:limma_bh_controls_the_FDR}}
\label{subsec:proof_limma_bh_controls_the_FDR}

As in the proofs of Propositions~\ref{prop:asymptotically_uniform} and~\ref{manualprop:asymptotically_uniform}, we will provide separate proofs and arguments for the hierarchical and compound settings.

\begin{proof}[Proof of Theorem~\ref{theo:limma_bh_controls_the_FDR}]
Inspired by the proof of~\citet{roquain2022false}, we start by defining $R_{n,-(i)}$ as the number of rejections of the full empirical partially Bayes procedure (Algorithm~\ref{algo:np_limma}) when applied to
$$\cb{ (Z_1, S_1^2),\dotsc, (Z_{i-1}, S_{i-1}^2),  (Z_i^{\infty}, S_i^2), (Z_{i+1}, S_{i+1}^2), \dotsc ,(Z_n, S_n^2)}.$$
In words, we keep the full dataset the same, and we only replace $Z_i$ of the $i$-th unit by $Z_i^{\infty}$ defined as:
$$Z_i^{\infty} = \begin{cases} +\infty & \text{if } Z_i \geq 0, \\ -\infty & \text{if } Z_i < 0. \end{cases}.$$
We have the following result. On the event that the $i$-th hypothesis is rejected by Algorithm~\ref{algo:np_limma} on the original data, then it must be the case that $R_n = R_{n,-(i)} \geq 1$. This can be shown in two steps. First, we note that \smash{$\widehat{G}$} is the same for Algorithm~\ref{algo:np_limma} applied to the original dataset as well as the leave-one-out modified dataset (because \smash{$\widehat{G}$} is a function of $S_1^2,\dotsc,S_n^2$ only). Second, we adapt  the leave-one-out argument for the Benjamini-Hochberg procedure presented in~\citet{roquain2022false} and~\citet{ferreira2006benjamini}.

To further proceed, we also reintroduce the notation $\Delta_i$ which was previously defined in~\eqref{eq:sup_deviation_pvalues} (where we take $\zeta = \max\cb{3/4, (1+\alpha)/2}$). We get the following for $i \in \Hnull$:
$$
\begin{aligned}
&\EE[G]{ \frac{\ind(P_i \leq \alpha R_n/n)}{R_n \lor 1} \cond S_1^2,\dotsc,S_n^2, R_{n,-(i)}} \\
&\;\;\;\; =  \EE[G]{ \frac{\ind(P_i \leq \alpha R_{n,-(i)}/n)}{R_{n,-(i)}} \cond S_1^2,\dotsc,S_n^2,  R_{n,-(i)} } \\ 
&\;\;\;\;\leq \EE[G]{ \frac{\ind(P_i^* \leq \alpha R_{n,-(i)}/n + \Delta_i)}{R_{n,-(i)}} \cond S_1^2,\dotsc,S_n^2,  R_{n,-(i)} } \\ 
&\;\;\;\;\stackrel{(*)}{\leq} \frac{\alpha R_{n,-(i)}/n + \Delta_i}{R_{n,-(i)}}  \\ 
&\;\;\;\; = \frac{\alpha}{n} \,+\,    \frac{\Delta_i}{R_{n,-(i)}}.
\end{aligned}
$$
The argument in $(*)$ is analogous to the main argument in the proof of Proposition~\ref{prop:asymptotically_uniform}. By summing over $i\in \Hnull$ and rearranging, we get:
$$ \sum_{i \in \Hnull} \EE[G]{ \frac{\ind(P_i \leq \alpha R_n/n)}{R_n \lor 1} \cond S_1^2,\dotsc,S_n^2,  R_{n,-(i)}}\, - \, \frac{n_0}{n}\alpha \leq  \min\cb{\sum_{i \in \Hnull}  \frac{\Delta_i}{R_{n,-(i)}},\,1}.$$
Above we also used the fact that the LHS is $\leq 1$. By the tower property of  conditional expectations and noting that the RHS is $\geq 0$ with probability $1$:
$$
\p{\EE{ \mathrm{FDP}_n \cond S_1^2,\dotsc,S_n^2} - \frac{n_0}{n}\alpha}\lor 0 \; \leq \; \EE[G]{ \min\cb{\sum_{i \in \Hnull}  \frac{\Delta_i}{R_{n,-(i)}},\,1} \cond S_1^2,\dotsc,S_n^2}.
$$
Next note that $R_{n,-(i)} \geq R_n$ and so:
$$\min\cb{\sum_{i \in \Hnull} \frac{\Delta_i}{R_{n,-(i)}},\,1} \, \leq \,   \sum_{i \in \Hnull}\frac{\Delta_i}{n \kappa_n} \,+\, \ind(R_n < n\kappa_n).$$
Taking expectations and applying the bound on $\EE[G]{\Delta_i}$ shown in the proof of Theorem~\ref{theo:pvalue_quality}, it follows that:
$$ \EE{\p{\EE{ \mathrm{FDP}_n \cond S_1^2,\dotsc,S_n^2} - \frac{n_0}{n}\alpha}\lor 0} \; \leq \; \frac{1}{\kappa_n} C \frac{(\log n)^{5/2}}{\sqrt{n}}  \, +\, \eta_n.$$
Finally, by Jensen's inequality applied to the function $u \mapsto u \lor 0$, we also get that:
$$\mathrm{FDR}_n - \alpha \leq \p{\mathrm{FDR}_n - \alpha}\lor 0  \leq \EE{\p{\EE{ \mathrm{FDP}_n \cond S_1^2,\dotsc,S_n^2} - \frac{n_0}{n}\alpha}\lor 0}.$$
\end{proof}

\begin{proof}[Proof of Theorem~\ref{manualtheorem:limma_bh_controls_the_FDR}]

Throughout this proof we will use the empirical process characterization of the BH procedure from Section~\ref{sec:bh_empirical_process}. In the first step, let us note the following elementary inequality:
$$
\frac{V_n(\hat{t})}{R_n(\hat{t}) \lor 1} \; \leq \; \ind(R_n(\hat{t}) < n \kappa_n) \,+\, \frac{V_n(\hat{t})}{R_n(\hat{t})}\ind(R_n(\hat{t}) \geq n \kappa_n).
$$
From the above and the assumptions of the theorem, it follows that
$$ \mathrm{FDR}_n \leq \eta_n \,+\, \EE{\frac{V_n(\hat{t})}{R_n(\hat{t})}\ind(R_n(\hat{t}) \geq n\kappa_n)},$$
and we turn toward bounding the second term.
On the event $\cb{R_n(\hat{t}) \geq n \kappa_n}$, it holds that:
$$
\begin{aligned}
\frac{V_n(\hat{t})}{R_n(\hat{t})} \; = \;\underbrace{\frac{n \hat{t}}{R_n(\hat{t})}}_{\leq \alpha} \,+\,\frac{V_n(\hat{t}) - n \hat{t}}{R_n(\hat{t})} \;\leq \;\alpha \, + \, \frac{n}{n \kappa_n}\sup_{t \in [0,\, \alpha]}\cb{ \frac{1}{n}V_n(t) - t}.
\end{aligned}
$$
It thus follows that:
$$ \mathrm{FDR}_n \leq  \eta_n \,+\,\alpha\,+\, \frac{1}{\kappa_n}\EE{\sup_{t \in [0,\, \alpha]}\cb{ \frac{1}{n}V_n(t) - t}}.$$
The expression on the RHS is precisely the expectation we bounded in Proposition~\ref{manualprop:asymptotically_uniform}. We conclude by plugging in the bound therein.
\end{proof}

\subsection{Proof of Proposition~\ref{prop:dense_fdr_control}}
\label{subsec:proof_dense_fdr_control}

\begin{proof}
By our assumptions, there exists $s^{\infty} \in (0, t^{\infty})$ such that 
$$0<\alpha':= \frac{s^{\infty}}{H^{\infty}(s^{\infty})}  < \frac{u}{H^{\infty}(u)} < \alpha \, \text{ for all } u \in (s^{\infty}, t^{\infty}).$$
We define
\begin{equation}
    \label{eq:epsilon_definition_bh_coro}
\varepsilon := (\alpha - \alpha') H^{\infty}(s^{\infty}),
\end{equation}
and then we let $\delta^{\infty} \in (0, s^{\infty})$ be such that:
\begin{equation}
\label{eq:delta_definition_bh_coro}
\alpha\cb{ H^{\infty}(s^{\infty}) - H^{\infty}(s^{\infty} - \delta^{\infty})} = \frac{\varepsilon}{2}.
\end{equation}
We also let $\zeta = \max\cb{3/4, (1 + \alpha)/2} \in (0,1)$.
Next consider the following decomposition (following the empirical process notation of the BH procedure described in Section~\ref{sec:bh_empirical_process}):
$$
\begin{aligned}
s^{\infty} - \alpha\frac{R_n(s^{\infty})}{n} &= s^{\infty}- \alpha \frac{1}{n}\sum_{i =1}^n \ind(P_i  \leq s^{\infty})  \\ 
&\stackrel{(*)}{\leq} s^{\infty}  - \alpha\frac{1}{n}\sum_{i =1}^n \ind(P_i^*  \leq s^{\infty} - \delta^{\infty})\,+\, \frac{\alpha}{\delta^{\infty}} \frac{1}{n}\sum_{i =1}^n \abs{P_i \land \zeta - P_i^* \land \zeta}  \\ 
&= s^{\infty} - \alpha' H^{\infty}(s^\infty) \\
&\phantom{ = s^{\infty}\,} - \alpha \p{ \frac{1}{n} \sum_{i=1}^n\cb{ \ind(P_i^*  \leq s^{\infty} - \delta^{\infty}) - \PP[G,\mu_i]{P_i^*  \leq s^{\infty} - \delta^{\infty}}}} \\ 
&\phantom{ = s^{\infty}\,} - \alpha \p{ \frac{1}{n} \sum_{i=1}^n\cb{ \PP[G,\mu_i]{P_i^*  \leq s^{\infty} - \delta^{\infty}} - H^{\infty}(s^{\infty} - \delta^{\infty})}} \\ 
&\phantom{ = s^{\infty}\,} +  \alpha\cb{ H^{\infty}(s^{\infty}) - H^{\infty}(s^{\infty} - \delta^{\infty})} \\ 
&\phantom{ = s^{\infty}\,} + (\alpha'-\alpha)H^{\infty}(s^{\infty}) \\ 
&\phantom{ = s^{\infty}\,} + \frac{\alpha}{\delta^{\infty}} \frac{1}{n}\sum_{i =1}^n \abs{P_i \land \zeta - P_i^* \land \zeta}.
\end{aligned}
$$
In $(*)$ we used the second smoothing inequality presented in Lemma~\ref{lemm:deterministic_indicator_to_l1}. To simplify the inequality derived above, we define $n_{\varepsilon} \in \mathbb N$ to be such that:
$$ 
\alpha \abs{ \frac{1}{n} \sum_{i=1}^n\cb{ \PP[G,\mu_i]{P_i^*  \leq s^{\infty} - \delta^{\infty}} - H^{\infty}(s^{\infty} - \delta^{\infty})}} \leq \frac{\varepsilon}{4}\, \text{ for all }\, n \geq n_{\varepsilon}.
$$
Let us also define:
$$ X_n := \alpha\p{\sup_{t \in [0,1]}\abs{\frac{1}{n}\sum_{i=1}^n \cb{ \ind(P_i^*  \leq t) - \PP[G,\mu_i]{P_i^*  \leq t}}} \, + \,\frac{1}{\delta^{\infty}} \frac{1}{n}\sum_{i =1}^n \abs{P_i \land \zeta - P_i^* \land \zeta}}.$$
Then, also noting that $s^{\infty} - \alpha' H^{\infty}(s^\infty) = 0$, as well as~\eqref{eq:epsilon_definition_bh_coro} and~\eqref{eq:delta_definition_bh_coro}, we have:
$$
s^{\infty} - \alpha\frac{R_n(s^{\infty})}{n} \leq -\frac{\varepsilon}{4} +  X_n\, \text{ for all }\, n \geq n_{\varepsilon}.
$$
Define the event $A_n := \cb{s^{\infty} - \alpha\frac{R_n(s^{\infty})}{n} > 0}$.  For $n \geq n_{\varepsilon}$, it holds that:
$$
\PP[G,\boldmu]{A_n} \leq \PP[G,\boldmu]{X_n > \frac{\varepsilon}{4}} \leq \frac{4}{\varepsilon} \EE[G,\boldmu]{X_n}.
$$
On the other hand, by Lemma~\ref{lemm:dkw} and Theorem~\ref{theo:pvalue_quality}, there exists a constant $C>0$ such that,
$$\EE[G,\boldmu]{X_n} \leq C \frac{(\log n)^{5/2}}{\sqrt{n}},$$
and so for another constant $C'>0$:
$$\PP[G,\boldmu]{A_n} \leq C' \frac{(\log n)^{5/2}}{\sqrt{n}}.$$
On the complement $A_n^c$ of the event $A_n$, it holds that $s^{\infty} - \alpha\frac{R_n(s^{\infty})}{n} \leq 0$, i.e.,
$$R_n(s^{\infty}) \geq \frac{n}{\alpha}s^{\infty} =n \kappa_n, \text{ where } \kappa_n := \frac{s^{\infty}}{\alpha}.$$
Applying Theorem~\ref{theo:limma_bh_controls_the_FDR} with $\kappa_n$ as above and $\eta_n = \PP[G,\boldmu]{A_n}$, we find that there exists a constant $C''>0$ such that:
$$ \mathrm{FDR}_n - \frac{n_0}{n}\alpha \leq C'' \frac{(\log n)^{5/2}}{\sqrt{n}}.$$
Multiplying the above by $n^{1/2} (\log n)^{-\xi}$ with $\xi > 5/2$, and taking the limsup as $n \to \infty$, we conclude with the asymptotic guarantee on the false discovery rate of the empirical partially Bayes procedure.
\end{proof}

\subsection{Proof of Proposition~\ref{prop:mimicking_oracle}}
\label{subsec:proof_mimicking_oracle}

\begin{proof}
By the compound Dvoretsky-Kiefer-Wolfowitz inequality in Lemma~\ref{lemm:dkw} (or alternatively, using a simpler Glivenko-Cantelli argument), the smoothing inequality in Lemma~\ref{lemm:deterministic_indicator_to_l1}, Theorem~\ref{theo:pvalue_quality}, and the continuity of $H^{\infty}$, we can show that:
\begin{equation}
    \label{eq:gc_to_H}
    \begin{aligned}
\sup_{t \in [0,\,1]}\abs{\frac{1}{n}\sum_{i=1}^n \ind(P_i \leq t)   - H^{\infty}(t)}  \, &\stackrel{\mathbb P}{\to}\, 0 \; \text{ as } n \to \infty, \\ 
\sup_{t \in [0,\,1]}\abs{\frac{1}{n_0}\sum_{i \in \Hnull} \ind(P_i \leq t) - t}  \,&\stackrel{\mathbb P}{\to}\, 0\; \text{ as } n \to \infty.
    \end{aligned}
\end{equation}
The result in~\eqref{eq:gc_to_H} along with our assumptions on $H^{\infty}(\cdot)$ allows us to directly call upon the results in~\citet[Section 3]{ferreira2006benjamini}. In particular, the stated limits (as $n\to \infty$) of $\mathrm{Pow}_n$ and $\mathrm{FNDR}_n$ follow from~\citet[Corollary 3.3] {ferreira2006benjamini} along with the dominated convergence theorem. We can also show that~\eqref{eq:gc_to_H} holds if we replace $P_i$ by the oracle p-values $P_i^* = P_G(Z_i, S_i^2)$. Hence, $\mathrm{Pow}_n^{\text{or}}$ and $\mathrm{FNDR}_n^{\text{or}}$ of the oracle procedure also converge to the same limits.

\end{proof}

\section{Proof of Proposition~\ref{prop:joint_hierarchical} in Section~\ref{sec:joint_hierarchical}}
\label{sec:proof_joint_hierarchical}

\begin{proof} 
Let $(\mu_i, \sigma_i^2, Z_i, S_i^2)$ be generated as in the statement of the proposition, that is,
suppose $(\mu_i, \sigma_i^2) \sim \Pi$~ as in~\eqref{eq:joint_EB} and $(Z_i, S_i^2)$ are 
generated as in~\eqref{eq:full_sampling} conditional on $\mu_i, \sigma_i$.
It will be helpful to also generate (compare to the notation in~\eqref{eq:conditional_pvalue}):
$$Z_i^{H_0} \cond \mu_i, \sigma_i^2, Z_i, S_i^2 \sim \mathcal{N}(0, \sigma_i^2).$$
Further note that the distribution of $(\sigma_i^2, Z_i^{H_0}, S_i^2)$ may be factorized as follows:
$$ \sigma_i^2 \sim G(\cdot; \Pi),\;\; (Z_i^{H_0}, S_i^2) \mid \sigma_i^2 \sim \mathcal{N}(0, \sigma_i^2) \otimes \frac{\sigma_i^2}{\nu}\chi^2_{\nu},$$
where $G(\cdot; \Pi)$ has been defined in~\eqref{eq:marginalized_G}. The distribution of  $(\sigma_i^2, Z_i^{H_0}, S_i^2)$
depends on $\Pi$ only through $G(\cdot; \Pi)$. 

By~\eqref{eq:indep_under_null}, it holds that $\sigma_i^2 \cond (\mu_i=0) \; \sim \; G(\cdot; \Pi)$. 
Since the distribution of $S_i^2$ conditional on $\sigma_i^2, \mu_i$ only depends on $\sigma_i^2$, it also holds that:
$$ (\sigma_i^2, S_i^2) \cond (\mu_i=0)  \;\; \stackrel{\mathcal{D}}{=}\;\; (\sigma_i^2, S_i^2).$$
This also implies that:
$$ (Z_i, \sigma_i^2, S_i^2) \cond (\mu_i = 0) \;\; \stackrel{\mathcal{D}}{=}\;\; (Z_i^{H_0}, \sigma_i^2, S_i^2).$$
Thus, $S_i^2$-almost surely,
\begin{equation}
    \label{eq:matching_conditional_prop_joint}
Z_i \cond S_i^2, (\mu_i = 0)\;\; \stackrel{\mathcal{D}}{=}\;\; Z_i^{H_0} \cond S_i^2.
\end{equation}
We are ready to conclude. For any $t \in [0,1]$, it almost surely holds that:
$$
\begin{aligned}
\PP[\Pi]{P_{ G(\cdot; \Pi)}(Z_i, S_i^2) \leq t \cond S_i^2, \mu_i=0  } &\stackrel{(*)}{=} \PP[\Pi]{P_{ G(\cdot; \Pi)}(Z_i^{H_0}, S_i^2) \leq t \cond S_i^2} \\ 
&\stackrel{(**)}{=} \PP[G(\cdot; \Pi)]{P_{ G(\cdot; \Pi)}(Z_i^{H_0}, S_i^2) \leq t \cond S_i^2} \stackrel{(***)}{=} t.
\end{aligned}
$$
In $(*)$ we used~\eqref{eq:matching_conditional_prop_joint}, in $(**)$ we recalled the observation
that the distribution of $(Z_i^{H_0}, S_i^2)$ only depends on $G(\cdot; \Pi)$, and in $(***)$ we used Proposition~\ref{prop:monotonicity}.
\end{proof}

\section{Further simulation results}
\label{sec:further_sim}

\subsection{Sensitivity analysis for data-driven support of $\widehat{G}$}
\label{subsec:sensitivity}
As explained in Remark~\ref{rema:computation}, our implementation optimizes the NPMLE objective over the class of priors $G$ supported on $[a , b]$, where $b = \max_i S_i^2$ and $a$ is set to the $1\%$ quantile of the $S_i^2$ by default. In this section, we conduct a sensitivity analysis to assess the impact of the choice of $a$ on the performance of our method in the simulation study of Section~\ref{sec:simulations}. We consider three choices for $a$:

\begin{enumerate}
\item $\mathrm{NPMLE}(\mathrm{min})$: $a = \min S_i^2$.
\item $\mathrm{NPMLE}(\mathrm{0.01})$: $a$ is the $1\%$ quantile of the $S_i^2$ (our default choice).
\item $\mathrm{NPMLE}(\mathrm{0.1})$: $a$ is the $10\%$ quantile of the $S_i^2$.
\end{enumerate}
We assess the performance of these alternatives in terms of false discovery rate (FDR), power, and the conditionality metric $\mathrm{MinSVarFP}(\leq 0.2)$ defined in~\eqref{eq:minsamplevar}.

\paragraph{Sensitivity in terms of false discovery rate:} The false discovery rate was robust to the choice of $a$, with $\mathrm{NPMLE}(\mathrm{min})$ and $\mathrm{NPMLE}(\mathrm{0.1})$ having FDR within $0.005$ of the FDR of $\mathrm{NPMLE}(\mathrm{0.01})$ across all settings in Figs.~\ref{fig:main_simulations} and~\ref{fig:adv_simulations}. Recall that the target FDR level was set to $0.1$.

\paragraph{Sensitivity in terms of power:}  Power was also relatively robust to the choice of $a$, with the exception of a slight decrease in power for $\mathrm{NPMLE}(\mathrm{0.1})$ in some settings with larger degrees of freedom $\nu$. The difference in power between $\mathrm{NPMLE}(\mathrm{0.1})$ compared to the power of $\mathrm{NPMLE}(\mathrm{0.01})$ (resp. $\mathrm{NPMLE}(\mathrm{min})$) was less than $0.005$ in all settings except for the two shown in Table~\ref{tbl:power_sensitivity}.

\setlength{\tabcolsep}{3pt} 
\begin{table}
    \centering
    \caption{\textbf{Sensitivity of power to the lower support point of $\widehat{G}$:} The table shows the power of three variations of the empirical partially Bayes approach, corresponding to different choices for the lower end $a$ of the support of the estimated prior $\widehat{G}$. $\mathrm{NPMLE}(\mathrm{0.01})$ sets $a$ to the $1\%$ quantile of the $S_i^2$ and is used throughout the main text. $\mathrm{NPMLE}(\mathrm{min})$ and $\mathrm{NPMLE}(\mathrm{0.1})$ set $a$ to $\min S_i^2$ and the $10\%$ quantile of the $S_i^2$, respectively. The settings shown in this table are the only ones from Figs.~\ref{fig:main_simulations} and~\ref{fig:adv_simulations} where the difference in power between any two method variations exceeded $0.005$. In these settings with larger $\nu$, $\mathrm{NPMLE}(\mathrm{0.1})$ exhibits a slight decrease in power compared to the other choices.}
    \small 
    \begin{tabular}{lllrrrr}
    \hline
    $G$ & $\nu$ & Ordering &  $\mathrm{Power}\cb{\mathrm{NPMLE}(\mathrm{min})}$ & $\mathrm{Power}\cb{\mathrm{NPMLE}(\mathrm{0.01})}$ & $\mathrm{Power}\cb{\mathrm{NPMLE}(\mathrm{0.1})}$ \\ \hline
    Sc-inv-$\chi^2$ & 64 & adversarial & 0.46 & 0.46 & 0.44 \\
    Sc-inv-$\chi^2$  & 32 & adversarial & 0.43 & 0.43 & 0.42 \\ \hline
    \end{tabular}
    \label{tbl:power_sensitivity}
\end{table}

\paragraph{Sensitivity in terms of conditionality properties:} We assess the conditionality properties of the methods using the metric $\mathrm{MinSVarFP}(\leq 0.2)$ defined in equation~\eqref{eq:minsamplevar}. This metric captures the type-I error rate (at $P_i \leq 0.2$) for the true null hypothesis with the smallest sample variance. If the null p-values are exactly uniform conditional on the sample variances, as is the case for the oracle p-values in Proposition~\ref{prop:monotonicity}, then $\mathrm{MinSVarFP}(\leq 0.2)$ should equal $0.2$. Figure~\ref{fig:sensitivity_conditionality} shows the sensitivity of this metric to the choice of the lower support point $a$ of $\widehat{G}$.

\begin{figure}
    \centering
    \begin{tabular}{@{}l@{}}
        a) setting with iid variances (non-adversarial)\\
        \includegraphics[width=0.99\linewidth]{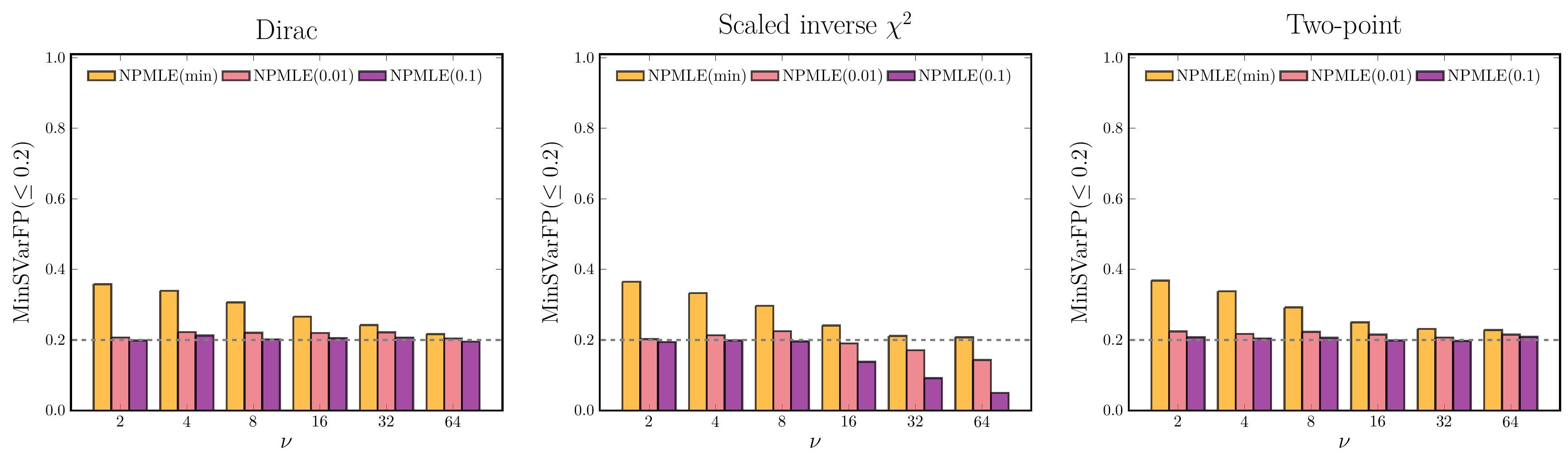} \\[6pt]
        b) setting with adversarially ordered  variances\\
        \makebox[\linewidth][r]{\includegraphics[width=0.97\linewidth]{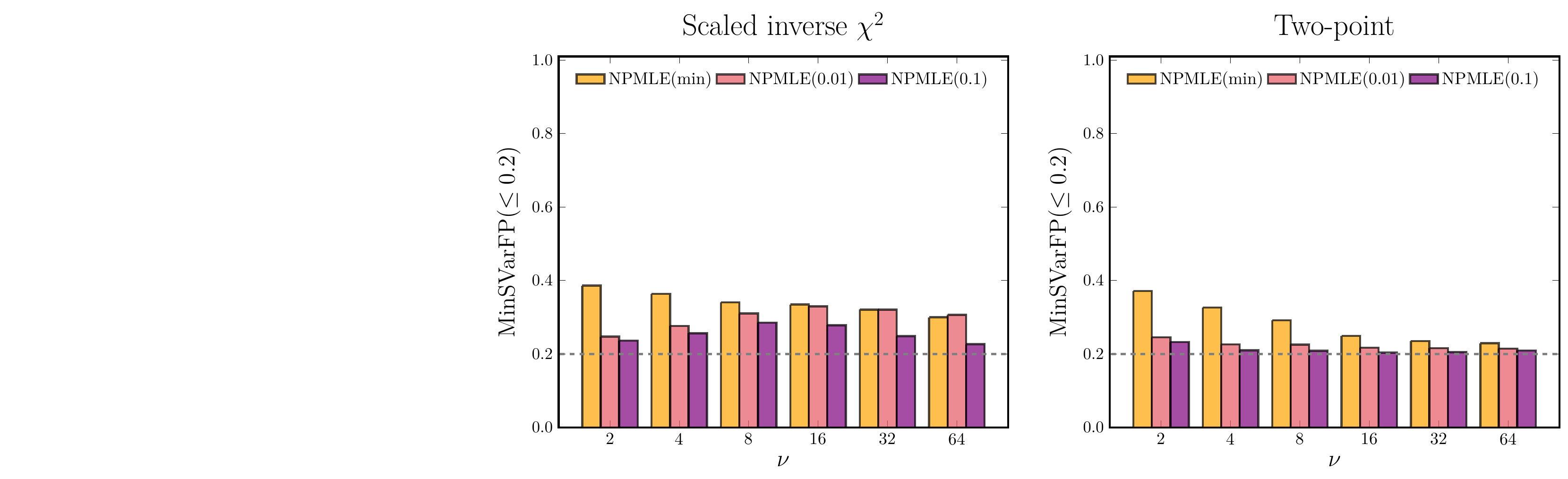}}
    \end{tabular}
    \caption{\textbf{Sensitivity of the conditionality metric $\mathrm{MinSVarFP}(\leq 0.2)$ to the choice of lower support point of $\widehat{G}$:} The figure shows the metric defined in equation~\eqref{eq:minsamplevar}, which measures the probability that the p-value of the true null hypothesis with the smallest sample variance is less than or equal to $0.2$. If the null p-values are exactly uniform conditional on the observed sample variances, this metric should be equal to $0.2$. $\mathrm{NPMLE}(\mathrm{0.01})$, which sets the lower end $a$ of the support of $\widehat{G}$ to the $1\%$ quantile of the $S_i^2$, is the default choice used in the main text. $\mathrm{NPMLE}(\mathrm{min})$ and $\mathrm{NPMLE}(\mathrm{0.1})$ set $a$ to $\min S_i^2$ and the $10\%$ quantile of the $S_i^2$, respectively. Panel a) shows results for settings with iid variances as in Fig.~\ref{fig:main_simulations}, while panel b) shows results for adversarially ordered variances as in Fig.~\ref{fig:adv_simulations}. In the non-adversarial setting, for the scaled inverse $\chi^2$ prior with small $\nu$, $\mathrm{NPMLE}(\mathrm{min})$ yields $\mathrm{MinSVarFP}(\leq 0.2)$ considerably higher than $0.2$, while for $\nu \geq 16$, $\mathrm{NPMLE}(\mathrm{0.1})$ results in $\mathrm{MinSVarFP}(\leq 0.2)$ much lower than $0.2$. $\mathrm{NPMLE}(\mathrm{0.01})$ achieves $\mathrm{MinSVarFP}(\leq 0.2) \approx 0.2$ in most non-adversarial settings. In the adversarial setting, all methods tend to have $\mathrm{MinSVarFP}(\leq 0.2)$ above $0.2$, with $\mathrm{NPMLE}(\mathrm{min})$ being most liberal.}
    \label{fig:sensitivity_conditionality}
\end{figure}

In the non-adversarial setting, for the scaled inverse $\chi^2$ prior with small degrees of freedom $\nu$, $\mathrm{NPMLE}(\mathrm{min})$ yields $\mathrm{MinSVarFP}(\leq 0.2)$ considerably higher than $0.2$, indicating that the p-value for the null hypothesis with the smallest sample variance is not uniformly distributed. Conversely, for $\nu \geq 16$, $\mathrm{NPMLE}(\mathrm{0.1})$ results in $\mathrm{MinSVarFP}(\leq 0.2)$ much lower than $0.2$, suggesting that the  p-value for the null hypothesis with smallest sample variance is overly conservative.  The $1\%$ quantile choice used by $\mathrm{NPMLE}(\mathrm{0.01})$ achieves $\mathrm{MinSVarFP}(\leq 0.2) \approx 0.2$ in most settings. In the adversarial setting, all methods tend to have $\mathrm{MinSVarFP}(\leq 0.2)$ above $0.2$, with $\mathrm{NPMLE}(\mathrm{min})$ being most liberal.

\paragraph{Summary:} To summarize, our sensitivity analysis demonstrates that the $1\%$ quantile choice for the lower end of the support of $\widehat{G}$ performs well in terms of FDR control, power, and conditionality properties. The FDR and power were relatively robust to the choice of $a$, with $\mathrm{NPMLE}(\mathrm{0.01})$ and $\mathrm{NPMLE}(\mathrm{min})$ showing very similar performance, and $\mathrm{NPMLE}(\mathrm{0.1})$ exhibiting only a slight decrease in power in some settings with larger $\nu$. In terms of the conditionality metric $\mathrm{MinSVarFP}(\leq 0.2)$, $\mathrm{NPMLE}(\mathrm{0.01})$ achieved values close to the desired $0.2$ level in most non-adversarial settings, while $\mathrm{NPMLE}(\mathrm{min})$ and $\mathrm{NPMLE}(\mathrm{0.1})$ showed suboptimal behavior in some cases, being overly liberal or conservative, respectively. These results support the use of the $1\%$ quantile as the default choice for $a$ in our implementation.

\subsection{Further simulation settings}
\label{subsec:further_simulations}

In Section~\ref{sec:simulations}, we considered the following setting for the signal strength of non-null hypotheses,
\begin{equation}
\mu_i \mid \sigma_i^2, (\mu_i \neq 0) \sim  \nn(0,\, 16 \sigma_i^2).
\label{eq:conjugate_signal}
\end{equation}
As announced in Section~\ref{sec:simulations}, in this supplement we repeat the simulations leading to Figure~\ref{fig:main_simulations}, replacing the above signal strength distribution (and keeping all other simulation settings identical).

\begin{figure}
    \centering
    \includegraphics[width=0.99\linewidth]{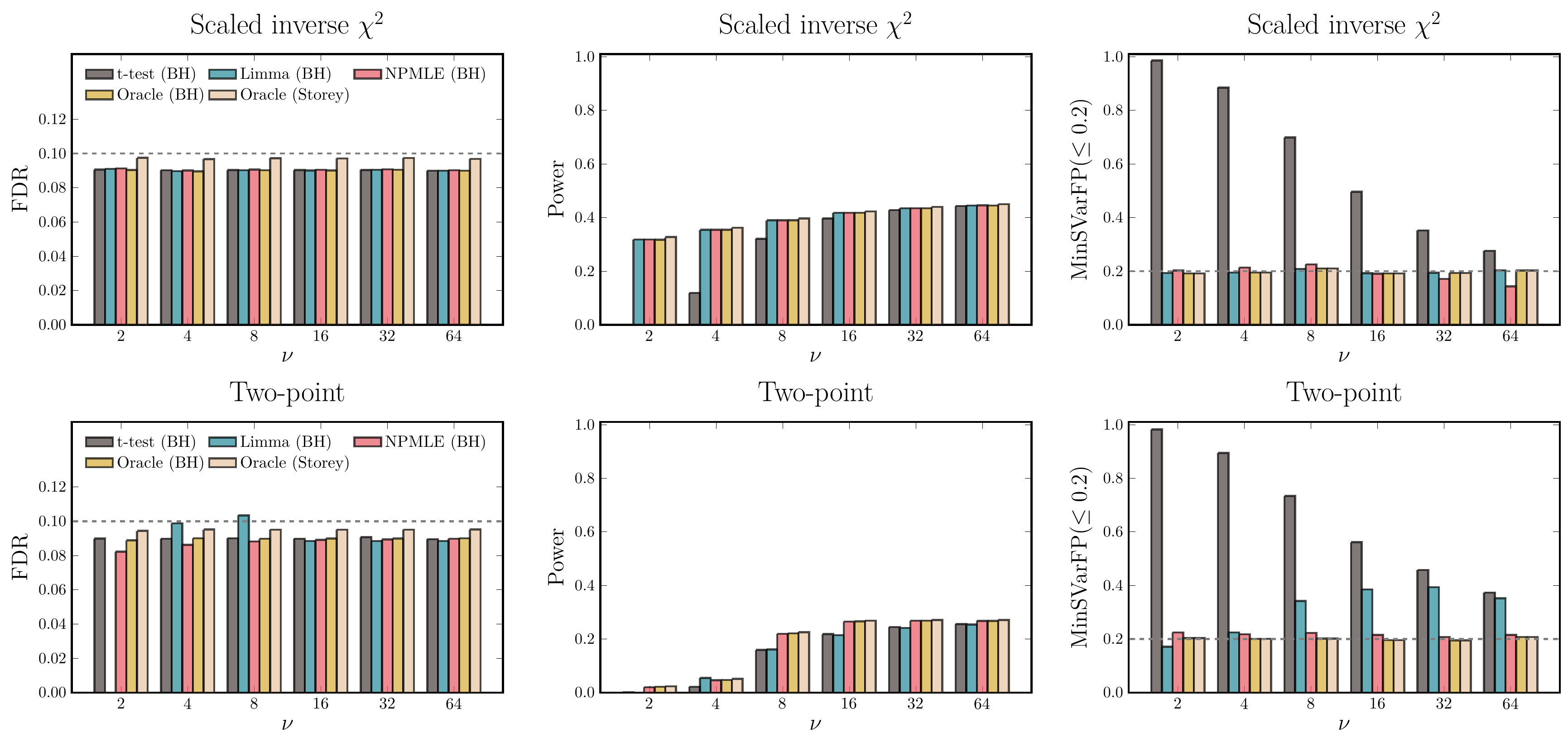}
    \caption{\textbf{Simulation results with homoscedastic normal alternative effect sizes:} {\small 
    The panels in this figure are analogous to the panels of the last two rows of Fig.~\ref{fig:main_simulations}. 
    In contrast to Fig.~\ref{fig:main_simulations}, the alternative signals were generated according to~\eqref{eq:normal_signal} rather than~\eqref{eq:conjugate_signal}.
    }}
    \label{fig:suppl_simulations_normal_signal_strength}
\end{figure}

We first consider the following alternative signal distribution:
\begin{equation}
\mu_i \mid \sigma_i^2, (\mu_i \neq 0) \sim \mathcal{N}(0,\, 16).
\label{eq:normal_signal}
\end{equation}
The results are shown in Figure~\ref{fig:suppl_simulations_normal_signal_strength}.\footnote{The figure omits the first row of Fig.~\ref{fig:main_simulations}. In the setting of that row, it holds that $\sigma_i^2 \simiid \delta_1$, that is, all $\sigma_i^2=1$. 
For that case the alternative signal distributions implied by~\eqref{eq:conjugate_signal} and~\eqref{eq:normal_signal} are identical.
} The results are qualitatively similar to those of Fig.~\ref{fig:main_simulations}. The empirical partially Bayes approach continues to show good control of the false discovery rate, good power, and conditionality properties. In the case of the two-point prior, we do observe some differences, however. In particular, all methods have substantially less power in that setting. The reason is that the signal-to-noise ratio for alternatives  with $\sigma_i^2 = 10$ is dominated by the large variance. Shrinkage on top of $S_i^2$ does not help reject any such alternative. However, the empirical partially Bayes methods still showcase gains in power compared to a standard t-test owing to the shrinkage effects for alternatives with $\sigma_i^2=1$.

\begin{figure}
    \centering
    \includegraphics[width=0.99\linewidth]{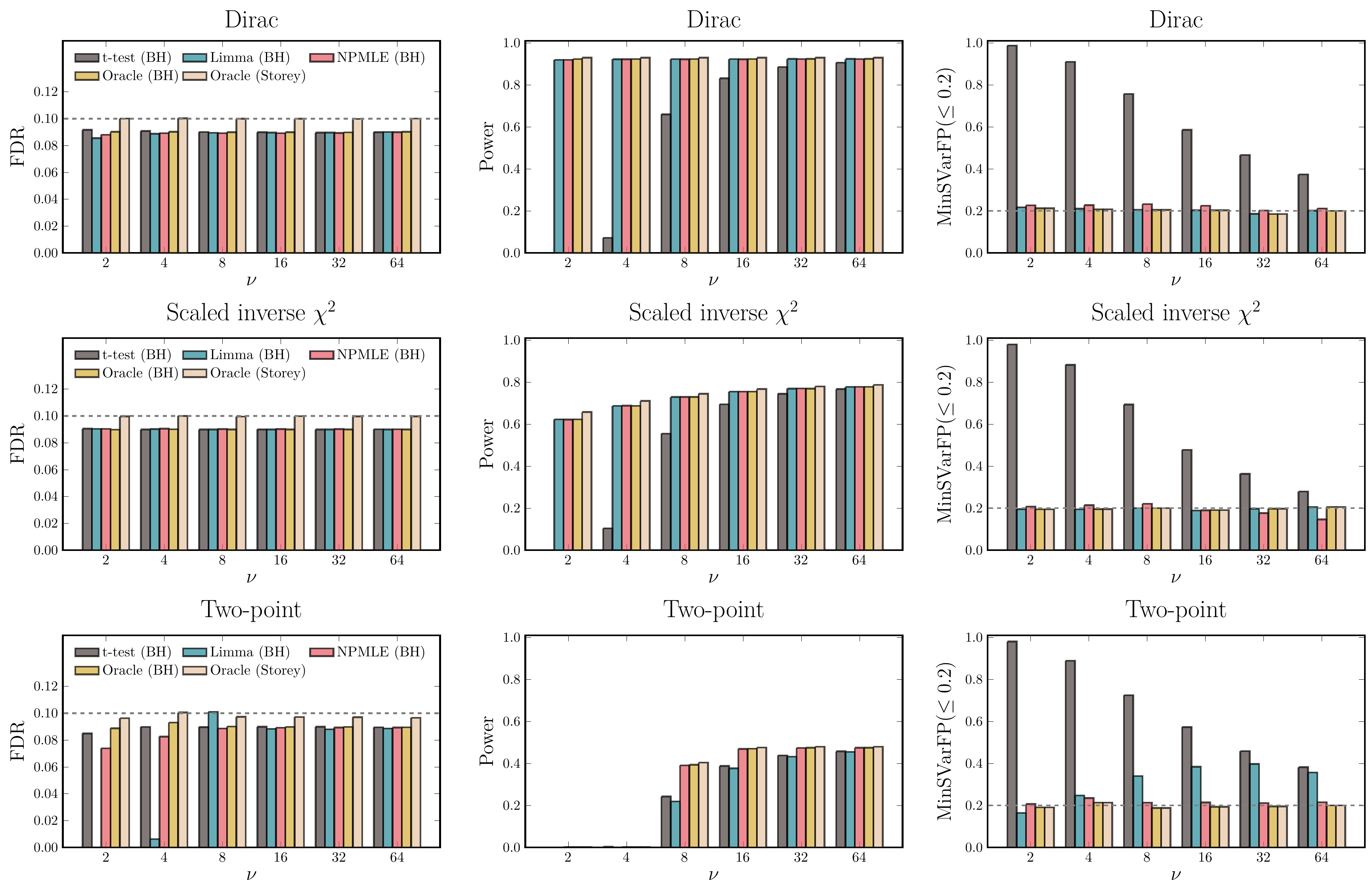}
    \caption{\textbf{Simulation results with constant alternative signal strength:} {\small 
    The panels in this figure are analogous to the panels of Fig.~\ref{fig:main_simulations}. 
    In contrast to Fig.~\ref{fig:main_simulations}, the alternative signals were all set equal to $4$ as specified in~\eqref{eq:dirac_signal}.
    }}
    \label{fig:suppl_simulations_dirac_signal_strength}
\end{figure}

We next consider the Dirac signal distribution:
\begin{equation}
\mu_i \mid \sigma_i^2, (\mu_i \neq 0) \sim \delta_4,
\label{eq:dirac_signal}
\end{equation}
in which $\mu_i=4$ for all non-null signals (and $\mu_i=0$ for null signals). Results are shown in Figure~\ref{fig:suppl_simulations_dirac_signal_strength}. Overall, the main conclusions remain similar to the case of Fig.~\ref{fig:main_simulations}. One notable difference is that for the two-point prior for $\sigma_i^2$ and for $\nu=2,4$, all methods (including the oracle methods) have no power.

\end{document}